\documentclass[11pt]{amsart}
\usepackage{amsmath, amsthm, amssymb}
\usepackage{amsmath,amscd}
\usepackage{mathabx}
\usepackage{hyperref}
\usepackage{mathrsfs}
\usepackage{xcolor, changepage} 
\usepackage{graphicx}
\usepackage{titletoc}
\usepackage{enumerate}
\usepackage{accents}

\usepackage{tikz}
\usetikzlibrary{matrix,arrows}
\usetikzlibrary{shapes}
\usetikzlibrary{calc}
\usetikzlibrary{arrows}
\usetikzlibrary{decorations.pathreplacing,decorations.markings}
\usepackage[all]{xy}
\usepackage{tikz-cd}

\usepackage{caption}
\usepackage{subcaption}
\usepackage{geometry}
\geometry{
	a4paper,
	total={210mm,297mm},
	left=32mm,
	right=25mm,
	top=32mm,
	bottom=32mm,
}

\theoremstyle{plain}
\newtheorem{theorem}{Theorem}
\newtheorem{proposition}[theorem]{Proposition} 
\newtheorem{lemma}[theorem]{Lemma}
\newtheorem{remark}[theorem]{Remark}
\newtheorem{corollary}[theorem]{Corollary}
\newtheorem{definition}[theorem]{Definition}

\numberwithin{theorem}{section}

\DeclareMathOperator{\dist}{dist}

\DeclareMathOperator{\Int}{Int}
\DeclareMathOperator{\divv}{div}

\numberwithin{equation}{section}

\title[Foliation and Schoen's conjecture]{Foliation of area minimizing hypersurfaces in asymptotically flat manifolds and Schoen's conjecture}
\author{Shihang He}
\address{Key Laboratory of Pure and Applied Mathematics,
School of Mathematical Sciences, Peking University, Beijing, 100871, P. R. China
}
\email{hsh0119@pku.edu.cn}
\author{Yuguang Shi}
\address{Key Laboratory of Pure and Applied Mathematics,
School of Mathematical Sciences, Peking University, Beijing, 100871, P. R. China
}
\email{ygshi@math.pku.edu.cn}

\author{Haobin Yu}
\address{
School of Mathematics, Hangzhou Normal University, Hangzhou, 311121, P. R. China
}
\email{yhbmath@hznu.edu.cn}

\thanks{S. He, Y. Shi  are funded by the National Key R\&D Program of China Grant 2020YFA0712800. H. Yu is funded by NSFC12001147}

\subjclass[2010]{Primary 53C21, secondary 53C24 }

\begin{document}
\begin{abstract}
In this paper, we demonstrate that any asymptotically flat manifold $(M^n, g)$ with $4\leq n\leq 7$ can be foliated by a family of area-minimizing  hypersurfaces, each of which is asymptotic to  Cartesian coordinate hyperplanes defined at an end of  $(M^n, g)$. As an application of this foliation, we show that  for any asymptotically flat manifold $(M^n, g)$ with $4\leq n\leq 7$,  nonnegative scalar curvature and positive mass, the solution of free boundary problem for area-minimizing hypersurface in coordinate cylinder $C_{R_i}$ in $(M^n, g)$ either does not exist or drifts to infinity of $(M^n, g)$ as $R_i$ tends to infinity. Additionally, we  introduce a concept of globally minimizing hypersurface in $(M^n, g)$, and verify a version of the Schoen Conjecture.
\end{abstract}	
	\maketitle
	\tableofcontents
	\section{Introduction}

	A famous conjecture due to R.Schoen states that for a Riemannian manifold $(M,g)$ of dimension $3\le n\le 7$ and asymptotically flat (AF for short) of rate $\tau=n-2$ with non-negative scalar curvature, if it contains a non-compact area minimizing boundary, then $M$ is actually isometric to the flat $\mathbb{R}^n$.  We say a hypersurface $\Sigma$ in $(M^n,g)$ is area-minimizing if for any   hypersurface $\Sigma'$ in $(M^n,g)$ and a compact set $D\subset M$ we have 
	 $$
	 \mathcal{H}^{n-1}(\Sigma\cap D)\leq  \mathcal{H}^{n-1}(\Sigma'\cap D),	 
	 $$
	 provided the symmetric difference $\Sigma \triangle \Sigma'$ is contained in $D$. Here $\mathcal{H}^{n-1}(\cdot)$ denotes the $(n-1)$-dimensional Hausdorff measure. 
  The conjecture has been verified in dimension $n=3$ in \cite{CCE16}, see also \cite{Carlotto16} for the case where $4 \leq n < 8$ with a volume growth condition on the minimal hypersurface and \cite{AR1989} for splitting results of area-minimizing surface in $3$-dimensional manifolds with nonnegative Ricci curvature.  Recently, a version of Schoen conjecture was verified in  \cite{EK23}, and also in Theorem 5 in this paper, a family of properly and completely embedded area-minimizing boundaries, which serve as counterexamples to the conjecture, was constructed  in Schwarzschild manifolds for dimensions $4 \le n \le 7$. In other words, the usual area-minimizing condition is not strong enough to ensure the validation of Schoen conjecture. Inspired by this, we get  the following more general result.

	\begin{theorem}\label{thm: Existence}
		Let $(M^n,g)$ be an AF manifold with $4\leq n\leq 7$ and asymptotic order $\tau>\frac{n-1}{2}$, then there is $t_0>0$, such that for all $t\geq t_0$, there exists an area minimizing hypersurface $\Sigma_t$, satisfying:
		
		(1) Under the Cartesian coordinates of $M\setminus K$, $\Sigma_t$ is the graph of a function $u_t$ over  coordinate $z-$hyperplane $S_t$;  
		
		(2) There is a constant $C$ depends only on $k$, $n$, $\tau$ and $(M,g)$, so that for any $k\geq 1$, $\epsilon>0$, $|z-t|_{C^k}(x)\leq C|x|^{1-\tau-k+\epsilon}$ as $|x|\to +\infty$;
		
		(3) Moreover, except for countable  values $t_i$, $1=1,2,\cdots $, each of those area minimizing hypersurfaces satisfying conditions above is unique. Furthermore, $\{(x,z)\in M:|z|\geq t_0\}$  for some fixed $t_0$ is smoothly foliated by these $\Sigma_t$. 
	\end{theorem}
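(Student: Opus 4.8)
The plan is to realize each leaf $\Sigma_t$ as a limit of minimizers of Plateau problems in an exhaustion of $(M^n,g)$ by coordinate cylinders, to confine these minimizers by explicit barriers modeled on the coordinate hyperplane $S_t=\{z=t\}$, to pass to the limit using the interior regularity theory for area-minimizing hypersurfaces (available because $4\le n\le 7$), and then to upgrade the crude confinement to the sharp graphical asymptotics of (2); uniqueness and the foliation statement will follow from ordering the leaves by a cut-and-paste comparison. Writing points of $M\setminus K$ as $(y,z)\in\mathbb R^{n-1}\times\mathbb R$ with $r=|y|$ and $|x|=\sqrt{r^2+z^2}$, one has $H_g(S_t)=O(|x|^{-\tau-1})$ with uncontrolled sign, so I would first solve an ODE of the shape $\phi''+\tfrac{n-2}{r}\phi'=-C_0 r^{-\tau-1}$ for large $r$; since $\tau>\tfrac{n-1}{2}>1$ this gives $\phi\ge 0$ with $\phi=O(r^{1-\tau})\to 0$, and after smoothing into $K$ the graphs $\Sigma_t^{\pm}=\{z=t\pm(\phi(r)+A)\}$ become, for suitable $A$, a super- and a sub-solution of the minimal surface equation of $g$, hence barriers confining any area-minimizer whose boundary lies between them to the slab $\{|z-t|\le A+1\}$ near infinity.

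\textbf{Plateau problems and compactness.} For large $R$ let $\Sigma_{t,R}$ minimize $\mathcal H^{n-1}(\,\cdot\,\cap C_R)$ among hypersurfaces that coincide with $S_t$ outside the coordinate cylinder $C_R$. Comparing with $S_t$ itself gives $\mathcal H^{n-1}(\Sigma_{t,R}\cap C_R)\le\mathcal H^{n-1}(S_t\cap C_R)$, and this is where the hypothesis $\tau>\tfrac{n-1}{2}$ is used in an essential way: it is the threshold making the second-order part of the area functional along graphs of size $r^{1-\tau}$ over $S_t$ integrable (the borderline contribution being $\int^R r^{n-2-2\tau}\,dr$), so that the renormalized area relative to $S_t$ is finite; this yields $R$-uniform area estimates for $\Sigma_{t,R}$ and, later, Euclidean density one at infinity. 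Together with the barrier confinement and the compactness theorem for area-minimizing boundaries, a subsequence converges to a complete, properly embedded hypersurface $\Sigma_t$, area-minimizing in the sense of the Introduction, lying in the slab, and --- since $4\le n\le 7$ --- everywhere smooth.

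\textbf{Sharp asymptotics.} Monotonicity together with the finite renormalized area forces $\Sigma_t$ to have Euclidean density one and a unique tangent hyperplane at infinity, which the slab confinement pins to be horizontal; hence outside a compact set $\Sigma_t$ is the graph of a bounded function $u_t$, which is (1). This $u_t$ solves the minimal surface equation of $g$, schematically $\Delta_\delta u_t=O(|x|^{-\tau-1})+O(|x|^{-\tau}|\nabla u_t|)+O(|\nabla u_t|^2|\nabla^2 u_t|)$. Weighted Schauder theory for the flat Laplacian on $\mathbb R^{n-1}$ first gives $u_t\to$ const, then --- because $\Sigma_t^{\pm}$ are asymptotic to $S_t$ --- that the constant is $t$, and then permits a bootstrap in which each gain in the decay of $u_t-t$ improves the right-hand side; the condition $\tau>\tfrac{n-1}{2}$ makes the quadratic terms strictly lower order than the linear inhomogeneity, so the iteration closes at $|u_t-t|=O(|x|^{1-\tau+\epsilon})$, and rescaled interior estimates upgrade this to the $C^k$ bound $|z-t|_{C^k}(x)\le C|x|^{1-\tau-k+\epsilon}$ of (2).

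\textbf{Uniqueness, foliation, and the main obstacle.} A cut-and-paste comparison prevents two minimizers as in (1)--(2) from crossing (a transversal crossing would produce a strictly area-reducing competitor, while a tangency would force them equal by the strong maximum principle), so for $t<t'$ the leaf $\Sigma_t$ lies weakly below $\Sigma_{t'}$; the family $\{\Sigma_t\}$ is thus weakly ordered and the height of $\Sigma_t$ over any fixed point is monotone in $t$. A monotone function has at most countably many jumps --- these are the exceptional $t_i$ --- and at every other $t$ the leaf is trapped between the leaves at $t\pm\epsilon$ as $\epsilon\to0$, hence unique; interior elliptic estimates with Arzel\`a--Ascoli then make the convergence $\Sigma_{t'}\to\Sigma_t$ as $t'\to t$ smooth, so the monotone family is in fact a $C^\infty$ foliation of $\{|z|\ge t_0\}$ after enlarging $t_0$, the leaves being asymptotic to $\{z=\pm t\}$ as $t\to\pm\infty$. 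I expect the heart of the argument to be the asymptotic step --- first the curvature decay that makes $\Sigma_t$ graphical, and then closing the nonlinear scheme at the sharp rate, which is exactly where the borderline hypothesis $\tau>\tfrac{n-1}{2}$ is indispensable --- with a secondary difficulty in promoting the weakly ordered, a priori possibly non-unique family to a genuine smooth foliation on the stated region.
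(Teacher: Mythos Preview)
Your barrier-based confinement is a genuinely different route from the paper's and, with some repair, is arguably more direct. The paper explicitly forgoes barriers --- remarking that unlike in the Schoen--Yau positive mass proof the planes $S_t$ need not be mean-convex --- and instead bounds $u=z-t$ on the Plateau solutions $\Sigma_{r,t}$ by first proving a Michael--Simon type Sobolev inequality on $\Sigma_{r,t}$ (using curvature estimates from stability and Euclidean volume growth) and then running Moser iteration on the intrinsic equation $\Delta_{\Sigma_{r,t}}u=O(|x|^{-\tau-1})$; the condition $\tau>\tfrac{n-1}{2}$ enters as the integrability threshold for the source term in $L^{2m/(m+2)}$. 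Your perturbed graphs $z=t\pm\psi$ with $\Delta\psi$ dominating the $O(|x|^{-\tau-1})$ error would bypass all of this. Two repairs are needed, though: the right-hand side of your ODE should be $-C_0|x|^{-\tau-1}$ rather than $-C_0 r^{-\tau-1}$, so that $\psi$ stays bounded near the $z$-axis (where $r\to0$ but $|x|\ge t$); and you should drop the constant $A$ --- with $A>0$ your barriers are asymptotic to $S_{t\pm A}$, not to $S_t$, so they do \emph{not} pin the limiting constant to $t$ as you later assert.

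The genuine gap is in the uniqueness/foliation step. Your cut-and-paste argument correctly orders the leaves and gives uniqueness at all but countably many $t$, but it gives no control on \emph{where} those exceptional values sit; nothing you have written prevents them from accumulating at $+\infty$, so ``after enlarging $t_0$'' is not justified and the smooth foliation of $\{|z|\ge t_0\}$ does not follow. The paper closes this gap by a PDE argument that has no maximum-principle substitute: writing two minimizers asymptotic to the same $S_t$ as graphs $u,v$, their difference $w=v-u$ solves a linear equation $a_{ij}D_{ij}w+b_iD_iw+cw=0$ whose zeroth-order coefficient $c$ (arising from the $z$-dependence of the metric) has no sign, so even though $w\to0$ at infinity the strong maximum principle is unavailable. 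What the paper proves instead is that for $t$ large the coefficients $Da_{ij},\,b_i,\,c$ are \emph{small} in the scale-invariant Lebesgue norms $L^{n-1},\,L^{(n-1)/2}$ on $S_t$; combined with the Sobolev inequality this forces $w\equiv0$. This quantitative uniqueness for all $t\ge t_0$ --- not merely for all but countably many $t$ --- is what actually yields the smooth foliation, and it is missing from your outline.
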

	
	In conjunction with Theorem 1.6 in \cite{CCE16},  Theorem \ref{thm: Existence} illustrates tremendous difference from the phenomenon in dimension 3. For an AF manifold $(M^n,g)$,  and let compact set $K\subset M$  be as defined in Definition \ref{def: AF manifold},  let $C_R$  be  the region bounded by the cylinder $\partial B^{n-1}_R(O)\times \mathbb{R}$ for $R>1$, where $B^{n-1}_R(O)$  is coordinate ball in a coordinate hyperplane at an end of  $(M^n,g)$. Then an application of Theorem \ref{thm: Existence} is given below, it may be of  some interest to compare this result  with Corollary 3 in \cite{Carlotto16}. Define 
\begin{equation}
\begin{split}
\mathcal{F}_R&:=\{\Sigma=\partial \Omega \cap \mathring{C_R}: \text{there is a Caccioppoli set } \Omega \subset M \text{ with locally finite perimeter}\\
	&\text{ and $C_R \cap \{z\leq b\}\subset \Omega $ and $\Omega \cap \{z\geq a\}=\phi  $ for some $-\infty< a\leq b<\infty$}, \}
	\end{split}\nonumber
\end{equation}

then we have 
	\begin{theorem}\label{thm: free boundary}
	Let $(M^n,g)$ be an AF manifold  as described  in Theorem \ref{thm: Existence} with nonnegative scalar curvature and $\tau>max \{\frac{n-1}{2}, n-3\}$. Suppose its ADM mass $m>0$. Then one of the following happens
		\begin{itemize}
		\item There exists $R_0>0$, such that for all $R>R_0$, there exists no hypersurface $\Sigma_i$ in $C_R$, which minimizes the volume in the $\mathcal{F}_R$;
		\item For any sequence $\{R_i\}$ tending to infinity such that there exists a hypersurface $\Sigma_{R_i}$ in $C_{R_i}$, which minimizes the volume in $\mathcal{F}_{R_i}$. Then $\Sigma_{R_i}$ drifts  to infinity,i.e. for any compact set $\Omega \subset M$, 	$\Sigma_{R_i}\cap \Omega=\phi$ for sufficiently large $i$.
		\end{itemize}
	\end{theorem}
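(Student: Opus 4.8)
The plan is to argue by contradiction, combining the foliation from Theorem~\ref{thm: Existence} with the positive mass rigidity phenomenon. Suppose neither alternative holds: there is a sequence $R_i \to \infty$ with free-boundary volume-minimizers $\Sigma_{R_i} \subset C_{R_i}$ in the class $\mathcal{F}_{R_i}$, and these do \emph{not} drift to infinity, so after passing to a subsequence there is a fixed compact set $\Omega_0$ with $\Sigma_{R_i} \cap \Omega_0 \neq \phi$ for all $i$. The first step is to obtain uniform curvature and area estimates for the $\Sigma_{R_i}$ on compact subsets of $M$: since each $\Sigma_{R_i}$ is volume-minimizing in $\mathring{C_{R_i}}$ and $4 \le n \le 7$, interior regularity theory for codimension-one area minimizers (De Giorgi--Federer--Almgren, via the monotonicity formula and the absence of singular minimizing cones in these dimensions) gives local area bounds and hence local $C^{2,\alpha}$ bounds away from $\partial C_{R_i}$; as $R_i \to \infty$ the boundary recedes, so on any fixed compact set these bounds are eventually uniform. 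By compactness for minimizing currents, a subsequence converges (in the local flat topology and smoothly on compacta) to a complete embedded hypersurface $\Sigma_\infty$ which is area-minimizing in the sense defined in the introduction, is nonempty (it meets $\Omega_0$), and — because the barrier conditions $C_R \cap \{z \le b\} \subset \Omega$ and $\Omega \cap \{z \ge a\} = \phi$ force $\Sigma_{R_i}$ to separate the two ends of the cylinder in the $z$-direction — is noncompact and ``horizontal'' at infinity.

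The second step is to identify $\Sigma_\infty$ with one of the leaves of the foliation. The barrier constraints pin the limit between the slabs $\{z \le a\}$ and $\{z \ge b\}$ near infinity in each cylinder, and letting $R_i \to \infty$ one shows $\Sigma_\infty$ is asymptotic to a coordinate $z$-hyperplane $S_t$ for some $t$, with the decay estimate of Theorem~\ref{thm: Existence}(2); indeed the graphical asymptotics are forced by the minimal surface equation together with the AF decay of $g$ at rate $\tau > \frac{n-1}{2}$, exactly as in the construction of the $\Sigma_t$. Invoking the uniqueness clause Theorem~\ref{thm: Existence}(3), $\Sigma_\infty$ is (for all but countably many $t$, and by a further perturbation/ordering argument for the exceptional $t_i$ as well) one of the foliation leaves $\Sigma_t$. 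Thus we have produced a \emph{complete, properly embedded, two-sided area-minimizing hypersurface} $\Sigma_t$ in $(M^n,g)$ which is a limit of compact volume minimizers and in particular is itself area-minimizing and separating.

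The third and decisive step invokes nonnegative scalar curvature together with $m > 0$ to derive a contradiction. Since $\Sigma_t$ is a two-sided area minimizer, it is stable, so the stability inequality combined with $R_g \ge 0$ and the Gauss equation controls the scalar curvature of the induced metric on $\Sigma_t$; following the Schoen--Yau / Schoen-conjecture circle of ideas (as in \cite{CCE16}, \cite{Carlotto16}, \cite{EK23}), one shows that the existence of such a noncompact area-minimizing boundary asymptotic to a hyperplane forces, via a conformal change and a second minimization (or via the Liouville-type argument on $\Sigma_t$), that $(M,g)$ splits or that the mass vanishes — contradicting $m > 0$. Concretely, the presence of the \emph{entire foliation} $\{\Sigma_t\}$ by area minimizers is what upgrades stability to infinitesimal rigidity: the leaves give a global parallel normal vector field / a positive Jacobi field, which combined with $R_g \ge 0$ forces $\operatorname{Ric}(g) \equiv 0$ in a neighborhood of infinity and then, by the rigidity case of the positive mass theorem, $m = 0$.

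I expect the main obstacle to be the second step — showing that the non-drifting limit $\Sigma_\infty$ is genuinely one of the foliation leaves rather than some other complete minimizer, and in particular controlling the \emph{free boundary} behavior on $\partial C_{R_i}$ well enough (the leaves meet $\partial C_{R_i}$, and one must rule out boundary concentration or loss of area along $\partial C_{R_i}$ as $R_i \to \infty$) to guarantee that the limit inherits the graphical asymptotics of Theorem~\ref{thm: Existence}. Boundary regularity for free-boundary minimizers in $4 \le n \le 7$ and a careful use of the barrier slabs $\{z \le b\}$, $\{z \ge a\}$ to trap the limit are what make this step work; once the limit is a foliation leaf, the contradiction with $m>0$ follows from the rigidity machinery already available.
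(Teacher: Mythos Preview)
Your overall contradiction setup (Step 1) matches the paper, but Step 3 contains a genuine gap, and Step 2 is also off.

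\textbf{The decisive gap is in Step 3.} You claim that the foliation $\{\Sigma_t\}$ by area minimizers furnishes a positive Jacobi field which, together with $R_g\ge 0$, forces rigidity. This is exactly what fails. Theorem~\ref{thm: Existence} applies to \emph{any} AF manifold with $4\le n\le 7$ and $\tau>\frac{n-1}{2}$ --- in particular to Schwarzschild, which has $R_g\equiv 0$ and positive mass. So Schwarzschild carries the full foliation $\{\Sigma_t\}$ by area minimizers, with the attendant positive Jacobi field tending to~$1$, yet is not flat. Your argument would prove $m=0$ for Schwarzschild. The point is that area-minimizing (equivalently, stability for compactly supported variations, or the existence of a positive Jacobi field) is \emph{not} the same as stability under asymptotically constant variation, and only the latter feeds into the Schoen--Yau mass-monotonicity argument. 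The boundary term in the Fischer-Colbrie--Schoen substitution $\varphi=\psi w$ is of order $R^{n-2-\tau+\epsilon}$, which does not vanish under the hypothesis $\tau>\max\{\frac{n-1}{2},n-3\}$.

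\textbf{What the paper actually does.} The limit $\Sigma$ of the $\Sigma_{R_i}$ is not identified with a foliation leaf at all (it need not be: the foliation exists only for $|t|\ge t_0$). Instead, $\Sigma$ is used as a \emph{barrier}. For each point $p$ on a high leaf $\Sigma_t$, one perturbs the metric to $g(s)$ along a tube from $p$ down to $\Sigma$ (the construction of \cite{EK23}, Lemma~31), making $\Sigma_i$ strictly mean convex under $g(s)$. One then solves a \emph{second} free-boundary problem in the region $C_{R_i}^+$ above $\Sigma_i$, obtaining minimizers $\Sigma_i(s)$ which are forced to meet the perturbation region. Their limit $\Sigma(s)$ is stable under asymptotically constant variation precisely because each $\Sigma_i(s)$ minimizes in the global class $\mathcal{G}_{R_i}$ (this is where the free-boundary hypothesis is genuinely used). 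Sending $s\to 0$, $r\to 0$, one shows $\Sigma(s)\to\Sigma_p$ passing through $p$; uniqueness (Proposition~\ref{prop: uniqueness}) then forces $\Sigma_p=\Sigma_t$, so every high leaf inherits ACV stability. Only then does the conformal/PMT rigidity apply to each leaf, after which Proposition~\ref{flatness} makes the whole region $\{|z|\ge t_1\}$ flat and $\tau>n-3$ gives $m=0$.

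In short: the free-boundary minimizers $\Sigma_{R_i}$ are not just used to produce one limit hypersurface; they are the resource that, via the perturbation and re-minimization, transmits ACV stability to the foliation leaves. Your proposal discards this mechanism and replaces it with an inference (positive Jacobi field $\Rightarrow$ rigidity) that the Schwarzschild example refutes.
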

		
 \begin{figure}
    \centering
    \includegraphics[width = 8cm]{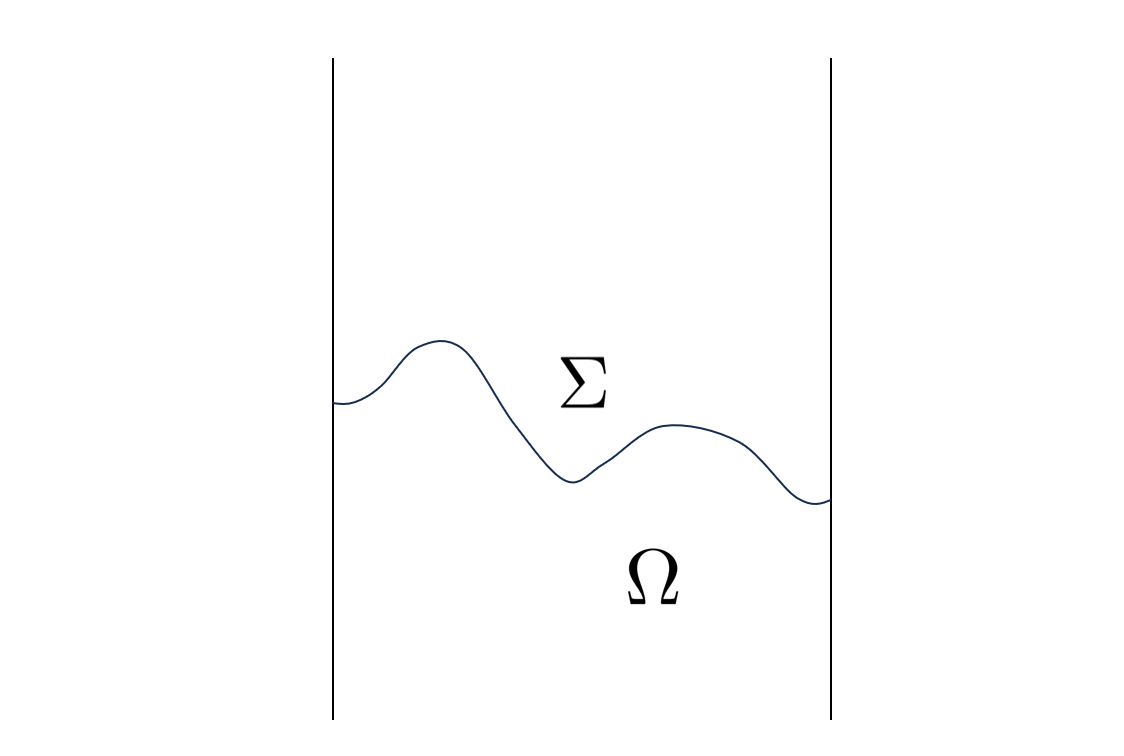}
    \caption{Free boundary hypersurface in the class $\mathcal{F}_R$}
    \label{f4}
\end{figure}    
  
	  Due to Theorem \ref{thm: free boundary}, it seems to us that  if there exists a hypersurface which minimizes the volume among all the hypersurfaces in some relative homology class represented by a given hypersurface $S_0$ then $M$ should be  actually flat. With these issues in mind, we introduce the following notion of global minimizing. 
	
	\begin{definition}\label{global min}
	Let  $(M^n,g)$ be an AF manifold. We say a hypersurface $\Sigma$ in $M$ is {\it globally minimizing}, if there exists a sequence of number $R_i\to +\infty$, such that 
		
		(1) $\Sigma$ intersects $\partial C_{R_i}$ transversally.
		
		(2) As free boundary hypersurface in $C_{R_i}$, $\Sigma_{R_i} = \Sigma\cap C_{R_i}$ minimizes the volume in  $\mathcal{F}_{R_i}$. Here and in the sequel, $S_t$ denotes the coordinated hyperplane $\{z:=x_{n}=t\}$ in $M$ outside $K$.
	\end{definition}

It is worth noting that $S_t$  is the only global minimizing hypersurface in $\mathbb{R}^n$. With this conception, we are able to demonstrate the following version of Schoen conjecture is true.

	\begin{theorem}\label{thm: cylinder minimizing}
		Let $(M,g)$ be an asymptotically flat Riemannian manifold of dimension $4\le n\le 7$ with non-negative scalar curvature and asymptotic order $\tau>max \{\frac{n-1}{2}, n-3\}$. If $M$ contains a globally minimizing hypersurface, then $M$ is isometric to the flat $\mathbb{R}^n$.
	\end{theorem}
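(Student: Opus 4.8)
The plan is to deduce Theorem~\ref{thm: cylinder minimizing} from Theorem~\ref{thm: free boundary} together with the positive mass theorem. Since $(M^n,g)$ is AF with $4\le n\le 7$, nonnegative scalar curvature, and decay order $\tau>\frac{n-1}{2}>\frac{n-2}{2}$, the positive mass theorem applies and yields $m\ge 0$. Thus it suffices to show $m=0$: the rigidity (equality) case of the positive mass theorem, available in dimensions $4\le n\le 7$, then forces $(M,g)$ to be isometric to flat $\mathbb{R}^n$.

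To prove $m=0$, suppose for contradiction that $m>0$, and let $\Sigma$ be a globally minimizing hypersurface with associated sequence $R_i\to\infty$ as in Definition~\ref{global min}, so that for every $i$ the free boundary hypersurface $\Sigma_{R_i}=\Sigma\cap C_{R_i}$ minimizes the volume in $\mathcal{F}_{R_i}$. Since $m>0$, Theorem~\ref{thm: free boundary} presents two alternatives. Its first alternative says there is $R_0>0$ such that $\mathcal{F}_R$ contains no volume minimizer whenever $R>R_0$; choosing any $R_i>R_0$ contradicts the existence of the minimizer $\Sigma_{R_i}$, so this alternative is excluded. Hence the second alternative holds, and, applied to the sequence $\{R_i\}$ and the minimizers $\Sigma_{R_i}$, it asserts that $\Sigma_{R_i}$ drifts to infinity: for every compact $\Omega\subset M$ one has $\Sigma_{R_i}\cap\Omega=\emptyset$ for all large $i$.

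This contradicts the very existence of $\Sigma$. Every member of $\mathcal{F}_{R_i}$ is nonempty, since it must separate $\{z\le b\}$ from $\{z\ge a\}$ inside the cylinder, so $\Sigma$ is a nonempty hypersurface; fix any point $p\in\Sigma$. Because the cylinders $C_R$ exhaust $M$ as $R\to\infty$, we have $p\in C_{R_i}$, hence $p\in\Sigma\cap C_{R_i}=\Sigma_{R_i}$, for all sufficiently large $i$. Taking $\Omega=\{p\}$ then contradicts the drifting conclusion. Therefore $m>0$ is impossible, so $m=0$, and the proof is finished by positive-mass rigidity.

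I expect that essentially all of the analytic content sits in Theorems~\ref{thm: Existence} and~\ref{thm: free boundary}: the proof of Theorem~\ref{thm: free boundary} presumably uses the foliation of Theorem~\ref{thm: Existence} as a barrier family and exploits the strict stability and monotonicity forced by positive mass to push any free boundary minimizer out to infinity. Given those results, the only delicate points in the argument above are of a bookkeeping nature — checking that $\Sigma_{R_i}=\Sigma\cap C_{R_i}$ is precisely the object governed by the dichotomy of Theorem~\ref{thm: free boundary} (which is exactly what Definition~\ref{global min} is designed to guarantee), that the cylinders $C_R$ exhaust $M$, and that the hypothesis $\tau>\max\{\frac{n-1}{2},\,n-3\}$ indeed places us in the regime where both the positive mass theorem and its rigidity statement hold in dimensions $4\le n\le 7$.
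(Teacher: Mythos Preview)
Your proof is correct and follows essentially the same approach as the paper: reduce to $m=0$ by showing that the sequence $\Sigma_{R_i}=\Sigma\cap C_{R_i}$ witnesses the failure of both alternatives in Theorem~\ref{thm: free boundary}, and then invoke positive mass rigidity. The only cosmetic difference is that the paper rules out the drifting alternative by noting that every $\Sigma_{R_i}$ contains the fixed set $\Sigma_{R_1}$, whereas you fix a single point $p\in\Sigma$ and use that the cylinders $C_R$ exhaust $M$; these are equivalent.
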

 
To establish existence of $\Sigma_t$ in Theorem \ref{thm: Existence}, we first solve a Plateau problem in cylinder $C_r$ with boundary $\partial C_r \cap S_t$, let $\Sigma_{r,t}$ denotes the solution, and we aim for  $\Sigma_{r,t}$ to converge to $\Sigma_t$ as $r$ tends to the infinity. To achieve this, we need to demonstrate $\Sigma_{r,t}$ does not escape to infinity of $M^n$. We note that in the proof of positive mass theorem in \cite{SY79}, the assumption of negative ADM mass of the AF manifold ensures that the mean curvature of $S_t$ with respect to outward normal vector is positive for large $|t|$. Consequently,  $S_t$ can be served as barrier hyperplane for large $|t|$. In our current case, we lack such condition, however, we observe that the volume growth of $\Sigma_{r,t}$ can be controlled and therefore, its second fundamental forms and higher derivatives can be controlled as well, for details,  see Lemma \ref{lem: interior curvature estimate} below. 	Once the  second fundamental forms of interior part of $\Sigma_{r,t}$ are controlled, we are able to improve the Sobolev inequality on it, which can be used to deduced the estimate of $u:=z(x)-t$ on $\Sigma_{r,t}$. Here, $z(x)$ is the restriction of  coordinate function $x_n$  on  $\Sigma_{r,t}$,  see Proposition \ref{pro: C^0 estimate of u} below. Therefore, we see that  $\Sigma_{r,t}$ cannot  slide   off to infinity of $M^n$. Thus, by choosing a subsequence if necessary, we prove that $\Sigma_{r,t}$ converges to  $\Sigma_t$  as $r$ approaches to infinity. By careful analysis of minimal surface equation that $\Sigma_t$ satisfies, we obtain  its asymptotic behavior 	at infinity. Additionally, we show that 
	$\{(x,z)\in M:|z|\geq t_0\}$  for some large $t_0$ is smoothly foliated by these $\Sigma_t$. 	It may be of interest to compare this property with that of constant mean curvature surfaces near infinity of an AF manifold (c.f. Theorem 4.1 in \cite{HY1996}). 
	
	 We observe that if there is a  sequence $\{R_i\}$ tending to infinity and a solution $\Sigma_{R_i}$ for the free boundary problem in each  $C_{R_i}$ in the sense of (2) in Definition \ref{global min} with $\Sigma_{R_i}\cap \Omega\neq \phi$ for a fixed compact set $\Omega\subset M$, then by choosing a subsequence if necessary, we may assume $\{\Sigma_{R_i}\}$ converges locally smoothly to a complete area-minimizing hypersurface $\Sigma\subset M$. By the arguments in \cite{EK23}, it must be asymptotic to certain coordinate hyperplane $S_t$ in a end $E$ of $M$. Then by Theorem \ref{thm: Existence}, in this Cartesian coordinates determined  by $S_t$ at the end $E$, there is an area-minimizing hypersurface foliation whose leaf is asymptotic to a coordinate hyperplane $S_t$ for large $|t|$.  Then again by the arguments in \cite{EK23}, for any $p\in M$ with $|z(p)|$ large enough, we can demonstrate that there is a complete area-minimizing  hypersurface $\Sigma'$ which is asymptotic to some coordinate hyperplane $S_t$ and passes through $p$. Additionally,  $\Sigma'$ is  stable under asymptotically constant variation. On account of Proposition \ref{prop: uniqueness}, we see that $\Sigma'$ is must be $\Sigma_t$. Thus,  all of these minimal hypersurfaces $\Sigma_t$ are indeed stable under asymptotically constant variation, and hence, in conjunction with a suitable conformal deformation and  the positive mass theorem, each of them are flat,  totally geodesic and the Ricci curvature of $(M^n, g)$ with normal direction of $\Sigma_t$ vanishes along these minimal hypersurfaces. Therefore, by Proposition \ref{flatness}, $(M^n, g)$ is flat in the part with $|z|$ large enough. Together with the condition $\tau>n-3$, we obtain the ADM mass $m=0$ and reach the contradiction. Theorem \ref{thm: cylinder minimizing} is a direct conclusion of Theorem \ref{thm: free boundary}, thereby confirming  a version of Schoen conjecture. 
	
	The remainder of the paper is outlined as follows. In Section 2, we will introduce some notations and collect some facts on the properties of minimal surfaces. In Section 3, we will construct foliation of area minimizing hypersurfces. In Section 4, we will verify a version of Schoen conjecture. In  Appendix we will list some basic facts about $L^p$ estimate for  linear elliptic equations.

	\section{Preliminary}
 In this section, we will introduce some notations and collect some facts on the properties of minimal surfaces.

 	\begin{definition}\label{def: AF manifold}
		A $n$-dimensional Riemannian manifold $(M,g)$ is said to be asymptotically flat of order $\tau$ for some $\tau>\frac{n-2}{2}$, if there is a compact set $K$ in $M$, such that each component of $M\backslash K$ is diffeomorphic to $\mathbb{R}^n\backslash B^n_{1}(O)$, and the metric in $M\backslash K$ satisfies
		\begin{align*}
			|g_{ij}-\delta_{ij}|+|x||\partial g_{ij}|+|x|^2|\partial^2 g_{ij}| = O(|x|^{-\tau}),
		\end{align*}
		and $R\in L^1(M)$. Each component of $M\backslash K$ is called end of $(M^n,g)$.
  \end{definition} 
	
	 For $\alpha>0$, we use $O(|x|^{-\alpha})$ to denote the quantities which can be bounded by $C|x|^{-\alpha}$ for some $C$ depending only on $(M,g)$ throughout the paper. For a fixed number $t$ with $|t|>1$, denote $S_t$ to be the coordinate hyperplane of height $t$, {\it i.e.} $S_t = \mathbb{R}^{n-1}\times\{t\}$. In the end $E$ of an AF manifold $(M^n,g)$, any point $x$ has its coordinate $x=(x_1,x_2,\dots,x_n)$. We always call $x_n$ the $z$-component and denote $z(x) = x_n$
and use $D_{r,t}$ to denote the coordinate ball of radius $r$ in $S_t$, centered at the intersection of $S_t$ and the $z$-axis.
  
  \begin{lemma}\label{lem: interior curvature estimate}
		Let $(M^n,g)$ be an asymptotically flat manifold of order $\tau$ $(n\ge 4)$.  Then there is a hypersurface $\Sigma_{r,t}$  contained in $C_r$ minimizing the volume among all  hypersurfaces $S$ with  $\partial S=\partial D_{r,t}$. Moreover, for any $p\in \Sigma_{r,t}$ we have
		\begin{itemize}
			\item There exists a constant $\Lambda$ depending only on $(M^n, g)$ such that for any $s>1$ with $\partial \Sigma_{r,t}\cap B^n_s(p)=\phi$ and
     \begin{equation}\label{eq: area growth}
     \mathcal{H}^{n-1}(\Sigma_{r,t}\cap B^n_s(p))\leq\Lambda s^{n-1}.
     \end{equation}	
      Here and in the sequel, $ B^n_s(p)$ denotes the geodesic ball in $(M^n, g)$ with centre $p$ and radius $s$.     \item For any $k\in \mathbb{Z}^+$, $\alpha\in (0, 1)$, there is a constant $\beta>0$ depends only on $k, \alpha$, $\Lambda$ and $n$ so that
     \begin{equation}\label{eq: curvature estiamte}
     	|h|_{C^{k,\alpha}(\Sigma_{r,t}\cap B^n_{\frac{\rho}{2}}(p))} \leq \frac{\beta}{{\rho}^{k+\alpha}} ,
     \end{equation}
      provided  $\partial \Sigma_{r,t}\cap B^n_{\rho}(p)=\phi$. Here and in the sequel $ \mathcal{H}^{n-1}(\cdot)$ and $h$ denote the $(n-1)-$dimensional Hausdorff measure and the second fundamental form of $ \Sigma_{r,t}$ respectively. 
        \end{itemize}
	\end{lemma}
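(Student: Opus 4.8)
The plan is to first establish existence of the minimizer $\Sigma_{r,t}$ by the standard direct method in geometric measure theory, then derive the area growth bound \eqref{eq: area growth}, and finally bootstrap from the area bound to the curvature estimates \eqref{eq: curvature estiamte} via Allard-type regularity and Schauder estimates. For existence, I would work with Caccioppoli sets in the compact region $C_r$: minimize the perimeter functional $\mathcal{H}^{n-1}(\partial^\* \Omega \cap \mathring{C_r})$ among finite-perimeter sets $\Omega$ whose reduced boundary meets $\partial C_r$ in $\partial D_{r,t}$ (equivalently, fix the trace on the cylinder $\partial B^{n-1}_r(O)\times\mathbb{R}$ to agree with that of the half-cylinder below $S_t$). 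Lower semicontinuity of perimeter plus the compactness theorem for sets of locally finite perimeter yields a minimizer $\Omega_{r,t}$, and we set $\Sigma_{r,t} = \partial^\* \Omega_{r,t}\cap \mathring{C_r}$. Since $4\le n\le 7$, interior regularity theory for area-minimizing boundaries (De Giorgi, Federer; the singular set has dimension $\le n-8$, hence is empty) guarantees $\Sigma_{r,t}$ is a smooth embedded hypersurface in the interior.

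For the area growth \eqref{eq: area growth}, the idea is the familiar monotonicity-type comparison: if $\partial\Sigma_{r,t}\cap B^n_s(p)=\emptyset$, then competitors obtained by replacing $\Sigma_{r,t}\cap B^n_s(p)$ with (a piece of) the geodesic sphere $\partial B^n_s(p)$, or by coning off to $p$, are admissible, and minimality forces $\mathcal{H}^{n-1}(\Sigma_{r,t}\cap B^n_s(p)) \le \mathcal{H}^{n-1}(\partial B^n_s(p)) \le \Lambda s^{n-1}$, where $\Lambda$ depends only on $n$ and a uniform bound on the metric $g$ on $M$ (which exists since $g$ is asymptotically flat, hence has bounded geometry outside $K$ and is smooth on the compact $K$). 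One must be slightly careful that the comparison surface stays inside $C_r$ and respects the boundary condition, but since $B^n_s(p)$ is disjoint from $\partial\Sigma_{r,t}$ this is automatic for the relevant competitors (taking intersections with $C_r$ only decreases area).

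From \eqref{eq: area growth}, the curvature estimate \eqref{eq: curvature estiamte} follows by a standard two-step argument, uniform in $r,t$. First, the uniform density bound \eqref{eq: area growth} together with minimality puts $\Sigma_{r,t}$ in the hypotheses of Allard's regularity theorem (or Schoen–Simon's curvature estimate for stable minimal hypersurfaces in the codimension-one case, which needs only the area bound); this gives an a priori bound $|h|\le C/\rho$ on $\Sigma_{r,t}\cap B^n_{\rho/2}(p)$ with $C=C(n,\Lambda)$ whenever $\partial\Sigma_{r,t}\cap B^n_\rho(p)=\emptyset$. Second, with the second fundamental form controlled, $\Sigma_{r,t}$ can be written locally as a graph over its tangent plane with $C^{1,\alpha}$-bounded gradient on a ball of definite size (after rescaling to unit scale), and the minimal surface equation becomes a uniformly elliptic quasilinear PDE with coefficients depending on $g$; Schauder estimates then bound all higher derivatives, giving $|h|_{C^{k,\alpha}}\le \beta/\rho^{k+\alpha}$ by scaling back. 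The main obstacle is making the constants genuinely independent of $r$ and $t$: this is exactly why the area bound \eqref{eq: area growth} is phrased with a $t$- and $r$-independent $\Lambda$, and one must check that the ambient geometric quantities entering Allard's theorem and the Schauder estimates (injectivity radius, curvature bounds of $g$, and the ellipticity constants) are uniformly controlled on all of $M$ — which holds precisely because $(M^n,g)$ is asymptotically flat and thus has globally bounded geometry.
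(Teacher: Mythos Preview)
Your proposal is essentially correct and close to the paper's argument, but there is one genuine oversight and one methodological difference worth noting.

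\textbf{Existence.} You write that you will minimize perimeter ``in the compact region $C_r$'', but $C_r = B^{n-1}_r(O)\times\mathbb{R}$ is an \emph{infinite} solid cylinder, so compactness of a minimizing sequence is not automatic and the minimizer could a priori drift vertically. The paper handles this by observing that the geodesic ball $B^n_r(O_t)$ centered at $O_t = S_t\cap\{z\text{-axis}\}$ encloses the disk $D_{r,t}$ and, by asymptotic flatness, has mean convex boundary for $r$ large; this ball then serves as a barrier containing any minimizer of the Plateau problem with boundary $\partial D_{r,t}$ (they cite Giusti, Theorem~1.20). With this barrier in place your direct-method argument goes through, and the area growth \eqref{eq: area growth} follows exactly as you say, by comparison with $\partial B^n_s(p)$.

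\textbf{Curvature estimate.} Your route---Allard or Schoen--Simon curvature estimates from the uniform density bound, then Schauder bootstrapping---is valid and is in fact what the paper first cites (Corollary~1 in \cite{Schoen1981}). The paper then supplies, ``for the convenience of readers'', an alternative direct argument: a point-picking contradiction in which one rescales by $|h|(q_i)$ at a carefully chosen point $q_i$, passes to a limit that is a complete stable minimal hypersurface in $\mathbb{R}^n$ with Euclidean volume growth and $|h|(O)=1$, and then invokes the Bernstein-type theorem of Bellettini \cite{Be2023} to conclude the limit is totally geodesic, a contradiction. Both arguments yield the same scale-invariant bound $\sup_{x\in\Sigma_{r,t}\cap B^n_\rho(p)}(\rho-d(x,p))|h|(x)\le\beta$; yours is the black-box route, the paper's is the self-contained blow-up route.
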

 \begin{remark}
 Indeed, by the proof of Lemma \ref{lem: interior curvature estimate},  we will know that $\Sigma_{r,t}$ is a part of a boundary of compact domain in $M^n$.	
 \end{remark}

	\begin{proof}[Proof of Lemma \ref{lem: interior curvature estimate}]
		Let $O_t$ be the intersection of $S_{r,t}$ and the $z$-axis, then $B^n_r(O_t)$ is a ball enclosing $D_{r,t}$. By asymptotic flatness of $M$ we   know that  $\partial B^n_r(O_t)$ is  mean convex with respect to outer normal  vector for $r$ sufficiently large, this enable us to find the solution $\Sigma_{r,t}$ of the Plateau problem  of mininal boundary   with  $\partial \Sigma_{r,t}=\partial D_{r,t}$ (c.f. Theorem 1.20 in \cite{Giu1984}). Due to its minimality, we know that for all $s$  	with $\partial \Sigma_{r,t}\cap B^n_s(p)=\phi$ we have

$$
 \mathcal{H}^{n-1}(\Sigma_{r,t}\cap B^n_s)\leq \frac{1}{2}\mathcal{H}^{n-1}(\partial B^n_s) \leq \Lambda s^{n-1}.
 $$

To verify \eqref{eq: curvature estiamte} it suffices to show

 $$
\sup_{x\in \Sigma_{r,t}\cap B^n_{\rho}(p)} (\rho-d(x,p))|h|(x)\leq \beta<\infty,
 $$

here $d(x, p)$ denotes the distance function of $x$ and $p$ in $(M^n, g)$. Indeed, this is direct conclusion of Corollary 1 in \cite{Schoen1981}. However, for the convenience of readers,  we will utilize the standard point-picking arguments to verify this.  Suppose the above inequality false, then there is a sequence $\{\Sigma_{r_i,t_i}\}$, $\{ B^n_{\rho_i}(p_i) \}$ and $\{\beta_i\}$ which tends to  infinity with 

$$
\sup_{x\in \Sigma_{r_i,t_i}\cap B^n_{\rho_i}(p_i)} (\rho_i-d(x,p_i))|h|(x)=\beta_i,
 $$
and $\partial\Sigma_{r_i,t_i}\cap B^n_{\rho_i}(p_i)=\phi$, let us assume 

$$
(\rho_i-d(q_i,p_i))|h|(q_i)=\sup_{x\in \Sigma_{r_i,t_i}\cap B^n_{\rho_i}(p_i)} (\rho_i-d(x,p_i))|h|(x)=\beta_i,
$$
 and $2l_i:=\rho_i-d(q_i,p_i)$. There are three cases we need to concern:

 {\bf Case 1}:  $\{l_i\}$ is bounded while $\{|h|(q_i)\}$ tends to infinity; 
 
 {\bf Case 2}: $\{l_i\}$ tends to infinity while $\{|h|(q_i)\}$ is bounded;
 
 {\bf Case 3}: Both $\{l_i\}$  and $\{|h|(q_i)\}$ tends to infinity.

It is enough to show Case 1,  since Case 2 and Case 3 can be handled by the similar arguments.  Let us consider $\Sigma_i:=\Sigma_{r_i,t_i}\cap B^n_{l_i}(q_i) $, then for any $y\in \Sigma_i$ there holds
$$
|h|(y)\leq 2 |h|(q_i):=2 \lambda_i.
$$
Note that $(M^n, g)$ is AF, $\{\lambda_i\}$ tends to infinity, we know that $\{(M^n, \lambda^2_i g, q_i)\}$ locally smoothly converges to $\mathbf{R}^n$ in Gromov-Hausdorff sense, meanwhile, $\{\Sigma_i,\lambda^2_i g|_{\Sigma_i}, q_i \}$ locally smoothly converges to a  complete and properly embedding  stable minimal hypersurface $\Sigma $ in  $\mathbf{R}^n$. Without loss of the generality, we amy $\{q_i\}$ converges to $O\in \Sigma$, then the norm of the second fundamental forms of $\Sigma$ at the point $O$  is $1$. On the other hand, by the definition, we know that  $\Sigma$ enjoys the Euclidean volume growth, then due to Theorem 1 in \cite{Be2023}, $S$ must be totally geodesic, therefore,  we get the contradiction.  
\end{proof}
In the rest of this subsection, we aim to establish suitable Sobolev type inequality on compact minimal hypersurface $\Sigma$ in asymptotically flat manifold, which will be used in the next section. 
     
 \begin{lemma}\label{lem: estimate}
 Assume $\Sigma$ is a compact minimal hypersurface in $(M^n, g)$ satisfying \eqref{eq: area growth}, and there is a coordinate ball $B_{R}$ of radius $R$, such that $\Sigma\subset B_R$, $\partial\Sigma\subset \partial B_R$. Given $\alpha>n-1$ and $r_1>1$, then for any $r>0$ and  $t$ with $|t|>1$, there holds
 \begin{align}\label{decay estimate}
     \int_{\Sigma\backslash B^n_{r_1}}|x|^{-\alpha}d\bar{\mu} \le Cr_1^{(n-1-\alpha)},
 \end{align}
  for some $C$ depending only on $(M^n, g)$.
 \begin{proof}
     Choose $k\in \mathbb{N}_+$ such that $\frac{R}{2^{k+1}}<r_1\le \frac{R}{2^k}$. Then
	\begin{align*}
			\int_{\Sigma_{r,t}\backslash B^n_{r_1}}|x|^{-\alpha}d\bar{\mu}
			=&\sum_{i=0}^{k}\int_{(\Sigma\cap (B^n_{\frac{R}{2^i}}\backslash B^n_{\frac{R}{2^{i+1}}})\backslash B^n_{r_1}}|x|^{-\alpha}d\bar{\mu}\\
			\le&\sum_{i=0}^{k}(\frac{R}{2^{i+1}})^{-\alpha}\mathcal{H}^{n-1}(\Sigma\cap B^n_{\frac{R}{2^i}})\\
			\le&\sum_{i=0}^{k}\Lambda(\frac{R}{2^{i+1}})^{-\alpha}(\frac{R}{2^i})^{n-1}\\
			=& \Lambda\sum_{i=0}^{k}(2^{k-1-i}r_1)^{-\alpha}\cdot (2^{k+1-i}r_1)^{n-1}\\
			\leq& 2^n\Lambda\sum_{i=0}^{k}2^{(n-1-\alpha)(k-1-i)}r_1^{n-1-\alpha} \\
			<& 2^{n+1}\Lambda r_1^{n-1-\alpha}.
		\end{align*}
		The conclusion follows.
 \end{proof}
 \end{lemma}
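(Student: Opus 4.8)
The plan is to decompose the annular region $\Sigma \setminus B^n_{r_1}$ into dyadic annuli centered at the origin of the Cartesian coordinate system, and on each annulus estimate $|x|^{-\alpha}$ by its value on the inner sphere while estimating the Hausdorff measure of the piece of $\Sigma$ by the area growth bound \eqref{eq: area growth}. First I would fix $k \in \mathbb{N}_+$ so that $\tfrac{R}{2^{k+1}} < r_1 \le \tfrac{R}{2^k}$, which is possible since $\Sigma \subset B_R$; then $\Sigma \setminus B^n_{r_1}$ is covered by the annuli $B^n_{R/2^i} \setminus B^n_{R/2^{i+1}}$ for $i = 0, 1, \dots, k$. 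On the $i$-th annulus one has $|x|^{-\alpha} \le (R/2^{i+1})^{-\alpha}$, so the integral over $\Sigma$ intersected with that annulus is at most $(R/2^{i+1})^{-\alpha}\,\mathcal{H}^{n-1}(\Sigma \cap B^n_{R/2^i})$.

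Next I would invoke the area growth estimate \eqref{eq: area growth} from Lemma \ref{lem: interior curvature estimate} — which applies since $\Sigma$ is a compact minimal hypersurface satisfying \eqref{eq: area growth} with $\partial\Sigma \subset \partial B_R$, so that $\partial\Sigma \cap B^n_{R/2^i} = \phi$ for every $i \ge 0$ — to conclude $\mathcal{H}^{n-1}(\Sigma \cap B^n_{R/2^i}) \le \Lambda (R/2^i)^{n-1}$. Summing over $i$ and writing $R = 2^{k+1} r_1 \cdot (R/2^{k+1} r_1)$ with $R/2^{k+1} < r_1 \le R/2^k$, one gets a geometric series of the form $\Lambda \sum_{i=0}^{k} 2^{(n-1-\alpha)(k-1-i)} r_1^{n-1-\alpha}$ up to a fixed dimensional constant. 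Because $\alpha > n-1$, the ratio $2^{n-1-\alpha} < 1$, so the series converges and is bounded by a constant multiple (depending only on $n$) of its largest term, yielding $\int_{\Sigma \setminus B^n_{r_1}} |x|^{-\alpha}\, d\bar\mu \le C r_1^{n-1-\alpha}$ with $C = C(M^n,g)$.

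The only genuine point requiring care — rather than a real obstacle — is bookkeeping the powers of $2$ when passing between the radii $R/2^i$ and $r_1$, and making sure the constant absorbing the finite-geometric-series tail and the dimensional factors ($2^n$, $2^{n+1}$) depends only on $n$, $\Lambda$, and hence only on $(M^n,g)$; the hypothesis $\alpha > n-1$ is exactly what makes the dyadic sum summable and is used only at that last step. No compactness, regularity, or PDE input beyond the already-established area bound \eqref{eq: area growth} is needed, so this is essentially a direct computation.
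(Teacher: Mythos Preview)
Your proposal is correct and follows essentially the same argument as the paper: a dyadic decomposition of $\Sigma\setminus B^n_{r_1}$ into annuli $B^n_{R/2^i}\setminus B^n_{R/2^{i+1}}$, the pointwise bound $|x|^{-\alpha}\le (R/2^{i+1})^{-\alpha}$, the area growth \eqref{eq: area growth}, and summation of the resulting geometric series using $\alpha>n-1$. The constants and bookkeeping you describe match the paper's computation almost line for line.
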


 \begin{figure}
    \centering
    \includegraphics[width = 15cm]{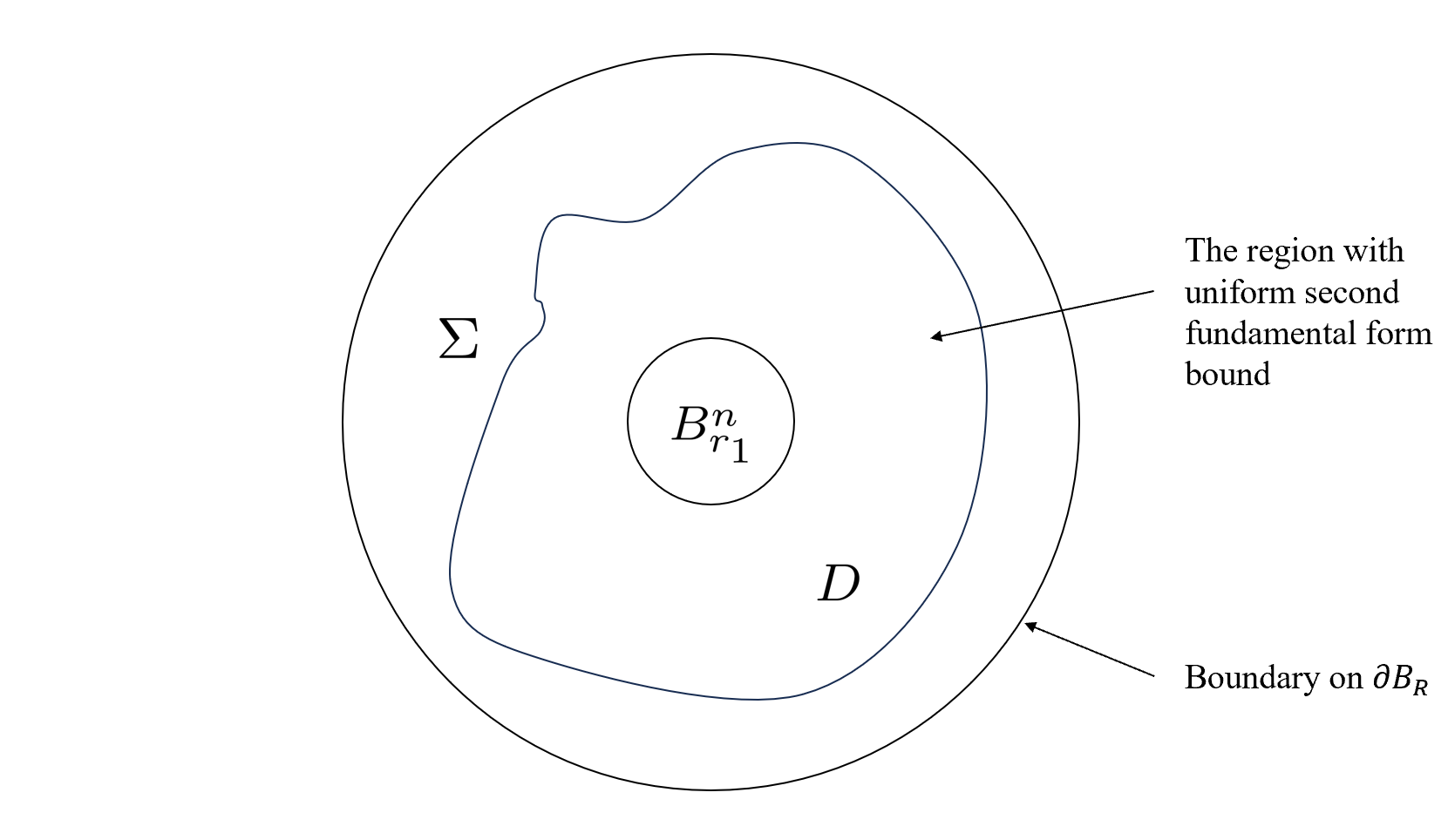}
    \caption{Region $D$ in $\Sigma$}
    \label{f7}
\end{figure}   
	Throughout the paper, we use $\bar{g}$ to denote the Euclidean metric on $\mathbb{R}^n$ and a bar to indicate that a geometric quantity is computed with respect to $\bar{g}$. If we regard $\Sigma$ as a hypersurface in $(\mathbb{R}^n, \bar{g})$, then we have
	\begin{lemma}\label{lem: L^m estimate of H}
	Under the same assumption as in Lemma \ref{lem: estimate}, given $r_1>0, \delta\in(0,1)$, assume further there is a region $D\subset\Sigma$ satisfying $|h|_g\leq \frac{C}{\delta}$ on $D$ for some $C$ depending only on $(M^n, g)$. There holds
	\begin{align}
		\int_{D\backslash B^n_{r_1}}|\bar{H}|^md\bar{\mu} \le C\delta^{-m}r_1^{m(1-\tau)}
	\end{align}
	Here $m=n-1$ and $C$ depends only on $(M,g)$.
	\end{lemma}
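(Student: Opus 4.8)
The idea is to play the minimality of $\Sigma$ against the asymptotic flatness of $g$. Write $\mathbf H$ and $\bar{\mathbf H}$ for the mean curvature vectors of $\Sigma$ computed with $g$ and with $\bar g$; since $\Sigma$ is minimal in $(M^n,g)$ one has $\mathbf H\equiv 0$, while $|\bar{\mathbf H}|=|\bar H|$ is exactly the quantity to be controlled. My plan is: (i) establish the pointwise bound $|\bar H|\le C\big(|x|^{-\tau-1}+|x|^{-\tau}|h|_g\big)$ on the part of $\Sigma$ where $|x|$ is large; (ii) on $D$ feed in the hypothesis $|h|_g\le C\delta^{-1}$, which (since $\delta<1$) yields $|\bar H|\le C\delta^{-1}|x|^{-\tau}$ there; (iii) raise this to the power $m=n-1$ and integrate, invoking the weighted area estimate of Lemma~\ref{lem: estimate}.

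To prove (i) --- the only substantive step --- I would fix $x\in\Sigma$ with $|x|\ge r_0$, where $r_0=r_0(M,g)$ is chosen large enough that $|g-\bar g|(y)<\tfrac12$ for $|y|\ge r_0$, and compare the vector-valued second fundamental forms of $\Sigma$ at $x$ taken with $g$ and with $\bar g$. The difference $\nabla-\bar\nabla$ of the two ambient Levi-Civita connections is the Christoffel tensor of $g$ in the asymptotic chart, of size $O(|x|^{-\tau-1})$ by Definition~\ref{def: AF manifold}; the $g$- and $\bar g$-unit normals of $\Sigma$, and the two inner products on tangent vectors, differ by $O(|x|^{-\tau})$ because $|g-\bar g|=O(|x|^{-\tau})$. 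I would then pick a local frame $\{f_a\}_{a=1}^{n-1}$ for $T\Sigma$ that is $\bar g$-geodesic at $x$, so $\bar\nabla_{f_a}f_b$ is $\bar g$-normal at $x$ with norm $\le|\bar h|$, and let $\{e_a\}$ be the associated $g$-orthonormal frame, $e_a=f_a+O(|x|^{-\tau})$ (a combination of the $f_b$). Expanding $0=\mathbf H=\sum_a(\nabla_{e_a}e_a)^{\perp_g}$ term by term, every summand is either $\bar{\mathbf H}=\sum_a(\bar\nabla_{f_a}f_a)^{\perp_{\bar g}}$, or carries an explicit factor $O(|x|^{-\tau-1})$ coming from $\nabla-\bar\nabla$, or carries a factor $O(|x|^{-\tau})$ (from $e_a-f_a$, or from the discrepancy of normals and inner products) multiplying a second fundamental form of $\Sigma$, which has size $\le|\bar h|$. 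This gives $|\bar H|\le C\big(|x|^{-\tau-1}+|x|^{-\tau}|\bar h|\big)$, and the same connection/normal comparison applied to the second fundamental forms gives $|\bar h|\le 2|h|_g+C|x|^{-\tau-1}$ for $|x|\ge r_0$, which completes (i). The delicate point here, and the reason the frame must be adapted to $\Sigma$ rather than pulled from ambient coordinates, is to ensure the second fundamental form only ever appears multiplied by the small factor $|x|^{-\tau}$, never bare.

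For (iii) I would first assume $r_1\ge r_0$. On $D\setminus B^n_{r_1}$ one has $|x|\ge r_1>1$ and $|h|_g\le C\delta^{-1}$, so by (i) and $\delta<1$ (which forces $|x|^{-\tau-1}\le|x|^{-\tau}\le\delta^{-1}|x|^{-\tau}$) we get $|\bar H|^m\le C\delta^{-m}|x|^{-\tau m}$ with $m=n-1$. Because $n\ge4$ forces $\tau>\tfrac{n-2}{2}\ge1$, the exponent obeys $\tau m>n-1$, so Lemma~\ref{lem: estimate} applied to $\Sigma$ (hence to $D\subseteq\Sigma$) gives
$$\int_{D\setminus B^n_{r_1}}|\bar H|^m\,d\bar\mu\ \le\ C\delta^{-m}\int_{\Sigma\setminus B^n_{r_1}}|x|^{-\tau m}\,d\bar\mu\ \le\ C\delta^{-m}\,r_1^{\,(n-1)-\tau m}\ =\ C\delta^{-m}\,r_1^{\,m(1-\tau)},$$
which is the claimed inequality. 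For $0<r_1<r_0$ the right-hand side is already $\ge C\delta^{-m}r_0^{m(1-\tau)}\ge c(M,g)\,\delta^{-m}$ (using $1-\tau<0$), and I would split $\int_{D\setminus B^n_{r_1}}|\bar H|^m\,d\bar\mu$ into the part over $D\setminus B^n_{r_0}$ --- already handled --- plus the part over the compact annular region $D\cap(B^n_{r_0}\setminus B^n_{r_1})$, on which $|\bar H|\le C\delta^{-1}$ and the area is bounded via the local bounded geometry of $\Sigma$; this disposes of the remaining range of $r_1$ and completes the proof. I expect Step (i) to be the main obstacle, everything else being bookkeeping around Lemma~\ref{lem: estimate}.
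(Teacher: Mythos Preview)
Your proposal is correct and follows essentially the same approach as the paper: the paper's proof simply records the pointwise identity $\bar H = H + O(|x|^{-1-\tau}) + O(|x|^{-\tau}|h|)$, uses $H=0$ and the bound $|h|_g\le C\delta^{-1}$ to get $|\bar H|=O(\delta^{-1}|x|^{-\tau})$, and then appeals to Lemma~\ref{lem: estimate} with $\alpha=m\tau>n-1$. Your step~(i) fleshes out in detail what the paper asserts in one displayed line, and your discussion of the range $0<r_1<r_0$ is a harmless extra care that the paper omits (in practice Lemma~\ref{lem: estimate} is stated for $r_1>1$, and in all later applications $r_1$ is chosen large).
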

	
	\begin{proof}
	As $\Sigma$ is a minimal surface with $|h|_g\leq C\delta^{-1}$ on $D$,	
 by
  \[\bar{H}=H+O(|x|^{-1-\tau})+O(|x|^{-\tau}|h|),
  \]
  it follows that $|\bar{H}|=O(\delta^{-1}|x|^{-\tau})$ on $D$. Since $\tau>\frac{n-2}{2}\geq 1$, then by Lemma \ref{lem: estimate},
		the conclusion follows.
	\end{proof}
 
	Next, we recall the following Sobolev-type inequality due to Michael and Simon [MS73] on hypersurfaces in Euclidean space.
	\begin{lemma}\label{lem: Sobolev inequality 1}
		Let $\bar{H}$ be the mean curvature of a hypersurface $\Sigma$  in $(\mathbb{R}^{m+1}, \bar{g})$.  Then there exists $c_m$ depending only on $m$ such that  for any positive Lipschitz function $u$ with compact support on $\Sigma$, there holds
		\begin{align}\label{eq: Sobolev1}
			(\large\int_{\Sigma}u^{\frac{m}{m-1}}d\bar{\mu}\large)^{\frac{m-1}{m}}\le c_m\int_{\Sigma}(|\bar{\nabla} u| + |\bar{H}|u)d\bar{\mu}
		\end{align}
	\end{lemma}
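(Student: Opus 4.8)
The plan is to reproduce the classical argument of Michael and Simon, which proceeds in three steps: a local mean-value (monotonicity) inequality coming from the first variation of area on $\Sigma$; a Vitali covering argument upgrading it to an isoperimetric inequality for subsets of $\Sigma$; and the coarea formula together with Minkowski's integral inequality to pass from sets back to functions. By mollification we may assume that $u$ is smooth, nonnegative, with compact support in $\Sigma\setminus\partial\Sigma$; both sides of \eqref{eq: Sobolev1} are stable under such an approximation. Throughout, $\omega_m$ denotes the volume of the unit ball in $\mathbb{R}^m$, and for $x_0\in\Sigma$ we write $r(x)=|x-x_0|$ and $B_\rho=B_\rho(x_0)\subset\mathbb{R}^{m+1}$.

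\emph{Step 1: monotonicity.} Since $u$ has compact support away from $\partial\Sigma$, the first variation formula for area reads $\int_{\Sigma}\divv_{\Sigma}X\,d\bar\mu=-\int_{\Sigma}\langle X,\vec H\rangle\,d\bar\mu$ for every Lipschitz vector field $X$ on $\mathbb{R}^{m+1}$, where $\vec H$ is the mean curvature vector of $\Sigma$, so $|\vec H|=|\bar H|$. Apply this with $X(x)=u(x)\,\gamma(r)\,(x-x_0)$, where $\gamma$ is a fixed nonnegative non-increasing Lipschitz cutoff equal to $1$ near $0$ and vanishing on $[\rho,\infty)$; using $\divv_{\Sigma}(x-x_0)=m$ and $\bar{\nabla}r\cdot(x-x_0)=r\bigl(1-r^{-2}|(x-x_0)^{\perp}|^{2}\bigr)$, expanding $\divv_{\Sigma}X$, rearranging, and letting the cutoff sharpen to $\chi_{[0,\rho)}$, one obtains, for a.e.\ $\rho>0$,
\[
\frac{d}{d\rho}\Bigl(\rho^{-m}\!\int_{\Sigma\cap B_\rho}\! u\,d\bar\mu\Bigr)\ \geq\ -\,\rho^{-m}\!\int_{\Sigma\cap B_\rho}\!\bigl(|\bar{\nabla}u|+u\,|\bar H|\bigr)\,d\bar\mu,
\]
the term carrying $|(x-x_0)^{\perp}|^{2}$ being discarded because it has the favorable sign. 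As $u$ is continuous and $\Sigma$ is a smooth $m$-dimensional hypersurface, $\rho^{-m}\int_{\Sigma\cap B_\rho}u\,d\bar\mu\to\omega_m u(x_0)$ as $\rho\downarrow0$; integrating from $0$ to $\rho$ and using $\int_{s}^{\rho}\tau^{-m}\,d\tau\leq\frac{1}{m-1}s^{1-m}$ then yields the mean-value inequality
\[
\omega_m\,u(x_0)\ \leq\ \rho^{-m}\!\int_{\Sigma\cap B_\rho}\! u\,d\bar\mu\ +\ \frac{1}{m-1}\!\int_{\Sigma\cap B_\rho}\!\frac{|\bar{\nabla}u|+u\,|\bar H|}{|x-x_0|^{\,m-1}}\,d\bar\mu
\]
for all $\rho>0$. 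Approximating a characteristic function by Lipschitz functions, the same estimate holds with $u$ replaced by $\chi_\Omega$ for a bounded set $\Omega\subset\Sigma$ of finite perimeter, in the form: for $\bar\mu$-a.e.\ density point $x_0$ of $\Omega$,
\[
\omega_m\ \leq\ \rho^{-m}\bar\mu(\Omega\cap B_\rho)\ +\ \frac{1}{m-1}\int_{B_\rho}|x-x_0|^{1-m}\bigl(dP_\Sigma(\Omega)+|\bar H|\,d\bar\mu\bigr),
\]
where $P_\Sigma(\Omega)$ denotes the perimeter of $\Omega$ inside $\Sigma$.

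\emph{Steps 2--3: covering and coarea.} Fix such an $\Omega$ and set $I:=P_\Sigma(\Omega)+\int_{\Omega}|\bar H|\,d\bar\mu$. For each density point $x_0$ of $\Omega$, let $\rho(x_0)$ be the smallest radius at which the kernel integral $\frac{1}{m-1}\int_{B_\rho}|x-x_0|^{1-m}(dP_\Sigma(\Omega)+|\bar H|\,d\bar\mu)$ equals $\omega_m/2$; such a radius exists and is finite because this quantity is non-decreasing in $\rho$ while $\rho^{-m}\bar\mu(\Omega\cap B_\rho)\to0$ as $\rho\to\infty$ (here we use $\bar\mu(\Omega)<\infty$). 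The set form of the mean-value inequality then forces $\bar\mu\bigl(\Omega\cap B_{\rho(x_0)}(x_0)\bigr)\geq\tfrac12\omega_m\,\rho(x_0)^m$, while the choice of $\rho(x_0)$ controls the portion of $I$ registered by the singular kernel near $x_0$. Applying the $5r$-covering lemma to the family $\{B_{\rho(x_0)}(x_0)\}$, extracting a countable disjoint subfamily, and summing these two facts over it, one obtains the isoperimetric inequality $\bar\mu(\Omega)^{(m-1)/m}\leq c_m\bigl(P_\Sigma(\Omega)+\int_{\Omega}|\bar H|\,d\bar\mu\bigr)$. Finally, for general $u$, Minkowski's integral inequality gives $\|u\|_{L^{m/(m-1)}(\Sigma)}\leq\int_0^\infty\bar\mu(\{u>t\})^{(m-1)/m}\,dt$; applying the isoperimetric inequality to each superlevel set $\{u>t\}$ and integrating in $t$, using the coarea formula $\int_0^\infty P_\Sigma(\{u>t\})\,dt=\int_\Sigma|\bar{\nabla}u|\,d\bar\mu$ and Fubini's theorem $\int_0^\infty\int_{\{u>t\}}|\bar H|\,d\bar\mu\,dt=\int_\Sigma|\bar H|\,u\,d\bar\mu$, produces \eqref{eq: Sobolev1}.

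The computation in Step 1 is a routine manipulation of the first variation formula; the delicate point is the covering argument in Step 2, where the radii $\rho(x_0)$ must be chosen so as to balance the mass lower bound against the part of the total perimeter-plus-curvature $I$ registered by the kernel $|x-x_0|^{1-m}$, and then summed over the disjoint subfamily so as to land precisely on the exponent $(m-1)/m$; this is the heart of Michael and Simon's original proof. (Alternatively, \eqref{eq: Sobolev1}, even with a sharp constant, follows from Brendle's more recent ABP-type argument, but the route above uses nothing beyond the first variation formula and standard covering lemmas.)
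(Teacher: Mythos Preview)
The paper does not give its own proof of this lemma; it simply attributes the inequality to Michael and Simon [MS73] and uses it as a black box. Your proposal is a correct outline of that classical argument (monotonicity from the first variation, a Vitali-type covering to get the isoperimetric inequality on $\Sigma$, then coarea/Minkowski to pass to functions), so it is entirely consistent with what the paper invokes.
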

	Now we are ready to establish the following Sobolev inequality on $\Sigma$.
	\begin{lemma}\label{lem: Sobolev inequality 2}
	Let $\Sigma$ be an embedding minimal hypersurface in $(M^n,g)$ as in Lemma \ref{lem: estimate}, given $\delta\in(0,1)$ with $|h|_g\leq \frac{C}{\delta}$ on a region $D\subset \Sigma$ for some $C$ depending only on $(M^n, g)$,
     	then there exists some  $r_1\geq \delta^{\frac{-1}{\tau-1}}$ such that for any smooth function $v$ with compact support on
 $D\backslash B^n_{r_1}$
 there holds
		\begin{align}\label{eq: Sobolev2}
			(\int_{D\backslash B^n_{r_1}} |v|^{\frac{2m}{m-2}}d\mu)^{\frac{m-2}{2m}}\le C(\int_{D\backslash B^n_{r_1}}|\nabla v|^2d\mu)^{\frac{1}{2}},
		\end{align}
   where $C$ depends only on $(M^n ,g)$.
	\end{lemma}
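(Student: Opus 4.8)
The strategy is to derive the intrinsic Sobolev inequality \eqref{eq: Sobolev2} from the Michael--Simon inequality \eqref{eq: Sobolev1} applied on $\Sigma$ viewed as a hypersurface in $(\mathbb{R}^n,\bar g)$, using the controlled mean-curvature decay from Lemma \ref{lem: L^m estimate of H} to absorb the $|\bar H|u$ term into the gradient term. First I would fix $m=n-1$ and work on the Euclidean side. Given a smooth $v$ with compact support on $D\setminus B^n_{r_1}$, apply \eqref{eq: Sobolev1} to $u=|v|^{2(m-1)/(m-2)}$ (a standard Moser-type exponent choice, truncated if necessary to keep it Lipschitz with compact support); then $\bar\nabla u$ is comparable to $|v|^{(2m-2)/(m-2)-1}|\bar\nabla v|$, and after an application of H\"older's inequality on the right-hand side with exponents $\tfrac{2m}{m+2}$ and its conjugate, the left side $\big(\int u^{m/(m-1)}\big)^{(m-1)/m}=\big(\int |v|^{2m/(m-2)}\big)^{(m-1)/m}$ rearranges, and one peels off one power of $\big(\int |v|^{2m/(m-2)}\big)^{?}$ from both sides, leaving
\[
\Big(\int_{D\setminus B^n_{r_1}}|v|^{\frac{2m}{m-2}}\,d\bar\mu\Big)^{\frac{m-2}{2m}}
\le C\Big(\int_{D\setminus B^n_{r_1}}|\bar\nabla v|^2\,d\bar\mu\Big)^{\frac12}
+ C\Big(\int_{D\setminus B^n_{r_1}}|\bar H|^{m}\,d\bar\mu\Big)^{\frac1m}\Big(\int_{D\setminus B^n_{r_1}}|v|^{\frac{2m}{m-2}}\,d\bar\mu\Big)^{\frac{m-2}{2m}},
\]
which is the familiar step (see e.g. the derivation of the Euclidean Sobolev inequality on minimal submanifolds from Michael--Simon).

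The key point is then to make the coefficient $C\big(\int_{D\setminus B^n_{r_1}}|\bar H|^m\,d\bar\mu\big)^{1/m}$ strictly less than $\tfrac12$ by choosing $r_1$ large. By Lemma \ref{lem: L^m estimate of H}, this coefficient is bounded by $C'\delta^{-1}r_1^{1-\tau}$; since $\tau>1$, choosing $r_1\ge (2C'/\delta)^{1/(\tau-1)}$, i.e. some $r_1\gtrsim\delta^{-1/(\tau-1)}$, forces $C'\delta^{-1}r_1^{1-\tau}\le \tfrac12$. Absorbing that half-term into the left-hand side yields \eqref{eq: Sobolev2} with the Euclidean measure $\bar\mu$ and Euclidean gradient $\bar\nabla$. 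Finally I would pass from the Euclidean quantities to the metric ones: on $D\setminus B^n_{r_1}$ with $r_1>1$, asymptotic flatness gives $|g-\bar g|=O(|x|^{-\tau})$, so $d\mu$ and $d\bar\mu$ are uniformly comparable and $|\bar\nabla v|$ and $|\nabla v|$ are uniformly comparable (with constants depending only on $(M^n,g)$), and \eqref{eq: Sobolev2} follows in the stated form.

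I expect the main obstacle to be purely bookkeeping: carrying the exponent $\tfrac{2m}{m-2}$ and the H\"older split through cleanly, and in particular handling the truncation needed to justify applying \eqref{eq: Sobolev1} to a power of $v$ (one applies it to $\min\{|v|,k\}^{2(m-1)/(m-2)}$ and lets $k\to\infty$ via monotone convergence, which is legitimate because all integrals on $D\setminus B^n_{r_1}$ are finite once $v$ has compact support there). No genuinely new geometric input is needed beyond Lemma \ref{lem: L^m estimate of H} and the $C^0$-closeness of $g$ to $\bar g$ near infinity; the smallness of $\int|\bar H|^m$ is exactly what the constraint $r_1\ge\delta^{-1/(\tau-1)}$ buys. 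One should double-check that the dimensional restriction $n\ge 4$ (hence $m=n-1\ge 3$, so $\tfrac{2m}{m-2}$ is a finite positive exponent) is what makes the argument go through; for $n=3$ the exponent degenerates and a different formulation would be required.
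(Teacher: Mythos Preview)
Your proposal is correct and follows essentially the same route as the paper: apply Michael--Simon to $u=|v|^{2(m-1)/(m-2)}$, use H\"older to isolate a factor $\big(\int|\bar H|^m\big)^{1/m}$ in front of $\big(\int|v|^{2m/(m-2)}\big)^{(m-2)/(2m)}$, invoke Lemma~\ref{lem: L^m estimate of H} to make this factor $\le\tfrac12$ by choosing $r_1\gtrsim\delta^{-1/(\tau-1)}$, absorb, and finally pass from $(\bar g,\bar\mu)$ to $(g,\mu)$ via asymptotic flatness. The only cosmetic difference is that the paper first assumes $v>0$ and then handles general $v$ by treating positive and negative parts separately, whereas you work with $|v|$ and a truncation; both are standard.
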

	\begin{proof}
		We assume $v>0$ first. Let $u = v^{\frac{2(m-1)}{m-2}}$ be a function on $D$ supported outside $B^n_{r_1}$. Plugging this into \eqref{eq: Sobolev1}, we get
		\begin{align*}
			(\int_{D\backslash B^n_{r_1}} v^{\frac{2m}{m-2}}d\bar{\mu})^{\frac{m-1}{m}}
			\le & c_m\frac{2(m-1)}{m-2}\int_{D\backslash B^n_{r_1}} |\bar{\nabla} v|\cdot v^{\frac{m}{m-2}}d\bar{\mu}+\int_{D\backslash B^n_{r_1}} v^{\frac{2(m-1)}{m-2}}|\bar{H}|d\bar{\mu}\\
			\le & c_m\frac{2(m-1)}{m-2}\int_{D\backslash B^n_{r_1}} |\bar{\nabla} v|\cdot v^{\frac{m}{m-2}}d\bar{\mu} + (\int_{D\backslash B^n_{r_1}}|\bar{H}|^md\bar{\mu})^m(\int_{D\backslash B^n_{r_1}}v^{\frac{2m}{m-2}}d\bar{\mu})^{\frac{m-1}{m}}
		\end{align*}
		As $\tau>\frac{n-2}{2}\geq 1$, then by Lemma \ref{lem: L^m estimate of H} we can choose some $r_1\geq \delta^{\frac{-1}{\tau-1}}$ such that 
		\begin{align*}
			\int_{D\backslash B^n_{r_1}}|\bar{H}|^md\bar{\mu}\le \frac{1}{2}.
		\end{align*}
		It follows that
		\begin{align*}
			(\int_{D\backslash B^n_{r_1}} v^{\frac{2m}{m-2}}d\bar{\mu})^{\frac{m-1}{m}}
			\le& c_m\frac{2(m-1)}{m-2}\int_{D\backslash B^n_{r_1}} |\bar{\nabla} v|\cdot h^{\frac{m}{m-2}}d\bar{\mu}\\
			\le &C_m(\int_{D\backslash B^n_{r_1}}|\bar{\nabla} v|^2d\bar{\mu})^{\frac{1}{2}}(\int_{D\backslash B^n_{r_1}}v^{\frac{2m}{m-2}}d\bar{\mu})^{\frac{1}{2}}.
		\end{align*}
		Here $C_m$ depends only on $m$. As $(M^n, g)$ is asymptotically flat, then there exist some $C$ depending on $(M^n, g)$ such that
  \[ 
  \frac{d\bar{\mu}}{C}\leq d\mu\leq C d\bar{\mu} \ \ \text{and}\ \ \ 
  \frac{|\bar{\nabla} v|}{C}\leq |\nabla v|\leq C |\bar{\nabla} v|
  \]
  Hence,
  \begin{align*}
			(\int_{D\backslash B^n_{r_1}} v^{\frac{2m}{m-2}}d\mu)^{\frac{m-2}{2m}}\le C(\int_{D\backslash B^n_{r_1}}|\nabla v|^2d\mu)^{\frac{1}{2}},
		\end{align*}
  This shows the Sobolev inequality holds for positive $C^1$-smooth function  on $D\backslash B^n_{r_1}$. The same thing also holds true for positive function in $W^{1,\frac{2m}{m-2}}$ by approximating argument. For general smooth function $v$, this can be proved by verifying the inequality for positive and negative part of $v$ respectively, and the conclusion follows. 
	\end{proof}

 \section{Construction of foliations of the area minimizing  hypersurfaces in AF manifolds}
In this section we construct area minimizing foliation, discuss existence result in Theorem \ref{thm: Existence},  uniqueness result in  Proposition \ref{prop: uniqueness}. Finally,  we are able to show a smooth foliation structure for our AF manifold  in Sec. 3.3.

\subsection{Constructing area minimizing hypersurfaces}
  Denote $z(x)$ to be the $n$-th coordinate function in $\mathbb{R}^n\backslash B^n_{1}(O)$ and set $u=z(x)-t$ 
  for some $t$. Let $\Sigma$ be any embedded area minimizing minimal surface.  We derive the equation of $u$ over $\Sigma$ as follows. 
    \begin{lemma}\label{lem: intrinsic MSE}
    Let $u$ and $\Sigma$ be described as above, then
            \begin{align}\label{eq: mse}
            \Delta_{\Sigma}u = f,
        \end{align}
        for smooth function $f$ on  $\Sigma\backslash B^n_{1}(O)$ with $|f| = O(|x|^{-\tau-1})$.
    \end{lemma}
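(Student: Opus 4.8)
The plan is to compute the Laplace–Beltrami operator $\Delta_\Sigma$ applied to the restriction of the ambient coordinate function $x_n$ to the minimal hypersurface $\Sigma$, and exploit the fact that on a minimal submanifold the Laplacian of a restricted ambient function is governed by the ambient Hessian together with the mean curvature term, which vanishes. Concretely, for any smooth ambient function $\phi$ on $M$, restricting to a hypersurface $\Sigma$ with unit normal $\nu$ and mean curvature vector $\vec{H}$ one has the standard identity
\begin{equation}
\Delta_\Sigma(\phi|_\Sigma) = \Delta_M\phi - \mathrm{Hess}_M\,\phi(\nu,\nu) + \langle \nabla_M\phi, \vec{H}\rangle.\nonumber
\end{equation}
Since $\Sigma$ is minimal, $\vec{H}=0$, so $\Delta_\Sigma u = \Delta_\Sigma(x_n|_\Sigma) = \Delta_M x_n - \mathrm{Hess}_M\,x_n(\nu,\nu)$; note $u$ and $x_n$ differ by the constant $t$, so their intrinsic Laplacians agree. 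Thus $f := \Delta_M x_n - \mathrm{Hess}_M x_n(\nu,\nu)$, and it remains only to estimate the size of this expression using asymptotic flatness.

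The second step is the asymptotic estimate. In the Cartesian coordinates on $M\setminus K$, the coordinate function $x_n$ is an affine function of the coordinates, so all its first partial derivatives are constant and its second partial derivatives vanish. Hence $\Delta_M x_n$ and $\mathrm{Hess}_M x_n$ are built purely out of the Christoffel symbols $\Gamma^k_{ij}$ of $g$ contracted against $\partial_k x_n$ and the (inverse) metric coefficients. By Definition~\ref{def: AF manifold}, $|g_{ij}-\delta_{ij}| = O(|x|^{-\tau})$ and $|\partial g_{ij}| = O(|x|^{-\tau-1})$, so $\Gamma^k_{ij} = O(|x|^{-\tau-1})$; the inverse metric $g^{ij}$ is bounded; and the unit normal $\nu$ has bounded $g$-length. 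Combining these, every term appearing in $\Delta_M x_n - \mathrm{Hess}_M x_n(\nu,\nu)$ carries at least one factor of $\partial g$, giving $|f| = O(|x|^{-\tau-1})$ as claimed. Smoothness of $f$ on $\Sigma\setminus B^n_1(O)$ follows from smoothness of $g$ there and (interior) regularity of the minimal hypersurface $\Sigma$.

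I do not anticipate a serious obstacle here: the result is essentially the observation that a coordinate function restricted to a minimal hypersurface is ``almost harmonic'' when the metric is almost flat, and the only care needed is bookkeeping of the decay orders of the Christoffel symbols against the explicit form of $x_n$. The one point worth stating carefully is that the normal direction $\nu$ enters only through $g^{ij}$ and the components of $\nu$, both of which are bounded, so no derivative of $\nu$ (hence no second fundamental form of $\Sigma$) appears in $f$ — the mean-curvature cancellation is exactly what removes it. This is convenient because at this stage of the paper one does not yet have curvature estimates for the particular $\Sigma$ under consideration; the identity holds for any embedded minimal hypersurface, as stated.
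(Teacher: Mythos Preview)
Your proof is correct and follows essentially the same approach as the paper: both use the identity $\Delta_\Sigma u = \Delta_M u - H\,\partial_\nu u - \nabla^2_M u(\nu,\nu)$, drop the mean-curvature term by minimality, and then observe that since $z$ is an affine coordinate function the remaining terms reduce to contractions of Christoffel symbols, which are $O(|x|^{-\tau-1})$ by asymptotic flatness. Your additional remark that no derivative of $\nu$ (hence no second fundamental form) enters the expression is a nice clarification not made explicit in the paper's proof.
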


    \begin{proof}
        Denote $\nu$ to be the normal vector along $\Sigma$. By minimality of $\Sigma$, we have 
        \begin{align*}
            \Delta_{\Sigma}u =& \Delta_{M^n}u-
            H\frac{\partial u}{\partial\nu}- \nabla^2_{M^n}u(\nu,\nu)\\
            =&g^{ij}(\partial_{i}\partial_{j}z-\Gamma_{ij}^{k}\partial_{k}z)- \nu^{i}\nu^{j}(\partial_{i}\partial_{j}z-\Gamma_{ij}^{k}\partial_{k}z)\\
            =& O(|x|^{-\tau-1}).
        \end{align*}
    \end{proof}
    Let $\Sigma, D$ be as in Lemma \ref{lem: Sobolev inequality 2} with $\partial D\subset S_t$ for some $t>0$. Let $u$ be a smooth function defined on $\Sigma$ satisfying
\begin{equation}\label{eq: poisson eq}
\left\{
\begin{aligned}
\Delta u&=f  \ \ \text{in} \  D\\
u&=0\ \text{on}\  \partial D,
\end{aligned}
\right.
\end{equation}
for some  $f=O(|x|^{-\tau-1})$. We start by establishing  $L^p(p>1)$ estimate for $u$.
\begin{lemma}\label{lem: L^p estimate for u}
            Suppose $\tau>\frac{m}{2}:=\frac{n-1}{2}$. 
            Then for some  $r_1\geq \delta^{\frac{-1}{\tau-1}}$ we have
		\begin{align*}
		    (\int_{D\backslash B^n_{r_1}} |u|^{\frac{2m}{m-2}}d\mu)^{\frac{m-2}{2m}} \le Cr_1^{\frac{m}{2}}(t+r_1),
		\end{align*}
  for some $C$ depending only on  $(M^n,g)$. Furthermore, if
 $\Sigma\cap B^n_{r_1}=\emptyset$, then
		\begin{align*}
		    (\int_{D} |u|^{\frac{2m}{m-2}}d\mu)^{\frac{m-2}{2m}} \le Cr_1^{-\frac{2m(\tau+1)}{m+2}+m}.
		\end{align*}
  
	\end{lemma}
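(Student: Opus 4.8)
The plan is to test the Poisson equation \eqref{eq: poisson eq} against $u$ itself and use the Sobolev inequality of Lemma \ref{lem: Sobolev inequality 2}. Concretely, I would multiply $\Delta u = f$ by $u$ and integrate over $D\backslash B^n_{r_1}$; since $u$ does not vanish on the inner boundary $\Sigma\cap\partial B^n_{r_1}$ in general, I would first cut off: fix a Lipschitz cutoff $\eta$ equal to $1$ outside $B^n_{2r_1}$ and $0$ inside $B^n_{r_1}$, with $|\nabla\eta|\le C/r_1$, and test against $\eta^2 u$. Integration by parts gives $\int \eta^2|\nabla u|^2 \le 2\int \eta|u||\nabla\eta||\nabla u| + \int \eta^2|u||f|$, and absorbing the first term on the right by Cauchy--Schwarz yields $\int_{D\backslash B^n_{2r_1}}|\nabla u|^2 \le C\big(\int_{D}|u|^2|\nabla\eta|^2 + \int_{D\backslash B^n_{r_1}} u^2\,?\big)$ — more precisely a bound by $\int |u|^2 r_1^{-2}$ over the annulus plus $\int |u|\,|f|$. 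Then Lemma \ref{lem: Sobolev inequality 2} converts $\big(\int |\eta u|^{\frac{2m}{m-2}}\big)^{\frac{m-2}{m}}$ into $\int |\nabla(\eta u)|^2$, and I would run a Moser-type iteration or a direct interpolation to convert the $L^2$ norms of $u$ on the right-hand side into the $L^{\frac{2m}{m-2}}$ norm, closing the estimate.

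For the first (rough) bound I would be cruder: on $\Sigma$, since $\Sigma\subset B_R$ with $\partial\Sigma\subset S_t$, the height function $u=z-t$ satisfies $|u|\le |z|+|t| \le C(R+t)$, but more usefully $|u|\le |t|+ |x|$ pointwise. Combining $|u|\lesssim t+|x|$ with the area growth \eqref{eq: area growth} and Lemma \ref{lem: estimate} (applied to a suitable power of $|x|$) controls $\int_{D\backslash B^n_{r_1}} u^2$ and $\int_{D\backslash B^n_{r_1}} |u|\,|f|$, using $|f|=O(|x|^{-\tau-1})$ and $\tau>\frac{m}{2}$ to guarantee the relevant integrals converge with the stated $r_1$-power. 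Keeping track of the powers, the weighted area bound $\int_{\Sigma\backslash B^n_{r_1}}|x|^{-\alpha} \le C r_1^{n-1-\alpha}$ with $\alpha$ chosen so that $n-1-\alpha$ matches forces out the factor $r_1^{\frac{m}{2}}(t+r_1)$: one power-counting check is $\int |x|^2 \cdot (\text{weight}) $ over the annulus near radius $r_1$ contributes $r_1^{m+2}$ under the measure, whose $(m-2)/(2m)$-th root after Sobolev is $r_1^{(m+2)(m-2)/(2m)}$, and combining with the $\nabla\eta$ factor $r_1^{-1}$ and the Sobolev constant yields the claimed exponent; I would organize this so the dominant term is manifestly $Cr_1^{m/2}(t+r_1)$.

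For the improved bound under the extra hypothesis $\Sigma\cap B^n_{r_1}=\emptyset$, the cutoff is unnecessary (the domain already avoids the inner ball), so testing $\Delta u = f$ directly against $u$ gives $\int_D |\nabla u|^2 \le \int_D |u||f| \le \big(\int_D |u|^{\frac{2m}{m-2}}\big)^{\frac{m-2}{2m}} \big(\int_D |f|^{\frac{2m}{m+2}}\big)^{\frac{m+2}{2m}}$ by Hölder, and then Lemma \ref{lem: Sobolev inequality 2} bounds the left side from below by $c\big(\int_D|u|^{\frac{2m}{m-2}}\big)^{\frac{m-2}{m}}$. Dividing once through by $\big(\int_D|u|^{\frac{2m}{m-2}}\big)^{\frac{m-2}{2m}}$ leaves $\big(\int_D|u|^{\frac{2m}{m-2}}\big)^{\frac{m-2}{2m}} \le C\big(\int_D |f|^{\frac{2m}{m+2}}\big)^{\frac{m+2}{2m}}$, and since $|f|=O(|x|^{-\tau-1})$ on $D\subset \Sigma\backslash B^n_{r_1}$, Lemma \ref{lem: estimate} with $\alpha = \frac{2m(\tau+1)}{m+2}$ (which exceeds $m$ precisely when $\tau > \frac{m}{2}$, matching the hypothesis) gives $\int_D |f|^{\frac{2m}{m+2}} \le C r_1^{\,n-1-\frac{2m(\tau+1)}{m+2}}$; raising to the power $\frac{m+2}{2m}$ and using $n-1=m$ produces exactly $Cr_1^{-\frac{2m(\tau+1)}{m+2}+m}$.

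The main obstacle I anticipate is the first estimate: handling the inner boundary term cleanly. Because $u$ is only prescribed to vanish on $\partial D\subset S_t$ and not on $\Sigma\cap\partial B^n_{r_1}$, the cutoff introduces a genuine annular error term $\int_{B^n_{2r_1}\setminus B^n_{r_1}}|u|^2 r_1^{-2}\,d\mu$ that must be controlled by the area growth together with the pointwise bound $|u|\lesssim t+|x|\lesssim t+r_1$ on that annulus; getting the $r_1$-exponent to come out as exactly $m/2$ (rather than something weaker) requires using the Sobolev inequality on the truncated function and feeding the resulting $L^{\frac{2m}{m-2}}$-control of $u$ back, i.e. a one-step bootstrap, and being careful that the Sobolev constant $C$ from Lemma \ref{lem: Sobolev inequality 2} is uniform in $r_1$ (it is, since $r_1\ge \delta^{-1/(\tau-1)}$ is exactly the threshold in that lemma). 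Everything else is routine integration by parts, Hölder, and power-counting against the weighted area bound of Lemma \ref{lem: estimate}.
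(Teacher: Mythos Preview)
Your approach is the same as the paper's: cut off near $B^n_{r_1}$, test \eqref{eq: poisson eq} against $\zeta^2 u$, integrate by parts, apply the Sobolev inequality of Lemma~\ref{lem: Sobolev inequality 2} to $\zeta u$, and control the annular error $\int |\nabla\zeta|^2 u^2$ via the pointwise bound $|u|\le t+r_1$ on $D\cap B^n_{r_1}$ together with the area growth \eqref{eq: area growth}. Your treatment of the improved bound under $\Sigma\cap B^n_{r_1}=\emptyset$ is exactly the paper's.

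One point to straighten out in your write-up of the first estimate: the direct pointwise bound $|u|\lesssim t+|x|$ does \emph{not} control $\int_{D\setminus B^n_{r_1}}|u|\,|f|$ uniformly in the outer radius $R$ when $\tfrac{m}{2}<\tau\le m-1$, since $\int_{\Sigma\setminus B^n_{r_1}} |x|^{-\tau}\,d\mu$ is of order $R^{m-\tau}$. The essential mechanism---which you do mention, but frame only as a sharpening of the exponent---is to H\"older this term as
\[
\Big(\int_{D\setminus B^n_{r_1}}|\zeta u|^{\frac{2m}{m-2}}\,d\mu\Big)^{\frac{m-2}{2m}}\Big(\int_{D\setminus B^n_{r_1}}|\zeta f|^{\frac{2m}{m+2}}\,d\mu\Big)^{\frac{m+2}{2m}}
\]
and absorb, using that the second factor tends to zero as $r_1\to\infty$ by Lemma~\ref{lem: estimate} (here is where $\tau>\tfrac{m}{2}$ enters). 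After absorption one has $\big(\int|\zeta u|^{\frac{2m}{m-2}}\big)^{\frac{m-2}{m}} \le C r_1^m(t+r_1)^2$, and a square root gives the claim directly; no Moser iteration or interpolation is needed.
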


 \begin{proof}

 Let $0\leq\zeta\leq 1$ be a cut off function on $\Sigma$ satisfying
     \begin{align*}
         &\zeta = 1 \mbox{ outside } B^n_{r_1}\\
         &\zeta = 0 \mbox{ inside } B^n_{r_1/2}\\
         &|\nabla\zeta|<\frac{4}{r_1}
     \end{align*}
     By integration by parts we have
\begin{equation}\label{eq: 10}       
   \begin{split}
       \int_{D\backslash B^n_{r_1}} |\nabla(\zeta u)|^2d\mu 
             &\le -\int_{D\backslash B^n_{r_1}} \zeta^2u\Delta u d\mu + \int_{D\backslash B^n_{r_1}} |\nabla\zeta|^2u^2 d\mu\\
             &=-\int_{D\backslash B^n_{r_1}} \zeta^2ufd\mu + \int_{D\backslash B^n_{r_1/2}} |\nabla\zeta|^2u^2 d\mu\\
         &\le  (\int_{D\backslash B^n_{r_1}} |\zeta u|^{\frac{2m}{m-2}}d\mu)^{\frac{m-2}{2m}}(\int_{D\backslash B^n_{r_1}} |\zeta f|^{\frac{2m}{m+2}}d\mu)^{\frac{m+2}{2m}}\\
         &+\mathcal{H}^m(D\cap B^n_{r_1/2})(t+r_1)^2
         \end{split}   
    \end{equation} 
    Here we have utilized 
    $$ 
  |u|\leq (t+r_1) \text{ on $D \cap B^n_{r_1}$}.
   $$    
    Since 
    $$f=O(|x|^{-\tau-1}),$$
    then by Lemma \ref{lem: estimate}, we have
     \begin{align}\label{eq: 12}
         \int_{\Sigma\backslash B^n_{r_1}} |\zeta f|^{\frac{2m}{m+2}}d\mu
         = O(r_1^{-\frac{2m(\tau+1)}{m+2}+m}) 
     \end{align}
     which tends to zero as $r_1$ increases, provided $-\frac{2m(\tau+1)}{m+2}+m<0$, {\it i.e.} $\tau >\frac{m}{2}$. Combining this with Lemma \ref{lem: Sobolev inequality 2} and \eqref{eq: 10} we obtain the desired result.\\
    If $\Sigma\cap B^n_{r_1}=\emptyset$, then by (\ref{eq: poisson eq}), Lemma \ref{lem: Sobolev inequality 2} and integration by parts we have
     \begin{align*}
         (\int_{D} u^{\frac{2m}{m-2}}d\mu)^{\frac{m-2}{m}}\le& \int_{D} |\nabla u|^2d\mu = \int_{D} -uf d\mu\\
         \le &  (\int_{D} |u|^{\frac{2m}{m-2}}d\mu)^{\frac{m-2}{2m}}(\int_{D} |f|^{\frac{2m}{m+2}}d\mu)^{\frac{m+2}{2m}}.
     \end{align*}
     The conclusion follows from \eqref{eq: 12}. 
 \end{proof}
    Now we use  Lemma \ref{lem: L^p estimate for u} and Moser iteration  to establish the $C^0$ estimate for $u$.

 \begin{proposition}\label{pro: C^0 estimate of u}
     There is some fixed $r_1\geq 1$ such that for any $r,t>0$ we have
     $$
     \|u\|_{C^{0}(\Sigma_{r,t}\backslash B^n_{r_1})}\leq C r_1^{\frac{m}{2}}(t+r_1),
     $$
      for some $C$ depending only on $(M^n,g)$. Moreover, if $\Sigma_{r,t}$ does not intersect $B^n_{r_1}$, then the $C^0$ norm of $u$ is bounded by some uniform constant $C(t)$. In particular, if $\tau>\frac{m}{2}$, then $C(t)$ approaches to zero as $|t|$ tends to infinity.
\end{proposition}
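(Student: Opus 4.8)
The plan is to bootstrap the $L^{\frac{2m}{m-2}}$-estimates of Lemma~\ref{lem: L^p estimate for u} up to an $L^\infty$ (that is, $C^0$) estimate by Moser's iteration, using the Sobolev inequality~\eqref{eq: Sobolev2} of Lemma~\ref{lem: Sobolev inequality 2} in the role of the Euclidean Sobolev inequality. I take $D=\Sigma_{r,t}$: by Lemma~\ref{lem: interior curvature estimate} this is a compact minimal hypersurface obeying the area bound~\eqref{eq: area growth} and, away from $\partial\Sigma_{r,t}=\partial D_{r,t}\subset S_t$, a bound $|h|_g\le C\delta^{-1}$, so I may fix once and for all an $r_1$, depending only on $(M^n,g)$, for which Lemmas~\ref{lem: estimate}--\ref{lem: L^p estimate for u} apply on $D\setminus B^n_{r_1}$. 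Two elementary observations are used throughout: $u=z(x)-t$ vanishes on $\partial D$, and $|u|\le|x|+t\le 2r_1+t$ on $D\cap B^n_{2r_1}$.

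For the first assertion I would run the iteration localized near $B^n_{r_1}$. For $\beta\ge1$ and a cut-off $\zeta$ with $\zeta\equiv1$ outside $B^n_{\rho'}$, $\zeta\equiv0$ inside $B^n_\rho$, $r_1\le\rho<\rho'\le 2r_1$, $|\nabla\zeta|\le C(\rho'-\rho)^{-1}$, I would test $\Delta u=f$ against $\zeta^2|u|^{2\beta-2}u$; since $u$ vanishes on $\partial D$ and $D$ is compact, $\zeta|u|^\beta$ is (after a routine approximation near $\partial D$) an admissible competitor in~\eqref{eq: Sobolev2}. Integration by parts, Young's inequality to absorb the $\nabla\zeta\cdot\nabla u$ cross-term, and \eqref{eq: Sobolev2} applied to $v=\zeta|u|^\beta$ then give the standard reverse-H\"older inequality with gain $\chi:=\tfrac{m}{m-2}>1$; iterating along $\beta_k=\chi^k$, $\rho_k=(2-2^{-k})r_1$ and using $\prod_k(C\beta_k)^{1/\beta_k}<\infty$ controls $\|u\|_{C^0(D\setminus B^n_{2r_1})}$. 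The point is that every $|\nabla\zeta_k|$ is supported inside $B^n_{2r_1}$, where $|u|\le 2r_1+t$, so the only size the $\nabla\zeta$-terms inject into the iteration is $2r_1+t$; the inhomogeneity is handled by H\"older together with Lemma~\ref{lem: estimate}, since $|f|\le C|x|^{-\tau-1}$ and $\tau>\tfrac m2\ge\tfrac32>1$ yield $\|f\|_{L^p(D\setminus B^n_{r_1})}<\infty$ for some $p>\tfrac m2$, with contribution $O(r_1^{1-\tau})$. Combining with the first inequality of Lemma~\ref{lem: L^p estimate for u} (which makes the $L^{2m/(m-2)}$-norm of $u$ finite uniformly in $r$), replacing $2r_1$ by $r_1$, and absorbing powers of the now-fixed $r_1$ into the constant, one lands on $\|u\|_{C^0(\Sigma_{r,t}\setminus B^n_{r_1})}\le Cr_1^{m/2}(t+r_1)$.

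For the refined statement, if $\Sigma_{r,t}\cap B^n_{r_1}=\emptyset$ then $u$ is defined on all of the compact $D=\Sigma_{r,t}$ and vanishes on $\partial D$, so I would instead run the cut-off-free Moser iteration (testing against $|u|^{2\beta-2}u$), which, started from the second inequality of Lemma~\ref{lem: L^p estimate for u}, namely $\|u\|_{L^{2m/(m-2)}(D)}\le C\rho_*^{\,m-\frac{2m(\tau+1)}{m+2}}$ (with $\rho_*\ge r_1$ the radius of the largest ball avoided by $\Sigma_{r,t}$), and fed with $f\in L^p(D)$ for $p$ as large as needed (legitimate since $f$ decays like $|x|^{-\tau-1}$, by Lemma~\ref{lem: estimate}), yields $\|u\|_{C^0(\Sigma_{r,t})}\le C(t)$, a negative power of $\rho_*$. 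Since $\rho_*$ grows with $|t|$ — an area-minimizer whose boundary recedes to height $t$ cannot remain pinned near a fixed compact set — and the exponent $m-\tfrac{2m(\tau+1)}{m+2}$ is negative precisely when $\tau>\tfrac m2$, one obtains $C(t)\to0$ as $|t|\to\infty$.

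The one place where genuine care is needed is the treatment of the inhomogeneity across the infinitely many iteration steps: one must check that the terms $\int\zeta^2|f|\,|u|^{2\beta-1}$ do not compound into a divergent factor. This is the familiar reason for insisting that $f$ lie in $L^p$ with $p$ \emph{strictly} above the critical exponent $\tfrac m2$, and here exactly this is secured by the hypothesis $\tau>\tfrac m2$ via Lemma~\ref{lem: estimate}. One must also propagate the crude bound $|u|\le|x|+t$ on $B^n_{2r_1}$ correctly to reach the clean dependence $r_1^{m/2}(t+r_1)$; everything else is the textbook Moser machine.
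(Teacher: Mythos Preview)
Your overall plan---Moser iteration from the $L^{2m/(m-2)}$ bound of Lemma~\ref{lem: L^p estimate for u} to $L^\infty$ via the Sobolev inequality of Lemma~\ref{lem: Sobolev inequality 2}---is exactly what the paper does. But there is a real gap in your implementation that the paper is careful to avoid.

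You take $D=\Sigma_{r,t}$ and propose to apply Lemma~\ref{lem: Sobolev inequality 2} to test functions $\zeta|u|^\beta$, where $\zeta\equiv0$ inside $B^n_\rho$ and $\zeta\equiv1$ outside $B^n_{\rho'}$. The support of such a function is all of $\Sigma_{r,t}\setminus B^n_\rho$, which in particular contains a full neighbourhood of the Plateau boundary $\partial\Sigma_{r,t}=\partial D_{r,t}$. But Lemma~\ref{lem: Sobolev inequality 2} (through Lemma~\ref{lem: L^m estimate of H}) requires the curvature bound $|h|_g\le C\delta^{-1}$ on the whole region $D$, and the interior estimate of Lemma~\ref{lem: interior curvature estimate} only furnishes such a bound at positive distance from $\partial\Sigma_{r,t}$. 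Near the boundary the second fundamental form is uncontrolled, so $\bar H=O(|x|^{-\tau}|h|)$ need not lie in $L^m$, and the absorption step in the proof of Lemma~\ref{lem: Sobolev inequality 2} cannot be carried out on your support. That $u=0$ on $\partial\Sigma_{r,t}$ does not rescue this: $u$ vanishes only on the boundary submanifold, not in a neighbourhood, so $|\bar H|^m$ is still integrated with full weight over the problematic region.

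The paper's fix is to replace $\Sigma_{r,t}$ by the super-level set $\Sigma^+_{r,t}:=\Sigma_{r,t}\cap\{z\ge t+a\}$ for a regular value $a\in[1,2]$. Since $\partial\Sigma_{r,t}\subset S_t$, every point of $\Sigma^+_{r,t}$ is at distance $\ge\tfrac12$ from $\partial\Sigma_{r,t}$, so Lemma~\ref{lem: interior curvature estimate} gives a \emph{uniform} bound $|h|\le C$ on $D=\Sigma^+_{r,t}$ and Lemma~\ref{lem: Sobolev inequality 2} is now legitimately available there. The shifted function $v=u-a$ vanishes on the new boundary $\partial\Sigma^+_{r,t}$, so Lemma~\ref{lem: L^p estimate for u} applies with $D=\Sigma^+_{r,t}$. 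The iteration is then run not with cut-offs localized near $B^n_{r_1}$ but on nested \emph{intrinsic} geodesic balls $B_{R_k}(q)\subset\Sigma^+_{r,t}$ of radii $R_k\in[\tfrac12,1]$ centred at the point $q$ at which one wants the pointwise bound; this keeps every test function compactly supported inside the region where the Sobolev inequality holds. The shift by $a\le2$ is harmless in the final estimate, and the lower bound for $u$ follows symmetrically from $\Sigma^-_{r,t}$.
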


 \begin{proof}
      We  give an upper bound for $u$ on $\Sigma_{r,t}$ and the lower bound can be derived in the same way. Note $u=0$ on $\partial\Sigma_{r,t}$. Choose some $a\in[1,2]$ such that $\Sigma^+_{r,t}=\Sigma_{r,t}\cap\{z(x)\geq a+t\}$ is a regular hypersurface. Without loss of generality, we may assume $\Sigma^+_{r,t}=\Sigma_{r,t}\cap\{z(x)\geq 1+t\}$.  Then for any $x\in\Sigma^+_{r,t}$ and $y\in\partial\Sigma_{r,t}$, we have
     \[
d_{\Sigma_{r,t}}(x,y)\geq d_{M^n}(x,y)\geq\frac{1}{2}|z(x)-z(y)|\geq\frac{1}{2}.
     \]
The last second inequality is due to the asymptotic flatness of metric. By Lemma \ref{lem: interior curvature estimate}, for any $r,t>0$, $\Sigma^+_{r,t}$ satisfies \eqref{eq: area growth} and $|h|\leq C$ for some uniform $C$ depending only on $(M^n, g)$. From this we see $\Sigma^+_{r,t}$ satisfies the conditions satisfied by $D$ in previous lemmas.
     It suffices to consider  $\Sigma_{r,t}\cap\{z(x)\geq 3+t\}\neq\emptyset$, otherwise we immediately have $u\leq 3$.
    For any point $q\in \Sigma_{r,t}\cap\{z(x)\geq 3+t\}$, we have $\dist_{\Sigma_{r,t}}(q,\partial \Sigma^+_{r,t})\geq 1$. 
    Set $v=u-a$ and Consider the equation
    \begin{equation}\label{eq: poisson eq2}
\left\{
\begin{aligned}
\Delta v&=f  \ \ \text{in} \  \Sigma^+_{r,t}\\
v&=0\ \ \text{on} \ \  \partial\Sigma^+_{r,t}.
\end{aligned}
\right.
\end{equation}
\begin{figure}
    \centering
    \includegraphics[width = 15cm]{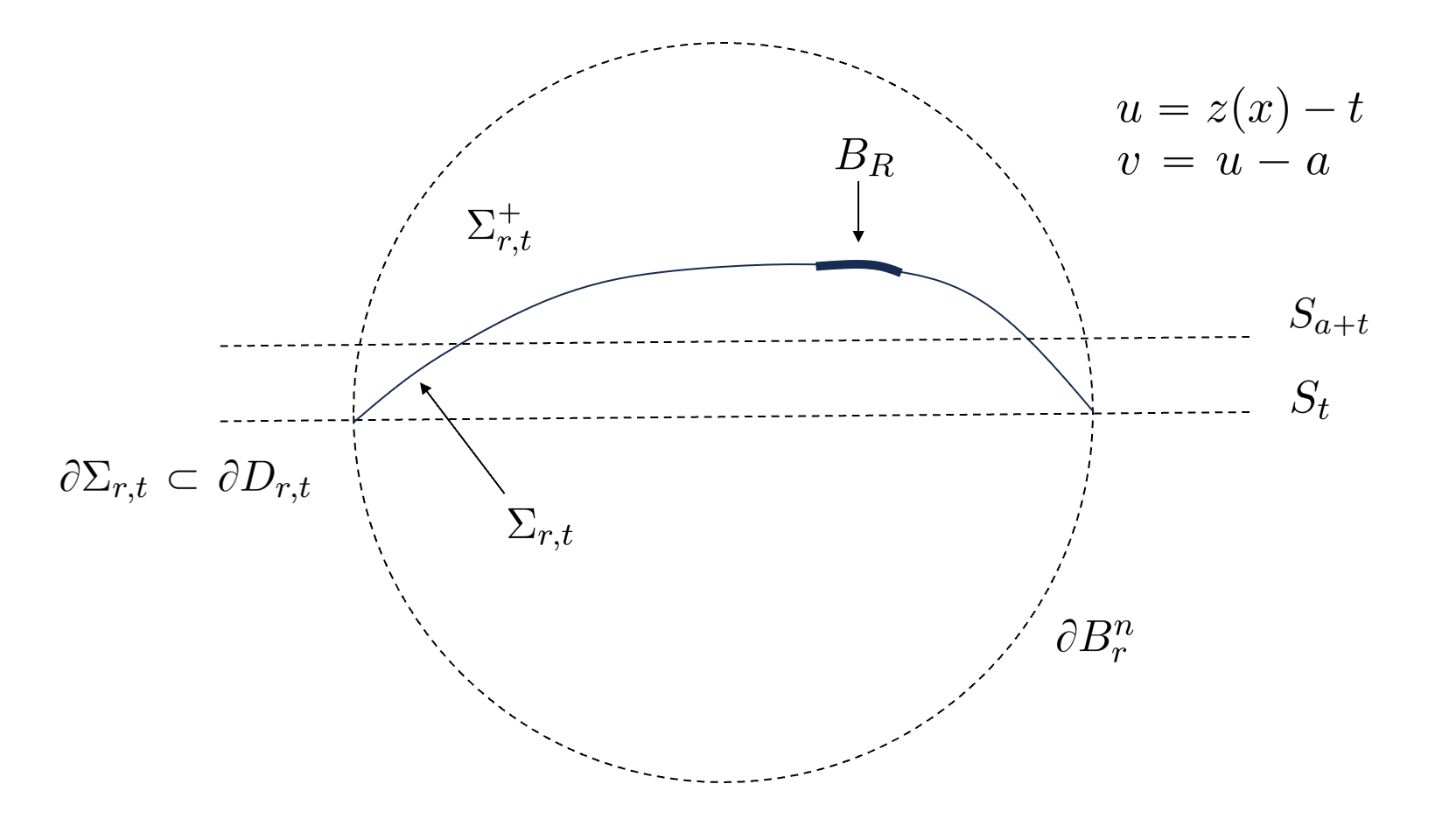}
    \caption{$C^0$ estimate of $u$: $\Sigma = \Sigma_{r,t}$, $D = \Sigma_{r,t}^+$}
    \label{f8}
\end{figure}   

    Denote $B_R$ to be the intrinsic geodesic ball of radius $R$ centered at $q$ throughout the proof. Let $\zeta$ be a compact supported function on $B_R$. For $p>2$, multiply $\zeta^2v^{2p-1}$ 
    on \eqref{eq: poisson eq2} and integration by part, we obtain
     \begin{equation}\label{eq: 13}
         \begin{split}
             -\int_{B_R}\zeta^2v^{2p-1}f =& \int_{B_R}\nabla(\zeta^2v^{2p-1})\cdot\nabla v\\
         =&(2p-1)\int_{B_R}\zeta^2v^{2p-2}|\nabla v|^2 + 2\int_{B_R}\zeta v^{2p-1}\nabla\zeta\cdot\nabla v
         \end{split}
     \end{equation}
     By Schwartz inequality
     \begin{align*}
         &2\int_{B_R}|\zeta v^{2p-1}\nabla\zeta\cdot\nabla v|\\
         \le& 2(\int_{B_R}\zeta^2 |v|^{2p-2}|\nabla v|^2)^{\frac{1}{2}}(\int_{B_R}v^{2p}|\nabla\zeta|^2)^{\frac{1}{2}}\le \frac{2p-1}{2}\int_{B_R}\zeta^2 |v|^{2p-2}|\nabla v|^2+\frac{2}{2p-1}\int_{B_R}v^{2p}|\nabla\zeta|^2
     \end{align*}
     Combining with \eqref{eq: 13} gives
     \begin{align*}
         \int_{B_R}\zeta^2v^{2p-2}|\nabla u|^2\le \frac{4}{(2p-1)^2}(\int_{B_R}v^{2p}|\nabla\zeta|^2-\frac{2p-1}{2}\int_{B_R}\zeta^2v^{2p-1}f)
     \end{align*}
     Applying Sobolev inequality for $\zeta v^p$ on $B_R$, we have
     \begin{equation}\label{eq: sobolev}
         \begin{split}
             (\int_{B_R}|\zeta v^p|^{\frac{2m}{m-2}})^{\frac{m-2}{m}}
             \le& C\int_{B_R}|\nabla\zeta v^p|^2\\
             \le&2C\int_{B_R}|\nabla\zeta|^2u^{2p}+2Cp^2\int_{B_R}\zeta^2v^{2p-2}|\nabla v|^2\\
             \le& 2C\int_{B_R}|\nabla\zeta|^2v^{2p}+
            10C\int_{B_R}v^{2p}|\nabla\zeta|^2+8Cp\int_{B_R}\zeta^2v^{2p-1}|f|\\
            \le &12C\int_{B_R}v^{2p}|\nabla\zeta|^2+8Cp\int_{B_R}\zeta^2v^{2p-1}|f|.
         \end{split}
     \end{equation}
     By Holder inequality,
     \begin{equation}\label{eq: Holder}
         \begin{split}
             \int_{B_R}\zeta^2 v^{2p-1}|f|\le& 
             (\int_{B_R}\zeta^2v^{2p})^\frac{2p-1}{2p}
             (\int_{B_R}\zeta^2|f|^{2p})^{\frac{1}{2p}}\\
             \le&\int_{B_R}\zeta^2v^{2p}+\frac{1}{2p}\int_{B_R}\zeta^2|f|^{2p}.
         \end{split}
     \end{equation}
     Substituting \eqref{eq: Holder} into \eqref{eq: sobolev} gives
     \begin{equation}\label{eq: iteration eq}
         \begin{split}
             (\int_{B_R}|\zeta v^p|^{\frac{2m}{m-2}})^{\frac{m-2}{m}}
         \le& 12C\int_{B_R}v^{2p}|\nabla\zeta|^2+8Cp\int_{B_R}\zeta^2v^{2p}+4C\int_{B_R}\zeta^2|f|^{2p}
         \end{split}
     \end{equation}
    Let $\zeta_k$ be the cut-off function supported on $B_{R_k}$ with
     \begin{align*}
         \zeta_k = 1 \mbox{ on }B_{R_{k+1}} \ \text{and}\ \  |\nabla\zeta_k|<\frac{2}{R_k-R_{k+1}} = 2^{k+2},
     \end{align*}
     where $R_k = (\frac{1}{2}+\frac{1}{2^k})$.
      Set 
     \begin{align*}
        p_k = 2(\frac{m}{m-2})^{k} \ \ \text{and}\ \  
        I_k = \int_{B_{R_k}}|v|^{p_k}+\int_{B_{R_k}}|f|^{2p_k}
     \end{align*}
     As $B_{R_0}$ lies outside $B^n_{r_1}$ and it's not hard to see $|f|_{C^0(B_{R_0})}\leq 1$.
     Then it follows from \eqref{eq: iteration eq} that
     \begin{equation}\label{eq: iteration2}
     \begin{split}
        I_{k+1}\le& C_1 [4^k+(\frac{m}{m-2})^k]I_k^{\frac{m}{m-2}}+C_1\int_{B_{R_k}}|f|^{2p_{k+1}}\\
         \le & C_2 4^kI_k^{\frac{m}{m-2}}.
     \end{split}
     \end{equation}
It's easy to show that
\[
I_{k+1}^{\frac{1}{p_{k+1}}}\leq C_3I_1^{\frac{m-2}{2m}}.
\]
Sending $k$ to $\infty$, in conjunction with Lemma \ref{lem: L^p estimate for u} gives
\begin{equation}\label{eq: C^0 bound of u}
    v(q)\leq C_4[(\int_{B_R} v^{\frac{2m}{m-2}})^{\frac{m-2}{2m}}
+(\int_{B_R} f^{\frac{2m}{m-2}})^{\frac{m-2}{2m}})]
\le C_5r_1^{\frac{m}{2}}(t+r_1).
\end{equation}
If $\Sigma_{r,t}$ does not intersect $B^n_{r_1}$, then by Lemma \ref{lem: L^p estimate for u} we have 
\[
(\int_{B_R} v^{\frac{2m}{m-2}})^{\frac{m-2}{2m}}\leq Cr_1^{-\frac{2m(\tau+1)}{m+2}+m}.
\]
As 
\[(\int_{B_R} f^{\frac{2m}{m-2}})^{\frac{m-2}{2m}}\leq Cr_1^{-\frac{2m(\tau+1)}{m+2}+m},
\]
then by \eqref{eq: C^0 bound of u}, we obtain the uniform bound for the $C^0$ norm of $u$ and this bound tends to zero as $|t|\rightarrow \infty$.
\end{proof}

With Proposition \ref{pro: C^0 estimate of u} we're able to construct area minimizing hypersurfaces in $(M,g)$. For fixed $t$, by Lemma \ref{pro: C^0 estimate of u},  $u_i(x) = z(x)-t_i$ is uniformly bounded on $\Sigma_{r_i,t}$, which shows $\Sigma_{r_i,t}$ intersects with fixed compact set. Hence, by taking a subsequence if necessary, $\Sigma_{r_i,t}$ converges to an area minimizing hypersurface $\Sigma_t$ locally smoothly  as $r_i\to +\infty$.  Moreover, $\Sigma_t$ is between  two parallel hyperplanes. Note that  a complete minimal hypersurface between two parallel hyperplane in $\mathbb{R}^n$ must be an affine hyperplane,  and an area minimizing hypersurface $\Sigma$ in $\mathbb{R}^n$ with $\partial\Sigma = S_0\cap B^n_1(O)$ must be the unit disk in $S_0$, in conjunction this with a blow-down argument we know that,  outside a compact set, $\Sigma_t$ must be a graph over $S_t$.

\subsection{Asymptotic behavior of $\Sigma_t$}

Let $\Sigma_t$ be as above. Let $u$ be the graph function of $\Sigma_t$ over $S_t$ outside a compact set. 
First, we prove the uniform gradient bound of the graph function $u$ outside give compact set if $\Sigma_t$ lies between two parallel hyperplanes. We use $(x', x^n)$ to denote a point $x\in M^n \setminus B^n_1(O)$. 

\textbf{Notational Remark}

Here and in the sequel, we may sometimes use abuse of notation. For $x\in\Sigma_t$, $u = z(x)-t$ denotes a function on $\Sigma_t$, as what was used in Lemma \ref{pro: C^0 estimate of u}. When $\Sigma_t$ can be locally written as a graph over $S_t$, we may use $u$ to denote the graph function. Note that in two definitions $u$ has the same value at corresponding points: $u(x)$ in the first definition equals $u(x')$ in the second definition, when $x = (x',x^n)$.
\begin{lemma}\label{lem: small gradient outside compact set}
Suppose that $\Sigma_t$ can be bounded by $S_{t\pm \lambda}$ for some $\lambda\geq 1$. Then there is a constant $C$ depending only on $(M^n,g)$ such that for $|x'|\gg1$ we have
 \[
    |\nabla u(x')|\leq \frac{C\lambda}{|x'|}.
    \]
\end{lemma}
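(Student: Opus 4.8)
The plan is to prove the bound by a rescaling argument that reduces the claim to an interior gradient estimate for a minimal graph in a metric that is $C^2$-close to Euclidean and whose graph function has small $C^0$-norm.

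\textbf{Step 1 (rescaling and an a priori gradient bound).} Fix $x'$ with $R:=|x'|\gg 1$, set $y_0=x'/R$, and consider the rescaled surface $\Sigma_t^R:=\tfrac1R\Sigma_t$ together with $g^R:=R^{-2}\phi_R^\ast g$, $\phi_R(y)=Ry$. By asymptotic flatness of order $\tau$, on $B^n_{1/2}(y_0)$ one has $|g^R-\bar g|+|\partial g^R|+|\partial^2 g^R|=O(R^{-\tau})$, and $\Sigma_t^R$ is an area-minimizing hypersurface there with Euclidean volume growth inherited from \eqref{eq: area growth} (which passes to the limit $\Sigma_t$ of the $\Sigma_{r_i,t}$). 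It is the graph over $S_{t/R}$ of $\hat u(y):=\tfrac1R u(Ry)$, and $|\hat u|\le \lambda/R$ on $B^n_{1/2}(y_0)$ since $\Sigma_t$ lies between $S_{t\pm\lambda}$. First I would record that $|\nabla\hat u|\le C_0$ on $B^n_{1/4}(y_0)$ for a constant depending only on $(M^n,g)$: applying Lemma \ref{lem: interior curvature estimate} to $\Sigma_t$ at scale $|x|/2$ gives $|h_{\Sigma_t}|=O(|x|^{-1})$, hence $|h_{\Sigma_t^R}|$ is bounded on $B^n_{1/4}(y_0)$ uniformly in $R$; if the gradient bound failed along some $R_j\to\infty$, then $\Sigma_t^{R_j}$ would subconverge in $C^{1,\alpha}_{\mathrm{loc}}$ to a complete minimal hypersurface in $\mathbb{R}^n$ trapped between two coincident hyperplanes, hence to a hyperplane (e.g.\ by the Euclidean volume growth and \cite{Be2023}), contradicting the non-vanishing gradient in the limit.

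\textbf{Step 2 (linearization and Moser iteration).} Once $|\nabla\hat u|\le C_0$, write the minimal surface equation for the graph of $\hat u$ in $g^R$ in divergence form $\partial_i\big(\mathcal A^i(y,\hat u,\nabla\hat u)\big)+\mathcal B(y,\hat u,\nabla\hat u)=0$, where $\mathcal A^i(y,z,p)=\dfrac{p_i}{\sqrt{1+|p|^2}}+O(R^{-\tau})(1+|p|)$ has no $z$-dependence in its principal part and $\mathcal B=O(R^{-\tau})(1+|p|)$; in particular the operator is uniformly elliptic on $\{|p|\le C_0\}$. Multiplying by $\hat u\eta^2$ and integrating (a Caccioppoli estimate), and using $\tau>1$ (which holds here since $\tau>\tfrac{n-2}{2}\ge 1$ for $n\ge 4$) so that $R^{-\tau}\le \lambda R^{-1}$, yields $\int_{B^n_{1/4}(y_0)}|\nabla\hat u|^2\le C\lambda^2R^{-2}$. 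Next, each derivative $w=\partial_k\hat u$ solves a linear divergence-form equation $\partial_i(A^{ij}\partial_j w)+b^j\partial_j w+cw=\partial_i F^i+G$ with $A^{ij}=\partial_{p_j}\mathcal A^i(\cdot,\nabla\hat u)$ uniformly elliptic, $b^j,c=O(R^{-\tau})$, and $\|F\|_{L^\infty}+\|G\|_{L^\infty}=O(R^{-\tau})$ — the terms that a priori involve $\partial^2\hat u$ carry the small factor $O(R^{-\tau})$ and are absorbed into the lower-order coefficients. The De Giorgi–Nash–Moser local boundedness estimate (in the form of the $L^p$ estimates recalled in the Appendix) then gives
\[
|\nabla u(x')|=|\nabla\hat u(y_0)|\le \sup_{B^n_{1/8}(y_0)}|\nabla\hat u|\le C\Big(\|\nabla\hat u\|_{L^2(B^n_{1/4}(y_0))}+R^{-\tau}\Big)\le \frac{C\lambda}{R}=\frac{C\lambda}{|x'|}.
\]

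\textbf{Step 3 (where the difficulty lies).} The only non-formal points are the a priori gradient bound in Step 1 — which is precisely why the curvature estimate of Lemma \ref{lem: interior curvature estimate} and the Bellettini-type rigidity \cite{Be2023} are invoked — and the bookkeeping that every error term generated by the asymptotically flat metric is $O(R^{-\tau})\le C\lambda/R$; both rest on $n\ge4$ and $\tau>\tfrac{n-2}{2}$. An essentially equivalent route that avoids differentiating the equation is to use the angle function: along $\Sigma_t$ one has $\Delta_{\Sigma_t}\langle\nu,\partial_n\rangle=-|h|^2\langle\nu,\partial_n\rangle+O(|x|^{-\tau-1})$ (a perturbed Jacobi equation, compatible with Lemma \ref{lem: intrinsic MSE}), so that $\psi:=1-\langle\nu,\partial_n\rangle\ge 0$ satisfies, after rescaling, a uniformly elliptic differential inequality with right-hand side $O(R^{-2})$ and $\int\psi\sim\int|\nabla u|^2\lesssim \lambda^2R^{-2}$; Moser iteration applied to $\psi$ then gives $|\nabla u(x')|^2\sim\psi(x')\le C\lambda^2/|x'|^2$.
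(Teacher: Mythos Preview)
Your argument is correct and shares the paper's overall strategy of rescaling combined with a compactness step, but the execution differs in a way worth noting. The paper first proves the \emph{qualitative} statement $|\nabla u(x')|\to 0$ as $|x'|\to\infty$ by translating to points $p_i\to\infty$ (no rescaling yet) and observing that any limit is a complete area-minimizing hypersurface in $\mathbb{R}^n$ trapped in a slab of \emph{fixed} width $2\lambda$, hence a horizontal hyperplane; this already makes the minimal-surface equation uniformly elliptic near infinity. Then, rescaling by $\sigma=|x'|/2$, the paper applies the Schauder interior $C^{2,\alpha}$ estimate directly to the rescaled graph function $\tilde u$, obtaining $|\tilde u|_{C^{2,\alpha}(B_{1/2})}\le C\big(|\tilde u|_{C^0(B_1)}+|\tilde f|_{C^{0,\alpha}(B_1)}\big)\le C\lambda/\sigma$, and scales back. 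Your route---a uniform (not small) gradient bound from the curvature estimate of Lemma~\ref{lem: interior curvature estimate}, followed by a Caccioppoli inequality for $\hat u$, then differentiating the equation and running De Giorgi--Nash--Moser on $\partial_k\hat u$---arrives at the same bound but with more machinery. The paper's economy comes from realising that once $|\nabla u|$ is known to be small, Schauder on $\tilde u$ itself converts the $C^0$ bound $|\tilde u|\le\lambda/\sigma$ straight into the gradient bound, with no need to pass through $\|\nabla\hat u\|_{L^2}$ or to linearise around $\hat u$.
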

\begin{proof}
  We first show $|\nabla u(x')|\to 0$ as $|x'|\to \infty$. Suppose not,  then we can identify  a constant $\delta_0>0$ and a sequence of points $p_i=(x_i',x_i^n)$ with $|x_i'|\to \infty$  satisfying
    \begin{align}\label{eq: initial decay for gradient}
        |\nabla u|(x_i')\geq\delta_0
    \end{align}
     By asymptotic flatness of $(M^n, g)$, we know that $(M, g, p_i)$ converges locally smoothly to $(\mathbb{R}^n, \bar g,  O)$ in the Gromov-Hausdorff's sense  as $i\to +\infty$. Here $\bar g$ denotes the standard Euclidean metric.  Additionally, we get that $(\Sigma_t, p_i)$ converge to an area minimizing graph $\Sigma$ with graph function $\bar{u}$ passing through the origin $O$ in $\mathbb{R}^n$.  Since $n\leq 7$, then $\Sigma$ must be a hyperplane. Combining with that  $\Sigma_{t}$ is bounded by $S_{t\pm \lambda}$ we deduce that $\bar{u}=0$.  This is not compatible with \eqref{eq: initial decay for gradient}.
    
   Next, we use scaling down argument to improve the estimate. 
    Take $p=(x',t)\in S_t$ with $|x'|=2\sigma\gg1$ and consider the region
    $\Omega_{\sigma}$ which is the part of the slab $\mathbb{R}^{n-1}\times[t-\lambda, t+\lambda]$ within the cylinder $C_{\sigma}(x')$ centered
around the line $\{(x',t) : |x'| = 2\sigma\}$. 
    Rescale the asymptotically flat metric $g_{ij}$  to the metric
  $\tilde {g}_{ij}$ using the
  transformation $\Phi$ given by $x = p + \sigma \tilde x$, i.e. 
  \[
  \tilde g =\Phi^*g.
  \]
The rescaled minimal surface is denoted by $\tilde{\Sigma}_t$ and
it can be bounded by $\tilde{S}_{\pm\lambda \sigma^{-1}}$.
We choose $\sigma$ large enough such that $\tilde{\Sigma}$ is a smooth minimal surface with $\tilde{u}$ satisfying the uniform elliptic equation. Then by the interior estimate, we have
\[
|\tilde{u}|_{C^{2,\alpha}(B_\frac{1}{2})}\leq C (|\tilde{u}|_{C^0(B_1)}+|\tilde{f}|_{C^{0,\alpha}(B_1)})
\leq C  \frac{\lambda}{\sigma},
\]
where the norm is taken with respect to $\tilde{g}$. Rescale back and use the fact $|\nabla u|$ is rescale invariant we get the desired decay estimate for $|\nabla u|$. 
\end{proof}
\begin{proposition}\label{pro:C^0 asymtotic behavior}
  Given $\Sigma_t$  as above, there is some $C=C_t$ depending only on $(M^n,g)$ and $t$ such that for any  $\epsilon>0$, 
    $|z(x)-t|\leq C_t|x'|^{1-\tau+\epsilon}$ as $|x'|\to +\infty$.
\end{proposition}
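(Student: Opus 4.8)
The plan is to compare $u$ with radial barriers via the maximum principle, using the intrinsic equation $\Delta_{\Sigma_t}u=f$ from Lemma~\ref{lem: intrinsic MSE} (so $|f|=O(|x|^{-\tau-1})$) together with the two a priori facts already in hand: $u$ is bounded outside a compact set by Proposition~\ref{pro: C^0 estimate of u}, and $|\nabla u|(x')\le C|x'|^{-1}$ by Lemma~\ref{lem: small gradient outside compact set}. Since outside a compact set $\Sigma_t$ is the graph over $S_t$ of a function with gradient $O(|x'|^{-1})$, its induced metric is asymptotically flat (of positive order) over $\mathbb{R}^{n-1}$; hence on the end of $\Sigma_t$ the quantities $|x|$, the cylindrical radius $\rho:=|x'|$, and the intrinsic distance to a fixed base point are mutually comparable, and $\Delta_{\Sigma_t}$ differs from the flat Laplacian of $\mathbb{R}^{n-1}$ only by lower-order terms.

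First I would upgrade boundedness of $u$ to the decay $u(x)\to 0$ as $|x|\to\infty$. The bound in Lemma~\ref{lem: L^p estimate for u} is uniform in $r$, so passing to the limit $\Sigma_{r,t}\to\Sigma_t$ yields $u\in L^{2m/(m-2)}(\Sigma_t\setminus B^n_{r_1})$; in particular $\int_{\{R<\rho<2R\}\cap\Sigma_t}|u|^{2m/(m-2)}\to 0$ as $R\to\infty$. Rescaling the dyadic annulus $\{R<\rho<2R\}$ to unit size, the rescaled function $\tilde u(y)=u(Ry)$ solves a uniformly elliptic equation whose right-hand side has $C^0$-norm $O(R^{1-\tau})\to 0$ (recall $\tau>\tfrac{n-1}{2}\ge1$), so the interior $L^\infty$--$L^{2m/(m-2)}$ estimate forces $\sup_{\{R<\rho<2R\}\cap\Sigma_t}|u|\to 0$.

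Then I would run the barrier argument. Fix $\epsilon>0$, set $\beta:=1-\tau+\epsilon$ and $w:=A\rho^{\beta}$ with $A>0$ to be chosen. Using the asymptotic flatness of the induced metric and the gradient bound one computes
\[
\Delta_{\Sigma_t}w=\beta(\beta+n-3)\,A\rho^{\beta-2}+o(A\rho^{\beta-2})\qquad(\rho\to\infty).
\]
When $-(n-3)<\beta<0$ the leading coefficient is strictly negative: $\beta<0$ holds for small $\epsilon$ since $\tau>\tfrac{n-1}{2}>1$, and $\beta+n-3>0$ holds for every $\epsilon>0$ when $\tau\le n-2$ (the case $\tau>n-2$ is discussed below). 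In this range $\Delta_{\Sigma_t}w\le -cA\rho^{\beta-2}=-cA\rho^{-1-\tau+\epsilon}$ for $\rho$ large, which dominates $|f|\le C\rho^{-1-\tau}$ precisely because $\epsilon>0$; hence $\Delta_{\Sigma_t}(w\pm u)\le 0$ on $\{\rho\ge R_0\}\cap\Sigma_t$ for $R_0$ fixed large. Choosing $A$ large enough that $w\ge|u|$ on the slice $\{\rho=R_0\}\cap\Sigma_t$ and invoking $u,w\to0$ at infinity, the maximum principle on this exterior domain gives $|u|\le A\rho^{1-\tau+\epsilon}$, i.e. $|z(x)-t|\le C_t|x'|^{1-\tau+\epsilon}$ for $|x'|$ large; for $\epsilon\ge\tau-1$ the estimate is trivial from boundedness of $u$, so it holds for all $\epsilon>0$.

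The hard part will be the regime where $\tau$ is large relative to $n$, so that $1-\tau+\epsilon$ is forced below $3-n$ for small $\epsilon$ and the pure radial barrier degenerates. There I would first apply the previous step with $\epsilon$ large enough that $\beta+n-3>0$, then improve the gradient decay by interior Schauder estimates on the rescaled annuli, use this to show that the flux $\int_{\Sigma_t\cap\partial B^n_R}\partial_\nu u$ stabilises to $\int_{\Sigma_t}f$, subtract the corresponding Newtonian potential, and conclude that the residual bounded harmonic part decays strictly faster; iterating this would push the exponent down to $1-\tau+\epsilon$ for every $\epsilon>0$. Apart from this bootstrap, everything reduces to the rescaling estimates above and the elementary barrier computation once the asymptotic flatness of $\Sigma_t$ is recorded.
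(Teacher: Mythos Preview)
Your approach is correct and takes a genuinely different, more self-contained route than the paper. The paper's proof is essentially two lines: it invokes Proposition~9 of \cite{EK23} as a black box to conclude that $\Sigma_t$ is asymptotic to \emph{some} $S_{t'}$ with $|z(x)-t'|\le C_{t'}|x'|^{1-\tau+\epsilon}$, and then devotes the actual work to proving $t'=t$ by a contradiction argument against the uniform $L^{\frac{2m}{m-2}}$ bound inherited from the $\Sigma_{r_i,t}$. You reverse the order and make both steps explicit: first you pass the $L^{\frac{2m}{m-2}}$ bound to $\Sigma_t$ and use a De~Giorgi--Moser estimate on rescaled dyadic annuli to read off $u\to 0$ directly (this is a cleaner, direct version of the paper's Claim), and then you recover the decay rate with the radial barrier $w=A\rho^{1-\tau+\epsilon}$, closing the maximum principle on the exterior domain using $u,w\to 0$ at infinity. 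The trade-off is that the paper's proof is short precisely because it outsources the decay mechanism, while yours avoids the external reference entirely; both rest on the same $L^p$ input from Lemma~\ref{lem: L^p estimate for u}. One caveat: your barrier requires $3-n<1-\tau+\epsilon<0$, so it yields the full statement for every $\epsilon>0$ only when $\tau\le n-2$. For $\tau>n-2$ your Newtonian-subtraction sketch cannot push the decay below $|x'|^{3-n}$ unless the flux $\int_{\Sigma_t}f$ vanishes, which nothing in the argument forces---but the paper's own sharpened Lemma~\ref{lem: decay estimate of u} is also written only for $\tau\le n-2$, so this limitation is not special to your route.
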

\begin{proof}
    As $\Sigma_t$ is bounded by $S_{\pm Ct}$ from above and below and
    we have the decay estimate for $\nabla u$, 
    then we can use Proposition 9 in \cite{EK23} to show that  $\Sigma_t$ must asymptotic to some hyperplane $S_{t'}$, furthermore, we have 
    \begin{equation}\label{eq: asymptotic of graph}
        |z(x)-t'|\leq C_{t'}|x'|^{1-\tau+\epsilon} \ \ \text{as}\ \  |x'|\to +\infty.
    \end{equation}
    Here $ C_{t'}$ is a positive constant depending only on $(M,g)$ and $t'$.
    
    \textbf{Claim}\ \ {$t\leq t'$}\\
    Suppose not, then we can choose a sequence of $\{r_i\}$ with $\Sigma_{r_i,t}$ converge to the given $\Sigma_t$ in the $C^{\infty}_{c}$ sense as $i\to \infty$.  Set
    \[\delta=\frac{t'-t}{4} \ \ \text{and}\ \ 
     \ \Sigma^{\delta}_{i,t}=\Sigma_{r_i,t}\cap \{z(x)\geq t+2\delta\}.
    \]
Then
\begin{align*}
		    (\int_{\Sigma^{\delta}_{i,t}\backslash B^n_{r_1}} |z(x)-(t+2\delta)|^{\frac{2m}{m-2}}d\mu)^{\frac{m-2}{2m}} \le Cr_1^{\frac{m}{2}}(t+r_1).
		\end{align*}
Sending $i\to\infty$  gives 
\begin{align*}
(\int_{(\Sigma_{t}\cap \{z(x)\geq t+2\delta\})\backslash B^n_{r_1}} |z(x)-(t+2\delta)|^{\frac{2m}{m-2}}d\mu)^{\frac{m-2}{2m}} \le Cr_1^{\frac{m}{2}}(t+r_1).
\end{align*}
It follows from \eqref{eq: asymptotic of graph} that $z(x)=t+4\delta+O(|x'|^{1-\tau+\epsilon})$.
Hence, we get the desired contradiction. By a similar argument, we may show $t\geq t'$. Therefore, we get the conclusion.  
\end{proof}

For future reference,  we introduce the following definition
\begin{definition}\label{def: asyhyperplane}
    A complete noncompact area minimizing  hypersurface $\Sigma$ is called asymptotic to some hyperplane $S_t$ if 
    $\Sigma$ is the graph over $S_t$ outside some compact set and  $|z(x)-t|\to 0$ as $x\to \infty$.
\end{definition}
\begin{remark}\label{re: asymptotic to hyperplane}
    If $\Sigma$ is a complete noncompact area minimizing  hypersurface as in Definition \ref{def: asyhyperplane}, then by the arguments of the proof of Lemma \ref{lem: interior curvature estimate}, its second fundamental form is uniformly bounded. Therefore, One can show the Sobolev inequality \eqref{eq: Sobolev2} holds on $\Sigma \setminus K$.Here $K$ is a compact set of $M$. If $\Sigma$ is  asymptotic to some hyperplane $S_t$
    then we can  use Proposition 9 in \cite{EK23} again to show that  for any  $\epsilon>0$
    it holds $|z(x)-t|\leq C|x'|^{1-\tau+\epsilon} \ \ \text{as}\ \  |x'|\to +\infty.$
\end{remark}
Lemma \ref{pro:C^0 asymtotic behavior} implies that for any $t\in\mathbb{R}$, the area minimizing hypersurface asymptotic to $S_t$. In the rest of this subsection, we aim to investigate more general property of area minimizing hypersurface asymptotic to $S_t$, so we always assume that $\Sigma$  is a complete noncompact area minimizing  hypersurface  asymptotic to some hyperplane $S_t$. We will give the explicit analysis for  the asymptotic behavior of $\Sigma$.

\begin{lemma}\label{lem: L^p estimate for graph}
 Let $\Sigma$ be any complete area minimizing hypersurface that is asymptotic to $S_t$,  suppose $\tau>\frac{m}{2}:=\frac{n-1}{2}$.  Then $u=z(x)-t$ is $L^{\frac{2m}{m-2}}$-integrable on  $\Sigma\backslash B^n_{r_1}$. Moreover,  there exists some constant $C$ depending only on $(M^n, g)$ such that
\begin{align*}
(\int_{\Sigma\backslash B^n_{r_1}} |u|^{\frac{2m}{m-2}}d\mu)^{\frac{m-2}{2m}} \le Cr_1^{\frac{m}{2}}(t+r_1).
\end{align*} 
\end{lemma}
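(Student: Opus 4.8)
\medskip

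The plan is to reprise, in the noncompact setting, the cut-off and integration-by-parts argument used to prove Lemma \ref{lem: L^p estimate for u}; the only genuinely new feature is that $\Sigma$ has no boundary, so the boundary contribution on $S_t$ that appeared there is replaced by a contribution coming from a cut-off near infinity, which must be shown to vanish. The ingredients at our disposal are: Lemma \ref{lem: intrinsic MSE}, which gives $\Delta_\Sigma u = f$ on $\Sigma\setminus B^n_1(O)$ with $|f| = O(|x|^{-\tau-1})$; Remark \ref{re: asymptotic to hyperplane}, by which the second fundamental form of $\Sigma$ is uniformly bounded, the Sobolev inequality \eqref{eq: Sobolev2} holds on $\Sigma$ outside a fixed $B^n_{r_1}$, and $|u| = |z(x)-t| \le C|x'|^{1-\tau+\epsilon}$ as $|x'|\to\infty$; and the area growth \eqref{eq: area growth} together with the crude bound $|u| \le t+r_1$ on $\Sigma\cap B^n_{r_1}$.

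First I would fix the large radius $r_1$ furnished by Lemma \ref{lem: Sobolev inequality 2}, take an inner cut-off $\zeta$ with $\zeta\equiv 1$ outside $B^n_{2r_1}$, $\zeta\equiv 0$ inside $B^n_{r_1}$ and $|\nabla\zeta|\le 4/r_1$, together with an outer cut-off $\eta_R$ with $\eta_R\equiv 1$ on $B^n_R$, $\eta_R\equiv 0$ outside $B^n_{2R}$ and $|\nabla\eta_R|\le 2/R$, and test $\Delta_\Sigma u = f$ against $\zeta^2\eta_R^2 u$. Since $\zeta^2\eta_R^2 u$ is compactly supported and $\Sigma$ is complete without boundary, integration by parts yields
\[
\int_\Sigma |\nabla(\zeta\eta_R u)|^2\, d\mu \;\le\; -\int_\Sigma \zeta^2\eta_R^2 u f\, d\mu + \int_\Sigma |\nabla(\zeta\eta_R)|^2 u^2\, d\mu .
\]
The first term on the right is treated exactly as in Lemma \ref{lem: L^p estimate for u}: Hölder's inequality and then \eqref{eq: Sobolev2} applied to $\zeta\eta_R u$ bound it by $C\big(\int_\Sigma|\nabla(\zeta\eta_R u)|^2\big)^{1/2}\big(\int_\Sigma|\zeta\eta_R f|^{\frac{2m}{m+2}}\big)^{\frac{m+2}{2m}}$, and by Lemma \ref{lem: estimate} the last factor is $O(r_1^{-\frac{2m(\tau+1)}{m+2}+m})$, hence bounded (in fact small) since $\tau>\frac m2$. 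In the second term the contribution of $\supp\nabla\zeta\subset B^n_{2r_1}\setminus B^n_{r_1}$ is estimated by $|u|\le t+2r_1$ there and $\mathcal{H}^m(\Sigma\cap B^n_{2r_1})\le\Lambda(2r_1)^m$, giving $\le Cr_1^m(t+r_1)^2$, while the contribution of $\supp\nabla\eta_R\subset B^n_{2R}\setminus B^n_R$ is estimated by $|u|\le CR^{1-\tau+\epsilon}$ there and $\mathcal{H}^m(\Sigma\cap B^n_{2R})\le\Lambda(2R)^m$, giving $\le CR^{m-2\tau+2\epsilon}$, which tends to $0$ as $R\to\infty$ once $\epsilon$ is chosen small, again because $\tau>\frac m2$.

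Absorbing the factor $\big(\int_\Sigma|\nabla(\zeta\eta_R u)|^2\big)^{1/2}$ by Young's inequality gives $\int_\Sigma|\nabla(\zeta\eta_R u)|^2\,d\mu\le Cr_1^m(t+r_1)^2+o_R(1)$, and a final application of \eqref{eq: Sobolev2} yields
\[
\Big(\int_{\Sigma\cap(B^n_R\setminus B^n_{2r_1})}|u|^{\frac{2m}{m-2}}\,d\mu\Big)^{\frac{m-2}{2m}}\le Cr_1^{\frac m2}(t+r_1)+o_R(1).
\]
Letting $R\to\infty$ and using monotone convergence shows $u\in L^{\frac{2m}{m-2}}(\Sigma\setminus B^n_{2r_1})$ with the stated bound; extending the estimate across the fixed bounded annulus $B^n_{2r_1}\setminus B^n_{r_1}$ (where $|u|\le t+2r_1$ and \eqref{eq: area growth} applies) gives it on $\Sigma\setminus B^n_{r_1}$. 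The step I expect to be the crux — and the only substantive departure from the proof of Lemma \ref{lem: L^p estimate for u} — is showing that the ``boundary term at infinity'' $\int_\Sigma|\nabla\eta_R|^2u^2\,d\mu$ vanishes in the limit; this is exactly where the decay $|u|=O(|x'|^{1-\tau+\epsilon})$ of Remark \ref{re: asymptotic to hyperplane}, the area bound \eqref{eq: area growth}, and the hypothesis $\tau>\frac m2$ must be combined. One could alternatively deduce the qualitative integrability directly from the pointwise decay and Lemma \ref{lem: estimate}, but the cut-off argument is what produces the explicit dependence $r_1^{m/2}(t+r_1)$.
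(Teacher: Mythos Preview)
Your argument is correct. It differs from the paper's, though both rest on the same ingredients (Remark \ref{re: asymptotic to hyperplane} for the decay of $u$, Lemma \ref{lem: Sobolev inequality 2} for the Sobolev inequality, and the area growth). You reprove the estimate from scratch with a \emph{double cut-off} $\zeta\eta_R$, the inner one replacing the compact set and the outer one replacing the missing boundary, and you use the pointwise decay $|u|=O(|x'|^{1-\tau+\epsilon})$ together with $\tau>\tfrac m2$ to kill the term $\int|\nabla\eta_R|^2u^2$ as $R\to\infty$. The paper instead uses the decay qualitatively: since $u\to 0$ at infinity, the super-level sets $\Sigma^\pm_{t,\theta}=\Sigma\cap\{\pm(z-t)\ge\theta\}$ are \emph{compact}, so Lemma \ref{lem: L^p estimate for u} applies verbatim with $D=\Sigma^\pm_{t,\theta}$ and the shifted function $u\mp\theta$ (which vanishes on $\partial D$), yielding the bound for $\|u\mp\theta\|_{L^{2m/(m-2)}(\Sigma^\pm_{t,\theta}\setminus B^n_{r_1})}$; sending $\theta\to 0$ and invoking Fatou's lemma finishes. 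The paper's route is more modular (it treats Lemma \ref{lem: L^p estimate for u} as a black box and avoids repeating the integration-by-parts computation), while yours is more self-contained and makes explicit where the hypothesis $\tau>\tfrac m2$ enters a second time, namely in forcing $R^{m-2\tau+2\epsilon}\to 0$. A minor slip: your inner cut-off term is actually $O(r_1^{m-2}(t+r_1)^2)$, not $O(r_1^{m}(t+r_1)^2)$, because of the factor $|\nabla\zeta|^2\le C r_1^{-2}$; this only strengthens the final inequality.
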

 \begin{proof}
It follows from Remark \ref{re: asymptotic to hyperplane} that $u=z(x)-t$ is $L^{\frac{2m}{m-2}}$-integrable on  $\Sigma\backslash B^n_{r_1}$.
 Given $\theta>0$, Set $\Sigma^+_{t,\theta}=\Sigma\cap\{z(x)-t\geq \theta\}$ and $\Sigma^-_{t,\theta}=\Sigma\cap\{z(x)-t\leq -\theta\}$. Then $\Sigma^+_{t,\theta}$ and $\Sigma^+_{t,\theta}$ are compact.
By applying Lemma \ref{lem: L^p estimate for u} in the case that $\Sigma = \Sigma_t\cap B_R^n$ for $R$ sufficiently large and $D = \Sigma^+_{t,\theta}$, we obtain
		\begin{align*}
		    (\int_{\Sigma^+_{t,\theta}\backslash B^n_{r_1}} |u-\theta|^{\frac{2m}{m-2}}d\mu)^{\frac{m-2}{2m}} \le Cr_1^{\frac{m}{2}}(t+r_1),
		\end{align*} 
  Similarly,
  \begin{align*}
		    (\int_{\Sigma^-_{t,\theta}\backslash B^n_{r_1}} |u+\theta|^{\frac{2m}{m-2}}d\mu)^{\frac{m-2}{2m}} \le Cr_1^{\frac{m}{2}}(t+r_1).
		\end{align*} 
  Thus
  \[
(\int_{\Sigma^+_{t,\theta}\backslash B^n_{r_1}} |u-\theta|^{\frac{2m}{m-2}}d\mu)^{\frac{m-2}{2m}}+
(\int_{\Sigma^+_{t,\theta}\backslash B^n_{r_1}} |u+\theta|^{\frac{2m}{m-2}}d\mu)^{\frac{m-2}{2m}} \le Cr_1^{\frac{m}{2}}(t+r_1).
  \]
The assertion follows by  sending $\theta\to 0$ and using Fatou Lemma.
 \end{proof}
Let $\{\Sigma_i\}$ be a sequence of area minimizing  minimal surfaces asymptotic to $\{S_{t_i}\}$ with 
$t_i\to +\infty$. 

\begin{proposition}\label{pro:drift}
$\{\Sigma_i\}$  must drift off to the infinity as $i\to +\infty$.
\end{proposition}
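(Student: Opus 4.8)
The plan is to argue by contradiction: suppose $\{\Sigma_i\}$ does not drift off to infinity, so there is a fixed compact set $\Omega \subset M$ and points $p_i \in \Sigma_i \cap \Omega$ (after passing to a subsequence). Since each $\Sigma_i$ is area minimizing, by the interior curvature estimates of Lemma \ref{lem: interior curvature estimate} (valid also for the complete limits by Remark \ref{re: asymptotic to hyperplane}), the $\Sigma_i$ have locally uniformly bounded second fundamental forms and uniform area growth. Hence, by standard compactness for area minimizing hypersurfaces in dimension $n \le 7$, a subsequence converges locally smoothly to a complete area minimizing hypersurface $\Sigma_\infty \subset M$ passing through some $p_\infty \in \Omega$. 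The first task is to show this limit $\Sigma_\infty$ is still a graph asymptotic to some coordinate hyperplane $S_{t_\infty}$ with $t_\infty$ finite; then the second task is to derive a quantitative lower bound on how far into the $z$-direction the $\Sigma_i$ must sit, contradicting $p_i \in \Omega$.

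For the first task I would use the $C^0$ estimate machinery already developed. By Lemma \ref{lem: L^p estimate for graph}, each $u_i := z(x) - t_i$ satisfies
\[
\Big(\int_{\Sigma_i \setminus B^n_{r_1}} |u_i|^{\frac{2m}{m-2}} d\mu\Big)^{\frac{m-2}{2m}} \le C r_1^{m/2}(t_i + r_1),
\]
but more importantly, the lower-bound half of Proposition \ref{pro: C^0 estimate of u} (the case $\Sigma \cap B^n_{r_1} = \emptyset$, giving $\|u\|_{C^0}$ bounded by a constant $C(t)$ that does NOT blow up) tells us about the shape near infinity. The key geometric point is the opposite inequality: I claim that for large $t_i$ the surface $\Sigma_i$ cannot intersect a fixed compact set, because it is asymptotic to $S_{t_i}$ from \emph{both sides} of the compact part. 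More precisely, since $\Sigma_i$ is asymptotic to $S_{t_i}$, Proposition \ref{pro:C^0 asymtotic behavior} gives $|z(x) - t_i| \le C_{t_i}|x'|^{1-\tau+\epsilon}$ at infinity; combining this with the fact that $\Sigma_i$ lies between two parallel hyperplanes $S_{t_i \pm \lambda_i}$, and that by the $C^0$ estimate (Proposition \ref{pro: C^0 estimate of u}, graphical case) $\lambda_i$ can be taken bounded independent of $t_i$ when $\tau > \frac m2$, we conclude $\Sigma_i \subset \{|z - t_i| \le \Lambda_0\}$ for a uniform $\Lambda_0$. Since $t_i \to +\infty$ this slab eventually avoids any fixed compact $\Omega$, and in particular $p_i \notin \Omega$ for $i$ large — the desired contradiction.

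The main obstacle, and the step deserving the most care, is establishing that $\Sigma_i$ is confined to a slab $\{|z - t_i| \le \Lambda_0\}$ with $\Lambda_0$ \emph{independent of $i$}. The $C^0$ bound from Proposition \ref{pro: C^0 estimate of u} for the \emph{approximating} surfaces $\Sigma_{r,t}$ that do not meet $B^n_{r_1}$ gives a constant $C(t) \to 0$ as $|t| \to \infty$, but one must pass this to the limit surface $\Sigma_i$ and, crucially, verify that for $t_i$ large the relevant $\Sigma_{r,t_i}$ indeed stay outside $B^n_{r_1}$ — this is where the hypothesis $\tau > \frac m2$ and the decay estimate $r_1^{-\frac{2m(\tau+1)}{m+2}+m}$ enter. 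I would handle this by: (i) fixing $r_1$ large enough that the $C^0$ bound forces any $\Sigma_{r,t}$ which is graphical over $S_t$ outside $B^n_{r_1}$ and whose graph function is small to remain outside $B^n_{r_1}$ for all $t \ge t_0$ (a continuity/openness argument in $t$, starting from large $t$ where Proposition \ref{pro: C^0 estimate of u} directly applies); (ii) concluding $\|u_i\|_{C^0(\Sigma_i)} \le C(t_i) \to 0$; hence for $i$ large $\Sigma_i \subset \{|z - t_i| < 1\}$, which lies outside any fixed compact set once $t_i > \sup_\Omega |z| + 1$. The rest of the argument — compactness of the $\Sigma_i$, smooth convergence, the limit being area minimizing — is routine given Lemma \ref{lem: interior curvature estimate} and the dimension restriction $n \le 7$.
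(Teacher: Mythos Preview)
Your contradiction setup and compactness step are fine, but the heart of your argument --- the uniform slab confinement $\Sigma_i \subset \{|z - t_i| \le \Lambda_0\}$ with $\Lambda_0$ independent of $i$ --- is circular as written. You correctly flag this as the main obstacle, but the proposed fix does not close the gap. The $\Sigma_i$ in this proposition are \emph{arbitrary} area minimizing hypersurfaces asymptotic to $S_{t_i}$, not the specific Plateau solutions $\Sigma_{r,t_i}$ nor necessarily their limits; so a continuity/openness argument in $t$ along the family $\Sigma_{r,t}$ has no bearing on a general $\Sigma_i$. And the good $C^0$ bound you want to invoke from Proposition~\ref{pro: C^0 estimate of u} (the one with $C(t)\to 0$) is conditional on $\Sigma \cap B^n_{r_1} = \emptyset$, which for a general $\Sigma_i$ is precisely the drift statement you are trying to prove. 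Without that hypothesis, the only $C^0$ bound available grows like $t_i$, which gives no contradiction.

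The paper avoids this circularity by never seeking a pointwise slab bound. Instead it uses the $L^{\frac{2m}{m-2}}$ estimate of Lemma~\ref{lem: L^p estimate for graph}, which holds for \emph{any} area minimizing $\Sigma_i$ asymptotic to $S_{t_i}$ and gives
\[
\Big(\int_{\Sigma_i \cap (B^n_R \setminus B^n_{r_1})} |u_i|^{\frac{2m}{m-2}}\, d\mu\Big)^{\frac{m-2}{2m}} \le C_0\, r_1^{m/2}(t_i + r_1).
\]
Under the non-drift assumption one has a smooth limit $\Sigma$ meeting a fixed compact set, and on the annulus $B^n_R \setminus B^n_{r_1}$ the pointwise bound $|u_i| \ge t_i - R$ is automatic (since $|z|\le R$ there). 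This yields a \emph{lower} bound on the same integral of order $(t_i - R)\cdot \big(\mathcal{H}^m(\Sigma \cap (B^n_R\setminus B^n_{r_1}))\big)^{\frac{m-2}{2m}}$. Both sides are linear in $t_i$, but by taking $R$ large one makes the area factor --- and hence the lower coefficient --- exceed $C_0\, r_1^{m/2}$, producing the contradiction. The key difference from your plan is that the paper compares integral quantities whose growth rates in $t_i$ match, and wins on the constants by enlarging $R$; it never needs to know that $\Sigma_i$ avoids $B^n_{r_1}$.
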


\begin{proof}
    We take the contradiction argument. 
    Suppose not, then after passing to a subsequence(we still denote by $\{\Sigma_{i}\}$)
    we have $\Sigma_{i}\cap K\ne \emptyset$ for some compact $K$. Then $\Sigma_{i}$ converges locally smoothly to an area minimizing boundary $\Sigma$. Let $R>r_1$ be a constant to be fixed.  By Lemma \ref{lem: L^p estimate for graph}, for some uniform $C_0$ depending only on $(M^n, g)$, we have
\begin{align}\label{eq:uniform L^p}
(\int_{\Sigma_{i}\cap(B^n_R\backslash B^n_{r_1})} |u_i|^{\frac{2m}{m-2}}d\mu)^{\frac{m-2}{2m}}
\le C_0r_1^{\frac{m}{2}}(t_i+r_1)
\end{align}
where $u_i$ is defined by $u_i=z(x)-t_i$.
Since $\Sigma_{i}\cap B^n_R$ converges  smoothly to $\Sigma\cap B^n_R$, we have that for $i$ large enough 
\begin{equation}\label{eq:lower bound for L^p}
\begin{split}
(\int_{\Sigma_{i}\cap(B^n_R\backslash B^n_{r_1})} |u_i|^{\frac{2m}{m-2}}d\mu)^{\frac{m-2}{2m}}
&\geq (t_i-R)\left(\mathcal{H}^m(\Sigma_{i}\cap (B^n_R\backslash B^n_{r_1}))\right)^{\frac{m-2}{2m}}\\
&\geq\frac{1}{2}(t_i-R)\left(\mathcal{H}^m(\Sigma\cap (B^n_R\backslash B^n_{r_1}))\right)^{\frac{m-2}{2m}}
\end{split}
\end{equation}
where we have used $|u_i|=|z(x)-t_i|\geq t_i-R$ in $\Sigma_{i}\cap (B^n_R\backslash B^n_{r_1})$. Fix some $R\gg r_1^{2m}$ such that 
 $\mathcal{H}^m(\Sigma\cap (B^n_R\backslash B^n_{r_1}))\geq C_0r_1^{m}$.   
Sending $i$ to the infinity, we find \ref{eq:lower bound for L^p} is not compatible with \ref{eq:uniform L^p}. Hence, we get the desired contradiction.
\end{proof} 
Now we are ready to show

\begin{proposition}\label{pro: entire graph}
    There exists some $t_0$ such that for any area minimizing   hypersurface $\Sigma'_t$ asymptotic to $S_t$  for some $t\geq t_0$ it holds
    \begin{equation}\label{eq: estimate for normal vector}
     \inf_{x\in \Sigma'_t}\langle \nu(x),\partial z\rangle\geq\frac{1}{2}
    \end{equation}
 and  $\Sigma'_t$  must be the entire graph over $S_t$ .
\end{proposition}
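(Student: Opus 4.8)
The plan is to trap $\Sigma'_t$ inside an arbitrarily thin horizontal slab for large $t$, rule out ``steep'' points by a gradient--flow argument that exploits the uniform curvature bound, and finally promote the pointwise estimate $\langle\nu,\partial z\rangle\ge\tfrac12$ to the graphical conclusion by a covering--space argument.

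\textbf{Step 1: a thin slab.} By Remark~\ref{re: asymptotic to hyperplane}, $\Sigma'_t$ has uniformly bounded second fundamental form, $|h|\le C_0=C_0(M,g)$, and satisfies the Sobolev inequality \eqref{eq: Sobolev2} outside a compact set. By Proposition~\ref{pro:drift}, for every $R$ there is a threshold $t(R)$ such that any area minimizing $\Sigma'_t$ asymptotic to $S_t$ with $t\ge t(R)$ is disjoint from $B^n_R$: otherwise a sequence $t_i\to\infty$ with $\Sigma'_{t_i}\cap B^n_R\ne\emptyset$ would violate Proposition~\ref{pro:drift}. Rerunning the second part of the argument of Proposition~\ref{pro: C^0 estimate of u} directly on $\Sigma'_t$ (with $D=\Sigma'_t\cap\{z\ge t\}$, with $R$ in the role of $r_1$, and using Lemma~\ref{lem: L^p estimate for graph}) bounds
\[
\lambda(t):=\sup_{\Sigma'_t}|z-t|\ \le\ C(M,g)\,R^{\,\frac{m(m-2\tau)}{m+2}},
\]
where the exponent is negative because $\tau>\tfrac m2$. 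Letting $R\to\infty$ gives $\lambda(t)\to 0$ as $t\to\infty$, uniformly over all area minimizing hypersurfaces asymptotic to $S_t$; in particular $\Sigma'_t\subset\{|z-t|\le\lambda(t)\}$ with $\lambda(t)$ as small as we wish.

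\textbf{Step 2: no steep points.} Orient $\Sigma'_t=\partial\Omega$ so that $\langle\nu,\partial z\rangle\to 1$ at infinity; this is possible since outside a compact set $\Sigma'_t$ is a graph over $S_t$ whose gradient tends to $0$ by Lemma~\ref{lem: small gradient outside compact set}. As an area minimizing boundary asymptotic to $S_t$, $\Sigma'_t$ is connected: a closed component would contradict minimality, and outside a compact set $\Sigma'_t$ is a single connected graph. Suppose $\inf_{\Sigma'_t}\langle\nu,\partial z\rangle<\tfrac12$; by continuity there is $p\in\Sigma'_t$ with $\langle\nu(p),\partial z\rangle=\tfrac12$. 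Since $|h|\le C_0$ and $g$ is $C^2$--controlled on the end, $\Sigma'_t$ is near $p$ a graph over $T_p\Sigma'_t$ on a disk of a universal radius $r_0=r_0(M,g)$ along which $\nu$ varies by at most $C_0 r_0$; after shrinking $r_0$ we may assume $\langle\nu,\partial z\rangle$ stays within $\tfrac1{100}$ of $\tfrac12$ on this piece $P$, so that $|\nabla_{\Sigma'_t}z|\ge c_0$ on $P$ for a universal $c_0\in(0,1)$. Flowing from $p$ along $\nabla_{\Sigma'_t}z/|\nabla_{\Sigma'_t}z|$ for intrinsic length $r_0/2$ (which stays inside $P$) raises $z$ by at least $c_0 r_0/2$, whereas the oscillation of $z$ on $\Sigma'_t$ is at most $2\lambda(t)$. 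Hence $\lambda(t)\ge c_0 r_0/4$, which is impossible once $t\ge t_0$, where $t_0$ is chosen using Step~1 so that $\lambda(t)<c_0 r_0/4$ for all $t\ge t_0$. Thus $\langle\nu,\partial z\rangle\ge\tfrac12$ on $\Sigma'_t$ for every such $t\ge t_0$, which is \eqref{eq: estimate for normal vector}.

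\textbf{Step 3: entire graph, and the main obstacle.} Because $\langle\nu,\partial z\rangle\ge\tfrac12>0$ everywhere, no tangent space of $\Sigma'_t$ contains a vertical vector, so the vertical projection $\pi:\Sigma'_t\to\mathbb R^{n-1}$, $\pi(x',x^n)=x'$, is a local diffeomorphism; it is proper since $\Sigma'_t$ is properly embedded and $z$ is bounded on it. A proper local diffeomorphism has open and closed, hence full, image, and is a covering map of $\mathbb R^{n-1}$; since $\mathbb R^{n-1}$ is simply connected and $\Sigma'_t$ is connected, $\pi$ is a diffeomorphism, so $\Sigma'_t$ is the entire graph of a function over $S_t$. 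The main obstacle is Step~1: one must convert the qualitative drift statement of Proposition~\ref{pro:drift} into a \emph{quantitative}, $t$--uniform decay $\lambda(t)\to 0$ valid for \emph{arbitrary} area minimizing $\Sigma'_t$ asymptotic to $S_t$, by rerunning the Moser iteration of Proposition~\ref{pro: C^0 estimate of u} on $\Sigma'_t$ itself; once the thin--slab trapping is in hand, the gradient--flow estimate of Step~2 and the covering argument of Step~3 are routine.
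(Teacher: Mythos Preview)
Your proof is correct and takes a genuinely different route from the paper's.

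The paper argues by contradiction and compactness: if \eqref{eq: estimate for normal vector} fails along a sequence $t_i\to\infty$ at points $p_i\in\Sigma'_{t_i}$, then by Proposition~\ref{pro:drift} $|p_i|\to\infty$, so $(M,g,p_i)\to(\mathbb{R}^n,\bar g,O)$ and the pointed sequence $(\Sigma'_{t_i},p_i)$ subconverges to a complete area minimizing hypersurface in $\mathbb{R}^n$, which for $n\le 7$ is a hyperplane; since each $\Sigma'_{t_i}$ is asymptotic to $S_{t_i}$ (hence sits in a horizontal slab), the limit must be the horizontal hyperplane, giving $\langle\nu(O),\partial z\rangle=1$ and contradicting $\langle\nu(p_i),\partial z\rangle\le\tfrac12$. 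The graphical conclusion is then obtained, as in your Step~3, by a covering argument (the paper restricts to a large compact cylinder and uses that $\pi$ is a local diffeomorphism which is already a diffeomorphism on the boundary).

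Your approach replaces the blow-up limit by a direct quantitative estimate. In Step~1 you upgrade the qualitative drift of Proposition~\ref{pro:drift} to the thin-slab bound $\sup_{\Sigma'_t}|z-t|\to 0$ by rerunning the $L^{2m/(m-2)}$ estimate (second case of Lemma~\ref{lem: L^p estimate for u}, via Lemma~\ref{lem: L^p estimate for graph}) and the Moser iteration of Proposition~\ref{pro: C^0 estimate of u} on $\Sigma'_t$ itself; this is legitimate since $\Sigma'_t$ has uniformly bounded curvature and avoids $B^n_R$ with $R\to\infty$. In Step~2 you then exploit the uniform bound $|h|\le C_0$ to show that a ``steep'' point forces a definite $z$-oscillation on a patch of universal radius, which is incompatible with the thin slab. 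This bypasses any appeal to the Bernstein/Simons classification of area minimizing hypersurfaces in $\mathbb{R}^n$, at the cost of invoking the full $C^0$ machinery of \S3.1--3.2; it also yields an explicit decay rate for the slab width, whereas the paper's compactness argument is softer but less informative. Your covering argument in Step~3 is essentially the same as the paper's, stated globally rather than on a truncated cylinder.
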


\begin{proof}    
 We take the contradiction argument. Suppose not, then we can choose a sequence of
 $t_i\to\infty$, a sequence of area minimizing  hypersurfaces
 $\{\Sigma'_{t_i}\}$ asymptotic to $\{S_{t_i}\}$ and points $p_i\in\Sigma'_{t_i}$ such that
 $\langle\nu(p_i),\partial z\rangle\leq\frac{1}{2}$.
Using the similar argument as in Lemma \ref{lem: small gradient outside compact set},  $(M,g, p_i)$ converges to $(\mathbb{R}^n,\bar g, O)$  and  $(\Sigma'_{t_i}, p_i)$ converges locally smoothly to an area minimizing hypersurface passing through the origin in $\mathbb{R}^n$ as $i\to +\infty$. Since $\Sigma'_{t_i}$ is asymptotic to $S_{t_i}$, then $\Sigma$ must be a hyperplane vertical to $\partial z$.   This is not compatible with the fact that 
 $\langle\nu(O),\partial z\rangle\leq\frac{1}{2}$.

It suffices  to show the projection map $\pi:\Sigma'_t\to S_t$ given by 
    \[
    \pi: (x',x^n)\to (x',t)\in S_t\ \ \text{for}\ \ x=(x',x^n)\in M^n
    \]
is injective for $t\geq t_0$.
It follows from \eqref{eq: estimate for normal vector} that projection map $\pi:\Sigma'_t\to S_t$ is the local diffeomorphism. Choose $R_t\gg1 $ and let
$C_{R_t}$ be the cylinder consisting of the points $x=(x',x^n)$ satisfying $|x'|\leq R_t$.
Constraint on $\Sigma'_{t}\cap C_{R_t}$ and $S_{t}\cap C_{R_t}$, we have 
\[
\pi:\Sigma'_{t}\cap C_{R_t}\to S_{t}\cap C_{R_t} 
\]
is  local diffeomorphism with $\pi:\partial(\Sigma'_{t}\cap C_{R_t})\to \partial(S_{t}\cap C_{R_t})$
being diffeomorphism as $\Sigma'_t$ being asymptotic to $S_t$. Since $\Sigma'_{t}\cap C_{R_t}$ is compact, $\pi|_{\Sigma'_{t}\cap C_{R_t}}$ is the covering map to $S_{t}\cap C_{R_t}$. As $S_{t}\cap C_{R_t}$ is simply connected, it follows that 
projection map $\pi:\Sigma'_t\to S_t$ is injective.

\end{proof}

Let $\Sigma_t'$ be any area minimizing surface asymptotic to $S_t$ for some $t\geq t_0$. As $(M^n, g)$ is asmptotically flat, we can take some fixed $t_0\gg1$ such  that $\frac{1}{C_0}\delta_{ij}\leq g|_{\Sigma'_t}\leq C_0\delta_{ij}$ for some uniform $C_0$ and $\Sigma'_t\cap B^n_{r_1}=\emptyset$ where $r_1$ is determined by Proposition \ref{pro: C^0 estimate of u}. Next, we want to prove the following more precise decay estimate for higher order derivative of the graph function $u$ over $S_t$ when $t$ is sufficiently large.

\begin{lemma}\label{lem: L^p decay rsp t}
    Let $t_0$ be taken as above and $\Sigma_t'$ be any area minimizing surface asymptotic to $S_t$ for some $t\geq t_0$. 
   Then for $p>\frac{2m}{m-2}$ there exists a function $C(t)$ relying only on $p$ and $(M^n,g)$ with $C(t)\to 0$ as $t\to +\infty$, such that
    \begin{align}
			||u||_{W^{2,p}} < C(t)
    \end{align}
    Here, the $W^{2,p}$ norm could be computed either on $S_t$ or on $\Sigma_t$.
    More generally, let $\mathcal{U}$ be an open set containing $K$, then there is a constant $C$ depends only on $n,p,(M,g)$, $\mathcal{U}$ and $K$ with
  \begin{align}
			||u||_{W^{2,p}(\Sigma_t\setminus \mathcal{U})} < C,
    \end{align}
 
and $C$ tends to zero when  $\mathcal{U}$ exhausts $M$. 
\end{lemma}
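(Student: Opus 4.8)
The plan is to reduce both claims to the interior $L^p$ estimates of the Appendix for the (asymptotically flat, hence nearly Euclidean) elliptic equation satisfied by $u$, applied at unit scale after a blow-down, and then to sum the resulting estimates over dyadic annuli; the hypothesis $p>\frac{2m}{m-2}$ is precisely what makes that dyadic sum converge. First I would record the uniform data available for every area minimizing $\Sigma'_t$ asymptotic to $S_t$ with $t\ge t_0$: by Proposition~\ref{pro: entire graph}, $\Sigma'_t$ is the entire graph of $u$ over $S_t$ with $\langle\nu,\partial z\rangle\ge\frac12$, so $|\nabla u|$ is bounded; by Lemma~\ref{lem: interior curvature estimate} and Remark~\ref{re: asymptotic to hyperplane}, $|h|$ is bounded and $\Sigma'_t$ has Euclidean volume growth; by Lemma~\ref{lem: small gradient outside compact set}, $|\nabla u|(x')\le C\Lambda_0|x'|^{-1}$ with $\Lambda_0$ uniform since $\|u\|_{C^0(\Sigma'_t)}\le C_*<\infty$ by Proposition~\ref{pro: C^0 estimate of u}; and, because $t_0$ is chosen so that $\Sigma'_t\cap B^n_{r_1}=\emptyset$, the refined estimate of Lemma~\ref{lem: L^p estimate for u} (equivalently Lemma~\ref{lem: L^p estimate for graph}) gives the $t$-independent bound $\|u\|_{L^{2m/(m-2)}(\Sigma'_t)}\le\delta_1$. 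Since $\pi\colon\Sigma'_t\to S_t$ is a diffeomorphism and the induced metric in the graph chart is uniformly comparable to $\bar g$, computing $W^{2,p}$ on $S_t$ or on $\Sigma'_t$ is equivalent. Throughout, $u$ solves $\Delta_{\Sigma'_t}u=f$ with $|f|=O(|x|^{-\tau-1})$ (Lemma~\ref{lem: intrinsic MSE}), which in the graph chart is uniformly elliptic with coefficients of uniformly bounded $C^{0,\alpha}$ norm.

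For the far region, fix a dyadic scale $R=2^j\ge R_0$ and blow down: with $\Phi_R(y)=Ry$ and $\hat g^{(R)}=R^{-2}\Phi_R^*g$, the set $\hat\Sigma^{(R)}:=\Phi_R^{-1}(\Sigma'_t)$ is a minimal graph for $\hat g^{(R)}$ of $\hat u:=R^{-1}u(R\,\cdot)$, which solves $\Delta_{\hat\Sigma^{(R)}}\hat u=\hat f$ with $\hat f(y)=Rf(Ry)$, so $|\hat f|=O(R^{-\tau})$ on $\{|y|\sim1\}$; moreover $|\hat u|+|\nabla\hat u|$ is small there and $\hat g^{(R)}$ is $O(R^{-\tau})$-close to $\bar g$. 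A Moser iteration started from $L^{2m/(m-2)}$ followed by the interior $L^p$ estimate then gives, with $C$ uniform in $R\ge R_0$ and $t\ge t_0$,
\[
\|\hat u\|_{W^{2,p}(\{1/2<|y|<2\})}\ \le\ C\big(\|\hat u\|_{L^{2m/(m-2)}(\{1/4<|y|<4\})}+R^{-\tau}\big)\ \le\ C\big(R^{-m/2}\delta_1+R^{-\tau}\big).
\]
Rescaling back, using $m/2=\tfrac{n-1}{2}<\tau$, yields $\|u\|_{W^{2,p}(\Sigma'_t\cap\{R/2<|x'|<2R\})}\le C\,R^{\,1+\frac{n-1}{p}-\frac m2}\,(\delta_1+1)$. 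The exponent $1+\frac{n-1}{p}-\frac m2$ is negative exactly when $p>\frac{2(n-1)}{n-3}=\frac{2m}{m-2}$, our hypothesis, so summing over $R=2^j\ge R_0$ gives a bound $\|u\|_{W^{2,p}(\Sigma'_t\cap\{|x'|\ge R_0\})}\le\Psi(R_0)$ with $\Psi(R_0)\to0$ as $R_0\to\infty$, uniformly in $t\ge t_0$.

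For the near region, $\Sigma'_t\cap\{|x'|\le2R_0\}$ is, for fixed $R_0$, a graph over a fixed-size disk of $S_t$ sitting at height $\approx t$; covering it by a bounded number $N(R_0)$ of unit balls and applying the interior $L^p$ estimate on each gives $\|u\|_{W^{2,p}(\Sigma'_t\cap\{|x'|\le2R_0\})}\le C(R_0)\,\|u\|_{C^0(\Sigma'_t)}+C(R_0)\,t^{-\tau-1}=:\Phi_{R_0}(t)$, which is finite for each $t\ge t_0$ and, by Proposition~\ref{pro: C^0 estimate of u}, tends to $0$ as $t\to\infty$. Hence $\|u\|_{W^{2,p}(\Sigma'_t)}\le\Phi_{R_0}(t)+\Psi(R_0)<\infty$, and letting first $R_0\to\infty$ and then $t\to\infty$ shows the right-hand side can be made arbitrarily small, which is the first assertion. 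For the last assertion, given $\varepsilon>0$ choose $R_0$ with $\Psi(R_0)<\varepsilon/3$ and then $T$ with $\Phi_{R_0}(t)<\varepsilon/3$ for $t\ge T$; the set $\bigcup_{t_0\le t\le T}\big(\Sigma'_t\cap\{|x'|\le2R_0\}\big)$ lies in a fixed compact set $\bar{\mathcal K}$, so for any open $\mathcal U\supseteq\bar{\mathcal K}\cup K$ we bound $\|u\|_{W^{2,p}(\Sigma'_t\setminus\mathcal U)}$ by $\Phi_{R_0}(t)+\Psi(R_0)<\varepsilon$ when $t\ge T$, and by $\Psi(R_0)<\varepsilon$ when $t_0\le t\le T$ (then the inner part lies in $\mathcal U$ and only the far region survives). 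Thus $C(\mathcal U):=\sup_{t\ge t_0}\|u\|_{W^{2,p}(\Sigma'_t\setminus\mathcal U)}$ depends only on $n,p,(M,g),\mathcal U,K$ and tends to $0$ as $\mathcal U$ exhausts $M$.

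The main obstacle is uniformity in $t$: the naive $L^p$ bounds for $u$ (Lemma~\ref{lem: L^p estimate for graph}) grow like $t$, so one must extract a genuinely $t$-independent, $R$-decaying bound on each dyadic annulus. This is exactly where the scale-invariant $L^{2m/(m-2)}$ bound — available because $t_0$ is taken large enough that $\Sigma'_t$ avoids $B^n_{r_1}$ — combined with the precise $W^{2,p}$ scaling and the numerology $\tau>m/2$, $p>\frac{2m}{m-2}$ comes in; a secondary, purely bookkeeping, point is to ensure that once $\mathcal U$ is large it contains the inner part of $\Sigma'_t$ for all $t$ in the relevant bounded range.
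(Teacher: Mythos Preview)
Your argument is correct but takes a genuinely different route from the paper. The paper proceeds more directly: it first shows, via the Sobolev inequality on $\Sigma'_t$ and integration by parts in $\Delta_{\Sigma'_t}u=f$, that $\|u\|_{L^{2m/(m-2)}(\Sigma'_t)}\le C_m\big(\int|f|^{2m/(m+2)}\big)^{(m+2)/2m}\to0$ (since $\Sigma'_t$ drifts to infinity); then, using the $C^0$ bound from Proposition~\ref{pro: C^0 estimate of u}, it interpolates to get $\|u\|_{L^p}\to0$ for $p>\frac{2m}{m-2}$ --- this is where the hypothesis on $p$ enters in the paper. Finally it covers $S_t$ by unit balls centred at integer points, applies the interior $L^p$ estimate on each, and sums using that the enlarged balls have multiplicity at most $4^m$, giving $\|u\|_{W^{2,p}}\le 4^mC(\|u\|_{L^p}+\|f\|_{L^p})\to0$. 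By contrast, you introduce a dyadic blow-down and use the $t$-independent $L^{2m/(m-2)}$ bound only as a seed; the condition $p>\frac{2m}{m-2}$ appears for you as the convergence exponent of the dyadic $W^{2,p}$ sum after rescaling. Both arguments are sound, but the paper's avoids scaling entirely and is shorter; your approach, however, gives pointwise-in-$R$ decay of the $W^{2,p}$ contribution on each annulus, which is a slightly sharper intermediate statement. One small citation point: the $t$-independent bound $\|u\|_{L^{2m/(m-2)}}\le\delta_1$ you invoke is not quite what Lemma~\ref{lem: L^p estimate for graph} states (that bound grows linearly in $t$); the $t$-independent version really comes from the second clause of Lemma~\ref{lem: L^p estimate for u} together with $\Sigma'_t\cap B^n_{r_1}=\emptyset$, or equivalently from the integration-by-parts computation carried out at the start of the paper's own proof.
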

\begin{proof}
    Note that  $\Sigma'_t$ could be written as an entire graph over $S_t$, $u$ could be regarded as a function on both $\Sigma'_t$ and $S_t$. In the following, we may not distinguish our notation for simplicity, and the domain of definition of $u$ could be identified by context. By calculation in Lemma \ref{lem: intrinsic MSE} we get
    \begin{align*}
        \Delta_{\Sigma'_t}u = f,\quad f = O(|x|^{-\tau-1})
    \end{align*}
 Note that the Sobolev inequality holds on $\Sigma'_t$ with some uniform Sobolev constant. By Proposition 9
 in \cite{EK23}, for any $\varepsilon>0$, it holds $|\nabla u|\leq C|x|^{-\tau+\varepsilon}$ as $x\to \infty$.
 It follows that $|\nabla u|^2\in L^1(\Sigma'_t)$.   Then integrating by parts gives
    \begin{align*}
		    (\int_{\Sigma'_{t}} |u|^{\frac{2m}{m-2}}d\mu)^{\frac{m-2}{m}} \le C_m\int_{\Sigma'_t} |\nabla u|^2 d\mu \le C_m(\int_{\Sigma'_t} |f|^{\frac{2m}{m+2}} d\mu)^{\frac{m+2}{2m}}(\int_{\Sigma'_t}u^{\frac{2m}{m-2}}d\mu)^{\frac{m-2}{2m}}
		\end{align*}
  Since $\Sigma'_t$ drift off to the infinity as $t\to\infty$, by Lemma \ref{lem: estimate} we have 
  \[
\int_{\Sigma'_t} |f|^{\frac{2m}{m+2}} d\mu\to 0\ \ \text{as}\ \ t\to\infty.
  \]
  Hence
  \begin{align*}
       (\int_{\Sigma'_{t}} |u|^{\frac{2m}{m-2}}d\mu)^{\frac{m-2}{2m}}\leq C(t)
       \ \ \text{with}\ \ C(t)\to 0\mbox{ as }t\to \infty
  \end{align*}
    Now we consider $u$ as graph function defined on $S_t$. Note that the volume element $d\mu$ is uniformly $C^0$ equivalent to that on $S_t$, we get
    \begin{align*}
               (\int_{S_{t}} |u|^{\frac{2m}{m-2}}dy)^{\frac{m-2}{2m}} < C(t)
                \ \ \text{with}\ \ C(t)\to 0\mbox{ as }t\to \infty
    \end{align*}
    By Proposition \ref{pro: C^0 estimate of u}, for $p>\frac{2m}{m-2}$ there holds
    \begin{align*}
        (\int_{S_{t}} |u|^{p}dy)^{p} < C(t), \ \ \text{with}\ \ C(t)\to 0\mbox{ as }t\to \infty\\
        (\int_{\Sigma'_{t}} |u|^{p}d\mu)^{p} < C(t), \ \ \text{with}\ \ C(t)\to 0\mbox{ as }t\to \infty
    \end{align*}

    Next, we derive the $W^{2,p}$ estimate of $u$ over $\Sigma_t$. We cover $S_t\cong \mathbb{R}^m$ by a collection of balls $B_{\alpha}$ of radius $1$, centered at each interger point on $S_t$. Then by covering $S_t$ by concentric balls $\Tilde{B}_{\alpha}$ of radius $2$, the multiplicity of each point is no greater than $4^m$. Consequently, $\Sigma_t$ could be covered by $\pi^{-1}(B_{\alpha})$ with multiplicity no greater than $4^m$. By uniform gradient estimate Lemma \ref{pro: entire graph}, there is a uniform constant $L_0$ bounding the diameter of each $\pi^{-1}(B_{\alpha})$ and $\pi^{-1}(\Tilde{B}_{\alpha})$. By $L^p$ interior estimate, we have for each $\alpha$ that
    \begin{align*}
        ||u||_{W^{2,p}(\pi^{-1}(B_\alpha))}\le C(||u||_{L^p(\pi^{-1}(\Tilde{B}_{\alpha}))}+||f||_{L^p(\pi^{-1}(\Tilde{B}_{\alpha}))})
    \end{align*}
    Therefore,
    \begin{align*}
        ||u||_{W^{2,p}} &\le \sum_{\alpha} ||u||_{W^{2,p}(\pi^{-1}(B_\alpha))}\\
        &\le \sum_{\alpha}C(||u||_{L^p(\pi^{-1}(\Tilde{B}_{\alpha}))}+||f||_{L^p(\pi^{-1}(\Tilde{B}_{\alpha}))})\\
        &\le 4^mC(||u||_{L^p}+||f||_{L^p}) < C(t),\quad C(t)\to 0\mbox{ as }t\to \infty
    \end{align*}
    Finally, by uniform gradient estimate, the same thing also holds true when considering $u$ as the function defining on $S_t$. This concludes the proof of Lemma \ref{lem: L^p decay rsp t}.
\end{proof}

Once $ W^{2,p}$-norm of $u$ is obtained, we get $C^{1,\alpha}$-estimate of $u$ by choosing $p$ large enough, and then by Schauder estimate we finally have $C^{k,\alpha}$-estimate of $u$ for all $k\geq 1$. With these facts, we establish the following lemma in terms of  constant $C$ independent on $t$, hence,  improve Proposition \ref{pro:C^0 asymtotic behavior}.

 \begin{lemma}\label{lem: decay estimate of u}
Let $t_0$ be as in Lemma \ref{lem: L^p decay rsp t} and $\Sigma'_t$ be any area minimizing hypersurface asymptotic to  $S_t$ for some $t>t_0$. Then there is a constant $C$ depends only on $n$ and $\tau$, so that for any $\epsilon>0$, $|z-t|_{C^0}(x)\leq C|x|^{1-\tau+\epsilon}$ as $|x|\to +\infty$.
 \end{lemma}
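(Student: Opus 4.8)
The plan is to bootstrap from the $L^p$-decay bounds already in place. By Lemma~\ref{lem: L^p decay rsp t} we know that, for $t>t_0$, the graph function $u$ over $S_t$ satisfies $\|u\|_{W^{2,p}(\Sigma_t\setminus\mathcal U)}<C$ with $C$ independent of $t$ (indeed $C\to 0$ as $\mathcal U$ exhausts $M$), and $\Delta_{\Sigma'_t}u=f$ with $f=O(|x|^{-\tau-1})$. Picking $p$ large gives a uniform $C^{1,\alpha}$ bound, and Schauder iteration on unit balls (using the uniform gradient bound from Proposition~\ref{pro: entire graph} to control the distortion between $S_t$-balls and their preimages in $\Sigma'_t$) gives uniform $C^{k,\alpha}$ bounds for every $k$, all with constants depending only on $n,\tau,(M,g)$ and not on $t$. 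So the task is really to upgrade these \emph{uniform bounded} statements into the \emph{decay} statement $|z-t|(x)\le C|x|^{1-\tau+\epsilon}$ with $C$ independent of $t$.

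The mechanism for the decay is the same as in Proposition~\ref{pro:C^0 asymtotic behavior}: since $\Sigma'_t$ is asymptotic to $S_t$ and has the gradient decay $|\nabla u|\le C|x|^{-\tau+\epsilon}$ (Proposition~9 in \cite{EK23}, applicable by Remark~\ref{re: asymptotic to hyperplane}), one feeds this into the elliptic equation $\Delta u=f=O(|x|^{-\tau-1})$ and runs a scaled interior estimate. Concretely, I would fix $x$ with $|x'|=2\sigma\gg 1$, rescale by $\Phi(y)=x+\sigma \tilde y$ so that $\tilde g=\Phi^*g\to\bar g$ and the rescaled surface becomes a smooth minimal graph satisfying a uniformly elliptic equation on $B_1$, then apply the interior Schauder estimate
\[
|\tilde u|_{C^{2,\alpha}(B_{1/2})}\le C\bigl(|\tilde u|_{C^0(B_1)}+|\tilde f|_{C^{0,\alpha}(B_1)}\bigr).
\]
The term $|\tilde f|_{C^{0,\alpha}(B_1)}$ is $O(\sigma^{-\tau-1+\epsilon})\cdot\sigma$ after unwinding the scaling, while $|\tilde u|_{C^0(B_1)}$ is controlled by the oscillation of $u$ over a ball of radius $\sigma$ at distance $\sim\sigma$, which the gradient decay shows is $O(\sigma\cdot\sigma^{-\tau+\epsilon})=O(\sigma^{1-\tau+\epsilon})$. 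Rescaling back and iterating this on dyadic annuli $|x'|\sim 2^k$ yields $|z(x)-t'|\le C|x'|^{1-\tau+\epsilon}$, and the Claim in Proposition~\ref{pro:C^0 asymtotic behavior} (or the argument via Lemma~\ref{lem: L^p estimate for u}) identifies $t'=t$.

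The point requiring genuine care — and the place where this lemma differs from Proposition~\ref{pro:C^0 asymtotic behavior} — is that \emph{all constants must be independent of $t$}. This is where Lemma~\ref{lem: L^p decay rsp t} does the work: it provides $t$-independent $W^{2,p}$ and hence $C^{1,\alpha}$ bounds on $\Sigma'_t\setminus\mathcal U$, so the rescaled surfaces $\tilde\Sigma'_t$ live in a compact family of smooth graphs with uniformly elliptic equations, and the Schauder constant $C$ in the display above is uniform in $t$. One has to check that the gradient-decay input $|\nabla u|\le C|x|^{-\tau+\epsilon}$ from \cite{EK23} also has $C$ independent of $t$ for $t\ge t_0$; this follows because the hypotheses of Proposition~9 of \cite{EK23} (uniform Sobolev inequality, uniform second-fundamental-form bound — Remark~\ref{re: asymptotic to hyperplane}) hold uniformly for the family $\{\Sigma'_t\}_{t\ge t_0}$, and for $t\ge t_0$ the surface avoids $B^n_{r_1}$, so the estimates of Proposition~\ref{pro: C^0 estimate of u} apply with a $t$-independent constant once the $C^0$-size of $u$ is itself uniformly small (again Lemma~\ref{lem: L^p decay rsp t}). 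The main obstacle, then, is not any single estimate but the careful accounting that the compactness/scaling argument of Lemma~\ref{lem: small gradient outside compact set} together with the uniform elliptic estimates produces a bound whose constant genuinely depends only on $n$ and $\tau$; once that bookkeeping is set up, the dyadic iteration is routine.
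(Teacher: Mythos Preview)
Your approach is correct but differs substantially from the paper's. The paper's proof is a direct Green's function argument: after Proposition~\ref{pro: entire graph} and the paragraph following it, $(\Sigma'_t,g|_{\Sigma'_t})$ is identified with $(\mathbf{R}^{n-1},ds^2)$ where $ds^2$ is uniformly equivalent to the Euclidean metric with ellipticity constants depending only on $n$ and a fixed $C_0$. The equation $\Delta_G u=f$, $u\to 0$ at infinity, is then solved via the Green's function $G(x,y)$ for $-\Delta_G$, for which the two-sided bound $C_1|x-y|^{3-n}\le G(x,y)\le C_2|x-y|^{3-n}$ of Littman--Stampacchia--Weinberger \cite{LSW63} holds with $C_1,C_2$ depending only on $n$ and the ellipticity. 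One then writes $u(x)=-\int G(x,y)f(y)\,dy$ and estimates the integral by splitting into the three regions $B_{r/2}(o)$, $\mathbf{R}^{n-1}\setminus(B_{r/2}(o)\cup B_{3r}(x))$, and $B_{3r}(x)\setminus B_{r/2}(o)$ with $r=|x|/2$; each piece is $O(r^{1-\tau})$ (with a logarithm in the borderline $\tau=n-2$, absorbed into the~$\epsilon$).

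Your route---invoke Proposition~9 of \cite{EK23} for $|\nabla u|\le C|x|^{-\tau+\epsilon}$ with $t$-uniform constant, then sum oscillations on dyadic annuli (equivalently, integrate the gradient bound from $x$ to infinity)---also works, and indeed is logically simpler than the Schauder/rescaling layer you wrap around it: once the gradient decay is in hand, direct integration along a ray already gives $|u(x)|\le C|x|^{1-\tau+\epsilon}$ since $u\to 0$ at infinity. The trade-off is that the paper's argument is self-contained (no appeal to \cite{EK23} at this step) and makes the $t$-independence of $C$ transparent, coming straight from the uniform ellipticity of $ds^2$; your argument pushes the $t$-uniformity question into the black box of \cite{EK23}, which is fine but requires the bookkeeping you describe.
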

\begin{proof}

 By the paragraph after the proof of Proposition \ref{pro: entire graph}, $(\Sigma'_t, g|_{\Sigma'_t})$ can be regarded as $(\mathbf{R}^{n-1}, ds^2)$ with $ds^2$ satisfying
 \[
 \frac{1}{C_0}\delta_{ij}\leq ds^2\leq C_0\delta_{ij}
 \]
 for some uniform $C_0$.  Thus,  \eqref{eq: mse} can be written as 
 \begin{align}\label{eq: 23}
        \left\{
		\begin{alignedat}{2}
			& \Delta_G u=f \quad \text {in $\mathbf{R}^{n-1}$} \\
		   & u(x)\to 0  \quad\text {as $x\to \infty$} 
		\end{alignedat}
	\right.           
        \end{align}
Here $\Delta_G$ denotes the Laplacian operator on  $(\mathbf{R}^{n-1}, ds^2)$ and $(G_{ij})$ is uniformly elliptic on $\mathbf{R}^{n-1}$. To demonstrate that  $|z-t|$ decay to zero at  the  rate specified in the Lemma, we still let $u=z(x)-t$ as before. Since  the smallest and the largest eigenvalue of $(G_{ij})$ are uniformly bounded, then  By \cite{LSW63} ( c.f. (7.9), p.67), we know that the Green function $G(x,y)$ for $-\Delta_G$ satisfies 

\begin{equation} \label{eq:Greenfun}
C_1|x-y|^{3-n}\leq G(x,y)\leq 	C_2|x-y|^{3-n}, \quad \text{for any $x,y$ in $\mathbf{R}^{n-1}$,}
\end{equation}
here $C_1$ and  $C_2$ are two constants depends only on $n$ and $C_0$.
 Now, we have
\begin{equation}
	u(x)=-\int_{\mathbf{R}^{n-1}}G(x,y)f(y) dy, 
\nonumber
\end{equation}	
and

\begin{equation}\label{eq:24}
\begin{split}
|u(x)|&\leq C\int_{\mathbf{R}^{n-1}}|x-y|^{3-n}(1+|y|)^{-\tau-1} dy\\
&\leq C \left(\int_{B_{\frac r2}(o)}|x-y|^{3-n}(1+|y|)^{-\tau-1} dy+\int_{\mathbf{R}^{n-1}\setminus (B_{\frac r2}(o)\cup B_{3r}(x) )}|x-y|^{3-n}(1+|y|)^{-\tau-1} dy\right. \\
 &+ \left. \int_{ B_{3r}(x) \setminus B_{\frac r2}(o)}|x-y|^{3-n}(1+|y|)^{-\tau-1} dy \right),
\end{split}
\end{equation}		
where $r=\frac{|x|}{2}$ and we have used the fact
$$
|f|(y)\leq C(1+|y|)^{-\tau-1}
$$	
in the first inequality in \eqref{eq:24}, here and in the sequel  $C$ denotes a constant depends only on $n$ and its meaning is different from line to line .

For the case that $\tau<n-2$  we have
\begin{equation}\label{eq:25}
\begin{split}
\int_{B_{\frac r2}(o)}|x-y|^{3-n}(1+|y|)^{-\tau-1} dy&\leq C r^{3-n}\int_{B_{\frac r2}(o)}(1+|y|)^{-\tau-1} dy\\
&\leq C r^{3-n}\int^{\frac r2}_0(1+\rho)^{-\tau-1}\rho^{n-2}d\rho\\
&\leq C r^{1-\tau},
\end{split}
\end{equation}	
and if $\tau=n-2$ we have
$$
\int_{B_{\frac r2}(o)}|x-y|^{3-n}(1+|y|)^{-\tau-1} dy \leq C r^{3-n}\log r.
$$
Note that for any $y\in \mathbb{R}^{n-1}\setminus (B_{\frac r2}(o)\cup B_{3r}(x))$ there holds 
$$|x|\leq \frac{|y|}{2},$$ 
and hence
$$
|x-y|\geq \frac12 |y|,
$$
thus
\begin{equation}\label{eq:26}
\begin{split}
\int_{\mathbf{R}^{n-1}\setminus (B_{\frac r2}(o)\cup B_{3r}(x) )}|x-y|^{3-n}(1+|y|)^{-\tau-1} dy&\leq C\int_{\mathbf{R}^{n-1}\setminus B_{\frac r2}(o)}	|y|^{2-n-\tau}dy\\
&\leq C r^{1-\tau},
\end{split}
\end{equation}

\begin{equation}\label{eq:27}
\begin{split}
\int_{ B_{3r}(x) \setminus B_{\frac r2}(o)}|x-y|^{3-n}(1+|y|)^{-\tau-1} dy&\leq C r^{-\tau-1}\int_{ B_{3r}(x)}|x-y|^{3-n}\\
&\leq  C r^{1-\tau}\end{split}
 \end{equation}
Plug \eqref{eq:25}, \eqref{eq:26} and \eqref{eq:27} into \eqref{eq:24} we finally obtain that for any $\epsilon>0$ and $|y|$ large enough there holds

$$
|u|(y)\leq |y|^{1-\tau+\epsilon}.
$$
Thus we get the conclusion. 

\end{proof}

Given an area minimizing hypersurface $\Sigma'_t$ asymptotic to $S_t$ for some $t\geq t_0$, $\Sigma'_t$ can be represented as the entire graph on $S_t$ by $\Sigma'_t=\{(y, u), y\in S_t\}$, here and in the following, $u$ is a smooth function defined on $S_t$. To obtain uniform decay for higher order derivative of $u$, we need the following minimal graph equation(\cite{CM11} P236).
\begin{lemma}\label{eq: minimal graph}
Let $\Sigma_t=\{(y, u), y\in S_t\}$ be the area minimizing surface as above. Then
\begin{equation}\label{eq: minimal graph eq}
\begin{split}
     F(y,u(y),D_iu(y),D_{ij}^2u(y))&:= h^{ij}(D_{ij}u+\Gamma_{ij}^n+D_iu\Gamma_{nj}^n+D_ju\Gamma_{ni}^n+D_iuD_ju\Gamma_{nn}^n)\\
    &-D_kuh^{ij}(\Gamma_{ij}^k+D_iu\Gamma_{nj}^k+D_ju\Gamma_{in}^k+D_iuD_ju\Gamma_{nn}^k)=0,
\end{split}
\end{equation}
where $(h^{ij})$ is the inverse matrix to $(h_{ij})$ and 
\begin{align}\label{eq: elliptic coefficient}
    h_{ij}(y,u,p_i) = g_{ij}(y,u)+p_ig_{jn}(y,u)+p_jg_{in}(y,u)+p_ip_jg_{nn}(y,u)
\end{align}
and each Christoffel symbol is evaluated at $(y,u)$ with $1\leq i,j,k\leq n-1$.
\end{lemma}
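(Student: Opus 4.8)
The plan is to obtain \eqref{eq: minimal graph eq} directly from the vanishing of the mean curvature of $\Sigma_t$. Let $\nabla^{g}$ denote the Levi-Civita connection of the ambient metric $g$. Since $\Sigma_t$ is area minimizing and, in the exterior region where it is written as a graph over $S_t$, is a smooth hypersurface (by the estimates recorded above), it is a smooth minimal hypersurface there, so $H\equiv 0$. Writing $\Sigma_t=\{(y,u(y)):y\in S_t\}$ in the ambient coordinates $(x_1,\dots,x_{n-1},x_n)$ with $S_t=\{x_n=t\}$, I would first record the coordinate tangent fields along $\Sigma_t$, namely $e_i=\partial_i+D_iu\,\partial_n$ for $1\le i\le n-1$. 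Pairing them gives the induced metric $h_{ij}=g(e_i,e_j)=g_{ij}+D_iu\,g_{jn}+D_ju\,g_{in}+D_iuD_ju\,g_{nn}$, which is \eqref{eq: elliptic coefficient}; since $\{e_i\}$ is a frame on $\Sigma_t$, $(h_{ij})$ is positive definite and $(h^{ij})$ is well defined.

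Next I would use that $H=h^{ij}g(\nabla^{g}_{e_i}e_j,\nu)=0$ for the unit normal $\nu$; since only a positive scalar multiple of $\nu$ enters this expression, it suffices to contract against any nonzero normal vector. The economical choice is the $g$-dual $N$ of the $1$-form $dx_n-du$: one has $(dx_n-du)(e_i)=D_iu-D_iu=0$, so $N\perp\Sigma_t$; $N$ is nonzero because $dx_n-du$ has a nonvanishing $dx_n$-part; and for any ambient vector field $X$ one computes $g(X,N)=X^n-D_kuX^k$ (here and below $k$ runs over $1,\dots,n-1$, with the convention $D_nu:=0$). Hence $H=0$ is equivalent to $h^{ij}\big((\nabla^{g}_{e_i}e_j)^n-D_ku\,(\nabla^{g}_{e_i}e_j)^k\big)=0$.

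It remains to compute $\nabla^{g}_{e_i}e_j$ in the coordinate frame. Expanding by the Leibniz rule, using $\nabla^{g}_{\partial_a}\partial_b=\Gamma_{ab}^c\partial_c$, and using that $u$ is independent of $x_n$ so that $e_i(D_ju)=D_{ij}u$, I would read off the vertical component $(\nabla^{g}_{e_i}e_j)^n=D_{ij}u+\Gamma_{ij}^n+D_iu\,\Gamma_{nj}^n+D_ju\,\Gamma_{ni}^n+D_iuD_ju\,\Gamma_{nn}^n$ and, for $1\le k\le n-1$, the horizontal components $(\nabla^{g}_{e_i}e_j)^k=\Gamma_{ij}^k+D_iu\,\Gamma_{nj}^k+D_ju\,\Gamma_{in}^k+D_iuD_ju\,\Gamma_{nn}^k$, all Christoffel symbols being evaluated at $(y,u(y))$. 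Substituting these two identities into $h^{ij}\big((\nabla^{g}_{e_i}e_j)^n-D_ku\,(\nabla^{g}_{e_i}e_j)^k\big)=0$ produces exactly $F(y,u,D_iu,D^2_{ij}u)=0$, which is \eqref{eq: minimal graph eq}.

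There is no serious obstacle here; it is a bookkeeping computation and the formula is classical (see \cite{CM11}, p.~236). The points that need mild care are keeping straight which indices run over $1,\dots,n-1$ versus $1,\dots,n$ — the terms containing $\Gamma^{c}_{nj}$, $\Gamma^{c}_{in}$ or $\Gamma^{c}_{nn}$ arise precisely from the $D_iu\,\partial_n$ summand of $e_i$ and the $D_ju\,\partial_n$ summand of $e_j$ — and noticing that replacing $\nu$ by the unnormalized normal $N$ only multiplies the minimal-surface equation by the harmless positive factor $|N|_g$. Alternatively, one may derive the same equation as the Euler–Lagrange equation of the area functional $u\mapsto\int_{S_t}\sqrt{\det(h_{ij})}\,dy$, but the normal-projection argument above is the shortest.
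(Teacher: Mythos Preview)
Your proposal is correct and is precisely the standard derivation: the paper does not give its own proof of this lemma but simply cites \cite{CM11}, p.~236, and what you have written is exactly that computation (tangent frame $e_i=\partial_i+D_iu\,\partial_n$, induced metric $h_{ij}$, contraction of $\nabla^g_{e_i}e_j$ against an unnormalized normal). There is nothing to add.
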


\begin{lemma}\label{lem: decay estimate for higher derivative of u}
There is a constant $C$ depends only on $k$,  $\alpha$ and $(M^n,g)$ so that for any $k\geq 1$, $\epsilon>0$, $|u-t|_{C^{k,\alpha}}(y)\leq C|y|^{1-\tau-k+\epsilon}$ as $|y|\to +\infty$.
\end{lemma}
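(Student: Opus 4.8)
The plan is to prove the estimate by induction on $k$ from the minimal graph equation \eqref{eq: minimal graph eq}, using a rescaling argument together with interior Schauder estimates. Recall that by Proposition \ref{pro: entire graph} the surface $\Sigma'_t$ is the entire graph $\{(y,u(y)):y\in S_t\}$ with $u-t\to 0$ at infinity, that $u$ satisfies \eqref{eq: minimal graph eq}, and that we already have the $C^0$ decay $|u-t|(y)\le C|y|^{1-\tau+\epsilon}$ from Lemma \ref{lem: decay estimate of u} and the gradient decay $|\nabla u|(y)\le C|y|^{-\tau+\epsilon}$ (Proposition 9 of \cite{EK23}, as used in the proof of Lemma \ref{lem: L^p decay rsp t}). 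These, combined with interior $L^p$ estimates exactly as in the proof of Lemma \ref{lem: L^p decay rsp t}, settle the cases $k=0,1$; so I may assume $k\ge 2$ and that
\[
|u-t|_{C^{j,\alpha}}(y)\le C|y|^{1-\tau-j+\epsilon}\quad\text{as }|y|\to\infty,\ \ 0\le j\le k-1,
\]
and prove the corresponding bound for $D^k u$.

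Fix $y_0\in S_t$ with $R:=|y_0|\gg1$ and rescale: put $v(\tilde y):=(u-t)(y_0+R\tilde y)$ for $\tilde y\in B_{1/4}(0)\subset\mathbb{R}^{n-1}$, so that $|y|=|y_0+R\tilde y|$ is comparable to $R$ on the domain. Substituting $y=y_0+R\tilde y$ into \eqref{eq: minimal graph eq} and multiplying through by $R^2$ shows that $v$ solves a uniformly elliptic equation
\[
a^{ij}(\tilde y)\,D_{ij}v=F(\tilde y)\qquad\text{on }B_{1/4}(0),
\]
where $a^{ij}(\tilde y)=h^{ij}\big(y_0+R\tilde y,\,t+v(\tilde y),\,R^{-1}Dv(\tilde y)\big)$ and $F$ is $R^2$ times the collection of Christoffel-symbol terms in \eqref{eq: minimal graph eq}. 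Using the asymptotic flatness bounds $g_{ij}-\delta_{ij}=O(|y|^{-\tau})$, $\partial^\ell g=O(|y|^{-\tau-\ell})$, the bound $\|v\|_{C^0(B_{1/4})}\le CR^{1-\tau+\epsilon}$, the bound $|Dv|=R|\nabla u|\le CR^{1-\tau+\epsilon}\to 0$, and the inductive hypothesis applied to $D^jv=R^jD^j(u-t)$ for $j\le k-1$, a direct computation shows that, uniformly in $R$, the coefficients $a^{ij}$ are uniformly elliptic, converge to $\delta_{ij}$, and are bounded in $C^{k-2,\alpha}(B_{1/4})$, while
\[
\|F\|_{C^{k-2,\alpha}(B_{1/4})}\le CR^{1-\tau}.
\]
The point here is that $\tau>\frac{n-1}{2}\ge\frac32>1$, so that each application of $\partial_{\tilde y}=R\,\partial_y$ to a Christoffel symbol $O(|y|^{-\tau-1})$ produces $O(R^{-\tau-1})$ again, and all lower-order error terms are genuinely small powers of $R$.

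With these uniform bounds in place, the standard interior Schauder estimate for linear uniformly elliptic equations with $C^{k-2,\alpha}$ coefficients gives
\[
\|v\|_{C^{k,\alpha}(B_{1/8})}\le C\big(\|v\|_{C^0(B_{1/4})}+\|F\|_{C^{k-2,\alpha}(B_{1/4})}\big)\le CR^{1-\tau+\epsilon},
\]
with $C$ independent of $R$ (the ellipticity constants and the coefficient norms being uniform). Since $D^kv(0)=R^kD^k(u-t)(y_0)$ and, for the Hölder seminorm, $[D^kv]_{\alpha,B_{1/8}}=R^{k+\alpha}[D^k(u-t)]_{\alpha,B_{R/8}(y_0)}$, rescaling back yields
\[
|D^k(u-t)|(y_0)+R^{\alpha}[D^k(u-t)]_{\alpha,B_{R/8}(y_0)}\le CR^{1-\tau-k+\epsilon}.
\]
As $y_0$ was an arbitrary point of $S_t$ with $|y_0|=R$ and $R$ was arbitrarily large, this closes the induction (after relabelling $\epsilon$); and since $\Sigma'_t$ is an entire graph with decaying gradient, the estimate computed on $S_t$ is equivalent to the one on $\Sigma'_t$.

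The main obstacle I anticipate is the bookkeeping in the inductive step: one must verify, with constants uniform in $R$, both that $a^{ij}$ stays bounded in $C^{k-2,\alpha}$ and that $\|F\|_{C^{k-2,\alpha}}=O(R^{1-\tau})$. Both rely on the inductive hypothesis controlling all derivatives of $u-t$ up to order $k-1$, and on the quantitative asymptotic flatness $\partial^\ell g=O(|x|^{-\tau-\ell})$ together with $\tau>1$, which guarantees that differentiating the metric (hence the Christoffel symbols) only improves decay. Once these two facts are in place the conclusion is a direct application of interior Schauder estimates; the quasilinear nature of \eqref{eq: minimal graph eq} causes no extra difficulty, since its top-order coefficients $h^{ij}$ depend on $u$ and $Du$ only, both already under control at the previous stage of the induction.
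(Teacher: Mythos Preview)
Your proposal is correct and takes essentially the same approach as the paper: both arguments amount to applying interior Schauder estimates on balls of radius comparable to $|y|$ and then iterating to higher order. Your explicit rescaling $v(\tilde y)=(u-t)(y_0+R\tilde y)$ is exactly equivalent to the paper's use of the scale-invariant norms $|u|^{*}_{2,\alpha;B_r(x)}$ from \cite{GT} Theorem~6.2 with $r=|x|/4$; the paper then obtains higher derivatives by differentiating \eqref{eq: minimal graph eq} and repeating, whereas you package the same bootstrap as an induction using higher-order Schauder directly---these are interchangeable.
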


\begin{proof}
for which we could write as $h^{ij}D_{ij}u = f_1$. For $x\in \mathbf{R}^{n-1}$, denote $r = \frac{|x|}{4}$. By uniform Schauder estimate from the remark behind Lemma \ref{lem: L^p decay rsp t}, we have $|h_{ij}|_{\alpha}$ and $|D_iu|_{\alpha}$ uniformly bounded. It follows that $f_1 = O(|x|^{-\tau-1})$. Moreover, we have
\begin{equation}\label{eq: 38}
    \begin{split}
        \frac{\partial \Gamma_{ij}^k(x,u(x))}{\partial x_l} =
     \frac{\partial \Gamma_{ij}^k}{\partial x_l}+\frac{\partial \Gamma_{ij}^k}{\partial x_n}D_lu = O(|x|^{-2-\tau}).
    \end{split}
\end{equation}
This yields
\begin{equation}\label{eq: 30}
    \begin{split}
        &|\Gamma_{ij}^k|_{C^{\alpha}(B_r(x))}\le r^{1-\alpha}|D\Gamma_{ij}^k| = O(|x|^{\alpha-\tau-1})\\
    &|f_1|_{0,\alpha;B_r(x)} = O(|x|^{\alpha-\tau-1})
    \end{split}
\end{equation}
Hence, we obtain $|h_{ij}|_{C^{\alpha}(B_r(x))}+ r^2|f_1|_{C^{\alpha}(B_r(x))}<\Lambda$ for a uniform constant $\Lambda$ when $x$ is sufficiently large. By \cite{GT} Theorem 6.2, we deduce
\begin{align*}
    &r|Du(x)|+r^2|D^2u(x)|\\
    \le&|u|^*_{2,\alpha; B_r(x)} \le C(|u|_{0,B_r(x)}+r^2|f_1|_{0,\alpha;B_r(x)})\\
    =& O(|x|^{-\tau+1+\epsilon})
\end{align*}
Here $C$ relies only on $\alpha,(M^n,g)$ and $\Lambda$.
This gives the desired estimate for $|Du|$ and $|D^2u|$.
By taking derivative on two sides of \eqref{eq: minimal graph eq} and iterating the argument above, we get the desired decay estimate for higher order derivative of $u$.
\end{proof}

In the last of this subsection, we show the uniqueness result for area minimizing surface asymptotic to same hyperplane $S_t$. 

Consider two  area minimizing hypersurfaces asymptotic to same hyperplane $S_t$ for some $t\geq t_0$. 
Denote the graph function to be $u,v$ respectively. 
Then we have
\begin{equation}\label{eq: formal minimal surface eq}
    \begin{split}
        F(y,u(y),D_iu(y),D_{ij}^2u(y)) = 0\\
    F(y,v(y),D_iv(y),D_{ij}^2v(y)) = 0
    \end{split}
\end{equation}
 Denote $w = v-u$.
Then following P237 of \cite{CM11} we substract the two equations \eqref{eq: formal minimal surface eq} and get
\begin{equation}\label{eq: eliptic eq for w}
    \begin{split}
        &(\int_0^1 F_{p_{ij}}ds)D_{ij}w+(\int_0^1 F_{p_{i}}ds)D_iw+(\int_0^1 F_{u}ds)w\\
    =:& a_{ij}D_{ij}w+b_iD_iw+cw = 0
    \end{split}
\end{equation}
where each derivative of $F$ is evaluated at $(y,u+sw,D_iu+sD_iw,D_{ij}u+sD_{ij}w)$.
Since $F_{p_{ij}} = h^{ij}$, by the remark after Proposition \ref{pro: entire graph}, we have $a_{ij}$ is uniformly elliptic for $t\geq t_0$:
\begin{align*}
    \lambda |\xi|^2\le a_{ij}\xi^i\xi^j
\end{align*}
The next lemma presents decay estimate of coefficients in \eqref{eq: eliptic eq for w}
\begin{lemma}\label{lem: Estimate of Dh^{ij}, F_{p_i}, F_u}
On $S_t$, evaluating at $(y,u(y),Du(y),D^2u(y))$, there holds
\begin{align*}
    |Dh^{ij}|+|F_{p_i}| + |F_u|= O(|y|^{-\tau-1})
    \end{align*}
    and
    \begin{align*}
   ||Dh^{ij}||_{L^m}+ ||F_{p_i}||_{L^m}+||F_u||_{L^{\frac{m}{2}}}\to 0 (t\to \infty)
\end{align*}
\end{lemma}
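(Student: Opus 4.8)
The plan is to express all coefficients in \eqref{eq: eliptic eq for w} in terms of the metric components $g_{\alpha\beta}$, their Christoffel symbols, and the graph data $u, Du, D^2u$, and then invoke the asymptotic decay of these quantities established earlier. First I would recall from Lemma \ref{lem: decay estimate for higher derivative of u} that $u = t + O(|y|^{1-\tau+\epsilon})$, $|Du| = O(|y|^{-\tau+\epsilon})$ and $|D^2u| = O(|y|^{-\tau-1+\epsilon})$; in particular $u$, $Du$, $D^2u$ are all uniformly bounded once $t\ge t_0$, and the point $(y,u(y))$ has $|\,(y,u(y))\,|\sim |y|$. Since $(M^n,g)$ is AF of order $\tau$, we have $g_{\alpha\beta} - \delta_{\alpha\beta} = O(|y|^{-\tau})$, $\partial g_{\alpha\beta} = O(|y|^{-\tau-1})$ at the point $(y,u(y))$, and consequently each Christoffel symbol satisfies $\Gamma_{\alpha\beta}^{\gamma}(y,u(y)) = O(|y|^{-\tau-1})$ and $\partial\Gamma_{\alpha\beta}^{\gamma} = O(|y|^{-\tau-2})$, exactly as in \eqref{eq: 38}. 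The chain rule along the graph, $\frac{\partial}{\partial y_l}\bigl(\Gamma_{\alpha\beta}^{\gamma}(y,u)\bigr) = \partial_l\Gamma_{\alpha\beta}^{\gamma} + \partial_n\Gamma_{\alpha\beta}^{\gamma}\,D_lu = O(|y|^{-\tau-2}) + O(|y|^{-\tau-1})\cdot O(|y|^{-\tau+\epsilon}) = O(|y|^{-\tau-2})$, handles the $y$-derivatives hitting the slot $(y,u)$, and similarly for derivatives with respect to the $u$-, $p_i$- and $p_{ij}$-slots.

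Next I would read off the three coefficients. From \eqref{eq: elliptic coefficient}, $h_{ij} = g_{ij} + p_ig_{jn} + p_jg_{in} + p_ip_jg_{nn}$ with $p_i = D_iu$, so $h_{ij} = \delta_{ij} + O(|y|^{-\tau}) + O(|y|^{-\tau+\epsilon})$, whence $h^{ij} = \delta^{ij} + O(|y|^{-\tau+\epsilon})$; differentiating, $Dh^{ij}$ picks up either a $\partial g$ term ($O(|y|^{-\tau-1})$) or a $D^2u$ term ($O(|y|^{-\tau-1+\epsilon})$), giving $|Dh^{ij}| = O(|y|^{-\tau-1+\epsilon})$ — which is $O(|y|^{-\tau-1})$ after absorbing $\epsilon$, or more honestly I would just state the bound with the $\epsilon$ and note the paper's convention that such $\epsilon$ is harmless; alternatively, since $Dh^{ij}$ is linear in $\partial g$ and in $D^2 u$ with bounded coefficients, and $D^2u = O(|y|^{-\tau-1+\epsilon})$, it is cleanest to keep $O(|y|^{-\tau-1})$ for the $\partial g$ part and observe the $D^2u$ part is the same order up to $\epsilon$. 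For $F_{p_i}$ and $F_u$: looking at \eqref{eq: minimal graph eq}, every term of $F$ except the pure Hessian term $h^{ij}D_{ij}u$ carries at least one Christoffel factor, hence is $O(|y|^{-\tau-1})$ times a bounded function of $(u,Du,D^2u)$; differentiating $F$ with respect to $u$ or $p_i$ either differentiates a Christoffel symbol in the $u$-slot (still $O(|y|^{-\tau-1})$), or differentiates the bounded algebraic prefactors (leaving the $O(|y|^{-\tau-1})$ Christoffel factor intact), or — for $F_{p_i}$ acting on the $h^{ij}D_{ij}u$ term — produces $(\partial_{p_i}h^{jk})D_{jk}u = O(1)\cdot O(|y|^{-\tau-1+\epsilon})$; in every case the result is $O(|y|^{-\tau-1})$ (modulo $\epsilon$). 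Assembling these gives the first display, $|Dh^{ij}| + |F_{p_i}| + |F_u| = O(|y|^{-\tau-1})$.

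For the second display, all three quantities are $O(|y|^{-\tau-1})$ on $S_t$ outside $B^n_{r_1}$, and since $\Sigma'_t$ drifts to infinity as $t\to\infty$ (Proposition \ref{pro:drift}, or its graph version via Lemma \ref{lem: L^p decay rsp t}), the relevant integrals are computed over $S_t \cong \mathbb{R}^{m}$ with $|y|\ge c\,t$ on the support; then $\int_{S_t}|y|^{-m(\tau+1)}\,dy \lesssim t^{m - m(\tau+1)} = t^{-m\tau} \to 0$ since $\tau > \tfrac{m}{2} > 0$, giving $\|Dh^{ij}\|_{L^m} + \|F_{p_i}\|_{L^m} \to 0$, and similarly $\int_{S_t}|y|^{-\frac{m}{2}(\tau+1)}\,dy \lesssim t^{m - \frac{m}{2}(\tau+1)} = t^{\frac{m}{2}(1-\tau)} \to 0$ since $\tau > 1$, giving $\|F_u\|_{L^{m/2}} \to 0$; one should instead integrate $|y|^{-\tau-1}$ itself and use that it lies in $L^m(S_t\setminus B_{ct})$ with norm $\to 0$, exactly the computation behind Lemma \ref{lem: estimate}. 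The main obstacle is purely bookkeeping: one must carefully track which terms in $F$ and its derivatives carry a genuine extra power of $|y|^{-1}$ from a $\partial g$ versus only from a $D^2u$ factor, so that the stated decay $O(|y|^{-\tau-1})$ (rather than merely $O(|y|^{-\tau})$) genuinely holds for $F_{p_i}$; the key structural point making this work is that $F$ vanishes to first order in the Christoffel symbols together with the $D^2u$ term, and both of those are $O(|y|^{-\tau-1})$ (up to $\epsilon$) by asymptotic flatness and Lemma \ref{lem: decay estimate for higher derivative of u} respectively.
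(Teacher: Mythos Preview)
Your treatment of the pointwise decay $O(|y|^{-\tau-1})$ follows the paper's approach: both compute $Dh^{ij}$, $F_{p_i}$, $F_u$ by differentiating $F=h^{ij}W_{ij}$ (the paper's notation) with respect to $y$, $p_i$, $u$, and then invoke asymptotic flatness together with the higher-order graph estimates of Lemma \ref{lem: decay estimate for higher derivative of u}. That part is fine.

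The gap is in the integral estimates. You assert that since $\Sigma'_t$ drifts to infinity one has ``$|y|\ge ct$ on the support'' and then evaluate $\int_{S_t}|y|^{-m(\tau+1)}\,dy\lesssim t^{-m\tau}$. But $y$ is the \emph{horizontal} coordinate on $S_t\cong\mathbb{R}^m$ and ranges over all of $\mathbb{R}^m$, including $y=0$, regardless of $t$; drift is a statement about the ambient point $(y,u(y))$, not about $y$. As written, your integral diverges at the origin. What does hold is that the purely metric contributions satisfy the stronger bound $|\partial g|,|\partial^2 g|\le C(|y|+t)^{-\tau-1}$ resp.\ $(|y|+t)^{-\tau-2}$, and the paper integrates these directly: $\int_{\mathbb{R}^m}(|y|+t)^{-q}\,dy\lesssim t^{m-q}$ for $q>m$. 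The contributions involving $Du$ and $D^2u$, however, do \emph{not} come with a $(|y|+t)$-weight --- Lemma \ref{lem: decay estimate for higher derivative of u} gives decay only in $|y|$, which says nothing on a fixed horizontal ball as $t\to\infty$. The paper therefore handles these terms by a separate mechanism: it isolates factors like $|Du|^{2m}$, $|D^2u|^m$ and controls their $L^1$-norm via the $W^{2,p}$-estimate $\|u-t\|_{W^{2,p}(S_t)}\le C(t)\to 0$ of Lemma \ref{lem: L^p decay rsp t}. You cite that lemma in passing but never actually use it in the integral bound; without it (or an unproved uniform estimate $|D^2u(y)|\le C(|y|+t)^{-\tau-1+\epsilon}$), the conclusion $\|Dh^{ij}\|_{L^m}+\|F_{p_i}\|_{L^m}+\|F_u\|_{L^{m/2}}\to 0$ does not follow from your argument.
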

\begin{proof}
 We first establish the decay estimate. We calculate
 \begin{equation}\label{eq: partial deriative of metric}
 \begin{split}
     D_{\theta}h^{ij}=&-h^{ik}h^{lj}D_{\theta}h_{kl}\\
     =&- h^{ik}h^{lj}D_{\theta}(g_{kl}+D_kug_{ln}+D_lug_{kn}+D_kuD_lug_{nn})
 \end{split}
 \end{equation}
 For the next two terms, we use $D_{p_{\theta}}$ and $D_u$ to denote partial derivative calculated with respect to $F$. Rewrite \eqref{eq: minimal graph eq} as $F = h^{ij} W_{ij}=0$. Then
    \begin{align*}
    F_{p_{\theta}}=&(D_{p_{\theta}}h^{ij})W_{ij}+h^{ij}D_{p_{\theta}}W_{ij}\\
         =& -h^{ik}h^{lj}D_{p_{\theta}}h_{kl}W_{ij}+h^{ij}D_{p_{\theta}}W_{ij}\\
        = & -h^{ik}h^{lj}(\delta_{k\theta}g_{ln}+\delta_{l\theta}g_{kn}+\delta_{k\theta}p_lg_{nn}+\delta_{l\theta}p_kg_{nn})W_{ij}+h^{ij}D_{p_{\theta}}W_{ij}
    \end{align*}
  Similarly,
\begin{align*}
    F_u=&(D_u h^{ij})W_{ij}+h^{ij}D_u W_{ij}\\
         =& -h^{ik}h^{lj}D_u h_{kl}W_{ij}+h^{ij}D_uW_{ij}\\
        = & -h^{ik}h^{lj}(p_k\frac{\partial g_{ln}}{\partial x_n}+p_l\frac{\partial g_{kn}}{\partial x_n} + p_kp_l\frac{\partial g_{nn}}{\partial x_n})W_{ij}+h^{ij}D_uW_{ij}
    \end{align*}
Then by the asymptotic flatness of $M$ and  Lemma \ref{lem: height estimate}, 
\[
|\partial g|+|Du||g_{kn}|+|D^2u|=O(|y|^{-\tau-1})
\]
Thus, we have $|Dh^{ij}|=O(|y|^{-\tau-1})$.
Similarly, $D_{ij}u=O(|y|^{-\tau-1})$ and any other term in $W_{ij}u$ consists the factors Christoffel symbol $\Gamma$,
it follows $W_{ij}=O(|y|^{-\tau-1})$. It's easy to show that $D_{p_{\theta}}W_{ij}$ and $D_uW_{ij}$ decays fast in order $-\tau-1$. Thus, we get the desired decay estimate.

To derive the integral estimate, we observe
\begin{equation}\label{estimate for W_{ij}}
\begin{split}
|Dh^{ij}|=&
    O(|\partial g|+|\partial g||Du|+|\partial g||Du|^{2}
    +|g_{kn}||D^2u|+|Du||D^2u|)\\
|W_{ij}|=&O(|Du|^2+|\partial g||Du|+|\partial g||Du|^2)\\
|D_{p_{\theta}}W_{ij}|=&O(|\partial g|+|\partial g||Du|)\\
\end{split}
\end{equation}
 By Lemma \ref{lem: L^p decay rsp t} and the remark after it, we have for some uniform $C$
 \begin{equation}\label{uniform bound for $Du, D^2u$}
|Du|+|D^2 u|\leq C.
 \end{equation}
 Then 
 \begin{align*}
    |Dh^{ij}|+|F_{p_{\theta}}|=&O(|\partial g|+|Du|^2+|g_{kn}||D^2u|+|Du||D^2u|)\\
    \leq &C(|\partial g|+|g_{kn}|^2+|Du|^2+|D^2u|^2)
 \end{align*}
Note for $q>m+1$,  we have
 \begin{align*}
        \int_{\mathbf{R}^m}\frac{1}{(|y|+t)^{q}}dy
        \leq C\int_0^{+\infty}\frac{\rho^{m-1}}{(\rho+t)^{q}}d\rho
        \leq C t^{m-q}\int_0^{+\infty}\frac{s^m}{(1+s)^{q}}ds\leq C_qt^{m-q}.
    \end{align*}
  Since  $|\partial g|\leq C(|y|+t)^{-1-\tau}$ and $m(1+\tau)\geq \frac{m}{2}(m+1)>m+1$, we get
  \begin{align}\label{integral estimate for g1}
    \int_{S_t}|\partial g|^md\mu\leq C  \int_{\mathbf{R}^m}\frac{1}{(|y|+t)^{m+m\tau}}dy
        \leq C t^{-m\tau}.
    \end{align}
Similarly, $|g_{kn}|\leq C(|y|+t)^{-\tau}$ and $2m\tau\geq m(m-1)>m+1$, then we have
 \begin{align*}
\int_{S_t}|g_{kn}|^{2m}d\mu\leq C  \int_{\mathbf{R}^m}\frac{1}{(|y|+t)^{2m\tau}}dy
        \leq C t^{m-2m\tau}.
    \end{align*}    
For $p\ge \frac{2m}{m-2}$, by Lemma \ref{lem: L^p decay rsp t}, we have
    \begin{align*}
        ||u-t||_{W^{2,p}}\leq C(t)\ \ \text{with}\quad C(t)\to 0\mbox{ as }t\to \infty
    \end{align*}
As $2m\geq \frac{2m}{m-2}$, then
\begin{equation}\label{L^m decay rsp t}
   \int_{S_t}(|Du|^m+|D^2u|^m)d\mu\leq C(t)  \ \ \text{with}\quad C(t)\to 0\mbox{ as }t\to \infty
\end{equation}
Thus,
\begin{equation}\label{L^m integral estimate for F}
   \int_{S_t}(|Dh^{ij}|^m+|F_{p_{\theta}}|^m)d\mu\leq 
   C(t)\ \ \text{with}\quad C(t)\to 0\mbox{ as }t\to \infty
\end{equation}
Similarly, we have
\[
|D_uW_{ij}|=O(|\partial^2 g|+|\partial^2 g||Du|+|\partial^2 g||Du|^2) \ \ \text{and}\ \ 
 |F_u|=O(|\partial g||W_{ij}|+|\partial g|^2+|\partial^2 g|)
\]
It follows
\begin{equation}\label{estimate for F_u}
    \begin{split}
|F_u|^{\frac{m}{2}}\leq& C(|\partial g|^{\frac{m}{2}}|W_{ij}|^{\frac{m}{2}}
+|\partial g|^m+|\partial^2 g|^{\frac{m}{2}})\\
\leq& C(|\partial g|^m+|W_{ij}|^m+|\partial^2 g|^{\frac{m}{2}})\\
\leq &C(|\partial g|^m+|Du|^{2m}+|\partial^2 g|^{\frac{m}{2}}),
    \end{split}
\end{equation}
where we have used \eqref{estimate for W_{ij}} and \eqref{uniform bound for $Du, D^2u$}
in the last inequality.
Using $|\partial^2 g|\leq C(|y|+t)^{-2-\tau}$ and $\frac{m}{2}(2+\tau)\geq \frac{m}{4}(m+3)>m+1$, we get
  \begin{align}\label{integral estimate for g2}
    \int_{S_t}|\partial^2 g|^\frac{m}{2}d\mu\leq C  \int_{\mathbf{R}^m}\frac{1}{(|y|+t)^{\frac{m}{2}(2+\tau)}}dy
        \leq C t^{-\frac{m}{4}(m-1)}.
    \end{align}
Plugging \eqref{integral estimate for g1},  \eqref{L^m decay rsp t} and \eqref{integral estimate for g2}
into \eqref{estimate for F_u} yields
\begin{equation}\label{integral estimate for F_u}
\int_{S_t}|F_u|^{\frac{m}{2}}d\mu\leq C(t) \ \ \text{with}\quad C(t)\to 0\mbox{ as }t\to \infty
\end{equation}
Combining \eqref{L^m integral estimate for F} and \eqref{integral estimate for F_u} gives the desired integral estimate.   
\end{proof}
Since $S_t$ enjoys very nice properties for $t\gg1$, we can use Corollary \ref{cor: uniqueness of solution} to 
conclude that \eqref{eq: eliptic eq for w} has only trivial solution $w=0$. Hence, we obtain
\begin{proposition}\label{prop: uniqueness}
    There exists some $t_0$ such that any two area minimizing surfaces asymptotic to the same hyperplane $S_t$ for some $t\geq t_0$ must coincide.
\end{proposition}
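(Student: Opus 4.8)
\textbf{Proof proposal for Proposition \ref{prop: uniqueness}.}

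The plan is to convert the uniqueness question into a statement about the linear elliptic equation \eqref{eq: eliptic eq for w} satisfied by the difference $w = v - u$ of the two graph functions over $S_t$, and to show that for $t$ large enough this equation has no nontrivial solution decaying at infinity. First I would record what is already available: by Proposition \ref{pro: entire graph}, each of the two area minimizing hypersurfaces asymptotic to $S_t$ is the entire graph over $S_t$ of a function ($u$, resp.\ $v$), so that $w$ is a globally defined function on $S_t \cong \mathbf{R}^{n-1}$; by Lemma \ref{lem: decay estimate of u} and Lemma \ref{lem: decay estimate for higher derivative of u}, both $u$ and $v$ (hence $w$) tend to $0$ at infinity, in fact with the rate $|y|^{1-\tau+\epsilon}$, so $w \to 0$ as $|y| \to \infty$; and by the paragraph following Proposition \ref{pro: entire graph} together with Lemma \ref{lem: L^p decay rsp t}, the metric on $S_t$ induced through the graph is uniformly equivalent to the Euclidean one with constants independent of $t \geq t_0$. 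Subtracting the two minimal graph equations \eqref{eq: formal minimal surface eq} as on p.\ 237 of \cite{CM11} produces \eqref{eq: eliptic eq for w}, namely $a_{ij} D_{ij} w + b_i D_i w + c\, w = 0$, where $a_{ij} = \int_0^1 F_{p_{ij}}\,ds$ is uniformly elliptic (since $F_{p_{ij}} = h^{ij}$ and $h^{ij}$ is uniformly elliptic for $t \geq t_0$), and $b_i = \int_0^1 F_{p_i}\,ds$, $c = \int_0^1 F_u\,ds$.

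The main analytic input is Lemma \ref{lem: Estimate of Dh^{ij}, F_{p_i}, F_u}, which gives the pointwise decay $|b_i| + |c| = O(|y|^{-\tau-1})$ and, crucially, the smallness of the integral norms $\|b_i\|_{L^m} + \|c\|_{L^{m/2}} \to 0$ as $t \to \infty$ (here $m = n-1$; strictly one applies the lemma to the intermediate coefficients $F_{p_i}$, $F_u$ evaluated along the segment, but the same estimates hold after integrating in $s$ since all the bounds are uniform along the path, using the uniform $W^{2,p}$ bounds on $u$ and $v$ from Lemma \ref{lem: L^p decay rsp t}). Next I would invoke the appendix: by Corollary \ref{cor: uniqueness of solution}, an operator $L = a_{ij}D_{ij} + b_i D_i + c$ on $\mathbf{R}^m$ that is uniformly elliptic with the lower-order coefficients having sufficiently small $L^m$, resp.\ $L^{m/2}$, norm admits only the trivial solution among functions vanishing at infinity. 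Since Lemma \ref{lem: Estimate of Dh^{ij}, F_{p_i}, F_u} guarantees these norms can be made as small as the corollary requires by enlarging $t_0$, we conclude $w \equiv 0$, i.e.\ $u = v$ and the two hypersurfaces coincide.

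I expect the main obstacle to be not the soft structure of the argument but making sure the hypotheses of Corollary \ref{cor: uniqueness of solution} are genuinely met: one must check that $w$ lies in the right function class (it does, since $w \to 0$ at infinity with a quantitative rate and is smooth, so it is in an appropriate weighted Sobolev or $C^0$-vanishing space), and one must verify that the smallness thresholds in the corollary — which depend only on the ellipticity constants and on $n$ — are indeed beaten by the $C(t) \to 0$ bounds of Lemma \ref{lem: Estimate of Dh^{ij}, F_{p_i}, F_u}, so that a single choice of $t_0$ works uniformly. A secondary point is the passage from the coefficients $F_{p_i}, F_u$ (controlled in Lemma \ref{lem: Estimate of Dh^{ij}, F_{p_i}, F_u}) to the averaged coefficients $b_i, c$; this is routine because the estimates there depend only on uniform bounds for $u$, $Du$, $D^2u$ and the asymptotics of $g$, all of which hold along the entire segment $u + sw$, $s \in [0,1]$, by the uniform estimates already established, but it should be stated explicitly. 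Once these checks are in place, the conclusion is immediate and we obtain Proposition \ref{prop: uniqueness} with the same $t_0$ as in Lemma \ref{lem: L^p decay rsp t}.
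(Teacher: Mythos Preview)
Your proposal is correct and follows exactly the paper's approach: reduce to the linear equation \eqref{eq: eliptic eq for w} for $w=v-u$ on $S_t$, use Lemma \ref{lem: Estimate of Dh^{ij}, F_{p_i}, F_u} to make the lower-order coefficients small in $L^m$ and $L^{m/2}$ for $t$ large, and then invoke Corollary \ref{cor: uniqueness of solution} to force $w\equiv 0$. You are in fact more explicit than the paper about checking the function-class hypothesis of the corollary (the decay $|w|\le C|y|^{1-\tau+\epsilon}$ with $\tau>\frac{m}{2}$ puts $w\in L^{\frac{2m}{m-2}}\cap W^{1,2}$) and about why the estimates on $F_{p_i},F_u$ transfer to the averaged coefficients $b_i,c$; both points are handled exactly as you indicate.
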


\subsection{The foliation structure for AF manifold}
For $t>t_0$, we denote the unique area minimizing hypersurface asymptotic to $S_t$ in Proposition \ref{prop: uniqueness} by $\Sigma_t$. The first thing we will do is to show that $\{\Sigma_t\}$ actually form a $C^0$ foliation for $t>t_0$
\begin{proposition}\label{prop: foliation beyond t_0}
    The region in $M$ beyond $\Sigma_{t_0}$ can be $C^0$ foliated by $\{\Sigma_t\}_{t>t_0}$.
\end{proposition}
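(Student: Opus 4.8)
The plan is to show three things: (i) the leaves $\Sigma_t$ are pairwise disjoint for distinct $t > t_0$; (ii) they are ordered monotonically in $t$, i.e. $\Sigma_{t_1}$ lies strictly below $\Sigma_{t_2}$ whenever $t_0 < t_1 < t_2$; and (iii) their union exhausts the region of $M$ beyond $\Sigma_{t_0}$, with the map $(t,y) \mapsto$ (point of $\Sigma_t$ over $y \in S_t$) a homeomorphism onto that region. For (i) and (ii), I would argue by the maximum principle. Suppose $\Sigma_{t_1}$ and $\Sigma_{t_2}$ touch. Since by Proposition \ref{pro: entire graph} each $\Sigma_t$ (for $t \ge t_0$) is an entire graph over $S_t$ with $\langle \nu, \partial z\rangle \ge \tfrac12$, and by Lemma \ref{lem: decay estimate of u} the graph functions satisfy $|z - t_i| = O(|x|^{1-\tau+\epsilon}) \to 0$, the vertical separation $z_{\Sigma_{t_2}} - z_{\Sigma_{t_1}}$ is a function on $\mathbb{R}^{n-1}$ that tends to $t_2 - t_1 > 0$ at infinity; if the two graphs were not globally ordered, this difference would attain a nonpositive interior minimum. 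Writing both as solutions of the minimal graph equation \eqref{eq: minimal graph eq} and subtracting as in \eqref{eq: eliptic eq for w}, the difference $w$ satisfies a linear uniformly elliptic equation $a_{ij}D_{ij}w + b_i D_i w + c w = 0$ (with the coefficient decay from Lemma \ref{lem: Estimate of Dh^{ij}, F_{p_i}, F_u}); an interior minimum principle then forces $w \equiv 0$ near that point and hence, by unique continuation / connectedness, $\Sigma_{t_1} = \Sigma_{t_2}$, contradicting Proposition \ref{prop: uniqueness} (which pins $\Sigma_t$ to its asymptotic hyperplane). This simultaneously gives disjointness and the strict ordering $t_1 < t_2 \Rightarrow \Sigma_{t_1} < \Sigma_{t_2}$ everywhere.

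Next I would address continuity of the family in $t$ and the exhaustion property. For continuity: given $t_j \to t$ with $t, t_j > t_0$, the uniform estimates of Lemma \ref{lem: L^p decay rsp t} and Lemma \ref{lem: decay estimate for higher derivative of u} give uniform $C^{k,\alpha}$ bounds on the graph functions $u_{t_j}$ over $S_{t_j}$ (equivalently, after the obvious identification, over a fixed $S_{t}$), so a subsequence converges in $C^{k}_{\mathrm{loc}}$ to an area minimizing hypersurface which is a graph over $S_t$; by Proposition \ref{pro:C^0 asymtotic behavior} / Lemma \ref{lem: decay estimate of u} it is asymptotic to $S_t$, hence equals $\Sigma_t$ by uniqueness (Proposition \ref{prop: uniqueness}), and since the limit is independent of the subsequence the whole family varies continuously. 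Combined with the monotonicity, the map $\Psi : (t_0,\infty) \times \mathbb{R}^{n-1} \to M$ sending $(t,y)$ to the point of $\Sigma_t$ over $y$ is continuous, injective, and — using that $z$ restricted to a fixed vertical line $\{y\}\times\mathbb{R}$ is continuous and strictly increasing in $t$ by the ordering, with $z \to t$ as the line moves to infinity — its image is exactly $\{x : x \text{ lies strictly above } \Sigma_{t_0}\}$. Invariance of domain (or a direct degree argument) upgrades $\Psi$ to a homeomorphism onto this region, which is precisely the asserted $C^0$ foliation.

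The main obstacle, I expect, is establishing the strict ordering cleanly, because the leaves are noncompact and the naive maximum principle applies to functions on a compact domain. The key point that makes it work is that the asymptotic decay $|z-t_i| = O(|x|^{1-\tau+\epsilon})$ from Lemma \ref{lem: decay estimate of u} controls the difference $w = u_{t_2} - u_{t_1}$ at infinity: $w \to t_2 - t_1 > 0$, so any violation of ordering produces a genuine interior extremum of $w$ on all of $\mathbb{R}^{n-1}$, at which the strong maximum principle for \eqref{eq: eliptic eq for w} applies (the zeroth-order coefficient $c$ has a sign issue in general, but one can either absorb it using the smallness from Lemma \ref{lem: Estimate of Dh^{ij}, F_{p_i}, F_u} for $t_0$ large, or apply the maximum principle to $w$ directly since at a negative interior minimum $D^2 w \ge 0$, $Dw = 0$, forcing $c w \ge 0$ hence $w \ge 0$ there, a contradiction). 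One must also check that $\Sigma_{t_0}$ itself is the "bottom" leaf and that nothing in the region between consecutive would-be leaves is missed — this follows from the exhaustion argument above. I would present (i)–(ii) first as a lemma, then assemble continuity and exhaustion into the proposition.
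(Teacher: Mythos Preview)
Your overall architecture --- disjointness/ordering, continuity in $t$, and exhaustion --- matches the paper's (their Lemmas \ref{lem: Sigma t_i converges to Sigma_t} and \ref{lem: t_1 beyond t_2} followed by a vertical-line intermediate value argument). The continuity step via compactness plus Proposition~\ref{prop: uniqueness} is exactly what the paper does, and your exhaustion via a monotone continuous parametrization of each vertical line is equivalent to their argument with the set $\mathcal{S}$.

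The substantive difference is in how you establish the strict ordering, and there your ``option 2'' is wrong. At a negative interior minimum of $w$ you have $a_{ij}D_{ij}w \ge 0$ and $D_iw = 0$, so the equation gives $cw = -a_{ij}D_{ij}w \le 0$, not $cw \ge 0$; with $w < 0$ this only says $c \ge 0$ at that point, which is no contradiction. Your ``option 1'' (absorb $c$ by smallness) is not developed: the smallness in Lemma~\ref{lem: Estimate of Dh^{ij}, F_{p_i}, F_u} is an integral smallness, and $w$ itself is not in $L^{2m/(m-2)}$ since $w \to t_2 - t_1 \ne 0$, so Corollary~\ref{cor: uniqueness of solution} does not apply directly. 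One can try to work with $w - (t_2 - t_1)$, but then the equation becomes inhomogeneous with right-hand side $-c(t_2 - t_1)$ and one must push a quantitative $C^0$ bound through Moser iteration; this is doable but is real work you have not indicated.

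The paper sidesteps this entirely by exploiting the \emph{area-minimizing} property rather than just minimality: since $\Sigma_{t_1}$ and $\Sigma_{t_2}$ are asymptotic to distinct hyperplanes, they are strictly ordered outside a compact set; a tangential touching would force coincidence by the usual strong maximum principle (which \emph{does} apply when $w \ge 0$ with an interior zero, regardless of the sign of $c$), contradicting the different asymptotics; hence any intersection is transversal, and then swapping the compact caps $D_1 \subset \Sigma_{t_1}$, $D_2 \subset \Sigma_{t_2}$ produces a new area minimizer $(\Sigma_{t_1}\setminus D_1)\cup D_2$ with a crease, violating regularity in dimension $\le 7$. This cut-and-paste argument is short and avoids the zeroth-order coefficient issue altogether; I would recommend replacing your maximum-principle step with it.
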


First, we prove the following two lemmas.
\begin{lemma}\label{lem: Sigma t_i converges to Sigma_t}
    For a sequence of $\{t_i\}$ with $t_i\to t> t_0$, we have $\Sigma_{t_i}\to \Sigma_t$.
\end{lemma}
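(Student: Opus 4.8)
The plan is to prove convergence $\Sigma_{t_i} \to \Sigma_t$ using the by-now-standard compactness-plus-uniqueness strategy, exploiting the uniform estimates already established. First I would observe that by Lemma \ref{lem: L^p decay rsp t} and the Schauder estimates derived from it, the leaves $\Sigma_{t_i}$ (written as entire graphs $u_{t_i}$ over $S_{t_i}$, which is legitimate once $t_i > t_0$ by Proposition \ref{pro: entire graph}) satisfy uniform $C^{k,\alpha}$ bounds outside any fixed compact set, with constants depending only on $(M^n,g)$ and on an upper and lower bound for the $t_i$; since $t_i \to t$, these bounds are uniform in $i$. In particular the graph functions $u_{t_i}$ have uniformly bounded $C^0$ norm, so the $\Sigma_{t_i}$ all meet a fixed compact region of $M$ and do not drift off to infinity.

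Next I would extract a convergent subsequence. By the interior curvature estimate Lemma \ref{lem: interior curvature estimate}, the second fundamental forms of $\Sigma_{t_i}$ are locally uniformly bounded, so a subsequence converges locally smoothly (and as area-minimizing boundaries) to a complete embedded area-minimizing hypersurface $\Sigma_\infty \subset M$. I then need to identify $\Sigma_\infty$. Using the uniform asymptotic estimate of Lemma \ref{lem: decay estimate of u} --- namely $|z(x) - t_i| \le C|x|^{1-\tau+\epsilon}$ with $C$ independent of $t_i$ --- one passes to the limit to get $|z(x) - t| \le C|x|^{1-\tau+\epsilon}$ on $\Sigma_\infty$, so in particular $z(x) \to t$ at infinity along $\Sigma_\infty$. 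Moreover $\Sigma_\infty$ is, outside a compact set, a graph over $S_t$ (the uniform gradient bound of Lemma \ref{lem: small gradient outside compact set}, or equivalently the normal estimate \eqref{eq: estimate for normal vector}, survives the limit). Hence $\Sigma_\infty$ is a complete noncompact area-minimizing hypersurface asymptotic to $S_t$ in the sense of Definition \ref{def: asyhyperplane}.

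Now the uniqueness result Proposition \ref{prop: uniqueness} applies: for $t \ge t_0$ there is exactly one such hypersurface, so $\Sigma_\infty = \Sigma_t$. Since every subsequence of $\{\Sigma_{t_i}\}$ has a further subsequence converging to the same limit $\Sigma_t$, the full sequence converges: $\Sigma_{t_i} \to \Sigma_t$ locally smoothly, and together with the uniform decay estimates this upgrades to convergence of the graph functions in, say, $C^{k,\alpha}_{\mathrm{loc}}$ over $S_t$ plus uniform control at infinity. The main obstacle is making sure the limit object is genuinely asymptotic to $S_t$ and not to some other hyperplane $S_{t'}$ with $t' \ne t$ (or that it does not develop an unbounded vertical piece); this is precisely where the $t$-uniform decay of Lemma \ref{lem: decay estimate of u} and the $t$-uniform Sobolev/$W^{2,p}$ estimates of Lemma \ref{lem: L^p decay rsp t} are essential, since they prevent the asymptotic height from slipping in the limit --- a naive argument with only $t$-dependent constants would fail here.
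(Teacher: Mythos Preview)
Your proposal is correct and follows essentially the same compactness-plus-uniqueness approach as the paper: extract a subsequential limit using the uniform curvature bounds, identify it as an area-minimizing hypersurface asymptotic to $S_t$, and then invoke Proposition~\ref{prop: uniqueness} to conclude $\Sigma_\infty = \Sigma_t$. Your version is more explicit than the paper's about why the limit is asymptotic to $S_t$ rather than some other hyperplane---you correctly pinpoint the $t$-uniform constant in Lemma~\ref{lem: decay estimate of u} as the key ingredient---whereas the paper simply cites the $C^0$ estimate of Proposition~\ref{pro: C^0 estimate of u} and passes over this point quickly.
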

\begin{proof}
    Since the second fundamental form of $\Sigma_{t_i}$ is uniformly bounded
    and $\Sigma_{t_i}$ is asymptotic to $S_{t_i}$, 
    then by  Proposition \ref{pro: C^0 estimate of u}, we have $C^0$ estimate on $\Sigma_{t_i}$, therefore it converges to an area minimizing  hypersurface asymptotic to $S_t$. By uniqueness result in Proposition \ref{prop: uniqueness} this hypersurface must coincide with $\Sigma_t$.
\end{proof}
\begin{lemma}\label{lem: t_1 beyond t_2}
    For $t_1>t_2$, let $\Sigma_i=\Sigma_{t_i}$ be the area minimizing hypersurface asymptotic to $S_{t_i}$ for $i=1,2$. Then $\Sigma_{1}$ lies strictly beyond $\Sigma_{2}$.
\end{lemma}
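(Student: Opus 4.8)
The plan is to argue by a maximum-principle/sliding argument applied to the graph functions. Since $t_1>t_2\geq t_0$, both $\Sigma_1$ and $\Sigma_2$ are entire graphs over their respective coordinate hyperplanes (by Proposition \ref{pro: entire graph}), and both can be regarded as graphs of functions $u_1,u_2$ over a common hyperplane $S_0$ outside a large compact set, with $u_i\to t_i$ at infinity. First I would show the weak inequality $\Sigma_1$ lies (non-strictly) beyond $\Sigma_2$: suppose not, so that after sliding one of them vertically the two hypersurfaces first touch at an interior point or stay tangential at infinity. The asymptotic expansions from Lemma \ref{lem: decay estimate of u} (namely $|z-t_i|\leq C|x|^{1-\tau+\epsilon}$, with $C$ independent of $t_i$) guarantee that $u_1-u_2\to t_1-t_2>0$ at infinity, so no touching can occur near infinity; hence any contact is at an interior point. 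At such a point the difference $w=u_1-u_2$ (suitably interpreted as graphs over a common base, or via the two minimal surface equations) satisfies a homogeneous linear uniformly elliptic equation of the form \eqref{eq: eliptic eq for w}, with no zeroth-order sign obstruction once we work with the ambient picture; by the strong maximum principle $w$ would be constant, contradicting $w\to t_1-t_2>0$ at infinity unless $w\equiv t_1-t_2>0$, which already gives strict separation.

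More carefully, to handle the possibility that the two hypersurfaces are not globally graphs over the same base or that interior contact is subtle, I would run the standard sliding argument: for $s\geq 0$ let $\Sigma_2^s$ be the vertical translate of $\Sigma_2$ by $s$ (translation in the $z$-direction outside $K$). For $s$ large, $\Sigma_2^s$ lies entirely beyond $\Sigma_1$ because of the uniform asymptotics and the fact that both are confined between parallel coordinate hyperplanes. Decrease $s$ to the infimal value $s_0$ for which $\Sigma_2^{s_0}$ still lies weakly beyond $\Sigma_1$. If $s_0>t_1-t_2$, then $\Sigma_2^{s_0}$ and $\Sigma_1$ are asymptotic to distinct hyperplanes $S_{t_2+s_0}$ and $S_{t_1}$ with $t_2+s_0>t_1$, so they are separated near infinity, forcing a first interior contact point $p$; there both are smooth minimal hypersurfaces tangent at $p$ with $\Sigma_1$ on one side, and the strong maximum principle for minimal hypersurfaces (applied in normal graph coordinates over the tangent plane at $p$, using that minimality gives a uniformly elliptic PDE) yields $\Sigma_2^{s_0}=\Sigma_1$, contradicting the distinct asymptotics. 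Hence $s_0\leq t_1-t_2$, i.e. already at translation amount $t_1-t_2$ (bringing $\Sigma_2$ up to asymptote $S_{t_1}$) we have $\Sigma_2^{t_1-t_2}$ weakly beyond $\Sigma_1$; but then $\Sigma_2^{t_1-t_2}$ and $\Sigma_1$ are two area minimizing hypersurfaces asymptotic to the same $S_{t_1}$, so by the uniqueness Proposition \ref{prop: uniqueness} they coincide. Undoing the translation shows $\Sigma_1$ lies exactly at vertical distance $t_1-t_2>0$ above $\Sigma_2$ at infinity and weakly beyond it everywhere.

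Finally I would upgrade "weakly beyond" to "strictly beyond." If $\Sigma_1$ and $\Sigma_2$ touched at an interior point $p$, then near $p$ both are smooth minimal graphs over the common tangent hyperplane $T_p\Sigma_1=T_p\Sigma_2$, and the difference of the two graph functions satisfies a homogeneous linear uniformly elliptic second-order equation (derived exactly as in \eqref{eq: eliptic eq for w}, integrating the minimal surface operator along the segment joining the two solutions), vanishing to second order at $p$; the strong maximum principle then forces the two graphs to agree in a neighborhood of $p$, and a standard continuation argument (the coincidence set is open and closed in the connected hypersurface) forces $\Sigma_1=\Sigma_2$ globally, contradicting the distinct asymptotic hyperplanes. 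Therefore $\Sigma_1\cap\Sigma_2=\emptyset$ and $\Sigma_1$ lies strictly beyond $\Sigma_2$.

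The main obstacle I anticipate is the bookkeeping at infinity: one must be sure that the uniform (i.e. $t$-independent) decay estimate of Lemma \ref{lem: decay estimate of u} really does prevent the two leaves from becoming asymptotically tangent, and that the vertical translation used in the sliding argument is compatible with the asymptotically flat coordinates and preserves the minimality (it is only an isometry of the model metric, not of $g$, so one should instead translate the graphs over $S_0$ and note that the resulting competitor is still a legitimate comparison surface, or — cleaner — run the whole argument purely in terms of the graph functions $u_1,u_2$ over $S_0$ and the single scalar elliptic equation \eqref{eq: eliptic eq for w}, invoking the strong maximum principle and Hopf lemma there). Getting this reduction to a clean scalar statement, so that Proposition \ref{prop: uniqueness} and the strong maximum principle apply without ambiguity, is the key technical point.
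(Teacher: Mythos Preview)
Your sliding argument has a genuine gap that you yourself flag but do not resolve. The vertical translate $\Sigma_2^s$ is \emph{not} minimal in $(M,g)$ (translation in the $z$-coordinate is only an isometry of the model Euclidean metric), so at a first interior touching point of $\Sigma_2^{s_0}$ and $\Sigma_1$ you cannot invoke the strong maximum principle for minimal hypersurfaces, and you certainly cannot apply Proposition~\ref{prop: uniqueness} to $\Sigma_2^{t_1-t_2}$, since that proposition concerns \emph{area-minimizing} hypersurfaces and the translate is not one. Your proposed fallback---working with the scalar equation \eqref{eq: eliptic eq for w} for $w=u_1-u_2$ over a common base---does not immediately help either: that equation carries a zeroth-order coefficient $c=\int_0^1 F_u\,ds$ of indefinite sign, so the classical maximum principle does not give $w\geq 0$ from $w\to t_1-t_2>0$ at infinity. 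One could try to exploit the smallness of $c$ in $L^{m/2}$ for $t\gg 1$ (Lemma~\ref{lem: Estimate of Dh^{ij}, F_{p_i}, F_u}) together with Corollary~\ref{cor: uniqueness of solution}, but this is both more delicate than needed and restricts the range of $t$.

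The paper's proof is shorter and takes a different route that uses the \emph{area-minimizing} property directly rather than only minimality. From the asymptotics one knows $\Sigma_1$ lies strictly above $\Sigma_2$ outside a compact set. If they intersect, the intersection must be transversal (a tangential touching would force local, hence global, coincidence by the strong maximum principle---this part you have). Then one performs a cut-and-paste: swap the compact caps $D_1\subset\Sigma_1$ and $D_2\subset\Sigma_2$ bounded by the transversal intersection. Because both $\Sigma_i$ are area-minimizing, the resulting hypersurface $(\Sigma_1\setminus D_1)\cup D_2$ is again area-minimizing, yet it has a genuine corner along the intersection set, contradicting the regularity theory for area-minimizing boundaries in dimension $n\leq 7$. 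This bypasses any need for weak ordering as an intermediate step and does not require $\Sigma_i$ to be entire graphs, so it also applies later in the paper where $t$ need not exceed $t_0$.
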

\begin{proof}
    By asymptotic property (Proposition 9 in \cite{EK23}) $\Sigma_{1}$ lies strictly beyond $\Sigma_{2}$ outside a compact set. If these two minimal hypersurface intersect, then they must intersect transversally. Let $D_i$ be the compact region on $\Sigma_i$ bounded by the transversal intersection of $\Sigma_1$ and $\Sigma_2$. Consider $(\Sigma_1\backslash D_1)\cup D_2$, which turns out to be another area minimizing boundary. This violates the regularity theory of area minimizing hypersurface for dimension less than $7$.
\end{proof}

\begin{proof}[Proof of Proposition \ref{prop: foliation beyond t_0}]
    Denote $U$ to be the region beyond $\Sigma_{t_0}$. 
    It suffices to show
    for any point $p$ in $U$, there exists a hypersurface $\Sigma_t$ passing through $p$ for some $t>t_0$. Let $l$ be the portion of line passing through $p$ in $U$, paralleling to $z$-axis under Euclidean metric. Denote
    \begin{align*}
        \mathcal{S} = \lbrace q\in l | \exists \Sigma_t, \Sigma_t\cap l = q\rbrace
    \end{align*}
    By Lemma \ref{lem: Sigma t_i converges to Sigma_t} $\mathcal{S}$ must be close. If $p$ is not in $l$, then we can find $p_1,p_2\in l$, such that
    \begin{equation}\label{eq: 19}
        \begin{split}
            &z(p_1) = \inf_{q\in l, z(q)>z(p)}z(q)\\
        &z(p_2) = \sup_{q\in l, z(q)<z(p)}z(q)
        \end{split}
    \end{equation}
    Let $p_i\in\Sigma_{t_i}$. By graph property we have $t_1\ge t_2$. If $t_1>t_2$, then for some $t_3\in (t_1,t_2)$, $p_3 = \Sigma_{t_3}\cap l$ lies between $p_1$ and $p_2$, this contradicts with \eqref{eq: 19}. Hence $t_1=t_2$, and this contradicts with uniqueness Proposition \ref{prop: uniqueness}.
\end{proof}

 Next, we improve the foliation $\lbrace\Sigma_t\rbrace_{t>t_0}$ into a $C^1$ foliation.

\begin{proposition}\label{prop: C^1 smoothness for foliation}
    The foliation $\lbrace\Sigma_t\rbrace_{t>t_0}$ is $C^1$ in $t$.
\end{proposition}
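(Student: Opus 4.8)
The plan is to upgrade the $C^0$ foliation of Proposition \ref{prop: foliation beyond t_0} to a $C^1$ one by analyzing the difference quotients of the graph functions and showing they converge to a solution of the linearized minimal surface equation, which by the uniqueness machinery (Corollary \ref{cor: uniqueness of solution}) is itself unique. First I would fix $t > t_0$ and, for small $h \neq 0$, represent $\Sigma_t$ and $\Sigma_{t+h}$ as entire graphs $u_t$ and $u_{t+h}$ over $S_t$ (using Proposition \ref{pro: entire graph}, noting both are entire graphs over $S_t$ once we shift coordinates, for $|h|$ small). Set $w_h := \frac{u_{t+h} - u_t - h}{h}$, the normalized difference quotient measuring the vertical separation of the two leaves relative to the parameter change. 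Subtracting the two minimal graph equations \eqref{eq: formal minimal surface eq} exactly as in the derivation of \eqref{eq: eliptic eq for w}, $w_h$ satisfies a uniformly elliptic equation $a_{ij}^{h} D_{ij} w_h + b_i^h D_i w_h + c^h w_h = r^h$, where the coefficients are the $s$-integrals of the derivatives of $F$ evaluated along the segment joining the two graphs, and $r^h$ is an error term coming from the explicit dependence of $F$ on the $z$-coordinate (the $\Gamma$-terms shift by $h$); by Lemma \ref{lem: Estimate of Dh^{ij}, F_{p_i}, F_u} and the uniform $C^{k,\alpha}$ decay estimates (Lemma \ref{lem: decay estimate for higher derivative of u}) these coefficients and the error term satisfy uniform decay bounds independent of $h$.

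Next I would obtain uniform estimates on $w_h$. From Lemma \ref{lem: t_1 beyond t_2} we know $u_{t+h} > u_t$ for $h > 0$, so $w_h$ has a definite sign structure, and from the $C^0$ estimate in Proposition \ref{pro: C^0 estimate of u} together with the decay Lemma \ref{lem: decay estimate of u} applied to both leaves, $w_h$ is bounded in $C^0$ uniformly in $h$ (the key point being that $u_{t+h} - u_t$ is comparable to $h$ near infinity since both are asymptotic to their respective hyperplanes with controlled rates). Feeding this into interior Schauder estimates for the elliptic equation above gives uniform $C^{2,\alpha}_{loc}$ bounds on $w_h$; combined with the decay of the coefficients this yields a uniform global weighted bound. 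Passing to the limit $h \to 0$ along a subsequence, $w_h \to w$ where $w$ solves the linearized equation $a_{ij} D_{ij} w + b_i D_i w + c w = r$ with $a_{ij}, b_i, c$ now evaluated at $(y, u_t, Du_t, D^2 u_t)$ and $r = \partial_z F$ evaluated there; moreover $w \to 1$ as $|y| \to \infty$ (since both leaves are asymptotic to $S_t$, $S_{t+h}$ with the separation approaching $h$). Here I should be careful that the limit is independent of the subsequence: this follows because the limit equation together with the asymptotic condition $w \to 1$ has a unique solution, by the same argument used for Proposition \ref{prop: uniqueness} (Corollary \ref{cor: uniqueness of solution}) applied to the difference of any two such solutions, which tends to $0$ at infinity and hence vanishes. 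Therefore the full limit $\lim_{h\to 0} w_h = w$ exists, which is precisely the statement that $t \mapsto u_t$ is differentiable with $\partial_t u_t = w$.

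Finally I would verify that $\partial_t u_t$ depends continuously on $t$: the linearized operator and the inhomogeneous term depend continuously on $t$ (through $u_t$, which varies continuously by Lemma \ref{lem: Sigma t_i converges to Sigma_t} and the Schauder estimates), and uniqueness of the solution $w$ to the limit problem then gives continuity of $w = \partial_t u_t$ in $t$ by a standard compactness-plus-uniqueness argument. This establishes that the map $(y, t) \mapsto (y, u_t(y))$ parametrizing the foliation is $C^1$ in $t$, and since each leaf is smooth, the foliation $\{\Sigma_t\}_{t > t_0}$ is $C^1$.

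\textbf{Main obstacle.} The crux is the uniform control of the difference quotient $w_h$ — both the uniform $C^0$ bound (which requires knowing the two leaves separate by an amount genuinely comparable to $h$, not faster or slower, uniformly up to infinity, drawing on the sharp asymptotics of Lemma \ref{lem: decay estimate for higher derivative of u}) and the identification of the limit via a uniqueness statement for the linearized equation with boundary value $1$ at infinity, which is what forces the whole sequence (not just a subsequence) to converge and thus gives genuine differentiability rather than mere Lipschitz dependence.
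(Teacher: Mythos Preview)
Your overall strategy matches the paper's exactly: form the normalized difference quotient, derive the linearized equation by subtracting the two minimal-graph equations, establish uniform estimates, pass to a limit, and use the uniqueness machinery (Corollary \ref{cor: uniqueness of solution}) to show the full sequence converges. The continuity argument for $\partial_t u_t$ is also the same as the paper's.

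There is, however, a genuine gap in how you obtain the uniform $C^0$ bound on $w_h$. You write that this follows from Proposition \ref{pro: C^0 estimate of u} and Lemma \ref{lem: decay estimate of u} applied to both leaves, with the key point being that ``$u_{t+h}-u_t$ is comparable to $h$ near infinity.'' But those decay estimates only yield
\[
|u_{t+h}-(t+h)|+|u_t-t|\le C|y|^{1-\tau+\epsilon},
\]
so after dividing by $h$ you get $|w_h|\le \frac{C}{h}|y|^{1-\tau+\epsilon}$, which blows up as $h\to 0$. The separation being ``comparable to $h$'' at infinity does not propagate to the interior without further argument, and the sign information from Lemma \ref{lem: t_1 beyond t_2} controls only $\frac{u_{t+h}-u_t}{h}>0$, not $w_h$ itself. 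You correctly flag this in your ``Main obstacle'' paragraph, but you do not resolve it. The paper's resolution is to first accept the non-uniform bound merely to conclude $w_\delta\in L^{\frac{2m}{m-2}}$, then apply the absorbing $L^p$ estimate \eqref{eq: L^p bound} from Lemma \ref{lem: L^ 2m/m-2 estimate}: since the coefficient combination $(D_j a_{ij}^\delta-b_i^\delta)^2+2\lambda c^\delta$ has small $L^{m/2}$ norm (Lemma \ref{lem: Estimate of Dh^{ij}, F_{p_i}, F_u}), the $L^{\frac{2m}{m-2}}$ norm of $w_\delta$ is bounded uniformly by the $L^{\frac{2m}{m+2}}$ norm of the right-hand side $-c^\delta$, which is uniform in $\delta$. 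Moser iteration then upgrades this to a uniform $C^0$ bound. This is the missing mechanism in your proposal.

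A minor slip: with your definition $w_h=\frac{u_{t+h}-u_t-h}{h}$, the limit satisfies $w\to 0$ at infinity, not $w\to 1$; the quantity tending to $1$ is $\partial_t u_t=w+1$.
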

\begin{proof}
Let $u_t$ and $u_{t+\delta}$ be the graph function of  
$\Sigma_t$ and $\Sigma_{t+\delta}$  over $S_t$ respectively. Then 
\begin{equation}\label{eq: 33}
    \begin{split}
        F(y,u_t(y),D_iu_t(y),D_{ij}^2u_t(y)) = 0\\
    F(y,u_{t+\delta}(y),D_iu_{t+\delta}(y),D_{ij}^2u_{t+\delta}(y)) = 0
    \end{split}
\end{equation}
By Lemma \ref{lem: decay estimate of u}, we have
\begin{align*}
    &|u_t-t|+|y||Du_t|+|y|^2|D^2u_t| = O(|y|^{1-\tau+\epsilon})\\
    &|u_{t+\delta}-(t+\delta)|+|y||Du_{t+\delta}|+|y|^2|D^2u_{t+\delta}| = O(|y|^{1-\tau+\epsilon})
\end{align*}
 Denote $w_{\delta} = \frac{u_{t+\delta}-u_t}{\delta}-1$. By substracting two equations in \eqref{eq: 33}, we obtain
\begin{align*}
    (\int_0^1 F_{p_{ij}}ds)D_{ij}w_{\delta}+(\int_0^1 F_{p_{i}}ds)D_iw_{\delta}+(\int_0^1 F_{u}ds)w_{\delta} = -(\int_0^1 F_{u}ds)
\end{align*}
where each derivative of $F$ is evaluated at $(y,u_{\delta}+sw_{\delta},D_iu_{\delta}+sD_iw_{\delta},D_{ij}u_{\delta}+sD_{ij}w_{\delta})$. We rewrite the above equation as
\begin{align}\label{eq: 34}
    L_{\delta}w_{\delta}= a_{ij}^{\delta}D_{ij}w_{\delta}+b_i^{\delta}D_iw_{\delta}+c^{\delta}w_{\delta} = -c^{\delta}
\end{align}
The following estimate follows easily from the argument of Lemma \ref{lem: Estimate of Dh^{ij}, F_{p_i}, F_u}:
\begin{align}\label{eq: 42}
    |Da_{ij}^{\delta}|+|b_i^{\delta}|+ |c^{\delta}|& = O(|y|^{-\tau-1})\\
 ||Da_{ij}^{\delta}||_{L^m}+||b_i^{\delta}||_{L^m}+||c^{\delta}||_{L^{\frac{m}{2}}}
 &\to 0 (t\to \infty)
\end{align}
By Lemma \ref{lem: decay estimate of u}, we see $|w_{\delta}|\leq \frac{C}{\delta}(|x|^{-\tau+1+\epsilon})$, hence $L^{\frac{2m}{m-2}}$ integrable. Combining the $L^p$ estimate for coefficients \eqref{eq: 42} with \eqref{eq: L^p bound} in Appendix we obtain uniform $L^{\frac{2m}{m-2}}$ estimate for $w_{\delta}$. By Moser iteration $w_{\delta}$ is uniformly bounded (\cite{GT}, Theorem 8.17), so $L^p$ norm of $w_{\delta}$ is uniformly bounded for all $p>\frac{2m}{m-2}$. Combining with $W^{2,p}$ estimate and Schauder estimate, we know the $C^{k,\alpha}$ norm of $w_{\delta}$ is uniformly bounded. Consequently, there exists a subsequence $w_{\delta_i}$ converging to a function $w$ with respect to $C^{k,\alpha}$ norm with  $w$  satisfying the following equation:
\begin{align}\label{eq: 43}
    h^{ij}D_{ij}w+F_{p_i}D_iw+F_uw = -F_u
\end{align}
Moreover, it follows from \eqref{eq: 42} and the uniform $C^{1,\alpha}$ estimate of $w_{\delta}$ that if we write
\begin{align*}
    D_i(a_{ij}^{\delta}D_jw_{\delta}) = f_{\delta}
\end{align*}
then $f_{\delta} = O(|y|^{-\tau-1})$. Similar to the argument of Lemma \ref{lem: decay estimate of u}, we get the following inequality holds for a constant $C$ independent of the choice of $\delta$:
\begin{align*}
    |w_{\delta}|+|y||Dw_{\delta}|+|y|^2|D^2w_{\delta}| \le C|y|^{1-\tau+\epsilon}
\end{align*}
Therefore, the same thing also holds for $w$:
\begin{align}\label{eq: 44}
    |w|+|y||Dw|+|y|^2|D^2w| \le C|y|^{1-\tau+\epsilon}
\end{align}
By Lemma \ref{lem: L^ 2m/m-2 estimate}, the solution of \eqref{eq: 43} satisfying \eqref{eq: 44} is unique. Combining with uniform Schauder estimate for $w_{\delta}$ we conclude that the limit of $w_{\delta}$ as $\delta\to 0$ exists and equals $w$. By uniform boundness of $w_{\delta}$ we see $u$ is Lipschitz in $t$. 

To see $u$ is $C^1$ in $t$, we denote $u^{(1)}_t = w+1$ above with respect to $\Sigma_t$. \eqref{eq: 43} then interprets as
\begin{align*}
    h^{ij}_tD_{ij}u^{(1)}_t+F_{p_i}^tD_iu^{(1)}_t+F_u^tu^{(1)}_t = 0
\end{align*}
By substracting the equations respect to $t$ and $t+\delta$, we obtain
\begin{align*}
    &h^{ij}_tD_{ij}(u^{(1)}_{t+\delta}-u^{(1)}_t)+F_{p_i}^tD_i(u^{(1)}_{t+\delta}-u^{(1)}_t)+F_u^t(u^{(1)}_{t+\delta}-u^{(1)}_t) \\
    = &(h^{ij}_t-h^{ij}_{t+\delta})D_{ij}u^{(1)}_{t+\delta}+(F_{p_i}^t - F_{p_i}^{t+\delta})D_iu^{(1)}_{t+\delta}+(F_u^t-F_u^{t+\delta})u^{(1)}_{t+\delta}
\end{align*}
By \eqref{eq: 42}\eqref{eq: 44}, the coefficients in the above equation has fast decay, so we can apply Lemma \ref{lem: L^ 2m/m-2 estimate} to obtain a $L^{\frac{2m}{m-2}}$ estimate for $|u^{(1)}_{t+\delta}-u^{(1)}_t|$. Standard iteration improves this to a $C^0$ bound. Consequently, $|u^{(1)}_{t+\delta}-u^{(1)}_t|$ tends to zero as $\delta\to 0$. This concludes the proof of Proposition \ref{prop: C^1 smoothness for foliation}.
\end{proof}

By the similar arguments, we can demonstrate that the foliation $\lbrace\Sigma_t\rbrace_{t>t_0}$ is $C^k$ in $t$ for all $k\ge 1$. Similarly, the same thing also holds when $t$ is sufficiently negative. As a result, we can find $T_0>0$, such that the regions in $M$ beyond $\Sigma_{T_0}$ and below $\Sigma_{-T_0}$ coincide with the smooth foliation $\lbrace\Sigma_t\rbrace_{t>T_0}$ and $\lbrace\Sigma_t\rbrace_{t<-T_0}$. Next we're interested in the characterization of the region between $\Sigma_{-T_0}$ and $\Sigma_{T_0}$.

 \begin{figure}
    \centering
    \includegraphics[width = 15cm]{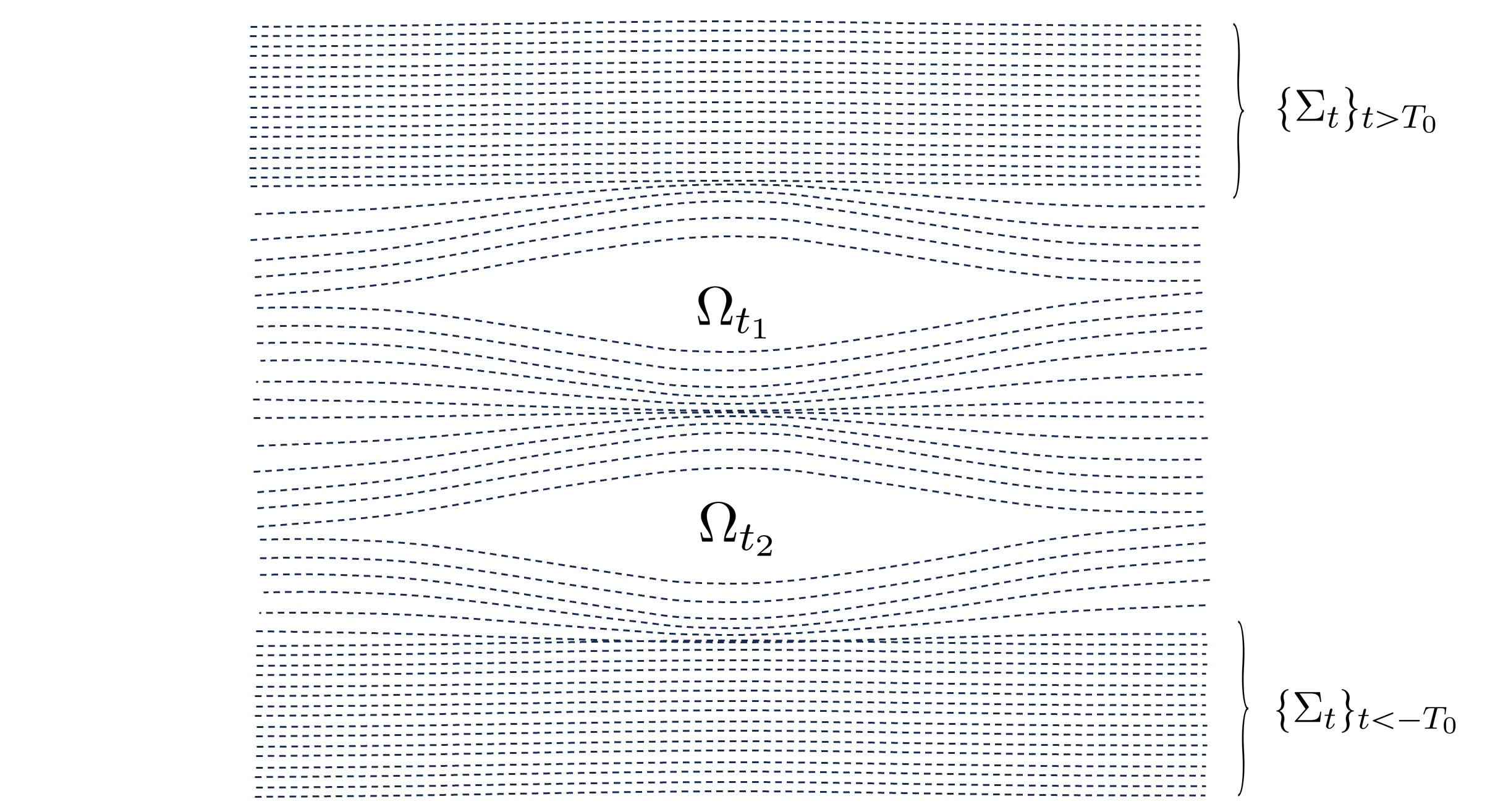}
    \caption{The foliation structure of $M$}
    \label{f3}
\end{figure}    

    For $t\in\mathbb{R}$, we define $\mathcal{A}_t$ to be the collection of area minimizing boundaries $\Sigma$ asymptotic to $S_t$ at infinity. Pick a sequence $t_i\to t^+$. By Lemma \ref{lem: decay estimate of u}, there exists $\Sigma_i\in \mathcal{A}_{t_i}$. For a fixed element $\Sigma\in \mathcal{A}_t$, we have $\Sigma$ strictly below $\Sigma_i$ for each $i$ according to Lemma \ref{lem: t_1 beyond t_2}. Let $i\to\infty$, by uniform $C^0$ decay estimate of Lemma \ref{lem: decay estimate of u} $\Sigma_i$ must converge to an element in $\mathcal{A}_t$, denoted by $\Sigma^+$. Then $\Sigma$ either coincide with $\Sigma^+$, or lies strictly below $\Sigma^+$. By exactly the same procedure we are able to construct an element $\Sigma_-$ in $\mathcal{A}_t$. The discussion above summarizes to the following lemma:

    \begin{lemma}
        For any $t\in\mathbb{R}$, one of the following two cases happens:

        (1) $|\mathcal{A}_t| = 1$. In this case area minimizing boundary aymptotic to $S_t$ is unique.

        (2) $|\mathcal{A}_t| > 1$. In this case there exist $\Sigma_t^+,\Sigma_t^-\in \mathcal{A}_t$ which bound a region $\Omega_t$, such that all other element in $\mathcal{A}_t$ lies in $\Omega_t$.

        Here, $|\mathcal{A}_t|$ denotes the number of elements of $|\mathcal{A}_t|$ (not necessarily finite).
    \end{lemma}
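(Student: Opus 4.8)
The plan is to extract from the family $\{\mathcal{A}_{t'}\}_{t'\in\mathbb{R}}$ a largest and a smallest leaf sitting inside $\mathcal{A}_t$, and then separate all remaining elements with the strong maximum principle. Two facts are used repeatedly. First, $\mathcal{A}_t\neq\emptyset$ for every $t\in\mathbb{R}$: this is precisely the outcome of the construction in Section 3.1 together with Proposition \ref{pro:C^0 asymtotic behavior}. Second, a strict ordering property: the proof of Lemma \ref{lem: t_1 beyond t_2} (outside a compact set the two graphs are ordered by their asymptotic heights, any intersection is therefore compact and, after excluding the tangential case by the strong maximum principle, transversal, and then a cut-and-paste produces a non-smooth area minimizing boundary, contradicting interior regularity for $n\le7$) applies verbatim to any $\Sigma_1\in\mathcal{A}_{t_1}$ and $\Sigma_2\in\mathcal{A}_{t_2}$ with $t_1>t_2$, giving $\Sigma_1$ strictly above $\Sigma_2$. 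In particular, for each $\epsilon>0$ every element of $\mathcal{A}_t$ lies strictly between any chosen element of $\mathcal{A}_{t-\epsilon}$ and any chosen element of $\mathcal{A}_{t+\epsilon}$, and by Proposition \ref{pro: C^0 estimate of u} these barriers lie in a fixed slab $\{-K\le z\le K\}$; since each element of $\mathcal{A}_t$ is the boundary of a Caccioppoli set of the type appearing in $\mathcal{F}_R$, it meets the $z$-axis, hence meets a fixed compact set.

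To build the top leaf I would fix $t_i\downarrow t$ and pick $\Sigma_i\in\mathcal{A}_{t_i}$. By the ordering property every $\Sigma_i$ lies above every element of $\mathcal{A}_t$ and below a fixed element of $\mathcal{A}_{t+1}$, so all the $\Sigma_i$ lie in a fixed slab and meet a fixed compact set; since, as in the proof of Lemma \ref{lem: interior curvature estimate} and Remark \ref{re: asymptotic to hyperplane}, their second fundamental forms are bounded by a constant depending only on $(M^n,g)$, a subsequence converges locally smoothly and with multiplicity one to a complete area minimizing boundary $\Sigma_t^+$ which lies above every element of $\mathcal{A}_t$. The bottom leaf $\Sigma_t^-$ is produced identically from a sequence $t_i\uparrow t$, and lies below every element of $\mathcal{A}_t$; thus $\Sigma_t^-\le\Sigma\le\Sigma_t^+$ for every $\Sigma\in\mathcal{A}_t$.

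Next I would check $\Sigma_t^\pm\in\mathcal{A}_t$. Being trapped between two parallel hyperplanes, $\Sigma_t^+$ is, by the blow-down argument of Section 3.1 and Remark \ref{re: asymptotic to hyperplane}, asymptotic to some $S_{t'}$. Comparing with an element of $\mathcal{A}_{t+\epsilon}$ (which lies above $\Sigma_i$ once $t_i<t+\epsilon$, hence above $\Sigma_t^+$) forces $t'\le t+\epsilon$ for every $\epsilon>0$, while comparing with an element of $\mathcal{A}_t$ (which lies below every $\Sigma_i$, hence below $\Sigma_t^+$) forces $t'\ge t$; so $t'=t$, and likewise for $\Sigma_t^-$. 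Now the dichotomy follows: $\Sigma_t^-$ and $\Sigma_t^+$ are smooth minimal hypersurfaces with $\Sigma_t^-$ lying weakly on one side of $\Sigma_t^+$, so by the strong maximum principle they either coincide or are disjoint. If they coincide, then $\Sigma_t^-\le\Sigma\le\Sigma_t^-$ forces $\Sigma=\Sigma_t^-$ for all $\Sigma\in\mathcal{A}_t$, i.e. $|\mathcal{A}_t|=1$; this is case (1). If they are disjoint they bound a region $\Omega_t$ (well defined since both are asymptotic to the same $S_t$), every $\Sigma\in\mathcal{A}_t$ satisfies $\Sigma_t^-\le\Sigma\le\Sigma_t^+$ and hence lies in $\overline{\Omega_t}$, and if moreover $\Sigma\neq\Sigma_t^\pm$ then $\Sigma$ cannot touch $\Sigma_t^\pm$ (strong maximum principle again), so it lies in the open region $\Omega_t$; this is case (2).

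The step I expect to be the main obstacle is the identification $\Sigma_t^\pm\in\mathcal{A}_t$, i.e. ruling out a jump of the asymptotic translation parameter $t'$ in the merely local limit; this is exactly where the uniform-in-$t$ decay of Lemma \ref{lem: decay estimate of u} and the comparison with the already-constructed leaves of $\mathcal{A}_t$ and $\mathcal{A}_{t\pm\epsilon}$ are indispensable. The remaining inputs — the strict ordering, the curvature bound and local compactness of area minimizing boundaries, and the strong maximum principle for minimal hypersurfaces — are routine given what has already been established.
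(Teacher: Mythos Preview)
Your proposal is correct and follows essentially the same approach as the paper: construct $\Sigma_t^\pm$ as limits of elements of $\mathcal{A}_{t_i}$ with $t_i\to t^\pm$, use the strict ordering of Lemma~\ref{lem: t_1 beyond t_2} to trap them, and finish with the strong maximum principle. Your sandwiching argument for the identification $\Sigma_t^\pm\in\mathcal{A}_t$ (comparing with fixed elements of $\mathcal{A}_t$ and $\mathcal{A}_{t\pm\epsilon}$) is in fact a slightly cleaner alternative to the paper's direct appeal to the uniform decay of Lemma~\ref{lem: decay estimate of u}, which as stated only covers $|t|>t_0$.
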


    Summarizing the results above, we obtain the following structure theorem for asymptotic manifold of dimension $4\le n\le 7$

    \begin{proposition}
        For any asymptotic flat manifold $(M^n,g)$ satisfying $4\le n\le 7, \tau>\frac{n-1}{2}$, there exists $T_0>0$, such that

        (1) For $|t|>T_0$, the area minimizing boundary in $M$ asymptotic to $S_t$ is unique. Moreover, the region outside $\Sigma_{\pm T_0}$ is smooth foliation by $\lbrace\Sigma_t\rbrace_{|t|> T_0}$.

        (2) For $|t|\le T_0$, denote
        \begin{align*}
            &I = \lbrace t,\quad |\mathcal{A}_t|>1\rbrace 
        \end{align*}
        Then $I$ is countable, and the region between $\Sigma_{\pm T_0}$ could be represented as
        \begin{align*}
            (\bigsqcup_{\alpha\in I}\Omega_{\alpha})\cup(\bigsqcup_{\beta\in [-T_0, T_0]\backslash I}\Sigma_{\beta})
        \end{align*}
    \end{proposition}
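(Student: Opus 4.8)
The plan is to assemble the final structure theorem from the pieces already established in this section, treating it essentially as a bookkeeping statement. For part (1), the existence and uniqueness of an area minimizing boundary asymptotic to $S_t$ for $|t|$ large is exactly Proposition~\ref{prop: uniqueness} (existence coming from the convergence argument after Proposition~\ref{pro: C^0 estimate of u} together with Lemma~\ref{lem: decay estimate of u}), and the smooth foliation of the region outside $\Sigma_{\pm T_0}$ is what the sequence Proposition~\ref{prop: foliation beyond t_0}, Proposition~\ref{prop: C^1 smoothness for foliation}, and the remark on $C^k$-dependence give, with $T_0$ chosen at least as large as the $t_0$ appearing there (and symmetrically for $t\to-\infty$). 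So the only thing to do for (1) is to fix a single $T_0$ that works simultaneously for the positive and negative ends and cite these results.

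For part (2), I would first note that for each $t$ the set $\mathcal{A}_t$ is nonempty (Lemma~\ref{lem: decay estimate of u} produces an element) and, by the lemma immediately preceding this proposition, either is a singleton or contains a distinguished pair $\Sigma_t^\pm$ bounding a region $\Omega_t$ that contains all other elements. The disjointness of the $\Omega_\alpha$ for $\alpha\in I$ and their disjointness from the leaves $\Sigma_\beta$, $\beta\notin I$, follows from Lemma~\ref{lem: t_1 beyond t_2}: if $t_1>t_2$ every element of $\mathcal{A}_{t_1}$ lies strictly above every element of $\mathcal{A}_{t_2}$, so the regions $\overline{\Omega_{t_1}}$ and $\overline{\Omega_{t_2}}$ (or the leaves, in the singleton case) are linearly ordered and pairwise disjoint; that they exhaust the slab between $\Sigma_{\pm T_0}$ is the foliation statement of Proposition~\ref{prop: foliation beyond t_0} applied on both sides together with the observation that every point of $M\setminus K$ lies on the vertical line $l$ used there and hence on some $\Sigma_t$ or in some $\Omega_t$.

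The one genuinely substantive point is the countability of $I$. Here I would argue that each $\Omega_\alpha$ has positive "vertical thickness": $\Sigma_\alpha^+$ lies strictly above $\Sigma_\alpha^-$, and since both are asymptotic to the same $S_\alpha$ with the uniform decay $|z-\alpha|\le C|x|^{1-\tau+\epsilon}$ of Lemma~\ref{lem: decay estimate of u}, the gap between them is a fixed positive number on, say, the $z$-axis; more precisely the vertical segment of the $z$-axis strictly between $\Sigma_\alpha^-$ and $\Sigma_\alpha^+$ is a nonempty open interval $J_\alpha$. By Lemma~\ref{lem: t_1 beyond t_2} these intervals $\{J_\alpha\}_{\alpha\in I}$ are pairwise disjoint open subintervals of the bounded segment of the $z$-axis lying between $\Sigma_{-T_0}$ and $\Sigma_{T_0}$, and a disjoint family of nonempty open intervals in $\mathbb{R}$ is necessarily countable. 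Hence $I$ is countable. I expect this thickness/disjointness argument to be the only place requiring care; the rest is a direct transcription of the already-proved lemmas, so I would keep the write-up short and mostly citational. One should also double-check that the two distinguished leaves $\Sigma_\alpha^\pm$ really are the extreme elements of $\mathcal{A}_\alpha$ (so that "all other elements lie in $\Omega_\alpha$" is literally correct), but this is exactly the content of the preceding lemma and needs only to be invoked.
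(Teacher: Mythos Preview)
Your proposal is correct and follows essentially the same approach as the paper: part (1) is assembled from the previously established propositions exactly as you outline, and the only substantive point in part (2) is the countability of $I$, proved via the pairwise disjointness of the nonempty open regions $\Omega_\alpha$. The paper's version of the countability step is marginally cleaner---it simply observes that $\{\Omega_\alpha\}_{\alpha\in I}$ is a disjoint family of nonempty open subsets of the (second-countable) manifold $M$, hence countable---whereas your projection to the $z$-axis has a small technical wrinkle: the $z$-axis passes through $K$, so for $|\alpha|$ small the relevant segment is not in the coordinate chart; you would need to replace it by a vertical line $\{x'=x_0'\}$ with $|x_0'|$ large, which your parenthetical ``say, the $z$-axis'' already hints you are prepared to do.
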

    \begin{proof}
        By our definition, the area minimizing hypersurface asymptotic to $S_t$ is unique if and only if $t\in \mathbb{R}\backslash I$. For the remaining it suffice to show $I$ is countable. This follows immediately from the 1-1 correspondence between element $t\in I$ and the open set $\Omega_t\subset M$, and that these $\Omega_t$'s are mutually disjoint.
    \end{proof}

 \section{On the Schoen's conjecture}
	In this section, we give the proof of Theorem \ref{thm: free boundary}. Let $(M^n,g)$ be the asymptotically flat manifold and $C_{R_i}$ the cylinder as stated in Theorem \ref{thm: free boundary}, each of which contains a hypersurface $\Sigma_i$ minimizing the volume in $\mathcal{F}_{R_i}$. Assume the statement of the theorem is not true, then we are able to find a compact set $\Omega_0$ with $\Sigma_i\cap\Omega_0\ne\emptyset$ for each $i$. By passing to a subsequence, $\Sigma_i$ converges locally smoothly to an area minimizing boundary $\Sigma$. To  investigate the basic properties for $\Sigma$, we introduce the {\it height} of a subset $A\subset M$ as follows: the height of $A$ is defined to be
		\begin{align}\label{eq: 58}
			Z(A) = &Z_+(A)-Z_-(A)
		\end{align}
  with
  \begin{align*}
      Z_+(A) =\left\{
	 \begin{alignedat}{2}
			& \sup \lbrace z(p), p\in A\backslash K\rbrace, \quad A\backslash K\ne\emptyset,  \\
		   & 0,  \quad A\backslash K=\emptyset
		\end{alignedat}
	\right.\\
    Z_-(A) =\left\{
	 \begin{alignedat}{2}
			& \inf \lbrace z(p), p\in A\backslash K\rbrace, \quad A\backslash K\ne\emptyset,  \\
		   & 0,  \quad A\backslash K=\emptyset
     \end{alignedat}
	\right.
  \end{align*}

\begin{lemma}\label{lem: Sigma parallel to S0}
Let $\Sigma$ be described as above, then $\Sigma$ is a graph over $S_0$ outside a compact set, and the graph function $u$ satisfies
        \begin{align*}
			|y|_{\bar{g}}^{-1}|u(y)-a|+|\Bar{\nabla}u(y)|_{\Bar{g}}+|y|_{\bar{g}}|\Bar{\nabla}^2u(y)|=O(|y|^{-\tau+\epsilon})
		\end{align*}
	\end{lemma}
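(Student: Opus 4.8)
\textbf{Plan of proof for Lemma \ref{lem: Sigma parallel to S0}.}
The plan is to transport all the asymptotic control we already have for the free boundary solutions $\Sigma_i$ through the smooth limit $\Sigma$, and then to pin down the limiting hyperplane. First I would record that each $\Sigma_i$, being the free boundary minimizer in $\mathcal{F}_{R_i}$, is a part of an area minimizing boundary with $C_{R_i}\cap\{z\leq b_i\}\subset\Omega_i$ and $\Omega_i\cap\{z\geq a_i\}=\emptyset$; in particular it is trapped in a slab $\{a_i'\leq z\leq b_i'\}$, and by the interior curvature estimate Lemma \ref{lem: interior curvature estimate} its second fundamental form is uniformly bounded on compact sets away from $\partial C_{R_i}$. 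Since $\Sigma_i\cap\Omega_0\neq\emptyset$, the monotonicity/area bound \eqref{eq: area growth} survives in the limit, so $\Sigma$ is a complete, properly embedded area minimizing boundary in $(M^n,g)$ with $\mathcal{H}^{n-1}(\Sigma\cap B^n_s(p))\leq\Lambda s^{n-1}$, hence (by the argument already used in the proof of Lemma \ref{lem: interior curvature estimate}) with uniformly bounded $|h|$. Then I would invoke the arguments of \cite{EK23} — exactly as in the discussion following Theorem \ref{thm: cylinder minimizing} — to conclude that $\Sigma$ is asymptotic to a coordinate hyperplane; the content of the present lemma is that in the Cartesian coordinates fixed by $S_0$ this hyperplane is $S_a$ for some constant $a$, i.e. $\Sigma$ is a graph over $S_0$ outside a compact set, and to quantify the convergence.

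The second step is to get the decay. Because $|h|$ is uniformly bounded on $\Sigma$ outside a compact set and $\Sigma$ is asymptotic to $S_a$, Remark \ref{re: asymptotic to hyperplane} gives the Sobolev inequality \eqref{eq: Sobolev2} on $\Sigma\setminus K$ and, via Proposition 9 of \cite{EK23}, the gradient bound $|\bar\nabla u|\leq C|y|^{-\tau+\epsilon}$; with the uniform gradient control, Lemma \ref{lem: small gradient outside compact set} upgrades this to $|\bar\nabla u(y)|\leq C|y|^{-1}$ first and then the rescaling argument there gives the sharp $|y|^{-\tau+\epsilon}$ rate. For the $C^0$ and $C^2$ rates I would run exactly the Green's function argument of Lemma \ref{lem: decay estimate of u}: writing the intrinsic minimal surface equation $\Delta_{\Sigma}u=f$ with $|f|=O(|x|^{-\tau-1})$ (Lemma \ref{lem: intrinsic MSE}), identifying $(\Sigma,g|_\Sigma)$ with $(\mathbb{R}^{n-1},ds^2)$ with uniformly elliptic $ds^2$, using the Green function bounds $C_1|x-y|^{3-n}\leq G(x,y)\leq C_2|x-y|^{3-n}$ from \cite{LSW63}, and splitting the convolution integral into the three regions near $o$, near $x$, and far from both, exactly as in \eqref{eq:24}–\eqref{eq:27}, yields $|u(y)-a|\leq C|y|^{1-\tau+\epsilon}$, i.e. $|y|^{-1}|u(y)-a|=O(|y|^{-\tau+\epsilon})$. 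The $|y||\bar\nabla^2 u|$ estimate then comes from the interior Schauder estimate applied to the minimal graph equation \eqref{eq: minimal graph eq} on balls $B_r(x)$ with $r=|x|/4$, as in Lemma \ref{lem: decay estimate for higher derivative of u}.

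The one genuinely new point, and the step I expect to be the main obstacle, is identifying the asymptotic hyperplane of $\Sigma$ as a \emph{coordinate} hyperplane $S_a$ in the \emph{given} coordinates (the ones attached to $S_0$) rather than some affine hyperplane in a blow-down. The blow-down of $\Sigma$ is a minimal cone in $\mathbb{R}^n$ that is area minimizing and, being a limit of slab-confined hypersurfaces, lies in a slab; since $4\leq n\leq 7$ the cone is a hyperplane, and the slab confinement forces it to be horizontal, so $\Sigma$ is a graph over $S_0$ outside a compact set with sublinearly growing graph function — this is where one must be careful that the trapping slabs for the $\Sigma_i$ have uniformly bounded width after recentering, which follows from $\Sigma_i\cap\Omega_0\neq\emptyset$ together with the barrier/height bookkeeping in the definition of $\mathcal{F}_{R_i}$. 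Once $\Sigma$ is known to be a sublinear graph over $S_0$, the argument of Proposition \ref{pro:C^0 asymtotic behavior} (Claim $t\leq t'$ and its reverse, using the $L^p$ bound of Lemma \ref{lem: L^p estimate for u} passed to the limit) shows the graph function has a genuine limit $a$, and everything above applies with $t=a$. After that, the decay estimates are routine given the machinery already developed in Section 3.
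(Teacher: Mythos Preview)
Your decay analysis is fine but massively overworked: once you know $\Sigma$ is an area minimizing boundary asymptotic to some hyperplane $\Pi$, Proposition 9 of \cite{EK23} gives you all three rates $|u-a|$, $|\bar\nabla u|$, $|y||\bar\nabla^2 u|=O(|y|^{-\tau+\epsilon})$ in one stroke; the paper simply quotes this. The Green's function and Schauder arguments you outline are correct but redundant here.

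The real issue is the step you yourself flag as ``the main obstacle'': showing $\Pi$ is parallel to $S_0$. Your argument is that the blow-down of $\Sigma$ lies in a slab because the $\Sigma_i$ are ``slab-confined'' with ``uniformly bounded width after recentering'', and you say this follows from $\Sigma_i\cap\Omega_0\neq\emptyset$ together with ``barrier/height bookkeeping in the definition of $\mathcal{F}_{R_i}$''. This is a gap. The class $\mathcal{F}_{R_i}$ only asserts the \emph{existence} of some $a_i,b_i$ with $\Sigma_i\subset\{b_i\leq z\leq a_i\}$; nothing in the definition, and nothing in the fact that $\Sigma_i$ meets a fixed compact set, bounds $a_i-b_i$ independently of $i$. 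A tilted free boundary competitor at angle $\theta_i\to 0$ with $R_i\theta_i\to\infty$ shows that uniform slab width does not come for free.

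The paper's route is different and actually uses the free boundary \emph{minimizing} property of $\Sigma_i$, not just membership in $\mathcal{F}_{R_i}$. Rescale the whole picture by $1/R_i$: the cylinder becomes the Euclidean unit cylinder $C_1$, and since $\Sigma_i\cap\Omega_0\neq\emptyset$, the rescaled $\frac{1}{R_i}\Sigma_i$ passes near the origin and subconverges to a stable minimal cone $\Sigma'$ in $C_1\setminus\{O\}$. By Simons' theorem ($n\leq 7$) this cone is a hyperplane through $O$. Now the key: the free boundary minimizing property in $\mathcal{F}_{R_i}$ passes to the limit, so $\Sigma'\cup\{O\}$ minimizes area in the Euclidean free boundary class in $C_1$. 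A tilted disk through $O$ does not minimize in this class, so $\Sigma'=\{z=0\}\cap C_1$. This yields $Z(\Sigma_i)=o(R_i)$, which is all one needs (and is weaker than your unproved uniform bound). The same rescaling, together with multiplicity-one convergence, shows $\Sigma$ has a single end. Finally, $Z(\Sigma_i)=o(R_i)$ forces the hyperplane produced by \cite{EK23} to be parallel to $S_0$.
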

	\begin{proof}
     We claim the following holds
     \begin{align}\label{eq: 57}
         Z(\Sigma_i) = o(R_i)
     \end{align}
In fact, let $\Tilde{C}_{R_i} = \frac{1}{R_i}(C_{R_i}\backslash K)$ be the rescaled cylinder with radius 1, then $\Tilde{C}_{R_i}$ converges to $(C_1\backslash\{O\},g_{Euc})$ in Gromov-Hausdorff sense, with $C_1$ denoting the Euclidean cylinder with radius $1$. Since $\Sigma_i$ intersects with a compact set, we have $\frac{1}{R_i}\Sigma_i$ converge to a stable hypercone $\Sigma'$ in $C_1\backslash O$ with Euclidean metric, and $\partial\Sigma'\subset C_1$. From our assumption that $n\le 7$, it follows from the classification of stable minimal cone by J.Simons \cite{Simon1968} that $\Sigma'$ is actually a hyperplane. Moreover, as each $\Sigma_i$ minimizes the volume in $C_{R_i}$, we see $\Sigma'\cup O$ minimizes the volume in $C_1$. Hence, $\Sigma'$ coincides with the standard disk $\{z=0\}\cap C_1$, and \eqref{eq: 57} immediately follows. 

    Furthermore, by the minimizing property of each $\Sigma_i$ and $\Sigma'$, we see the convergence from $\frac{1}{R_i}\Sigma_i$ to $\Sigma'$ is of multiplicity one. From this it is not hard to see $\Sigma$, the limit of $\Sigma_i$, has exactly one end.

     Let us return to the proof of Lemma \ref{lem: Sigma parallel to S0}. By Proposition 9 in \cite{EK23}, $\Sigma$ is asymptotic to a hyperplane $\Pi$ at infinity, with desired decay estimate for derivatives. \eqref{eq: 57} implies $\Pi$ must be parallel to $S_0$, and the conclusion follows.
	\end{proof}

For convenience of later discussion, we introduce the following conception of slope
        \begin{definition}\label{def: slope}
            For a piecewise smooth hypersurface $V$ in $M\backslash K$, we define the slope of a regular point $p\in V$ to be the dihedral angle of the tangent hyperplane of $V$ at $p$ and $S_0$, with respect to Euclidean metric.
        \end{definition}
        Obviously, if we view $V$ as a local graph over $S_0$ with graph function $u$, then the slope at $p$ coincides with $\arctan|\Bar{\nabla}u|$.
 
	Clearly, $\Sigma$ seperates $M$ into two parts: the upper part $M_+$ and the lower part $M_-$. By Theorem \ref{thm: Existence}, there exists a positive number $t_0$, such that the region in $M$ beyond $\Sigma_{t_0}$ is smoothly foliated by $\Sigma_t (t\ge t_0)$. The key step in proving is to establish Lemma \ref{lem: coincide}, which claims the existence of $t_1>t_0$, such that for $t>t_1$, $\Sigma_t$ is stable under asymptotic constant variation. We would achieve this by picking a suitable point $p\in\Sigma_t$ and constructing a minimal hypersurface $\Sigma_p$ with desired property, and showing that $\Sigma_t$ coincides with $\Sigma_p$.
 
 For $t>t_0$, we can pick a point $p\in \Sigma_t$ with sufficiently large $x_1$-coordinate, such that the minimizing geodesic $\gamma$ joining $p$ and $\Sigma$ is very far from the compact set $K$ defined in Definition \ref{def: AF manifold}, and $\gamma$ is almost perpendicular
        \begin{equation}\label{eq: 63}
            \begin{split}
                &\inf\{x_1(q),q\in\gamma\}>100\\
                &\langle \gamma',\frac{\partial}{\partial z}\rangle>1-\frac{1}{10000}
            \end{split}
        \end{equation}
 This is always possible due to the asymptotic flatness of $M$ and Lemma \ref{lem: Sigma parallel to S0}.

 We present the following metric deformation lemma, which is a slight modification of Lemma 31 from \cite{EK23}.
        \begin{lemma}\label{lem: g(s) perturbation}
            Let $p\in M_+$ be stated as above, then for any $r_0>0$, there exists $0<r<r_0$, an open set $W\subset M$ with compact closure, satisfying $W\cap \Sigma\ne\emptyset$, and a family of Riemannian metrics $\lbrace g(s)\rbrace_{s\in[0,1]}$, such that the following holds

            (1) $g(s)\to g$ smoothly as $t\to 0$

            (2) $g(s)=g$ in $M\backslash W$

            (3) $g(s)<g$ in $W$

            (4) $R(g(s))>0$ in $\lbrace x\in W: \dist(x,p)>r\rbrace$

            (5) For $i$ sufficiently large, $\Sigma_i$ is weakly mean convex and strictly mean convex at one interior point under $g(s)$ with respect to the normal vector pointing into $M_-$.

            Moreover, $W$ satisfies following properties:
            
            (I) $\partial W$ is piecewise smooth.

            (II) If there exists a smooth hypersurface $V$ touching $\partial W$ from the outside at $q$, then $q$ lies in the regular part in $\partial W$.
            
            (III) There exists a constant $\epsilon_0>0$, if $q\in W\cap M_+$ has slope smaller than $\epsilon_0$, then $\dist(q,p)<6r$.
        \end{lemma}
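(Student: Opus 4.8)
\textbf{Proof proposal for Lemma \ref{lem: g(s) perturbation}.}

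The plan is to follow the construction in Lemma 31 of \cite{EK23} almost verbatim, adjusting the geometry of the region $W$ so that the three extra conditions (I)--(III) hold simultaneously with the metric conditions (1)--(5). The basic idea is that the geodesic $\gamma$ from $p$ to $\Sigma$ is, by \eqref{eq: 63}, nearly a vertical segment sitting far out in the asymptotically flat end; we want to ``thicken'' a neighbourhood of this segment to obtain $W$, then conformally shrink the metric inside $W$ in a controlled way. First I would fix a small tube $T_\rho$ of radius $\rho$ around $\gamma$ (with $\rho$ to be chosen small depending on $r_0$ and on the almost-verticality in \eqref{eq: 63}), truncate it near the endpoint $p$ by a small geodesic ball $B^n_r(p)$ with $r<r_0$, and near $\Sigma$ by following $\Sigma$ a bounded distance; then set $W$ to be the interior of the union of this truncated tube with a thin collar of $\Sigma$ so that $W\cap\Sigma\neq\emptyset$ and so that every boundary piece is either a piece of a geodesic sphere, a piece of the tube's lateral surface, or a piece of a fixed graph over $S_0$ near $\Sigma$. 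This makes $\partial W$ piecewise smooth, giving (I). For (II), the non-smooth locus of $\partial W$ is a finite union of submanifolds of codimension $\ge 2$ formed as transverse intersections of the smooth pieces, and at such an edge the boundary is locally the intersection of two half-spaces making an angle $<\pi$, hence concave; no smooth hypersurface can be tangent to $\partial W$ from outside there, so any outside-touching point is regular. For (III), since $W\cap M_+$ lies inside the nearly-vertical thin tube $T_\rho$ together with the collar of $\Sigma$ near $p$, and $\Sigma$ itself has slope decaying to $0$ only at spatial infinity (it is parallel to $S_0$ by Lemma \ref{lem: Sigma parallel to S0}), a point of $W\cap M_+$ with slope $<\epsilon_0$ must be within the part of the tube close to $p$, because the rest of $W\cap M_+$ is either tube wall (large slope, being nearly vertical graph) or the $\Sigma_t$-side which one arranges to have slope bounded below by a fixed positive amount away from $B^n_{6r}(p)$; choosing $\epsilon_0$ small enough forces $\dist(q,p)<6r$.

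For the metric family, I would take $g(s)=(1-s\varphi)^{\frac{4}{n-2}}g$ or more simply $g(s)=g-s\,\psi\,\eta$ in the spirit of \cite{EK23}, where $\psi\ge 0$ is supported in $W$, vanishing to high order on $\partial W$, so that (1), (2), (3) are immediate. Condition (4) is the scalar curvature positivity: here one uses the standard fact that a small compactly supported conformal (or localized) perturbation of a metric can be made to have positive scalar curvature on the region where the perturbation ``profile'' is strictly increasing/convex, away from a small ball $B^n_r(p)$ where we give up control; this is exactly the mechanism in \cite{EK23} and requires only that $W$ lies deep in the AF end where $g$ is $C^2$-close to $\bar g$, which is guaranteed by the first line of \eqref{eq: 63} and by taking $p$ with large $x_1$. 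Condition (5), the mean-convexity of $\Sigma_i$ under $g(s)$ with respect to the normal pointing into $M_-$: since $\Sigma_i\to\Sigma$ locally smoothly and $\Sigma$ is area-minimizing hence minimal under $g$, the mean curvature of $\Sigma_i$ under $g$ is $o(1)$ on compact sets; the perturbation $g(s)<g$ is designed (as in \cite{EK23}) to push the mean curvature to the weakly-convex side on $W\cap\Sigma$ and strictly convex at one interior point, while outside $W$ nothing changes and $\Sigma_i$ stays minimal, so for $i$ large the combined effect gives weak mean convexity everywhere and strict convexity somewhere.

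The main obstacle I expect is the simultaneous compatibility of (III) with (4) and (5): we need $W$ thin and nearly vertical enough that slope $<\epsilon_0$ traps points near $p$, yet fat enough (and with the right monotone profile $\psi$) that the scalar-curvature-increasing perturbation actually works on $W\setminus B^n_r(p)$ and that $\Sigma_i$ genuinely becomes strictly mean convex at an interior point rather than merely weakly so. Threading this needle is precisely where the almost-perpendicularity estimate \eqref{eq: 63} and the freedom to shrink $r$ below any given $r_0$ are used: one first chooses $\rho$ and the tube, then chooses $r$ small relative to $\rho$ and to the collar width, then chooses the perturbation amplitude small relative to everything, and finally chooses $\epsilon_0$ last. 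A secondary technical point is verifying (II) rigorously at the ``triple'' corners where three smooth faces of $\partial W$ meet (near $p$ and near where the tube meets the $\Sigma$-collar); there one checks that the intersection is still concave from outside, which follows because each face is either a geodesic sphere (convex toward its center, which lies inside $W$) or a nearly-flat graph, and transverse intersections of such concave pieces remain non-touchable from outside. I would handle these corner cases by an explicit local model in Fermi coordinates along $\gamma$, deferring the routine verification to a short computation.
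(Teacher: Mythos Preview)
Your overall strategy---build $W$ around the almost-vertical geodesic $\gamma$ from $p$ to $\Sigma$ and conformally shrink the metric inside---is the same as the paper's, but your proposed geometry for $W$ (a tube plus a collar) misses the key mechanism behind condition~(4), and you hand-wave precisely the step that carries the whole construction.

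The issue is this: for a conformal change $g(s)=(1+sv)^{4/(n-2)}g$ with $v\le 0$, $v=0$ on $\partial W$, $v<0$ in $W$, condition~(4) requires $\Delta v<0$ on all of $W\setminus B_r(p)$. But a nonpositive function vanishing on $\partial W$ must have $\Delta v\ge 0$ somewhere in $W$; the content of the construction is to force this bad set into a \emph{single} small ball at one \emph{end} of a long thin region. Your sentence ``the standard fact that a small compactly supported conformal perturbation can be made to have positive scalar curvature on the region where the profile is strictly increasing/convex'' is not a fact in this generality---for a tube around $\gamma$ there is no obvious profile $\psi$ that is superharmonic everywhere except near $p$. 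You would need to actually exhibit such a $\psi$, and you do not.

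The paper resolves this by taking $W=\bigcup_{i=1}^N B_{6r}(p_i)$ as a \emph{chain of overlapping geodesic balls} with centers $p_1,\dots,p_N=p$ spaced at distance $4r$ along $\gamma$, and $v=\sum_i a_i\, v_{r,p_i}$ where each $v_{r,p_i}=r_0^2 f(d(\cdot,p_i)/r_0)$ is a radial bump satisfying $\Delta v_{r,p_i}<0$ on the annulus $B_{6r}(p_i)\setminus B_r(p_i)$ (this uses a specific one-variable profile $f$ with $(4n-1)f'+xf''<0$ on $(1,6)$). The inner ball $B_r(p_{i-1})$, where $\Delta v_{r,p_{i-1}}$ may be positive, is arranged to sit inside the annulus $B_{5r}(p_i)\setminus B_{3r}(p_i)$ of the next ball; the weights $a_i$ are then chosen \emph{recursively} so that the negative Laplacian of $a_i v_{r,p_i}$ dominates the possibly-positive Laplacian of $a_{i-1}v_{r,p_{i-1}}$ on that overlap. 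This propagates the bad set all the way up to $B_r(p_N)=B_r(p)$. This recursive absorption trick is the heart of the lemma and has no analogue in your tube picture.

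The chain-of-balls shape also makes (II) and (III) one-line checks rather than the corner analysis you outline: adjacent spheres $\partial B_{6r}(p_i)$, $\partial B_{6r}(p_{i+1})$ meet with outer dihedral angle $<\pi$ (so no smooth hypersurface can touch from outside at an edge), and at any $q\in\partial W\cap M_+$ not on the upper hemisphere of the top ball $\partial B_{6r}(p)$, the slope of $\partial W$ is bounded below by $\cot\alpha>2\sqrt{2}-\epsilon'$, immediately giving (III). Condition~(5) is then obtained from the explicit formula $\partial v/\partial\nu = r_0 f'(d/r_0)\langle\nabla d,\nu\rangle$ on $\Sigma$, together with the lower bound $\langle\nabla d(\cdot,p_1),\nu\rangle>\tfrac{2}{3}-\epsilon'$ coming from the geometry of the single ball $B_{6r}(p_1)$ meeting $\Sigma$; your ``collar'' does not give such a clean normal-derivative computation.
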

        \begin{proof}
            We follow the main strategy of Lemma 31 in \cite{EK23}. See also \cite{CCE16} and \cite{Li24}.

            By the calculation of \cite{Li24}, if we can find $v\in C^{\infty}(M)$ and the set $W$, such that
            \begin{equation}\label{eq: 50}
                \begin{split}
                    &  v = 0 \mbox{ in }M\backslash W\\
                &  v < 0 \mbox{ in }W\\
                &  \Delta v<0 \mbox{ in }\lbrace x\in W: \dist(x,p)>r\rbrace\\
                &  \frac{\partial v}{\partial\nu_i}>0 \mbox{ on }\Sigma_i
                \end{split}
            \end{equation}
            Then $g_s = (1+sv)^{\frac{4}{n-2}}g$ satisfies the desired property.

            As in \cite{Li24} we can find a nonpositive function $f\in C^{\infty}(\mathbb{R})$, such that
            \begin{align*}
                &f'(x)>0\mbox{ on }(0,6)\\
                &(4n-1)f'(x)+xf''(x)<0\mbox{ on }(1,6)\\
                &f(x)=0\mbox{ for }x\ge 6
            \end{align*}
            Moreover, by choosing $0<r_0<inj(M)$ sufficiently small, we can guarantee $\Delta_g d(x,q)^2<8n$ on any geodesic ball $B_{6r}(q)$ for $r<r_0$. Define $v_{r,q} = r_0^2f(\frac{d(x,q)}{r_0})$, so the calculation in \cite{Li24} yields
            \begin{align*}
                &v_{r,q} = 0 \mbox{ in } M\backslash B_{6r}(q)\\
                &v_{r,q} < 0 \mbox{ in } B_{6r}(q)\\
                &\Delta v_{r,q} < 0 \mbox{ in }B_{6r}(q)\backslash B_{r}(q)
            \end{align*}
            Additionally, If $B_{6r}(q)\cap V\ne\emptyset$ for some hypersurface $V$, then there holds
            \begin{align}\label{eq: 51}
                \frac{\partial v_{r,q}}{\partial\nu} = r_0f'(\frac{d(x,q)}{r_0})\langle \nabla d(x,q),\nu\rangle \mbox{ on }V
            \end{align}
            with $\nu$ denoting the unit normal of $V$.

            Let $\gamma$ be the minimizing geodesic joining $p$ and $\Sigma$, with endpoints $p$ and $p_0\in\Sigma$. By our assumption $\gamma$ lies far from the compact set $K$, the geometry around $\gamma$ is very similar to that of the Euclidean space. Let $N$ be a large interger with $r=\frac{L(\gamma)}{4N}<r_0$. Choose $p_i(i=1,2,\dots,N)$ along $\gamma$ such that $\dist(p_i,p_{i+1}) = 4r (i=0,1,\dots,N-1)$ with $p_N=p$. Then $p_i$ coincides with the image of exponential map determined by $p_0$, by evaluating at $4ir$.

 \begin{figure}
    \centering
    \includegraphics[width = 15cm]{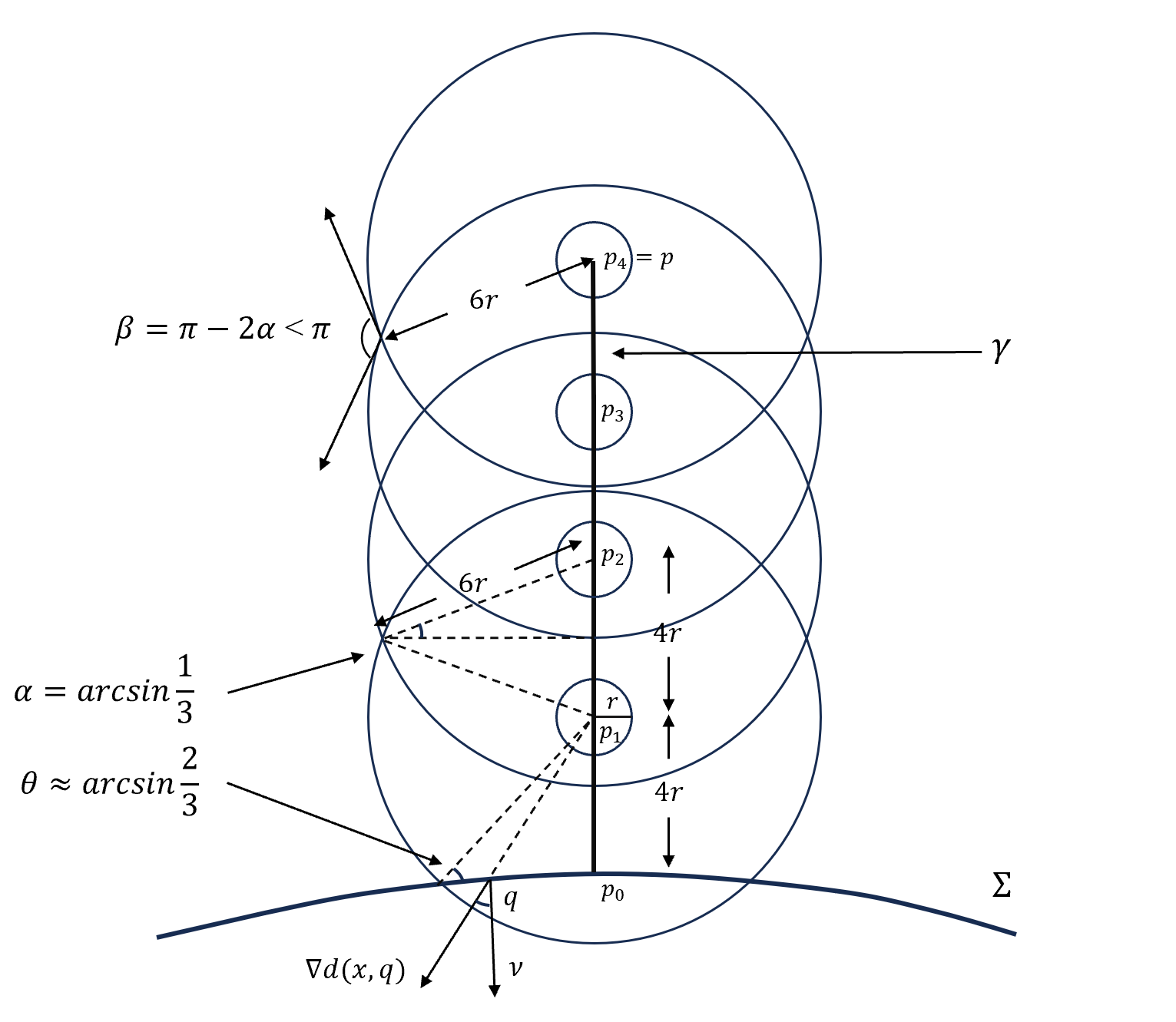}
    \caption{The construction of $W$}
    \label{f1}
\end{figure}           
It is readily to check that, the $p_i$'s chosen above satisfies
           \begin{align*}
               &B_r(p_i)\subset B_{5r}(p_{i+1})\backslash B_{3r}(p_{i+1}),\mbox{ for }i=1,2,\dots,N-1\\
               &B_{6r}(p_i)\cap\Sigma = \emptyset, i = 2,\dots,N; B_{6r}(p_1)\cap\Sigma\ne\emptyset.
           \end{align*}
           Following \cite{EK23}, we define $a_1,a_2,\dots,a_N$ recursively by $a_1 = 1$ and set $a_2,\dots,a_N$ by
           \begin{align*}
               a_i = 1+\frac{\sup\{\Delta v_{r,q_{i-1}}(x),x\in B_r(q_{i-1})}{\inf\{\Delta v_{r,q_i}(x),x\in B_{5r}(q_i)\backslash B_{3r}(q_i)}a_{i-1}
           \end{align*}
           Let
           \begin{align*}
               &v = \sum_{i=1}^N a_iv_{r,p_i}\\
               &W = \bigcup_{i=1}^N B_{6r}(p_i)\\
           \end{align*}
           then the first three equations in \eqref{eq: 50} could be satisfied.
           
           For the last equation in \eqref{eq: 50}, if we choose $r$ sufficiently small, then $\Sigma\cap B_{6r}(p_1)$ could be regarded as a hyperplane. It is not hard to see from Figure \ref{f1} that
           \begin{align}\label{eq: 55}
               \langle \nabla d(x,q),\nu\rangle>\frac{2}{3}-\epsilon'
           \end{align}
           for any $q\in\Sigma\cap B_{6r}(p_1)$, with $\nu$ denoting the outer normal of $\Sigma$ pointing inward $M_-$, and $\epsilon'$ can be arranged arbitrarily small by possibly choosing $p$ with larger $x_1(p)$. Since $\Sigma_i$ converges to $\Sigma$, \eqref{eq: 55} also holds for $\Sigma_i$ when $i$ is sufficiently large. The last equation of \eqref{eq: 50} then follows immediately \eqref{eq: 51}. This shows the metric $g(s)$ has desired properties.

            Next, we investigate properties (I)(II)(III) in describing the shape of $W$. (I) follows immediately from the construction. (II) follows from Figure \ref{f1} as the outer dihedral angle $\beta$ of tangent hyperplanes of $\partial B_{6r}(p_i)$ and $\partial B_{6r}(p_{i+1})$ on $\partial B_{6r}(p_i)\cap \partial B_{6r}(p_{i+1})$ is less than $\pi$. (III) also follows from Figure \ref{f1}, since for any $q\in \partial W\cap M_+$ the slope of $q$ with respect to $\partial W$ is bounded from below by $\cot\alpha>2\sqrt{2}-\epsilon'$ for $\epsilon'$ arbitrarily small, provided that $q$ is not in the upper hemisphere of $\partial B_{6r}(p)$, so the conclusion follows.

        \end{proof}
	
	Under this perturbed metric, we next aim to find a free boundary minimal surface in $\mathcal{F}_{R}$. We need the following existence result for free boundary minimal hypersurface.
	
	\begin{lemma} \label{lem: free boundary existence}
		Let $N$ be a $n$-Riemannian manifold with piecewise smooth boundary $(n\le 7)$, $\partial N=\partial_{eff}N\cup \partial_{side} N$, with $\partial_{eff}N=\partial_+N\sqcup \partial_-N$, such that each of $\partial_+N, \partial_-N$ and $ \partial_{side} N$ is smooth. Assume the mean curvature on $\partial_{\pm}N$ with respect to outer normal is everywhere nonnegative, and strictly positive somewhere on both $\partial_{\pm}N$. Furthermore, assume the dihedral angle of $\partial_+N$ with $\partial_{side}N$ is acute, and the dihedral angle of $\partial_-N$ with $\partial_{side}N$ equals $\frac{\pi}{2}$.
		
		Let 
		\begin{align*}
		    \mathcal{C} = \lbrace \Sigma = \partial\Omega\cap\mathring{N}, \Omega \mbox{ is a Caccioppoli set in } N, \partial_-N\subset\Omega, \partial_+N\cap\Omega = \emptyset \rbrace
		\end{align*}
		
		Then one can find a hypersurface $\hat{\Sigma}$ minimizing the volume in the class $\mathcal{C}$. Further more, $\hat{\Sigma}$ is smooth away from $\partial N$, and is disjoint with $\partial_{\pm}N$. 
	\end{lemma}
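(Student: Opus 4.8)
The plan is to run the standard direct method in the space of Caccioppoli sets, using the geometric hypotheses on $\partial N$ as barriers to push the minimizer off the boundary. First I would check that the class $\mathcal{C}$ is nonempty: the set $\Omega$ obtained by taking a collar neighborhood of $\partial_-N$ (made large enough to contain $\partial_-N$ but small enough to avoid $\partial_+N$, using that $\partial_+N$ and $\partial_-N$ are disjoint components of $\partial_{eff}N$) is a Caccioppoli set with finite perimeter, so $\mathcal{C}\neq\emptyset$. Then I would take a minimizing sequence $\Omega_k$ for the relative perimeter $P(\Omega_k;\mathring N)$. Since $N$ is compact and the perimeters are uniformly bounded, by the compactness theorem for sets of finite perimeter (lower semicontinuity of perimeter under $L^1_{loc}$ convergence) we extract a subsequence converging to some $\hat\Omega$ in $L^1(N)$. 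The constraints $\partial_-N\subset\Omega_k$ and $\partial_+N\cap\Omega_k=\emptyset$ are closed under this convergence (they are conditions on a neighborhood of $\partial_{eff}N$ which persist in the limit), so $\hat\Omega\in\mathcal{C}$ and $\hat\Sigma=\partial\hat\Omega\cap\mathring N$ minimizes volume in $\mathcal{C}$.

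Next I would establish interior regularity: $\hat\Sigma$ is a volume-minimizing hypersurface (locally area-minimizing boundary) in $\mathring N$, so by the classical regularity theory of De Giorgi–Federer–Simons and the dimension hypothesis $n\le 7$, $\hat\Sigma$ is smooth in $\mathring N$. The main content of the lemma, and the step I expect to be the principal obstacle, is showing $\hat\Sigma$ does not touch $\partial N$ — more precisely that $\hat\Sigma$ is disjoint from $\partial_\pm N$ and stays away from the edge $\partial_{eff}N\cap\partial_{side}N$. For the smooth faces $\partial_\pm N$ this is a maximum-principle/first-variation argument: if $\hat\Sigma$ touched $\partial_+N$ at an interior point of that face, one could compare with the competitor obtained by sliding $\hat\Sigma$ slightly into $N$ away from $\partial_+N$; the strict mean convexity somewhere plus weak mean convexity everywhere on $\partial_+N$ (with the correct sign of the normal, inherited from the definition of $\mathcal{C}$) forces a strict decrease of area, contradicting minimality — this is the strong maximum principle for minimal hypersurfaces against a mean-convex barrier (cf. the barrier argument in the proof of Lemma \ref{lem: interior curvature estimate} and the solvability of the Plateau problem quoted there). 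The same argument applies at $\partial_-N$ using its strict mean convexity. Touching $\partial_{side}N$ transversally is ruled out because $\hat\Sigma$ would then be a free-boundary minimal hypersurface meeting $\partial_{side}N$, and the acute dihedral angle along $\partial_+N\cap\partial_{side}N$ together with the right-angle condition along $\partial_-N\cap\partial_{side}N$ are exactly the convexity conditions on the wedge that, via the reflection/comparison technique for free-boundary minimizers in wedge domains, prevent $\hat\Sigma$ from reaching the edge; away from the edge the usual free-boundary regularity (Grüter–Jost) gives smoothness up to $\partial_{side}N$, and then one argues as with $\partial_\pm N$.

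The only subtlety beyond bookkeeping is handling the corners of $\partial N$ where $\partial_{side}N$ meets $\partial_\pm N$: there one invokes the dihedral-angle hypotheses to guarantee that a minimizer cannot concentrate at the edge, using that the wedge is (weakly) mean-convex in the appropriate sense so that a small inward perturbation is admissible and strictly area-decreasing. Once disjointness from $\partial_\pm N$ is known, $\hat\Sigma$ is an interior volume-minimizing hypersurface with free boundary possibly on $\partial_{side}N$, and the stated conclusions — existence, interior smoothness, and disjointness from $\partial_\pm N$ — all follow. I would remark that this lemma is a minor variant of the corresponding existence statement in \cite{EK23} and \cite{CCE16}, so the write-up can cite those sources for the technical details of the barrier and regularity arguments.
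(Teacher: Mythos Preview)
Your sketch is essentially correct and follows the standard direct-method template for free-boundary area minimization. However, the paper takes a much more expedient route: it does not prove the lemma at all but simply cites Theorem 3.1 in \cite{Li24} for the existence and regularity, and Lemma 2.14 in \cite{CCZ23} (with $\mu=0$) for the acute dihedral angle condition that prevents the minimizer from reaching the edge $\partial_+N\cap\partial_{side}N$. Your proposal instead unpacks the content of those references.

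Two minor remarks on your write-up. First, you say ``touching $\partial_{side}N$ transversally is ruled out'' --- but it is not: the conclusion of the lemma only asserts disjointness from $\partial_\pm N$, and the minimizer is expected (and allowed) to meet $\partial_{side}N$ orthogonally as a free boundary. What must be ruled out is that $\hat\Sigma$ reaches the \emph{edge} $\partial_\pm N\cap\partial_{side}N$, which you do address correctly in the following paragraph via the dihedral-angle hypotheses. Second, your suggested citations (\cite{EK23}, \cite{CCE16}) are thematically relevant but not the ones the paper invokes; the precise statements needed are in \cite{Li24} and \cite{CCZ23}.
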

	
	\begin{proof}
		The lemma follows from Theorem 3.1 in \cite{Li24}. The acute angle part follows from Lemma 2.14 in \cite{CCZ23}, by letting $\mu = 0$ in Lemma 2.14 in \cite{CCZ23}.
	\end{proof}
	
	We need the following monotonicity formula for free boundary minimal hypersurface
	\begin{lemma}[Theorem 3.4 in \cite{Zhou20}]\label{lem: monotonicity}
		Let $(N,\partial N)$ be a Riemannian manifold with $|sec|<k$, injective radius bounded from below by $i_0$ and the square of the second fundamental form $|A|^2$ of $\partial N$ bounded from above by $\Lambda$. Let $(\Sigma,\partial\Sigma)\subset(N,\partial N)$ be a free boundary minimal hypersurface in $N$. Then there exists $L_0, \Lambda_0$ depending only on $k,i_0,\Lambda$ such that the following holds:
		\begin{align}\label{eq: monotonicity}
			e^{\Lambda_0\sigma}\frac{\mathcal{H}^{n-1}(B_{\sigma}(p))}{\sigma^{n-1}}\le e^{\Lambda_0\rho}\frac{\mathcal{H}^{n-1}(B_{\rho}(p))}{\rho^{n-1}}
		\end{align}
		for $0<\sigma<\rho<L_0$, when $p\in \partial N$.
	\end{lemma}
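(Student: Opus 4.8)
The plan is to deduce the free boundary monotonicity from the ordinary interior monotonicity formula by passing to the double of $N$ across its boundary. First I would form $\hat N$, the manifold obtained by gluing two copies of $N$ along $\partial N$ via the identity, with the metric $\hat g$ that restricts to $g$ on each copy. This $\hat g$ is smooth away from $\partial N$ and only Lipschitz across it, but its Lipschitz seminorm across $\partial N$ is controlled by $\sup_{\partial N}|A|\le\sqrt{\Lambda}$, its sectional curvatures (defined almost everywhere) are bounded in terms of $k$ and $\Lambda$, and its injectivity radius along $\partial N$ is bounded below in terms of $i_0$, $k$ and $\Lambda$; this last bound is what produces the scale $L_0$. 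I would also record the elementary fact that for $p\in\partial N$ and any $x$, the distances satisfy $\hat d(p,x)=d_N(p,x)$, by folding any path of $\hat N$ joining them into a single copy of $N$; hence each geodesic ball $\hat B_\rho(p)$ of $\hat N$ meets each copy of $N$ exactly in the ball $B_\rho(p)\subset N$.

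Next I would reflect $\Sigma$: let $\hat\Sigma$ be the integral varifold consisting of $\Sigma$ in one copy together with its mirror image in the other. The key point is that $\hat\Sigma$ is stationary in $(\hat N,\hat g)$. Indeed, testing the first variation identity on each copy against a $C^1$ vector field $\hat X$ with compact support gives $\int_\Sigma\operatorname{div}_\Sigma\hat X=\int_{\partial\Sigma}\langle\hat X,\eta\rangle$, and the free boundary condition identifies the outward conormal $\eta$ of $\partial\Sigma$ in $\Sigma$ with the outward unit normal of $\partial N$; since the outward normals of $\partial N$ seen from the two copies are opposite, the two boundary integrals cancel and $\int_{\hat\Sigma}\operatorname{div}_{\hat\Sigma}\hat X=0$. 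Consequently $\hat\Sigma$ is a stationary integral $(n-1)$-varifold in $\hat N$ with $\|\hat\Sigma\|(\hat B_\rho(p))=2\,\mathcal{H}^{n-1}(\Sigma\cap B_\rho(p))$ for every $\rho>0$.

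Then I would run the standard interior monotonicity computation for $\hat\Sigma$ at the boundary point $p\in\partial N$. For $\hat r:=\hat d(\cdot,p)<L_0$ the function $\hat r$ is Lipschitz and satisfies, in the barrier sense, the usual comparison $\big|\operatorname{Hess}\hat r-\tfrac1{\hat r}(\hat g-d\hat r\otimes d\hat r)\big|\le C\hat r$ with $C=C(k,\Lambda)$; testing the first variation identity with $\hat X=\gamma(\hat r)\,\hat r\,\hat\nabla\hat r$ for cut-offs $\gamma$ approximating $\mathbf 1_{[0,\rho)}$ and $\mathbf 1_{[0,\sigma)}$, computing $\operatorname{div}_{\hat\Sigma}\hat X$ and absorbing the curvature error terms into an exponential factor, yields $\frac{d}{d\rho}\big(e^{\Lambda_0\rho}\rho^{-(n-1)}\|\hat\Sigma\|(\hat B_\rho(p))\big)\ge0$ on $(0,L_0)$ with $\Lambda_0=\Lambda_0(k,i_0,\Lambda)$. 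Integrating from $\sigma$ to $\rho$ and cancelling the common factor $2$ gives precisely \eqref{eq: monotonicity}.

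The main obstacle is the low regularity of $\hat g$: because it is merely Lipschitz across $\partial N$, the first variation identity for varifolds, the smoothness of the distance function $\hat r$ away from $p$, and the Hessian comparison all have to be justified in this setting rather than quoted from the smooth theory — this is exactly the analysis carried out by Gr\"uter and Jost for varifolds with free boundary, and it is where the dependence of $L_0$ and $\Lambda_0$ on $\Lambda$ enters. An alternative that avoids doubling is to work directly on $\Sigma$, testing the free boundary first variation identity with $X=\gamma(r)\,r\nabla r$ where $r=d(\cdot,p)$; the free boundary condition turns the boundary contribution into $\int_{\partial\Sigma}\gamma(r)\,r\,\langle\nabla r,\nu_{\partial N}\rangle$, and since $p\in\partial N$ the minimizing geodesics from $p$ deviate from $\partial N$ by $O(\Lambda r^2)$, so $\langle\nabla r,\nu_{\partial N}\rangle=O(\Lambda r)$ along $\partial\Sigma$; this term can then be absorbed into the exponential correction, at the cost of a somewhat more delicate comparison of the boundary and interior measures.
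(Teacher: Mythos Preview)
The paper does not give its own proof of this lemma; it is quoted verbatim as Theorem~3.4 of \cite{Zhou20} and used as a black box in the height estimate of Lemma~\ref{lem: height estimate}. So there is no argument in the paper to compare your proposal against.

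That said, your doubling strategy is one of the standard routes to a result of this type and is essentially correct in outline: reflecting $\Sigma$ across $\partial N$ yields a stationary varifold in the Lipschitz doubled manifold, the boundary contributions cancel by the free boundary condition, and interior monotonicity in bounded geometry gives the exponential correction. You have also correctly identified the genuine technical point, namely that $\hat g$ is only Lipschitz across $\partial N$, so the Hessian comparison for $\hat r$ and the first variation computation must be done in that regularity class; this is precisely where the Gr\"uter--Jost machinery (or a careful smoothing argument) enters and where the dependence of $L_0,\Lambda_0$ on $\Lambda$ appears. Your alternative sketch, working directly with $X=\gamma(r)\,r\nabla r$ and controlling the boundary term via $\langle\nabla r,\nu_{\partial N}\rangle=O(\sqrt{\Lambda}\,r)$, is also a viable path and closer in spirit to how such results are often proved without doubling. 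Either approach, carried through carefully, would suffice; the paper simply takes the statement on citation.
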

	
	Next, we study the existence of the area minimizing free boundary hypersurface in $(C_R,g(s))$. First, we need some notations: For $t_1,t_2$ with $|t_i|>1$ ($i = 1,2$), $t_1<t_2$, we denote $S_{[t_1,t_2]}$ to be the region in $M$ between $t_1$ and $t_2$. For a point $p\in M\backslash K$, we define $z(p)$ to be the $z$-coordinate of $p$. 

  We begin by showing the following result concerning height estimate of the free boundary hypersurface in cylinder.
        \begin{lemma}\label{lem: height estimate}
             There exists constants $R_0>0$, $\eta = \eta(k,i_0,\Lambda)>0$, with $k,i_0$ representing the curvature upper bound and injective radius lower bound of the asymptotically flat manifold $(M,g(s))$, $\Lambda$ representing the upper bound of second fundamental form of $C_R$, such that for $R>R_0$, there holds
			\begin{align}\label{eq: 52}
				\mathcal{H}^{n-1}(V_0\cap (C_R\cap S_{[-h,h]}))\ge \eta (Z(V_0\cap (C_R\cap S_{[-h,h]}))-2)
			\end{align}
   with $Z$ given by \eqref{eq: 58}. Here, $V_0$ is an embedded hypersurface in $C_R$ with $\partial V_0\subset \partial C_R$, such that the portion of $V_0$ inside $(C_R\cap S_{[-h,h]})$ is minimal in free boundary sense, with respect to the boundary portion of $\partial C_R$ between the coordinate hyperplanes $S_{\pm h}$.
        \end{lemma}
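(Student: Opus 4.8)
The plan is to use the monotonicity formula of Lemma~\ref{lem: monotonicity} to obtain a uniform lower bound for the density of $\Sigma_0:=V_0\cap Q$ (where $Q:=C_R\cap S_{[-h,h]}$) at each of its points, and then to sum this bound over a family of pairwise disjoint small geodesic balls whose centres lie on $\Sigma_0$ and whose $z$–coordinates are evenly spread over the interval $[Z_-,Z_+]$; here $Z_\pm:=Z_\pm(\Sigma_0)$ and $Z:=Z_+-Z_-$. We may assume $Z>2$, since otherwise the right–hand side of \eqref{eq: 52} is non‑positive.

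Fix $R_0$ large enough that for $R>R_0$ the cylinder wall $\partial C_R$ lies in the asymptotically flat end, so that its second fundamental form is bounded by the given $\Lambda$ and the ambient geometry by $k,i_0$; let $L_0,\Lambda_0$ be the constants of Lemma~\ref{lem: monotonicity} for $(C_R,\partial C_R)$. Since $z=x_n$ is a fixed smooth function on $M\setminus K\supset Q$, there is $c_0\in(0,1]$, depending only on $(M,g)$ and $K$, with $|\nabla^{g(s)}z|\le c_0^{-1}$ on $Q$ (close to $1$ in the end and bounded on the fixed compact part of $Q$), hence $d(x,y)\ge c_0|z(x)-z(y)|$ for $x,y\in Q$. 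Pick $\rho_0=\rho_0(k,i_0,\Lambda,n)$ with $0<\rho_0<\min\{\tfrac14 L_0,\tfrac14 c_0\}$. Let now $p\in\overline{\Sigma_0}$ be any point whose geodesic ball $B_{\rho_0}(p)$ avoids the faces $S_{\pm h}$ and the corners $S_{\pm h}\cap\partial C_R$; inside $B_{\rho_0}(p)$ the surface $\Sigma_0$ coincides with $V_0$, which is minimal and, if it meets $\partial C_R$, orthogonal to it. If $\dist(p,\partial C_R)\ge\rho_0$, the interior monotonicity formula for minimal hypersurfaces in a manifold with bounded geometry gives $\mathcal{H}^{n-1}(\Sigma_0\cap B_{\rho_0}(p))\ge\tfrac12\omega_{n-1}\rho_0^{\,n-1}$, $\omega_{n-1}$ being the volume of the Euclidean unit $(n-1)$–ball; if $\dist(p,\partial C_R)<\rho_0$, Lemma~\ref{lem: monotonicity} applied at a point of $\partial C_R$ with inner radius $\sigma\to0$ gives $\mathcal{H}^{n-1}(\Sigma_0\cap B_{\rho_0}(p))\ge\tfrac12 e^{-\Lambda_0\rho_0}\omega_{n-1}\rho_0^{\,n-1}$, since the density at a stationary free–boundary point is at least half that of a hyperplane. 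In all cases $\mathcal{H}^{n-1}(\Sigma_0\cap B_{\rho_0}(p))\ge\eta_0$ for some fixed $\eta_0=\eta_0(k,i_0,\Lambda,n)>0$.

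Let $\Sigma_0'$ be a connected piece of $\Sigma_0$ with $\inf_{\Sigma_0'}z=Z_-$ and $\sup_{\Sigma_0'}z=Z_+$ (its existence is the delicate point, see below). Since $z$ is continuous on the connected set $\overline{\Sigma_0'}$, it attains every value in $[Z_-+\tfrac12,\,Z_+-\tfrac12]$; so with $N:=\lfloor c_0(Z-1)/(2\rho_0)\rfloor+1$ we may choose $p_1,\dots,p_N\in\overline{\Sigma_0'}$ with $z(p_j)=Z_-+\tfrac12+2(j-1)\rho_0/c_0\le Z_+-\tfrac12$. Because $\rho_0/c_0<\tfrac14$, each $B_{\rho_0}(p_j)$ has $z$–range inside $(Z_-,Z_+)\subseteq(-h,h)$ and so avoids $S_{\pm h}$ and the corners; and since $|z(x)-z(p_j)|\le d(x,p_j)/c_0<\rho_0/c_0$ for $x\in B_{\rho_0}(p_j)$ while $|z(p_i)-z(p_j)|\ge 2\rho_0/c_0$ for $i\ne j$, the balls $B_{\rho_0}(p_j)$ are pairwise disjoint. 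Therefore
\begin{align*}
\mathcal{H}^{n-1}\!\left(V_0\cap(C_R\cap S_{[-h,h]})\right)\ \ge\ \sum_{j=1}^{N}\mathcal{H}^{n-1}\!\left(\Sigma_0\cap B_{\rho_0}(p_j)\right)\ \ge\ N\,\eta_0\ \ge\ \frac{c_0\,\eta_0}{2\rho_0}\,(Z-2),
\end{align*}
which is \eqref{eq: 52} with $\eta:=c_0\eta_0/(2\rho_0)$, a constant depending only on $k,i_0,\Lambda,n$ and the fixed asymptotically flat structure, and in particular independent of $R$ and $h$.

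\textbf{Main obstacle.} The essential point is the existence of a connected piece $\Sigma_0'$ of $\Sigma_0$ that realises both $Z_-$ and $Z_+$; without it the inequality fails (take $\Sigma_0$ to be one small cap near height $Z_+$ together with another near $Z_-$). In the setting in which the lemma is applied, $V_0$ bounds inside $C_R\cap S_{[-h,h]}$ a region containing the part of the slab below some level and missing the part above another (as in the definition of $\mathcal{F}_R$, and as holds for the limit $\Sigma$ in Lemma~\ref{lem: Sigma parallel to S0}), and this topological constraint forces one component of $\partial\Omega$ to run from $Z_+$ down to $Z_-$; that component is $\Sigma_0'$. The remaining technical point — that $\eta_0$ is genuinely uniform in $R$, $h$, and in the location of $p$ relative to $\partial C_R$ — is routine, handled by combining the interior and free–boundary monotonicity formulas and keeping $\rho_0$ below a fixed fraction of $L_0$, of $c_0$, and of the distance from the chosen points to the faces $S_{\pm h}$.
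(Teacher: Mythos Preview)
Your approach is essentially the same as the paper's: pick a small scale $d$ (your $\rho_0$) below the monotonicity radius $L_0$, choose points on $V$ at $z$–heights spaced roughly $d$ apart, use the interior or free–boundary monotonicity formula to get a uniform lower area bound in each small ball, and sum over the $\gtrsim (Z-2)/d$ disjoint balls.

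Two remarks. First, the paper handles the ``close to $\partial C_R$'' case by a slightly different trichotomy than your dichotomy: in each height–slab it asks whether $\partial V$ meets the slab; if yes it picks a genuine free–boundary point and applies Lemma~\ref{lem: monotonicity} there, and if not it picks an interior point and applies the interior monotonicity at a smaller radius so that the ball stays inside the slab and hence misses $\partial V$ entirely. Your version---applying Lemma~\ref{lem: monotonicity} ``at a point of $\partial C_R$'' when $p$ is merely near the wall---needs a one-line fix (replace $p$ by a nearby actual free–boundary point of $\Sigma_0$, or fall back to interior monotonicity at half radius), but this is routine.

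Second, you are right that the existence of points of $\Sigma_0$ at every intermediate height is the substantive input. The paper's proof has exactly the same implicit assumption (it picks a point of $V$ in every slab $S_{[a_i,a_{i+1}]}$ without comment), and, just as you say, it is justified only because the lemma is applied in Lemma~\ref{lem: Existence} to a single connected component $\hat\Sigma$ that is already known to stretch from near $S_{h-\delta_0}$ down to $K\cup W$. So your flagged ``main obstacle'' is not a defect of your argument relative to the paper's; it is a feature of the statement itself, resolved in both cases by the connectedness available in the application.
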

 \begin{proof}
			By the asymptotic flatness of $(M,g(s))$, we can choose $R_0>1$, such that for $R>R_0$, for any two points $p,q\in\mathbb{R}^n\backslash C_{R-1}$, the following inequality holds
            \begin{align}\label{eq: 45}
            \frac{1}{2}d_{\mathbb{R}^n}(p,q)<d_{g(s)}(p,q)<2d_{\mathbb{R}^n}(p,q)
            \end{align}
   Furthermore, it is clear that $k,i_0$ and $\Lambda$ exists as positive number, owing to the asymptotic flatness. By Lemma \ref{lem: monotonicity} and the monotonicity formula for minimal surface without boundary (\cite{CM11} P234), there exists $L_0, \Lambda_0 >0$ depending only on $k,i_0,\Lambda$, such that for $p\in V\cap (C_R\cap S_{[-h+L_0,h-L_0]})$, the monotonicity formula
			\begin{align*}
				e^{\Lambda_0\sigma}\frac{\mathcal{H}^{n-1}(V_0\cap B_{\sigma}(p))}{\sigma^{n-1}}\le e^{\Lambda_0\rho}\frac{\mathcal{H}^{n-1}(V_0\cap B_{\rho}(p))}{\rho^{n-1}}
			\end{align*}
			holds, provieded one of the following holds:
			
			(1) $p\in \partial M$, $0<\sigma<\rho<L_0$.
			
			(2) $p\in \Int M$, $B_{\tau}(p)\cap \partial V=\emptyset$, $0<\sigma<\rho<\tau<L_0$. 
   
   Here we require $p\in C_R\cap S_{[-h+L_0,h-L_0]}$ to ensure that $B_{\sigma}(p)$ and $B_{\rho}(p)$ lie between $S_{\pm h}$, so the intersection of $B_{\sigma}(p)$ and $B_{\rho}(p)$ with $V_0$ lies in the minimal part of $V_0$.

$\quad$

 \begin{figure}
    \centering
    \includegraphics[width = 11cm]{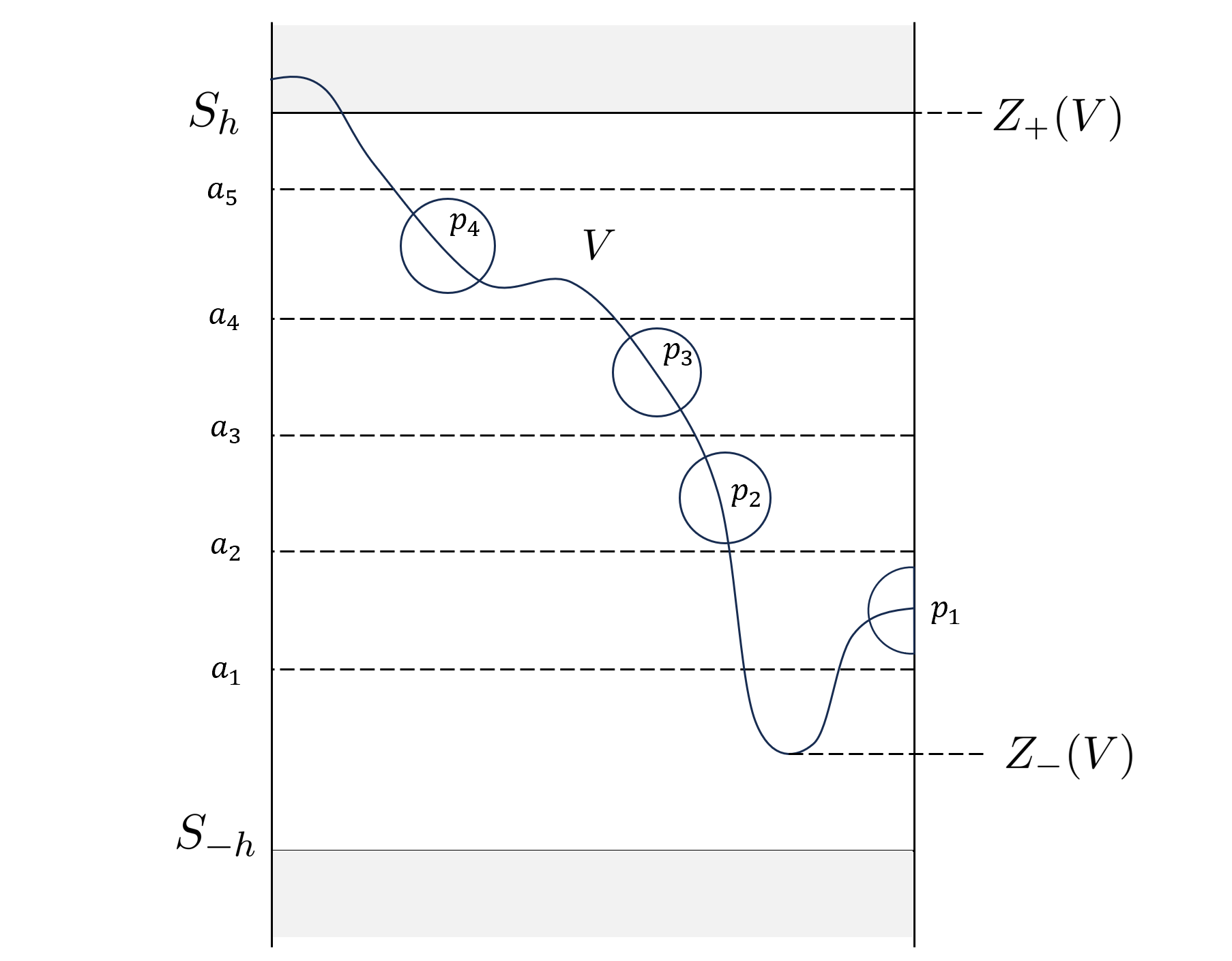}
    \caption{Covering $V$ by small geodesic balls}
    \label{f6}
\end{figure}  

In the following, for simplicity of the notation, we set $V = V_0\cap (C_R\cap S_{[-h,h]})$ to be the minimal portion on $V_0$. Fix $d<\min\lbrace\frac{L_0}{2},\frac{1}{2}\rbrace$. Let us assume that either $|Z_+(V)|$ or $|Z_-(V)|$ is greater than $1$, or else \eqref{eq: 52} automatically holds. Without loss of generality we assume $Z_+(V)>1$.

\textbf{Case 1:} $Z_-(V)<-1$. 

In this case, we can pick $Z_-(V)<a_1<a_2<\dots<a_k = -1$, $1=a_{k+1}<a_{k+2}<\dots<a_l<Z_+(V)$, such that $a_{i+1}-a_i>5d$. Then for sufficiently large $Z(V)$, we can always find $a_i$'s such that
\begin{align*}
    l > &\lceil\frac{Z_+(V)-1}{5d}\rceil+\lceil\frac{-1-Z_-(V)}{5d}\rceil\\
    \ge&\frac{Z(V)-2}{5d}
\end{align*}
Here $\lceil x\rceil$ denotes the minimal integer greater than or equal to $x$. 

\textbf{Case 2:} $Z_-(V)\ge -1$.

In this case, we can pick $\max\{Z_-(V),1\}<a_1<a_2<\dots<a_l <Z_+(V)$, such that $a_{i+1}-a_i>5d$, and
\begin{align*}
    l>\lceil\frac{Z_+(V)-\max\{Z_-(V),1\}}{5d}\rceil>\frac{Z(V)-2}{5d}
\end{align*}
where in the second inequality we have used $\max\{Z_-(V),1\}\le Z_-(V)+2$.

$\quad$

Denote 
   \begin{align*}
       b_i = a_i+d, c_i = a_i+2d, d_i = a_i+3d, e_i = a_i+4d
   \end{align*}
   We aim to pick a point $p_i$ in $S_{[b_i,e_i]}$ for each $i$, and obtain a uniform lower bound estimate of $\mathcal{H}^{n-1}(B_d(p_i)\cap V)$.
			
			If $\Omega_i\cap\partial V\ne \emptyset$, then we simply pick $p_i\in \partial V\cap\Omega_i$. According to the choice of $R_0$ in \eqref{eq: 45}, the geodesic ball $B_{\frac{d}{2}}(p_i)$ always lies in $S_{[a_i,a_{i+1}]}$. By \eqref{eq: monotonicity} we have
			\begin{align}\label{eq: 5}
				\mathcal{H}^{n-1}(B_d(p_i)\cap V)\ge \Theta^{n-1}(V,p_i)\omega_{n-1}(\frac{d}{2})^{n-1} = \frac{1}{2^n}\omega_{n-1}d^{n-1}.
			\end{align}
			
			If $\Omega_i\cap\partial V= \emptyset$, we discuss the following two cases:
			
			\textbf{Case 1:} There exists a point $p_i$ with $\dist(p_i,\partial V)>d$. Then $\partial V\cap B_d(p_i)=\emptyset$. By the monotonicity formula we have
			\begin{align}\label{eq: 6}
				\mathcal{H}^{n-1}(B_d(p_i)\cap V)\ge \Theta^{n-1}(V,p_i)\omega_{n-1}d^{n-1} = \omega d^{n-1}.
			\end{align}
			
			\textbf{Case 2:} $V\cap S_{[b_i,e_i]}$ lies entirely in a $d$-neighbourhood of $\partial C_R$. We pick a point $p_i\in S_{[c_i,d_i]}$. By the choice of $R_0$ in \eqref{eq: 45} we know the geodesic ball $B_{\frac{d}{2}}(p_i)$ lies in $S_{[b_i,e_i]}$. Furthermore, since there is no boundary point of $V$ in  $S_{[b_i,e_i]}$, we have $B_{\frac{d}{2}}(p_i)\cap \partial V = \emptyset$. Consequently, the monotonicity formula applies:
			\begin{align}\label{eq: 7}
				\mathcal{H}^{n-1}(B_{\frac{1}{2}d}(p_i)\cap V)\ge \Theta^{n-1}(V,p_i)\omega_{n-1}(\frac{d}{2})^{n-1} = \frac{\omega_{n-1}}{2^{n-1}} d^{n-1}.
			\end{align}
			
			Since $B_{\frac{d}{2}}(p_i)\subset S_{a_i,a_{i+1}}$, it is straightforward to see the ball we construct are disjoint with each other. Combining \eqref{eq: 5}\eqref{eq: 6}\eqref{eq: 7} we conclude that
			\begin{align*}
				\mathcal{H}^{n-1}(V)>\frac{\omega_{n-1}}{2^n} d^{n-1}l>\frac{\omega_{n-1}}{10 \cdot 2^n} d^{n-2}(Z(V)-2).
			\end{align*}
		\end{proof}

        Let $C_{R_i}^+$ be the region in $C_{R_i}$ beyond $\Sigma_i$. Define
        \begin{align*}
            \mathcal{G}_{R_i} = &\lbrace \Sigma = \partial\Omega\cap\mathring{C}_{R_i}^+, \Omega \mbox{ is a Caccioppoli set in } C_{R_i}^+,\\
      &\Sigma_i\subset\Omega,\quad \Omega\cap\{z\ge h\} = \emptyset \mbox{ for some }h\rbrace
        \end{align*}

	\begin{lemma}\label{lem: Existence}
		For the cylinder $(C_{R_i},g(s))$, there exists an free boundary minimal hypersurface $\Sigma_i(s)$ which minimizes the volume in $\mathcal{G}_{R_i}$. Moreover, $\Sigma_i(s)$ intersects $W$.
		
	\end{lemma}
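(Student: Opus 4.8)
The plan is to apply the existence result for free boundary minimal hypersurfaces, Lemma~\ref{lem: free boundary existence}, to an appropriately chosen region $N$, and then use the height estimate of Lemma~\ref{lem: height estimate} together with the monotonicity formula to show that the minimizer cannot escape to infinity in the $z$-direction, hence produces an honest free boundary minimal hypersurface; finally the properties of the metric deformation $g(s)$ from Lemma~\ref{lem: g(s) perturbation} will force the minimizer to meet $W$.

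First I would set up the region in which to minimize. For a large height $h$, let $N = C_{R_i}^+ \cap S_{[z(\Sigma_i)_{\min}, h]}$, more precisely the part of $C_{R_i}$ that lies beyond $\Sigma_i$ (in $M_+$) and below the coordinate hyperplane $S_h$. Its boundary decomposes as $\partial_- N = \Sigma_i$, $\partial_+ N = S_h \cap C_{R_i}^+$, and $\partial_{side} N = \partial C_{R_i} \cap N$. By property (5) of Lemma~\ref{lem: g(s) perturbation}, for $i$ large $\Sigma_i$ is weakly mean convex and strictly mean convex somewhere with respect to the normal pointing into $M_-$, which is exactly the outer normal of $N$ along $\partial_- N$; for $h$ large, the asymptotic flatness of $(M,g(s))$ (the mean curvature of $S_h$ computed in $g$ is $O(h^{-\tau-1})$, but one must check the conformal factor $(1+sv)^{4/(n-2)}\equiv 1$ there since $W$ is compact, so $g(s)=g$ near $S_h$) — actually the cleaner route is to intersect instead with a large mean-convex coordinate sphere or to note $S_h$ can be taken strictly mean convex into $N$ after a harmless perturbation; along $\partial_{side} N = \partial C_{R_i}$ the cylinder meets $S_h$ orthogonally and meets $\Sigma_i$ at an acute (indeed nearly right) angle since $\Sigma_i$ is nearly horizontal near $\partial C_{R_i}$. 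Thus the hypotheses of Lemma~\ref{lem: free boundary existence} are met, and we obtain $\hat\Sigma \in \mathcal{C}$ minimizing volume, smooth away from $\partial N$ and disjoint from $\partial_\pm N$.

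The key point is that $\hat\Sigma$ does not depend on $h$ once $h$ is large enough, i.e.\ it lies in $\mathcal{G}_{R_i}$. This is where Lemma~\ref{lem: height estimate} enters: the competitor obtained by taking $\Omega$ to be the thin slab just above $\Sigma_i$ has volume bounded by $\mathcal{H}^{n-1}(\Sigma_i) + C$, uniformly in $h$; on the other hand, if the minimizer $\hat\Sigma$ reached height near $h$ it would have $Z(\hat\Sigma) \gtrsim h$, and since $\hat\Sigma$ is a free boundary minimal hypersurface in $C_{R_i}\cap S_{[-h,h]}$ in the sense required by Lemma~\ref{lem: height estimate}, we would get $\mathcal{H}^{n-1}(\hat\Sigma) \ge \eta(Z(\hat\Sigma)-2) \to \infty$, contradicting the uniform volume bound. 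Hence $\hat\Sigma \subset \{z \le h_0\}$ for some fixed $h_0$ independent of the choice $h > h_0$, so $\hat\Sigma =: \Sigma_i(s)$ minimizes in $\mathcal{G}_{R_i}$.

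Finally, to see $\Sigma_i(s)$ intersects $W$: by the strict mean convexity of $\Sigma_i$ at an interior point under $g(s)$, the minimizer $\Sigma_i(s)$ cannot coincide with $\Sigma_i$, so it lies strictly beyond $\Sigma_i$ on a nonempty open set — more carefully, the usual first-variation/strong-maximum-principle argument (as in \cite{EK23}, \cite{CCE16}) shows that a minimizer in $\mathcal{G}_{R_i}$ which touched $\Sigma_i$ would have to be pushed off by the mean-convexity, so $\Sigma_i(s) \ne \Sigma_i$. But $g(s) = g$ outside $W$ and $R(g(s))>0$ away from $B_r(p)$ inside $W$; if $\Sigma_i(s)$ avoided $W$ entirely, it would be an area-minimizing hypersurface for the original metric $g$ lying strictly between $\Sigma_i$ and infinity, and comparing areas with $\Sigma_i$ using that $\Sigma_i$ is itself volume-minimizing in $\mathcal{F}_{R_i}$ (hence, restricted appropriately, in its class) forces $\Sigma_i(s) = \Sigma_i$, a contradiction. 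Therefore $\Sigma_i(s) \cap W \ne \emptyset$. I expect the main obstacle to be the careful verification of the boundary mean-convexity and angle conditions needed to invoke Lemma~\ref{lem: free boundary existence} — in particular making sure the top barrier $\partial_+ N$ is genuinely mean-convex into $N$ under $g(s)$ and that the dihedral angles along $\partial C_{R_i}$ are of the correct sign — together with making rigorous the "strictly beyond $\Sigma_i$" and "must meet $W$" steps via the strong maximum principle.
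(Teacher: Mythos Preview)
Your overall architecture matches the paper's, but there is a genuine gap in the height-bound step, and the order in which you run the two main steps matters.

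The gap is in the assertion ``if the minimizer $\hat\Sigma$ reached height near $h$ it would have $Z(\hat\Sigma)\gtrsim h$''. For this to follow from Lemma~\ref{lem: height estimate} you need $Z_-(\hat\Sigma)$ to be bounded independently of $h$, i.e.\ you need to know that $\hat\Sigma$ (or at least the relevant component of it) also has points at bounded height. Nothing you have written rules out a minimizer that sits entirely in a thin slab near $S_h$: such a surface still separates $\Sigma_i$ from $S_h$, it has area comparable to $\omega_{n-1}R_i^{n-1}$, and your competitor bound $\mathcal{H}^{n-1}(\hat\Sigma)\le \mathcal{H}^{n-1}(\Sigma_i)+C$ does not beat that. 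The paper handles this by a nontrivial topological step: for any \emph{component} $\hat\Sigma$ of the minimizer that enters the $\delta_0$-neighbourhood of $S_h$, a Jordan--Brouwer separation argument on $\partial C_{R_i}\cong S^{n-2}\times\mathbb{R}$ shows that either $\hat\Sigma$ (together with a piece of $\partial C_{R_i}$) bounds a region and can be discarded, contradicting minimality, or $\hat\Sigma$ itself lies in $\mathcal{G}_{R_i,h}$ and therefore must intersect $W$. Only then does Lemma~\ref{lem: height estimate} apply to that component with $Z_-\le \max\{1,Z_+(K\cup W)\}$ and give the desired contradiction with the uniform area bound.

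This is also why the paper proves the $W$-intersection \emph{first} (for the minimizer in the $g^\epsilon(s)$-metric, via the $\epsilon\to 0$ volume comparison), and only afterward runs the height argument; your reversal of the order leaves you without the key input at the moment you need it. Your final argument for $\Sigma_i(s)\cap W\neq\emptyset$ is essentially the same comparison as the paper's and is fine once a genuine $g(s)$-minimizer in $\mathcal{G}_{R_i}$ exists, but that existence is exactly what is at stake. A smaller point: you have the dihedral-angle conditions in Lemma~\ref{lem: free boundary existence} swapped --- it is $\partial_-N=\Sigma_i$ that meets $\partial C_{R_i}$ orthogonally (it is free boundary for $g$, and $g(s)=g$ there), while $\partial_+N=S_h\cap C_{R_i}$ must be perturbed to make the angle acute and the mean curvature strictly positive.
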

	
	\begin{proof}
		Without particularly indicated, the Hausdorff measure throughout this proof is considered with respect to metric $g(s)$. For $h$ sufficiently large, denote $C_{R_i,h}$ to be the region in $C_{R_i}$ between the coordinate hyperplane $S_h$ and $\Sigma_i$. By a $C^0$-small $\epsilon$-perturbation of the metric in a $\delta_0$-neighbourhood of $S_{\pm h}$, we obtain a metric $g^{\epsilon}(s)$ on $C_{R_i,h}$, such that the mean curvature on $S_h$ with respect to outer normal is positive, and the dihedral angle of $S_h$ with $\partial C_{R_i}$ is acute inside $C_{R_i,h}$. Here a $C^0$-small $\epsilon$-perturbation means that
		\begin{align*}
			(1-\epsilon)g^{\epsilon}(t)(v,v)<g(s)(v,v)<(1+\epsilon)g^{\epsilon}(t)(v,v)
		\end{align*}
for all $v\in TC_{R_i,h}$. $C_{R_i,h}$ then satisfies the condition in Lemma \ref{lem: free boundary existence} satisfied by $N$, with $\partial_+N = C_{R_i}\cap S_h$, $\partial_-N = \Sigma_i$. Define
\begin{align*}
		    \mathcal{G}_{R_i,h} = &\lbrace \Sigma = \partial\Omega\cap\mathring{C}_{R_i,h}, \Omega \mbox{ is a Caccioppoli set in } C_{R_i,h},\\
      &\Sigma_i\subset\Omega,\quad (C_{R_i}\cap S_h)\cap\Omega = \emptyset \rbrace
		\end{align*}
  By Lemma \ref{lem: free boundary existence}, there exists a volume minimizing free boundary minimal hypersurface $\Sigma_i^h(s)$ in the class $\mathcal{G}_{R_i,h}$.

 \begin{figure}
    \centering
    \includegraphics[width = 15cm]{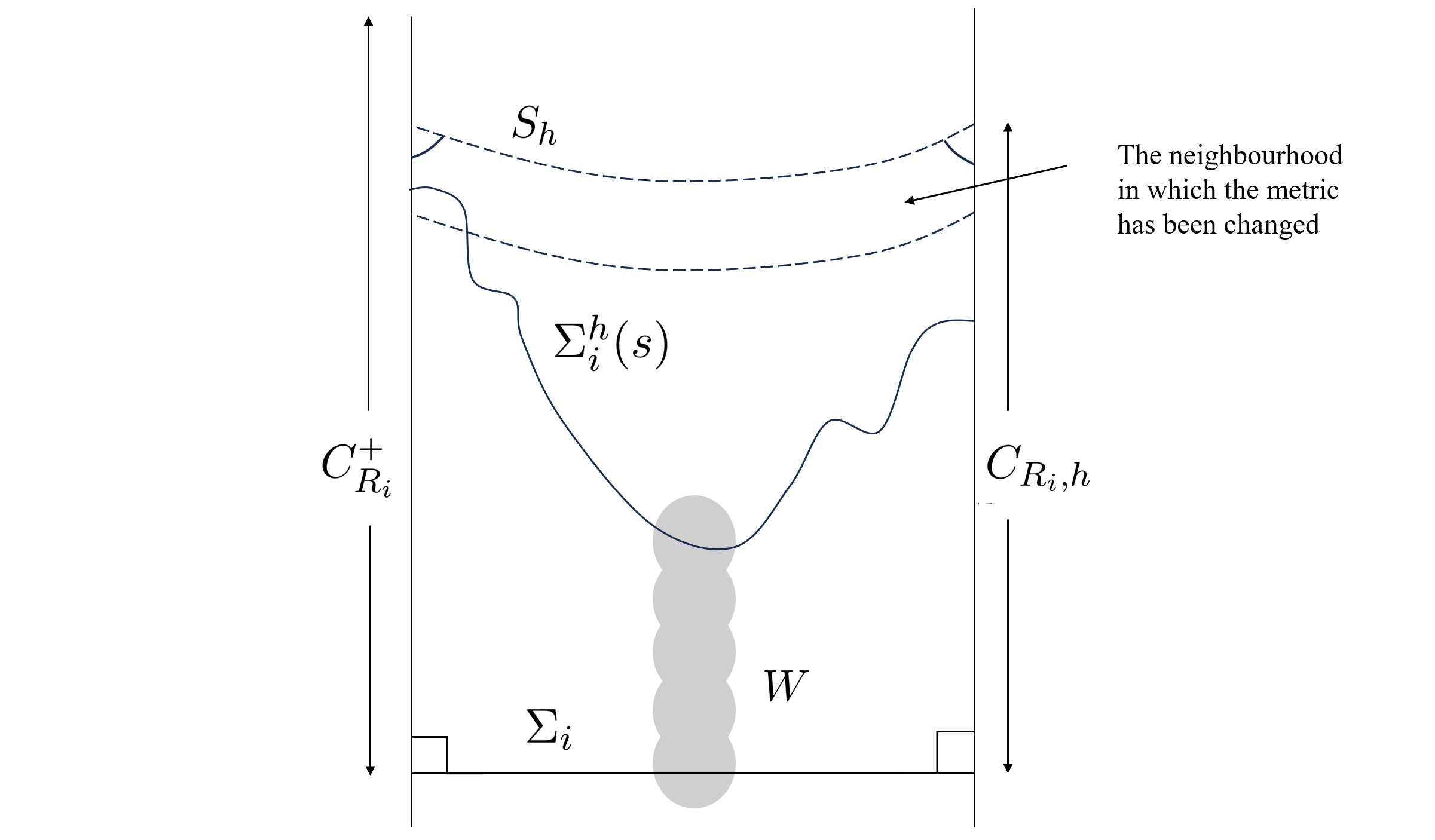}
    \caption{A diagram for $C_{R_i,h}$}
    \label{f5}
\end{figure}  

		We claim for $\epsilon$ sufficiently small, $\Sigma_i^h(s)$ must intersect with $W$. Or else, it follows that
		\begin{align*}
			&\mathcal{H}^{n-1}_g(\Sigma_i^h(s)) = \mathcal{H}^{n-1}_{g(s)}(\Sigma_i^h(s))\\
			&<(1+\epsilon)\mathcal{H}^{n-1}_{g^{\epsilon}(s)}(\Sigma_i^h(s))\le (1+\epsilon)\mathcal{H}^{n-1}_{g^{\epsilon}(s)}(\Sigma_i)\\
			&<\frac{1+\epsilon}{1-\epsilon}\mathcal{H}^{n-1}_{g(s)}(\Sigma_i) = \mathcal{H}^{n-1}_{g}(\Sigma_i)-(\mathcal{H}^{n-1}_{g}(\Sigma_i)-\mathcal{H}^{n-1}_{g(s)}(\Sigma_i))+\frac{2\epsilon}{1-\epsilon}\mathcal{H}^{n-1}_{g(s)}(\Sigma_i)\\
			&\le \mathcal{H}^{n-1}_{g}(\Sigma_i^h(s))-(\mathcal{H}^{n-1}_{g}(\Sigma_i)-\mathcal{H}^{n-1}_{g(s)}(\Sigma_i))+\frac{2\epsilon}{1-\epsilon}\mathcal{H}^{n-1}_{g(s)}(\Sigma_i)
		\end{align*}
		In the second inequality in line 2 we have used the volume minimizing property of $\Sigma_i^h(s)$ under the metric $g^{\epsilon}(t)$ and for the last inequality we have used the volume minimizing property of $\Sigma_i$ under the metric $g$. This implies
		\begin{align*}
			0<\mathcal{H}^{n-1}_{g}(\Sigma_i)-\mathcal{H}^{n-1}_{g(s)}(\Sigma_i)<\frac{2\epsilon}{1-\epsilon}\mathcal{H}^{n-1}_{g(s)}(\Sigma_i)
		\end{align*}
		A contradiction then follows by letting $\epsilon\to 0$.
		
		Next, we show for $h$ sufficiently large, $\Sigma_i^h(s)$ does not touch the small neighbourhood of $S_h$ where the metric has been changed to $g^{\epsilon}(s)$. We argue by contradiction. Pick a component $\hat{\Sigma}$ of $\Sigma_i^h(s)$ which intersects the $\delta_0$ neighbourhood of $S_h$. We show that $\hat{\Sigma}$ must intersect $K\cup W$. If not, then $\hat{\Sigma}$ lies entirely in $\mathbb{R}^n\backslash B_1^n(O)$. Denote $\Gamma = \partial\hat{\Sigma}$, and assume that $\Sigma_i^h(s) = \partial\Omega\cap\mathring{C_{R_i,h}}$. We discuss following cases respectively

  (1) $\Gamma = \emptyset$, then $\hat{\Sigma}$ is a closed hypersurface in $\mathbb{R}^n\backslash B^n_1(O)$. By Jordan-Brouwer separation Theorem, we find an open set $U\subset M$ with compact closure satisfying $\partial U = \hat{\Sigma}$. Replacing $\Omega$ by $\Omega\cup U$ eliminates the component $\hat{\Sigma}$, and the volume of $\Sigma_i^h(s)$ decreases. A contradiction.

  (2) $\Gamma$ bounds a compact region $P$ on $\partial C_{R_i}\cong S^{n-2}\times\mathbb{R}$. In this case, $P\cup \hat{\Sigma}$ is a closed hypersurface, and the argument for (1) carries through.

  (3) $\Gamma$ does not bound compact region in $\partial C_{R_i}$. In this case, by applying Jordan-Brouwer Theorem to two point compactification of $S^{n-2}\times\mathbb{R}$ diffeomorphic to $S^{n-1}$, we see $\Gamma$ separates $\partial C_{R_i}$ into two noncompact regions, $P_-$ and $P_+$. Again by Jordan-Brouwer Theorem,  $\hat{\Sigma}\cup P_-$ bounds a noncompact region $\Omega'$, so we have $\hat{\Sigma}\in\mathcal{G}_{R_i,h}$, and it must intersect with $W$.

  Therefore, we have found a component $\hat{\Sigma}$ of $\Sigma_i^h(s)$, intersecting both $S_{h-\delta_0}$ and $K\cup W$. The portion of $\Sigma_i^h(s)$ inside $C_{R_i,h-\delta_0}$ is minimal in free boundary sense. By Lemma \ref{lem: height estimate} we deduce
        \begin{align}\label{eq: 46}
            \mathcal{H}^{n-1}_{g(s)}(\Sigma_i^h(s)\cap C_{R_i,h-\delta_0})\ge \eta Z(\Sigma_i^h(s)\cap C_{R_i,h-\delta_0})
        \end{align}
        Note that
        \begin{equation}\label{eq: 47}
            \begin{split}
                &Z_+(\Sigma_i^h(s)\cap C_{R,h-\delta_0}) = h-\delta_0\\
            &Z_-(\Sigma_i^h(s)\cap C_{R,h-\delta_0}) \le \max\lbrace1,Z_+(K\cup W)\rbrace
            \end{split}
        \end{equation}
         On the other hand, we note that
        \begin{equation}\label{eq: 64}
            \begin{split}
                \mathcal{H}^{n-1}_{g(s)}(\Sigma_i^h(s)\cap C_{R,h-\delta_0})\le\mathcal{H}^{n-1}_{g(s)}(\Sigma_i)<\mathcal{H}^{n-1}_{g}(\Sigma_i)\\
            \le\liminf_{t\to\infty}\mathcal{H}^{n-1}_g(C_{R_i}\cap S_t) = \omega_{n-1}R_i^{n-1}
            \end{split}
        \end{equation}
        By Letting $h\to\infty$, \eqref{eq: 64} contradicts with \eqref{eq: 46}\eqref{eq: 47}.

		Consequently, there exists $h_0>0$, for $h>h_0$, $\Sigma_i^h(s)$ does not touch the region where $g(s)$ has been changed. This shows $\Sigma_i^h(s)$ is entirely minimal under the metric $g(s)$. Therefore
            \begin{align}\label{eq: 65}
            \mathcal{H}^{n-1}(\Sigma_i^h(s))=\mathcal{H}^{n-1}(\Sigma_i^{h_0}(s)) \mbox{ for all }h>h_0
            \end{align}
  since $\Sigma_i^h(s)$ lies entirely in $C_{R,h_0}$.
		
		Finally, it remains to show $\Sigma_i^h(s)$ minimizes the volume globally in $\mathcal{G}_{R_i}$ for $h>h_0$. If not, there exists $\Sigma'\subset\mathcal{G}_{R_i}$ with $\mathcal{H}^{n-1}(\Sigma')<\mathcal{H}^{n-1}(\Sigma_i^h(s))=\mathcal{H}^{n-1}(\Sigma_i^{h_0}(s))$. Then by taking $h$ sufficiently large such that $\Sigma'\subset C_{R_i,h}$, we obtain a contradiction with the minimizing property of $\Sigma_i^h(s)$! Set ${\Sigma}_i(s)=\Sigma_i^h(s)$ for any $h>h_0$. Though the choice of ${\Sigma}_i(s)$ may not be unique, each choice has the same volume due to \eqref{eq: 65}, and it turns out ${\Sigma}_i(s)$ minimizes the volume in $\mathcal{G}_{R_i}$. This completes the proof of Lemma \ref{lem: Existence}.
		
	\end{proof}

	By letting $R_i\to\infty$, We obtain a sequence of free boundary volume minimizing surface $\Sigma_i(s)$ under the metric $g(s)$, each of which intersects with the compact set $W$. Since $\Sigma_i(s)$ is a minimizing boundary it is standard to obtain the following volume estimate
	\begin{align}\label{eq: 4}
		\mathcal{H}^{n-1}(\Sigma_i(s)\cap B_R)\le \mathcal{H}^{n-1}(\partial B_R)\le \omega_{n-1}R^{n-1}.
	\end{align}
	Here $B_R$ is the coordinate ball in the asymptotically flat end. It then follows from standard results in geometric measure theory that $\Sigma_i(s)$ converges subsequencially to  a complete and noncompact properly embedded hypersurface $\Sigma(s)$ in $(M,g(s))$, satisfying the same volume estimate as \eqref{eq: 4}.

	\begin{lemma}\label{prop: AF surface}
		There exists $r_1>0$, such that $\Sigma(s)$ is a graph over $S_0\setminus B^n_{r_1}$, and  for any $\epsilon>0$ the graph function $u$ satisfies
        \begin{align}\label{eq: 48}
			|y|_{\bar{g}}^{-1}|u(y)-a|+|\Bar{\nabla}u(y)|_{\Bar{g}}+|y|_{\bar{g}}|\Bar{\nabla}^2u(y)|=O(|y|^{-\tau+\epsilon})
		\end{align}
        Consequently, $\Sigma(s)$ is an asymptotically flat manifold with mass zero.
	\end{lemma}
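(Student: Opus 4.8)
The plan is to proceed in four steps, closely following the strategy used for Lemma \ref{lem: Sigma parallel to S0}, with one extra argument needed to pin down the limiting hyperplane. First I would record the basic regularity of $\Sigma(s)$: since $g(s)=g$ outside the compact set $W$, the manifold $(M,g(s))$ is still AF of order $\tau$ with $R(g(s))\in L^1(M)$, and $\Sigma(s)$, being a subsequential limit of the area-minimizing boundaries $\Sigma_i(s)$, is itself a complete, noncompact, properly embedded area-minimizing boundary with Euclidean volume growth (estimate \eqref{eq: 4}). Exactly as in the proof of Lemma \ref{lem: interior curvature estimate}, a blow-up analysis together with the classification of stable minimal cones in dimension $\le 7$ (\cite{Simon1968}) and of minimal hypersurfaces of Euclidean volume growth (Theorem 1 in \cite{Be2023}) gives a uniform bound on the second fundamental form of $\Sigma(s)$ outside a compact set; in particular the Sobolev inequality \eqref{eq: Sobolev2} holds on $\Sigma(s)\setminus K$ (cf. Remark \ref{re: asymptotic to hyperplane}). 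Then Proposition 9 in \cite{EK23} applies and shows $\Sigma(s)$ is, outside a compact set, a graph over some hyperplane $\Pi$, with the associated decay estimates for the graph function.

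Second, I would show $\Pi$ is horizontal, i.e. $\Pi=S_a$ for some constant $a$. By construction each $\Sigma_i(s)=\partial\Omega_i\cap\mathring{C}_{R_i}^+$ bounds a Caccioppoli set $\Omega_i$ containing $\Sigma_i$, so $\Sigma_i(s)$ lies in the region of $M$ above $\Sigma_i$; passing to the limit, $\Sigma(s)$ lies in the closure of $M_+$. By Lemma \ref{lem: Sigma parallel to S0}, $\Sigma$ is asymptotic to $S_0$, so outside a large coordinate ball the region $M_+$ is contained in $\{z>-1\}$; hence $z$ is bounded below on $\Sigma(s)$ away from $K$. A tilted hyperplane $\Pi$ would force $z\to-\infty$ along $\Sigma(s)$ in some direction, a contradiction. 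Therefore $\Pi=S_a$, $\Sigma(s)$ is a graph over $S_0\setminus B^n_{r_1}$ for some $r_1>0$, and its graph function $u$ tends to the constant $a$. Feeding this back into Proposition 9 of \cite{EK23} — or, alternatively, running the Green's-function argument of Lemma \ref{lem: decay estimate of u} and the Schauder iteration of Lemma \ref{lem: decay estimate for higher derivative of u} directly on $\Sigma(s)$, whose Sobolev constant and curvature are now under control — yields \eqref{eq: 48}.

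Third, for the ``consequently'' part, I would write the induced metric $ds^2$ on $\Sigma(s)$ in the coordinates $y\in S_0$ via the graph map $y\mapsto (y,u(y))$, so that $(ds^2)_{ij}(y)=g(s)_{ij}(y,u(y))+g(s)_{in}D_ju+g(s)_{jn}D_iu+g(s)_{nn}D_iuD_ju$. Since $g(s)$ is AF of order $\tau$ near infinity and, by \eqref{eq: 48}, $|\bar\nabla u|=O(|y|^{-\tau+\epsilon})$ and $|u-a|=O(|y|^{1-\tau+\epsilon})$, all the $\bar\nabla u$-terms are $O(|y|^{-2\tau+\epsilon})$ and $g(s)_{ij}(y,u(y))=g(s)_{ij}(y,a)+O(|y|^{-2\tau+\epsilon})$, so $(ds^2)_{ij}=\delta_{ij}+O(|y|^{-\tau})$ with derivative decay $O(|y|^{-\tau-1})$ and $O(|y|^{-\tau-2})$; thus $(\Sigma(s),ds^2)$ is an AF manifold of dimension $n-1$ and order $\tau$. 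Because $\tau>\frac{n-1}{2}>\frac{(n-1)-2}{2}$ and, by the Gauss equation together with $R(g(s))\in L^1$ and the curvature decay, $R_{\Sigma(s)}\in L^1(\Sigma(s))$, the ADM mass of $\Sigma(s)$ is well defined; and since $\tau>n-3=(n-1)-2$, the mass integrand over a large coordinate $(n-2)$-sphere in $\Sigma(s)$ is $O\!\big(r^{(n-2)-(\tau+1)}\big)=O(r^{\,n-3-\tau})\to 0$, so the ADM mass of $\Sigma(s)$ vanishes.

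The \emph{main obstacle} is the second step: unlike in Lemma \ref{lem: Sigma parallel to S0}, the height of $\Sigma_i(s)$ is only controlled by its area and is of order $R_i^{n-1}$, so the rescaled surfaces $\tfrac1{R_i}\Sigma_i(s)$ need not converge to a horizontal plane, and the one-sided constraint ``$\Sigma(s)$ lies above $\Sigma$'' inherited from the Caccioppoli-set formulation must be exploited carefully to exclude a tilted asymptotic hyperplane. A secondary, routine point is checking that the curvature and Sobolev estimates of Section 2, proved for $(M,g)$, transfer verbatim to $(M,g(s))$; this is immediate since $g(s)=g$ outside the compact set $W$.
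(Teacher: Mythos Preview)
Your proposal is correct and takes essentially the same approach as the paper: apply Proposition 9 of \cite{EK23} to get $\Sigma(s)$ asymptotic to some hyperplane $\Pi$, use the one-sided barrier $\Sigma(s)\subset\overline{M_+}$ (together with Lemma \ref{lem: Sigma parallel to S0} for $\Sigma$) to force $\Pi$ parallel to $S_0$, and then read off zero mass from $\tau>n-3$. The only point the paper makes explicit that you leave implicit is that the blow-down argument of Lemma \ref{lem: Sigma parallel to S0} also shows $\Sigma(s)$ has exactly one end, which is needed before Proposition 9 of \cite{EK23} yields a single asymptotic hyperplane.
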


	\begin{proof}		
		Arguing as in Lemma \ref{lem: Sigma parallel to S0}, we see $\Sigma(s)$ has exactly one end. By Proposition 9 in \cite{EK23}, $\Sigma(s)$ is a graph with desired decay estimate as \eqref{eq: 48} over a hyperplane $S'$. If $S'$ is not parallel to $S_0$, then $\Sigma(s)$ would pass through $\Sigma$ by Lemma \ref{lem: Sigma parallel to S0}. This contradicts with the fact that $\Sigma(s)$ lies in $M^+$. The fact that $\Sigma(s)$ is an asymptotically flat manifold with mass zero follows immediately from \eqref{eq: 48} and the assumption that $\tau>n-3$
	\end{proof}
	
	\begin{lemma}\label{lem: ACV}
		$\Sigma(s)$ is stable under asymptotically constant variation.
	\end{lemma}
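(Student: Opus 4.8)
The plan is to deduce the statement from the fact that $\Sigma(s)$ is a locally smooth limit of the free boundary volume minimizers $\Sigma_i(s)$ of Lemma \ref{lem: Existence}, following the scheme of \cite{EK23}. Write $\nu$ for the unit normal of $\Sigma(s)$ in $(M,g(s))$, $h$ for its second fundamental form, $V:=|h|^2+\mathrm{Ric}_{g(s)}(\nu,\nu)$ for the Jacobi potential, and $Q(\phi,\phi):=\int_{\Sigma(s)}\big(|\nabla\phi|^2-V\phi^2\big)\,d\mu$ for the stability form; ``stable under asymptotically constant variation'' is understood as $Q(\phi,\phi)\ge0$ for every $\phi$ asymptotic to a constant with $\nabla\phi\in L^2(\Sigma(s))$, equivalently the existence on $\Sigma(s)$ of a positive Jacobi field asymptotic to a positive constant. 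I first note that $Q$ is finite on this class: by Lemma \ref{prop: AF surface} the graph function of $\Sigma(s)$ over $S_0$ has $|\bar\nabla^2u|=O(|x|^{-\tau-1+\epsilon})$, so $|h|=O(|x|^{-\tau-1+\epsilon})$, while $|\mathrm{Ric}_{g(s)}|=O(|x|^{-\tau-2})$ by asymptotic flatness, so $V=O(|x|^{-\tau-2})$; since $\dim\Sigma(s)=n-1$, $\Sigma(s)$ has Euclidean volume growth (Lemma \ref{lem: interior curvature estimate}) and $\tau>n-3$, this gives $V\in L^1(\Sigma(s))$.

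Next I would establish the classical stability $Q(\phi,\phi)\ge0$ for $\phi\in C_c^\infty(\Sigma(s))$. Each $\Sigma_i(s)$ minimizes volume in $\mathcal G_{R_i}$ and, by Lemma \ref{lem: free boundary existence}, is disjoint from the obstacle $\Sigma_i$; since $\Sigma_i\to\Sigma$ and, by property~(5) of Lemma \ref{lem: g(s) perturbation}, $\Sigma$ is weakly mean convex and strictly mean convex somewhere with respect to $g(s)$, the strong maximum principle forces $\Sigma(s)$ to be strictly separated from $\Sigma$ as well. Hence on every bounded region the obstacle is inactive, so $\Sigma(s)$ is a genuine locally area minimizing boundary; being a minimizer it is stable, and stability passes to the locally smooth limit.

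The remaining, and principal, step is to upgrade this to asymptotically constant variations, which I would carry out following \cite{EK23}. The key structural input is that $\mathcal G_{R_i}$ is closed under translating a competitor upward by a constant and, since $\Sigma_i(s)$ is strictly above $\Sigma_i$, under small downward translations; combined with the convexity of the Euclidean cylinder $\partial C_{R_i}$ this makes the second variation of $\Sigma_i(s)$ in the vertical direction $\partial_z$ non-negative. Passing this to the limit, together with the compactly supported stability above and the integrability $V\in L^1(\Sigma(s))$, one runs a Fischer-Colbrie--Schoen type argument on $\Sigma(s)$: from stability one produces a positive $w$ with $\Delta_{\Sigma(s)}w+Vw\le0$, and using the Green's function estimate of Lemma \ref{lem: decay estimate of u} (the potential decaying like $|x|^{-\tau-2}$) one shows $w$ is bounded between positive constants and asymptotic to a positive constant. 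Writing a given asymptotically constant variation as $\phi=w\psi$ and integrating by parts against cut-offs supported in $B^n_{2\rho}\setminus B^n_\rho$, all the terms at infinity are controlled because $\tau>n-3$, and one obtains $Q(\phi,\phi)\ge0$, i.e. the desired stability.

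The main obstacle is precisely this last step. One must make the non-compactly-supported second variation of $\Sigma_i(s)$ legitimate despite the one-sided obstacle $\Sigma_i$, control its tails uniformly in $i$, and verify that every integral appearing at infinity --- the potential $V$, the gradient of the Jacobi field $w$, and the cut-off errors in the final integration by parts --- converges; each of these uses the hypothesis $\tau>n-3$, the same condition that makes $\Sigma(s)$ have vanishing mass in Lemma \ref{prop: AF surface} and that will be combined with the positive mass theorem in the sequel.
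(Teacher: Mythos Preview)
Your starting observation --- that $\Sigma_i(s)$ minimizes in $\mathcal G_{R_i}$, so its second variation along variations equal to $\partial_z$ at infinity is non-negative, and this should pass to the limit $\Sigma(s)$ --- is exactly what the paper uses. But the paper executes it far more directly than you propose, and your Fischer--Colbrie--Schoen detour carries a genuine gap.

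The paper does not split the argument into ``first compactly supported stability, then upgrade via a positive Jacobi field.'' Instead it fixes a vector field $X$ on $M$ equal to $\partial/\partial t$ outside a compact set, writes down Simon's second variation integrand $a$ for the flow of $X$, and uses the minimizing property of $\Sigma_i(s)$ in $\mathcal G_{R_i}$ to get $\int_{\Sigma_i(s)}a\,d\mu\ge 0$ directly. One checks $a=O(|x|^{-\tau-2})$ uniformly along each $\Sigma_i(s)$, so dominated convergence gives $\int_{\Sigma(s)}a\,d\mu\ge 0$. Then one decomposes $X=\hat X+\varphi\nu$ and invokes Schoen's computation (\cite{Schoen1989}) to write $a=|\nabla\varphi|^2-(\mathrm{Ric}(\nu,\nu)+|A|^2)\varphi^2+G$, where $\int_D G$ is a boundary integral that is shown to vanish as $D$ exhausts $\Sigma(s)$ using the decay from Lemma~\ref{prop: AF surface}. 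This already delivers the stability inequality for every $\varphi$ agreeing with $\varphi_0:=\langle\partial_t,\nu\rangle$ outside a compact set; a short $W^{1,2}$ approximation then extends it to all asymptotically constant $\phi$. No Jacobi field is ever constructed.

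Your proposed route has a real gap at the upgrade step. Fischer--Colbrie--Schoen, applied to compactly supported stability, yields a positive \emph{supersolution} $w$ of $\Delta w+Vw\le 0$, not a Jacobi field; the substitution $\phi=w\psi$ and the identity $Q(w\psi,w\psi)=\int w^2|\nabla\psi|^2$ require $Lw=0$, not merely $Lw\le 0$. You assert that one then shows $w$ is asymptotic to a positive constant, but nothing in your outline produces such a $w$: the FCS supersolution has no reason to be bounded or to have a limit at infinity, and compactly supported stability alone does \emph{not} imply asymptotically constant stability (that implication is precisely what is at stake). The ingredient you actually need --- non-negativity of the second variation for the specific non-compactly-supported variation $\partial_t$ --- is the one you mention but then abandon; the paper simply computes that second variation, passes it to the limit, and reads off the inequality via Schoen's decomposition.
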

	
	\begin{proof}
		Let $X$ be a vector field on $\Sigma(s)$ which equals $\frac{\partial}{\partial t}$ outside a compact set and let $F_t: M\longrightarrow M$ be the one parameter transformation group of $X$. For an embedded hypersurface $V$ and a point $p\in V$, denote $a(p)$ to be the second derivative of the volume element of $F_t(V)$ with respect to $t$, evaluated at $t = 0$. It follows from \cite{Simon83} that
        \begin{align*}
            a = &-\sum_{i=1}^{n-1}R(X,e_i,X,e_i)+\divv_VZ+(\divv_VX)^2\\
            &+\sum_{i=1}^{n-1}|(D_{e_i}X)^{\perp}|^2-
            \sum_{i,j=1}^{n-1}\langle e_i,D_{e_j}X\rangle\langle e_j,D_{e_i}X\rangle
        \end{align*}
        Here, $\{e_i\}_{1\leq i\leq n-1}$ forms an locally orthonormal basis on $V$. $Z = D_{\frac{\partial}{\partial t}}X$ is used to denote the acceleration vector field of the transformation $F_t$, and $D$ denotes the covariant differentiation in $M$ with respect to $g(s)$. Since $\Sigma_i(s)$ has least volume in the free boundary class in $C_{R_i}$, we have
        \begin{align}\label{eq: 49}
            \int_{\Sigma_i(s)}ad\mu\ge 0
        \end{align}
        for $R_i$ sufficiently large.

        By the asymptotic flatness, we have the following hold along each $\Sigma_i(s)$ and $\Sigma(s)$ outside a compact set:
        \begin{equation}\label{eq: 53}
            \begin{split}
                &Z = D_{\frac{\partial}{\partial t}}\frac{\partial}{\partial t} = O(|x|^{-\tau-1})\\
                &\divv_{\Sigma_i(s)}Z = O(|x|^{-\tau-2}), \divv_{\Sigma(s)}Z = O(|x|^{-\tau-2})\\
            &D_{e_i}X = D_{e_i}\frac{\partial}{\partial t} = O(|x|^{-\tau-1})\\
            &R(X,e_i,X,e_i) = O(|x|^{-\tau-2})
            \end{split}
        \end{equation}
        It follows that $a = O(|x|^{-\tau-2})$ along each $\Sigma_i(s)$ and $\Sigma(s)$. As $\tau+2>n-3+2=n-1$, $a$ must integrable over $\Sigma(s)$. Combining with \eqref{eq: 49} and the dominated convergence theorem we conclude that
        \begin{align*}
             \int_{\Sigma(s)}ad\mu\ge 0
        \end{align*}

        We write $X = \hat{X}+\varphi \nu$, $Z = \hat{Z}+\psi\nu$, with $\hat{X},\hat{Z}$ tangent to $\Sigma(s)$ and $\nu$ the unit normal, $\hat X:=\sum^{n-1}_{i=1}\hat X_i e_i$, $\hat X_{i,j}:=<D_{e_i}\hat X, e_j>$. By the calculation of \cite{Schoen1989} we have
        \begin{align*}
            G = &-2\sum_{i=1}^{n-1}\varphi\langle R(\hat{X},e_i)\nu,e_i\rangle-\sum_{i=1}^{n-1}\langle R(\hat{X},e_i)\hat{X},e_i\rangle\\
            &+div\hat{Z}+(div\hat{X})^2-2A(\nabla\varphi,\hat{X})+\sum_{i=1}^{n-1}A(e_i,\hat{X})^2\\
            &-2\varphi\sum_{i,j=1}^{n-1}A(e_i,e_j)\hat{X}_{i;j}-\sum_{i,j=1}^{n-1}\hat{X}_{i;j}\hat{X}_{j;i}
        \end{align*}
        Moreover, it follows from the integration by part in \cite{Schoen1989} that, for a bounded domain $D\subset \Sigma(s)$, there holds
        \begin{align}\label{eq: 54}
            \int_DGd\mu = \int_{\partial D}(div\hat{X})\langle\hat{X},\eta\rangle-\sum_{i,j}\hat{X}_{i;j}\hat{X}_j\eta_i\\
            -2\varphi\sum_{i,j}A(e_i,e_j)\hat{X}_i\eta_j+\langle\hat{Z},\eta\rangle d\sigma
        \end{align}
        Here $\eta$ denotes the outer normal of $\partial D$ on $\Sigma(s)$ and $d\sigma$ denotes the volume element on $\partial D$.

        By \eqref{eq: 48} we have
        \begin{align*}
            |A| = O(|y|^{-\tau- 1+\epsilon})
        \end{align*}
        Moreover, from \eqref{eq: 53} we see that outside a compact set the following holds:
        \begin{align*}
             \hat{X}_{i;j} &= \langle\nabla_{e_i}\hat{X},e_j\rangle = \langle \nabla_{e_i}X-\varphi\nu,e_j\rangle\\
             &=\langle \nabla_{e_i}\frac{\partial}{\partial t},e_j\rangle - \varphi A(e_i,e_j)\\
             &= O(|y|^{-\tau-1})\\
            \langle \hat{Z},\eta\rangle &= \langle Z,\eta\rangle = O(|y|^{-\tau-1+\epsilon})
        \end{align*}
        Here, $y$ is as in Lemma \ref{prop: AF surface} to describe $\Sigma(s)$ as a graph outside a compact set. By choosing $\epsilon$ sufficiently small we have
        \begin{align*}
            G = O(|y|^{2-n-\epsilon})
        \end{align*}
        Let $D = B_d$ be the $d$-geodesic ball on $\Sigma(s)$ and let $d\to\infty$, we obtain
        \begin{align}\label{eq: 61}
            \int_{\Sigma(s)}|\nabla\varphi|^2d\mu\ge\int_{\Sigma(s)}(Ric(\nu,\nu)+|A|^2)\varphi^2d\mu
        \end{align}
        which holds for all the test function $\varphi$ coinciding with $\varphi_0 = \langle \frac{\partial}{\partial t},\nu\rangle$ outside a compact set.

        From $\nu = \Bar{\nu}+O(|y|^{-\tau})$ we obtain
        \begin{align*}
            \varphi_0 = \langle \frac{\partial}{\partial t},\Bar{\nu}\rangle = \frac{1}{(1+|\Bar{\nabla}u|^2)^{\frac{1}{2}}}+O(|y|^{-\tau}) = 1+O(|y|^{-\tau+\epsilon})
        \end{align*}
        It follows that $\varphi_0-1$ is $L^2$-integrable out side a compact set. Let $\phi$ be a $C^1$ function which equals to $1$ outside a compact set and $\varphi$ as above. Let $\zeta_d$ be a cut off function which equals $1$ inside the geodesic ball $B_d$ in $\Sigma(s)$ and $0$ outside $B_{2d}$. Define
        \begin{align*}
            \phi_d = \zeta_d\phi+(1-\zeta_d)\varphi
        \end{align*}
       Note that $Ric(v,v)+|A|^2=O(|y|^{-\tau-2})$,  and $\phi_d\to\phi$ in $W^{1,2}$ as $d\to\infty$. It follows that \eqref{eq: 61} also holds for $\phi$. This completes the proof of Lemma \ref{lem: ACV}.
 \end{proof}

 \begin{remark}\label{rk: finite energy}
     By the same argument, we see that the conclusion remains true, provided that $\phi-1$ has finite $W^{1,2}$ norm and $\phi$ is asymptotic to a constant at infinity. See also Remark 29 in \cite{EK23}.
 \end{remark}

The following infinitesimal rigidity lemma is a consequence of Lemma \ref{lem: ACV}, which follows from the argument in \cite{Carlotto16} and \cite{EK23}, and originates from the proof of the positive mass theorem \cite{SY79} and \cite{Schoen1989}.
\begin{lemma}\label{lem: infinitesimal rigidity}
    Suppose the ambient scalar curvature is nonnegative along $\Sigma(s)$, then $\Sigma(s)$ is isometric to $\mathbb{R}^n$ and totally geodesic. Moreover, we have $R = Ric(\nu,\nu) = 0$ along $\Sigma(s)$.
\end{lemma}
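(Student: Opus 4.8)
The plan is to reproduce the classical infinitesimal rigidity argument of Schoen--Yau \cite{SY79} and Schoen \cite{Schoen1989}, adapted as in \cite{Carlotto16} and \cite{EK23}, using Lemma \ref{lem: ACV} as the crucial stability input. First I would combine Lemma \ref{lem: ACV} with the conformal method: since $\Sigma(s)$ is stable under asymptotically constant variation, inequality \eqref{eq: 61} holds for every test function $\phi$ that is asymptotic to a constant at infinity and has $\phi-1\in W^{1,2}$ (see Remark \ref{rk: finite energy}). Applying this with $\phi\equiv 1$ yields
\begin{align*}
    \int_{\Sigma(s)}(Ric(\nu,\nu)+|A|^2)\,d\mu\le 0.
\end{align*}
By the Gauss equation and the traced Gauss equation, $2\,Ric(\nu,\nu)+|A|^2 = R_M - R_{\Sigma(s)} + H^2 = R_M - R_{\Sigma(s)}$ along the minimal hypersurface $\Sigma(s)$ (recall $H=0$), where $R_M\ge 0$ is the ambient scalar curvature and $R_{\Sigma(s)}$ is the scalar curvature of the induced metric. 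Hence $Ric(\nu,\nu)+|A|^2 = \tfrac12(R_M - R_{\Sigma(s)}) + \tfrac12|A|^2$, and the stability inequality controls a combination of $R_{\Sigma(s)}$, $R_M$ and $|A|^2$.

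Next I would run the conformal deformation argument on $\Sigma(s)$ to upgrade ``the obstruction integral is $\le 0$'' to ``all three quantities vanish pointwise.'' Concretely, by Lemma \ref{prop: AF surface} the induced metric on $\Sigma(s)$ is asymptotically flat of order $>n-3 = (n-1)-2$ with vanishing mass; one then seeks a conformal factor $\Sigma(s)' = (1+\psi)^{4/(n-3)} g|_{\Sigma(s)}$ making the scalar curvature of $\Sigma(s)'$ identically zero — the relevant linear operator is $-\Delta + c_{n-1}(Ric(\nu,\nu)+|A|^2)$, whose coercivity is exactly the stability inequality \eqref{eq: 61}. Solvability of the corresponding Yamabe-type equation in the asymptotically flat setting with the small decay is standard (as in \cite{SY79}), and the conformal change only decreases the ADM mass unless $Ric(\nu,\nu)+|A|^2\equiv 0$ and the conformal factor is constant. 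Since the mass of $\Sigma(s)$ is already zero and the positive mass theorem forces the mass of $\Sigma(s)'$ to be $\ge 0$, we conclude $Ric(\nu,\nu)+|A|^2\equiv 0$ on $\Sigma(s)$; feeding this back into the positive mass theorem with rigidity gives that $(\Sigma(s), g|_{\Sigma(s)})$ is isometric to flat $\mathbb{R}^{n-1}$. From $|A|^2\equiv 0$ we get that $\Sigma(s)$ is totally geodesic, and then the Gauss equation together with $R_{\Sigma(s)}\equiv 0$ and $R_M\ge 0$ forces $R_M = 0$ and $Ric(\nu,\nu) = 0$ along $\Sigma(s)$, which is the last assertion of the lemma.

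I would present the argument by invoking the conformal deformation and positive mass theorem as a black box in the form already used in \cite{Carlotto16} and \cite{EK23}, since the only genuinely new ingredient here is that the stability inequality \eqref{eq: 61} has been established for our $\Sigma(s)$ in Lemma \ref{lem: ACV}; everything downstream is the standard Schoen--Yau mechanism. The main obstacle — and the point deserving the most care — is verifying that the decay hypotheses of Lemma \ref{prop: AF surface} (order $>n-3$, mass zero, together with the pointwise decay $Ric(\nu,\nu)+|A|^2 = O(|y|^{-\tau-2})$ with $\tau+2>n-1$) are precisely strong enough both to make the linear conformal Laplacian on $\Sigma(s)$ Fredholm on the appropriate weighted spaces and to ensure that the conformal change to zero scalar curvature is admissible and mass-nonincreasing; this is where the hypothesis $\tau>\max\{\tfrac{n-1}{2}, n-3\}$ is used essentially. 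Once that bookkeeping is in place, the rigidity statement follows verbatim from the positive mass theorem.
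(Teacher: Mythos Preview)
Your proposal is correct and follows essentially the same route as the paper: solve the conformal Laplacian equation on $\Sigma(s)$ to make the induced scalar curvature zero, use the stability under asymptotically constant variation (Lemma \ref{lem: ACV}) to see the mass is non-increasing, and then invoke the positive mass theorem with rigidity together with the Gauss rearrangement $Ric(\nu,\nu)+|A|^2=\tfrac12(|A|^2+R_M)$ to conclude flatness, $|A|=0$, and $R_M=Ric(\nu,\nu)=0$. One small slip: the operator you wrote as $-\Delta+c_{n-1}(Ric(\nu,\nu)+|A|^2)$ has the wrong sign---stability gives nonnegativity of $-\Delta-(Ric(\nu,\nu)+|A|^2)$, and this is what, via the Gauss equation, feeds into positivity of the genuine conformal Laplacian $-\tfrac{4(n-2)}{n-3}\Delta_\Sigma+R_\Sigma$; this does not affect your argument.
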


\begin{proof}

    Since the argument is well known to the experts we only give a sketch of the proof, and for a detailed treatment the readers may refer to P.14, P.15 in \cite{Carlotto16}. Let $\phi$ be the solution of the conformal Laplacian equation on $\Sigma(s)$ which equals to $1$ at infinity. It follows that $(\Sigma(s),\phi^{\frac{4}{n-3}}g(s))|_{\Sigma(s)}$ is an asymptotically flat manifold with zero scalar curvature. The stability of $\Sigma(s)$ under asymptotically constant variation implies that the mass of $\Sigma(s)$ is non-increasing under the conformal deformation. Thus, the mass of $\Sigma(s)$ is nonpositive due to Lemma \ref{prop: AF surface}. By the positive mass theorem we see $(\Sigma(s),\phi^{\frac{4}{n-3}}g(s))|_{\Sigma(s)}$ is isometric to $\mathbb{R}^{n-1}$. Again by Lemma \ref{lem: ACV}, one deduces $\phi$ is a constant, so $\Sigma(s)$ is isometric to $\mathbb{R}^{n-1}$. By letting $\phi = 1$ in \eqref{eq: 48} and using the rearrange formula
    \begin{align*}
        Ric(\nu,\nu)+|A|^2 = \frac{1}{2}(|A|^2+R)
    \end{align*}
    we see $\Sigma(s)$ is totally geodesic, which $R=Ric(\nu,\nu)$ along $\Sigma(s)$.
\end{proof}

\begin{lemma}\label{lem: coincide}
    There exists $t_1>t_0$ with the following property: If $p$ is chosen in $\Sigma_t$ with $t\ge t_1$, then by letting $r\to 0$, $s\to 0$, $\Sigma(s)$ subconverges locally smoothly to an area minimizing hypersurface $\Sigma_p$ passing through $p$. Moreover, $\Sigma_p = \Sigma_t$.
\end{lemma}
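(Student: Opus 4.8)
Lemma~\ref{lem: coincide} is the technical heart of the argument, so the plan is to combine the perturbation machinery built in Lemmas~\ref{lem: g(s) perturbation}--\ref{lem: ACV} with the foliation and uniqueness results of Section~3. First I would fix $t_0$ as in Theorem~\ref{thm: Existence} and choose $t_1>t_0$ large enough that every leaf $\Sigma_t$ with $t\ge t_1$ is the entire graph over $S_t$ (Proposition~\ref{pro: entire graph}), satisfies the decay estimates of Lemma~\ref{lem: decay estimate of u} and Lemma~\ref{lem: decay estimate for higher derivative of u} with a uniform constant, and lies entirely outside the compact set $K$. Given $p\in\Sigma_t$ with $t\ge t_1$, I pick $p$ to have very large $x_1$-coordinate so that the normalization \eqref{eq: 63} holds; this is where the graphical, almost-horizontal asymptotics of both $\Sigma_t$ and the limit surface $\Sigma$ are used. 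Then for each small $r>0$ and $s\in[0,1]$, Lemma~\ref{lem: g(s) perturbation} produces the metric $g(s)$, and Lemma~\ref{lem: Existence} produces a free boundary volume minimizer $\Sigma_i(s)$ in $\mathcal{G}_{R_i}$ meeting $W$; letting $R_i\to\infty$ gives a complete properly embedded $\Sigma(s)$ with the uniform area bound \eqref{eq: 4}, still meeting $W$.

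Next I would run the two-parameter limit $r\to 0$, $s\to 0$. The uniform area bound \eqref{eq: 4} and the interior curvature estimates (Lemma~\ref{lem: interior curvature estimate}) give local smooth subconvergence of $\Sigma(s)$ to a complete area minimizing boundary $\Sigma_p$; because $\Sigma(s)$ meets the shrinking sets $W$ which concentrate near the minimizing geodesic $\gamma$ from $p$ to $\Sigma$, and by property (III) of Lemma~\ref{lem: g(s) perturbation} any point of $W\cap M_+$ with small slope is within $6r$ of $p$, the limit $\Sigma_p$ is forced to pass through $p$ itself. (Here one uses that $\Sigma_p$, being area minimizing and asymptotic to a horizontal hyperplane by the argument of \cite{EK23} together with Lemma~\ref{prop: AF surface}, has small slope near $p$, so it cannot escape $W$ elsewhere.) By Lemma~\ref{lem: ACV} and Remark~\ref{rk: finite energy}, $\Sigma(s)$ is stable under asymptotically constant variation, and this property passes to the limit $\Sigma_p$, so $\Sigma_p$ is an area minimizing hypersurface asymptotic to some $S_{t'}$ with $t'\ge t_0$ which is stable under asymptotically constant variation and passes through $p\in\Sigma_t$.

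Finally I would invoke uniqueness. Since $\Sigma_p$ is area minimizing and asymptotic to $S_{t'}$, and $\Sigma_t$ is the unique such hypersurface asymptotic to $S_t$ (Proposition~\ref{prop: uniqueness}), it suffices to show $t'=t$. If $t'\ne t$, then by Lemma~\ref{lem: t_1 beyond t_2} the leaves $\Sigma_{t'}$ and $\Sigma_t$ are disjoint — one lies strictly beyond the other — contradicting $p\in\Sigma_p=\Sigma_{t'}$ and $p\in\Sigma_t$; hence $t'=t$ and $\Sigma_p=\Sigma_t$. The main obstacle I anticipate is controlling the limit carefully enough to guarantee that $\Sigma_p$ genuinely passes through $p$ and does not degenerate or slide off: one must verify that the construction of $W$ and the barrier properties survive the joint limit $r,s\to 0$, and that the area bound prevents collapse, so that $\Sigma_p$ is a genuine complete area minimizing boundary and not, say, empty or a multiplicity-two plane. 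This is exactly the point where the slope estimate (III), the mean-convexity of $\Sigma_i$ under $g(s)$ (property (5) of Lemma~\ref{lem: g(s) perturbation}), and the maximum principle must be combined to pin the limit to $p$; I would treat it in the spirit of the corresponding argument in \cite{EK23}, adapted to the foliation setting here.
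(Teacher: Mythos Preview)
Your outline captures the overall flow, but it has a genuine gap at the step where you force $\Sigma(s)$ (and hence $\Sigma_p$) to meet $p$. The paper's proof splits into two mutually exclusive situations, and the mechanisms are entirely different in each; you have effectively only sketched the second one and tried to make it do the work of both.

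\textbf{The missing rigidity step (Case 1).} When $\Sigma(s)\cap W\neq\emptyset$, property~(III) of Lemma~\ref{lem: g(s) perturbation} does \emph{not} apply: that property concerns the slope of $\partial W$ at a point where a hypersurface \emph{touches} $\partial W$ from outside (so the tangent planes coincide). If $\Sigma(s)$ crosses transversally into $W$, the slope of $\Sigma(s)$ at the crossing point says nothing about the slope of $\partial W$ there, and a nearly horizontal hypersurface can certainly enter the steep tube $W$ at any height along $\gamma$. The paper instead uses property~(4) ($R(g(s))>0$ on $W\setminus B_r(p)$) together with the asymptotically constant variational stability (Lemma~\ref{lem: ACV}) and the infinitesimally rigid conclusion of Lemma~\ref{lem: infinitesimal rigidity}: if $\Sigma(s)$ met $W$ but missed $B_r(p)$, the ambient scalar curvature would be nonnegative along $\Sigma(s)$ and strictly positive somewhere, contradicting $R=0$ along $\Sigma(s)$. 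You cite Lemma~\ref{lem: ACV} but never deploy the positive-mass rigidity consequence; without it the pinning fails in this case.

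\textbf{Case 2 requires more than you wrote.} When $\Sigma(s)\cap W=\emptyset$ but $\Sigma(s)\cap\bar W\neq\emptyset$, one must first argue (before invoking (III)) that $\Sigma(s)$ is already one of the leaves $\Sigma_{t'}$ with $t'>t$. The paper does this via a topological claim: $\Sigma(s)$ must cross the vertical ray $l_p^+$ above $p$, proved by a Mayer--Vietoris argument showing $\Sigma(s)$ is homologically forced to meet the curve $l_p^+\cup\gamma\cup l_{p_0}^-$, and it cannot meet $\gamma\subset W$ or $l_{p_0}^-\subset M_-$. Only then does small slope of $\Sigma_{t'}$ at the tangential contact point on $\partial W$ put that point within $6r$ of $p$. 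Finally, your assertion that $\Sigma_p$ is asymptotic to $S_{t'}$ with $t'\ge t_0$ is unjustified; the paper separately handles $t'<t_0$ by a maximum-principle touching argument against the foliation $\{\Sigma_\zeta\}_{\zeta>t_0}$.
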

\begin{proof}
    From Lemma \ref{lem: Existence} we know $\Sigma_i(s)\cap W\ne\emptyset$ for each $i$. By letting $i\to\infty$, we have $\Sigma(s)\cap\bar{W}\ne\emptyset$. We discuss the following two situations:

    \textbf{Case 1:} $\Sigma(s)\cap W\ne\emptyset$. 

    In this case, $\Sigma(s)$ must intersect $B_r(p)$. Or else, the ambient scalar curvature turns out to be nonnegative along $\Sigma(s)$, and strictly positive somewhere on $\Sigma(s)$, due to Lemma \ref{lem: g(s) perturbation}, which violates Lemma \ref{lem: infinitesimal rigidity}. Consequently, by letting $r\to 0$, $s\to 0$, $\Sigma(s)$ must subconverge locally smoothly to an area minimizing hypersurface $\Sigma_p$ passing through $p$, with respect to metric $g$. 
    
    To see $\Sigma_p$ coincides with $\Sigma_t$, recall that each $\Sigma(s)$ is asymptotic to certain hyperplane parallel to $S_0$ from Lemma \ref{prop: AF surface}. It follows that $\Sigma_p$ is also asymptotic to a hypersurface $S_{t'}$ for some $t'$. If $t'>t_0$, by uniqueness statement Proposition \ref{prop: uniqueness} we see that $\Sigma_p$ coincides with $\Sigma_{t'}$. As $p\in\Sigma_t$, we obtain $t'=t$. If $t'<t_0$, By increasing $\zeta$ we see $\Sigma_p$ must touch $\Sigma_{\zeta}$ for some $\zeta>t_0$, so $\Sigma_p$ coincides with $\Sigma_{\zeta}$. Given the fact that $p\in\Sigma_t$ we see $\Sigma_{\zeta}$ = $\Sigma_t$ and the conclusion follows.

    \textbf{Case 2:} $\Sigma(s)\cap W=\emptyset$. 

    In this case, $\Sigma(s)$ must touch $\partial W$ somewhere. For any point $p\in M\backslash K$ we introduce following notations.
    Denote
    \begin{align*}
        &l_p^+ = \{q\in M_+, x_j(q) = x_j(p), j=1,2,\dots,n-1; z(q)\ge z(p)\}\\
        &l_P^- = \{q\in M_+, x_j(q) = x_j(p), j=1,2,\dots,n-1; z(q)\le z(p)\}
    \end{align*}

     \begin{figure}
    \centering
    \includegraphics[width = 15cm]{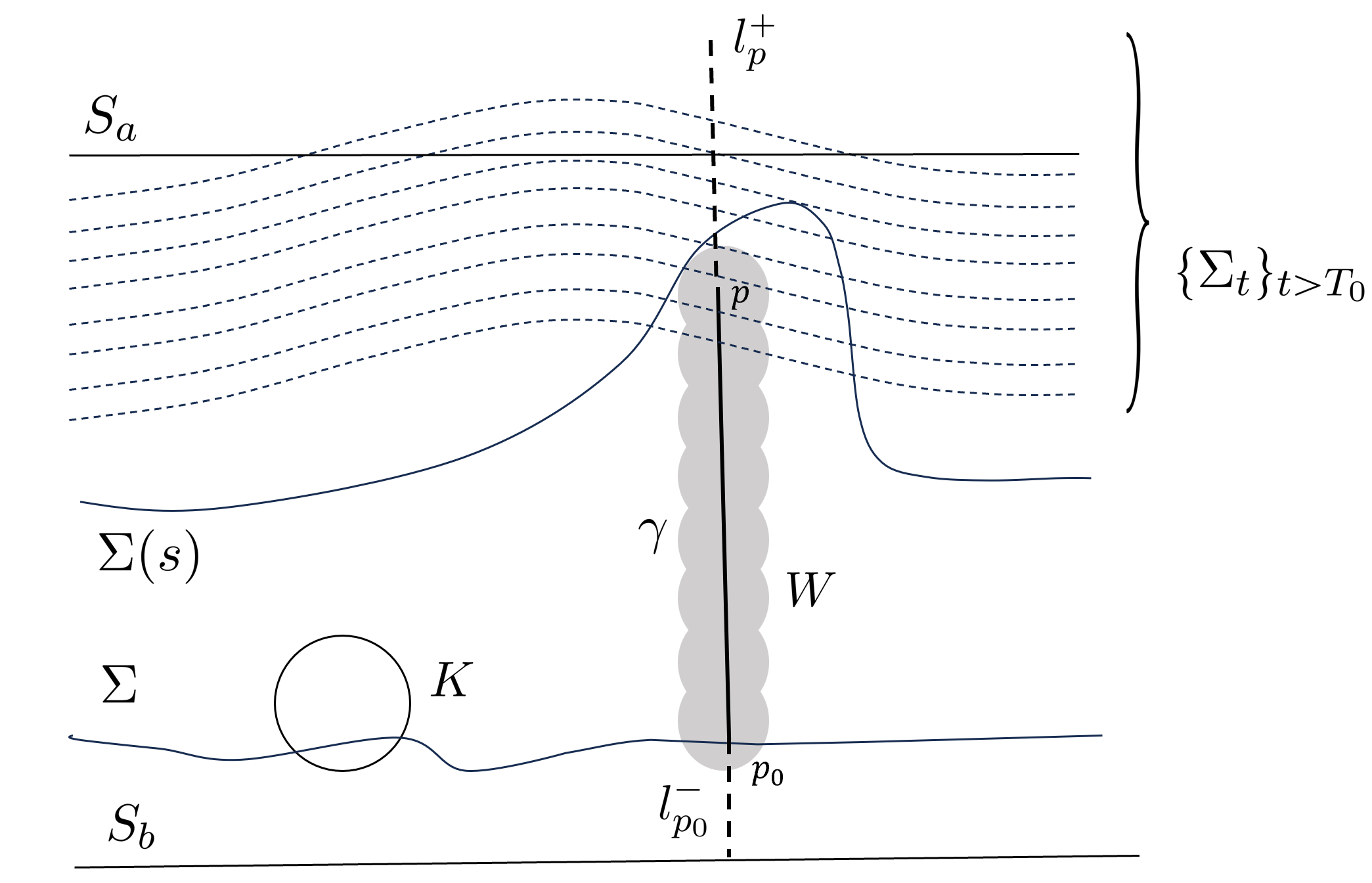}
    \caption{The case that $\Sigma(s)$ touches $\partial W$ in $M$}
    \label{f2}
\end{figure}   
    We hope to verify the following topological claim

    \textbf{Claim} $\Sigma(s)$ intersects $l_p^+$.

    \begin{proof}[Proof of the Claim]
        Assume $\Sigma(s)$ is asymptotic to $S_{s'}$. We pick $a$ sufficiently large such that both $W$ and $\Sigma(s)$ lie below $S_a$. Moreover, we pick $b<-1$ such that both $\Sigma$ and $K$ lie beyond $S_b$. Let $\Omega$ be the region between $S_a$ and $S_b$, so $\Omega$ is homeomorphic to $\Omega_0\cup_{S^{n-1}}K$, with $\Omega_0 = \mathbb{R}^{n-1}\times[a,b]\backslash B_1^n(0)$. 

        Let
        \begin{align*}
            l = (l_p^+\cup \gamma\cup l_{p_0}^-)\cap \Bar{\Omega}
        \end{align*}
        be a curve joining $S_a$ and $S_b$. From the long exact sequence of relative homology group and the fact $\mathbb{R}^{n-1}\times[a,b]\backslash l\cong (\mathbb{R}^{n-1})\backslash \{0\}\times[a,b]$ we know that
        \begin{align}\label{eq: 56}
            H_{n-1}(\Omega_0\backslash l) = \mathbb{Z}\oplus\mathbb{Z}
        \end{align}
        By Lemma \ref{prop: AF surface} $\Sigma(s)$ is asymptotic to $S_{s'}$, so $\Sigma(s)$ is a graph over $S_{s'}$ outside a compact set. Consider a disk $D_{\rho}\subset S_{s'}$. By choosing $\rho$ sufficiently large, $\partial D_{\rho}$ represents the generator $\alpha$ of the first $\mathbb{Z}$ summand in \eqref{eq: 56}. The generator $\beta$ of the second $\mathbb{Z}$ summand is represented by the inner boundary of $\Omega_0$: $\partial B_1^n(0)\cong S^{n-1}$. By the Mayer-Vietoris sequence for $\Omega_0$ and $K$ we see that
        \begin{align*}
        0\longrightarrow H_{n-1}(S^{n-1}) \stackrel{i_1\oplus i_2}{\longrightarrow} H_{n-1}(\Omega_0)\oplus H_{n-1}(K)\stackrel{j}{\longrightarrow} H_{n-1}(\Omega)\longrightarrow 0
        \end{align*}
        Since $i_1$ maps the generator of $H_{n-1}(S^{n-1})=\mathbb{Z}$ to $\beta$, the first summand in \eqref{eq: 56} is mapped injectively by $j$ into $H_{n-1}(\Omega)$. This shows $\partial D_{\rho}$ is not homologuous to zero in $H_{n-1}(\Omega)$, and the same thing holds for $\{(x,u_s(x)), x\in \partial D_{\rho}\}$, where $u_s$ denotes the graph function of $\Sigma(s)$ outside a compact set due to Lemma \ref{prop: AF surface}.

        On the other hand, by Lemma \ref{prop: AF surface}, we know $\{(x,u_s(x)), x\in \partial D_{\rho}\}$ is the boundary of a compact region in $\Sigma(s)$. This shows $\Sigma(s)$ intersects with $l$. However, $\Sigma(s)\cap W = \emptyset$, and $\Sigma(s)$ lies beyond $\Sigma$. This shows $\Sigma(s)\cap l_p^+\ne\emptyset$.
        \end{proof}

        Since $\Sigma(s)\cap W = \emptyset$, $\Sigma(s)$ turns out to be an area minimizing hypersurface with respect to the original metric $g$. By the same argument as in Case 1, we see that $\Sigma(s)$ coincide with $\Sigma_{t'}$ for $t'>t>t_1$. Let $\epsilon_0$ be as in Lemma \ref{lem: g(s) perturbation}. When $t_1$ is sufficiently large, the slope of $\Sigma_{t'}$ is smaller than $\epsilon_0$. Assume $\Sigma_{t'}$ touches $\partial W$ at $q$, then the slope of $q$ with respect to $\partial W$ is also smaller than $\epsilon_0$. It follows from Lemma \ref{lem: g(s) perturbation} that $\dist(q,p)<6r$. By Letting $r\to 0$, we see that $\Sigma(s)$ subconverge to an area minimizing hypersurface $\Sigma_p$ passing through $p$, which in turn coincides with $\Sigma_t$.

\end{proof}
	
	\begin{lemma}\label{lem: limit strong stability}
		For $t\ge t_1$, $\Sigma_t$ is stable under asymptotically constant variation.
	\end{lemma}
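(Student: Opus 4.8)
The plan is to derive the stability inequality for $\Sigma_t$ as a limit of the stability inequalities for the free boundary minimizers $\Sigma(s)$: by Lemma~\ref{lem: coincide} these converge locally smoothly to $\Sigma_t$ (along parameters $r\to 0$, $s\to 0$), and by Lemma~\ref{lem: ACV} each $\Sigma(s)$ is stable under asymptotically constant variation, so it remains to check that this property survives the limit. Concretely, I would use the reduction carried out in the proof of Lemma~\ref{lem: ACV} together with Remark~\ref{rk: finite energy}: being stable under asymptotically constant variation amounts to the inequality $\int_{\Sigma_t}|\nabla\varphi|^2\,d\mu\ge\int_{\Sigma_t}(\mathrm{Ric}(\nu,\nu)+|A|^2)\varphi^2\,d\mu$ for all $\varphi$ with $\varphi-c$ of finite $W^{1,2}$ norm for some constant $c$. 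Splitting off compactly supported perturbations (which are controlled since $\Sigma_t$ is a limit of area minimizers, hence stable) and rescaling by $c$, it suffices to prove this for $\varphi$ with $\varphi\equiv 1$ outside a coordinate ball $B^n_{R_0}$.

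Fix such a $\varphi$ and pick, via Lemma~\ref{lem: coincide}, parameters $r_k\to 0$, $s_k\to 0$ with $g(s_k)\to g$ smoothly and $\Sigma(s_k)\to\Sigma_t$ in $C^\infty_{\mathrm{loc}}$. Using the uniform graphical decay of Lemma~\ref{prop: AF surface}, for $k$ large both $\Sigma(s_k)$ and $\Sigma_t$ project diffeomorphically onto $\mathbb{R}^{n-1}\setminus B^{n-1}_{R_0}$ under the vertical projection; combining this identification near infinity with the local diffeomorphisms supplied by the smooth convergence on $B^n_{R_0}$, I would transplant $\varphi$ to produce test functions $\varphi_k$ on $\Sigma(s_k)$ with $\varphi_k\equiv 1$ outside $B^n_{R_0}$ and $\varphi_k\to\varphi$ in $C^1_{\mathrm{loc}}$. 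Each $\varphi_k$ is then admissible for the stability inequality of $\Sigma(s_k)$ by Remark~\ref{rk: finite energy}, since $\varphi_k-1$ is compactly supported.

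I would then apply that inequality to $\varphi_k$ and let $k\to\infty$. The gradient terms live in $B^n_{R_0}$, so $\int_{\Sigma(s_k)}|\nabla_{g(s_k)}\varphi_k|^2\to\int_{\Sigma_t}|\nabla\varphi|^2$ by local smooth convergence; likewise the part of the right-hand side over $B^n_{R_0}$ converges. For the part over the end, where $\varphi_k\equiv 1$, I would use that asymptotic flatness passes uniformly to $g(s_k)$ and that $|A_k|=O(|x|^{-1-\tau+\epsilon})$ uniformly in $k$ by Lemma~\ref{prop: AF surface}, so that $\mathrm{Ric}_{g(s_k)}(\nu_k,\nu_k)+|A_k|^2=O(|x|^{-\tau-2})$ with a constant independent of $k$; since $\tau+2>n-1$ and the areas obey the uniform bound \eqref{eq: 4}, the integral over $\{|x|>R_1\}$ is uniformly small for $R_1$ large, and a standard $\varepsilon/3$ argument then gives convergence of the tail integrals to $\int_{\Sigma_t\setminus B^n_{R_0}}(\mathrm{Ric}(\nu,\nu)+|A|^2)$. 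Collecting the pieces and using $\varphi\equiv 1$ outside $B^n_{R_0}$ yields the inequality for $\varphi$, hence, after the finite-energy approximation and rescaling of the first paragraph, for every asymptotically constant variation.

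The main obstacle I anticipate is precisely this passage to the limit over the non-compact end: $C^\infty_{\mathrm{loc}}$ convergence alone says nothing about integrals at infinity, so the argument genuinely relies on the \emph{uniformity} of the decay estimates along $\Sigma(s_k)$ — that the constants in Lemma~\ref{prop: AF surface} and in the asymptotically flat structure of $g(s)$ can be taken independent of the perturbation parameters $r$ and $s$. This uniformity should follow because $g(s)\to g$ smoothly and the relevant estimates only invoke asymptotically flat bounds, but verifying it carefully is the delicate point; once it is in hand, the remainder is a routine limiting argument.
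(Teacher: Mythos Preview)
Your approach is essentially the same as the paper's: pass the stability inequality from $\Sigma(s_k)$ to $\Sigma_t$ via local smooth convergence on compacta and a uniform tail estimate for $\mathrm{Ric}(\nu,\nu)+|A|^2$ at infinity, then upgrade from compactly-supported-plus-constant test functions to general asymptotically constant variations via Remark~\ref{rk: finite energy}. The one technical simplification in the paper worth noting is the transplantation step: rather than patching graphical identifications near infinity with local diffeomorphisms from the convergence, the paper simply extends the test function $\varphi$ on $\Sigma_t$ to a smooth function $\phi$ on all of $M$ that equals $1$ outside a compact set, and then restricts $\phi$ to each $\Sigma(s_j)$; this makes your $\varphi_k$ automatic and removes the need to worry about compatibility of the two identifications. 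For the tail, the paper invokes Lemma~\ref{lem: decay estimate for higher derivative of u} (whose constants are explicitly uniform in $t$) rather than Lemma~\ref{prop: AF surface}; since $g(s)=g$ outside the compact set $W$, the $\Sigma(s_j)$ are area minimizing for $g$ near infinity and that lemma applies directly, which disposes of the uniformity concern you flag at the end.
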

	\begin{proof}
		Let $\varphi\in C^{\infty}(\Sigma_t)$ which equals to $1$ outside a compact set on $\Sigma_t$. We can extend $\varphi$ to a smooth function $\phi$ defined on $M$, such that $\phi$ equals to $1$ outside a compact set. By the previous Lemma, there exists $s_j\to 0$, such that $\Sigma(s_j)$ converge to $\Sigma_t$ locally smoothly. Therefore, on each ball $B_R\subset M$, there holds
        \begin{align}\label{eq: 59}
            \int_{\Sigma(s_j)\cap B_R}(Ric(\nu,\nu)+|A|^2)\phi^2d\mu\to \int_{\Sigma_t\cap B_R}|(Ric(\nu,\nu)+|A|^2)\phi^2d\mu
        \end{align}
        Moreover, by uniform decay estimate in Lemma \ref{lem: decay estimate for higher derivative of u}, we have
        \begin{align}\label{eq: 60}
            \int_{\Sigma(s_j)\backslash B_R}(Ric(\nu,\nu)+|A|^2)\phi^2d\mu = o(1)
        \end{align}
        where the $o(1)$ is independent of $R$. By letting $R\to\infty$, in conjunction with Lemma \ref{lem: ACV},\eqref{eq: 59}\eqref{eq: 60}, we see $\Sigma_t$ is stable under variation which equals $1$ outside a compact set. Remark \ref{rk: finite energy} implies the same thing holds true when $\varphi$ is only required to be asymptotic to $1$ in $W^{1,2}$ sense.
	\end{proof}

        \begin{proof}[Proof of Theorem \ref{thm: free boundary}]
            By Lemma \ref{prop: AF surface}, Lemma \ref{lem: coincide} and Lemma \ref{lem: limit strong stability} , we see that for $t\ge t_1$, $\Sigma_t$ is an area minimizing hypersurface which is asymptotically flat under induced metric, with zero mass and stable under asymptotic constant variation. It follows from Lemma \ref{lem: infinitesimal rigidity} that $\Sigma_t$ is isometric to $\mathbb{R}^{n-1}$, totally geodesic and has normal Ricci curvature vanishing. By Proposition \ref{flatness}, we deduce that the region beyond $\Sigma_{t_1}$ is isometric to $\mathbb{R}^n_+$. Similarly, there exists $t_2<0$ such that the region below $\Sigma_{t_2}$ is isometric to $\mathbb{R}^n_-$. By the definition of ADM mass we have
            \begin{align}\label{eq: 62}
                m = \lim_{r\to\infty}\frac{1}{2(n-1)\omega_{n-1}}\int_{\partial B_r(O)}(g_{ij,i}-g_{ii,j})\nu^j d\Bar{\mu}
            \end{align}
            with $\partial B_r(O)$ denoting the centered coordinate sphere of radius $r$. Since $\tau>n-3$, the term $(g_{ij,i}-g_{ii,j})$ decays faster than $|x|^{2-n}$. Let us denote the region between $\Sigma_{t_2}$ and $\Sigma_{t_1}$ by $\Omega_{t_2,t_1}$, then it follows that
            \begin{align*}
                \mathcal{H}^{n-1}(\partial B_r(O)\cap\Omega_{t_2,t_1}) = O(r^{n-2})
            \end{align*}
            where the Hausdorff measure is associated with Euclidean metric in $\mathbb{R}^n$. In conjunction with \eqref{eq: 62}, we see $m=0$. This is a contradiction.
        \end{proof}

        \begin{proof}[Proof of Theorem \ref{thm: cylinder minimizing}]
            Let $\Sigma$ be the globally minimizing hypersurface as in Definition \ref{global min}, then for each $R_i$, $\Sigma_i = \Sigma\cap C_{R_i}$ serves as the free boundary minimizing hypersurface in Theorem \ref{thm: free boundary}. Furthermore, these $\Sigma_i$ does not drift to infinity as each of them contains $\Sigma_1$. It follows from Theorem \ref{thm: free boundary} that $m=0$, and $M$ is isometric to $\mathbb{R}^n$.
        \end{proof}

 \appendix
\section{Some $L^p$ estimate for solutions to elliptic equations}

Let $(\mathbf{R}^m, g)$ be a complete Riemannian manifold with $g$ equivalent to the Euclidean metric. Then we have the following Sobolev inequality
\[
(\int_{\mathbf{R}^m} |w|^{\frac{2m}{m-2}}dy)^{\frac{m-2}{m}}\le  C_m\int_{\mathbf{R}^m}|Dw|^2dy.
\]
for any $u\in C^{\infty}_c(\mathbf{R}^m)$.
\begin{lemma}\label{lem: L^ 2m/m-2 estimate}
   Let $w\in C^2(\mathbf{R}^m)\cap W^{1,2}(\mathbf{R}^m)\cap L^{\frac{2m}{m-2}}(\mathbf{R}^m)$ be the solution of the equation
\begin{align}\label{eq: linear eq1}
    a_{ij}D_{ij}w+b_iD_i w +cw = f
\end{align}
on $\mathbf{R}^m$ with uniform elliptic condition
\begin{align*}
    \lambda |\xi|^2\le a_{ij}\xi^i\xi^j
\end{align*}
Then
\begin{equation}\label{eq: L^p bound}
    \begin{split}
        &(\int_{\mathbf{R}^m} |w|^{\frac{2m}{m-2}}dy)^{\frac{m-2}{2m}}\\
    \le & \frac{C_m}{\lambda^2}\left((\int_{\mathbf{R}^m}|(D_ja_{ij}-b_i)^2+2\lambda c|^{\frac{m}{2}} dy)^{\frac{2}{m}}(\int_{\mathbf{R}^m} w^{\frac{2m}{m-2}})^{\frac{m-2}{2m}} + (\int_{\mathbf{R}^m} |f|^{\frac{2m}{m+2}})^{\frac{m+2}{2m}} \right)
    \end{split}
\end{equation}
\end{lemma}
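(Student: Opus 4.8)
The plan is to reduce everything to a single energy identity obtained by testing the equation against $w$, and then to combine uniform ellipticity with Young's inequality to absorb the gradient term, finishing with the stated Sobolev inequality and two Hölder estimates. We may assume the right‑hand side of \eqref{eq: L^p bound} is finite, i.e. $\int_{\mathbf{R}^m}\bigl||D_ja_{ij}-b_i|^2+2\lambda c\bigr|^{\frac m2}\,dy<\infty$ and $\int_{\mathbf{R}^m}|f|^{\frac{2m}{m+2}}\,dy<\infty$, since otherwise there is nothing to prove (recall $w\in L^{\frac{2m}{m-2}}$, so the left‑hand side is finite). First I would rewrite the non‑divergence operator as $a_{ij}D_{ij}w=D_j(a_{ij}D_iw)-(D_ja_{ij})D_iw$, so that \eqref{eq: linear eq1} reads $D_j(a_{ij}D_iw)-(D_ja_{ij}-b_i)D_iw+cw=f$. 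Testing against $\zeta_R^2w$, where $\zeta_R$ is a cutoff equal to $1$ on the ball $B_R$ about the origin, vanishing outside $B_{2R}$, with $|D\zeta_R|\le 2/R$, and integrating by parts once gives
\[
\int_{\mathbf{R}^m}\zeta_R^2\,a_{ij}D_iw\,D_jw=-\int_{\mathbf{R}^m} w\,a_{ij}D_iw\,D_j(\zeta_R^2)-\int_{\mathbf{R}^m}\zeta_R^2\,(D_ja_{ij}-b_i)\,w\,D_iw+\int_{\mathbf{R}^m}\zeta_R^2\,c\,w^2-\int_{\mathbf{R}^m}\zeta_R^2\,f\,w.
\]
Using that $(a_{ij})$ is (uniformly) elliptic and bounded, $w\in W^{1,2}$, and the a priori finiteness above (together with Hölder's inequality with the pairs $(\tfrac m2,\tfrac m{m-2})$ and $(\tfrac{2m}{m+2},\tfrac{2m}{m-2})$), all integrands on the right lie in $L^1(\mathbf{R}^m)$; moreover the cutoff error term is bounded by $\tfrac{C}{R}\|Dw\|_{L^2}\|w\|_{L^2(B_{2R}\setminus B_R)}\to 0$. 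Letting $R\to\infty$ and applying dominated convergence yields the clean identity
\[
\int_{\mathbf{R}^m} a_{ij}D_iw\,D_jw=-\int_{\mathbf{R}^m}(D_ja_{ij}-b_i)\,w\,D_iw+\int_{\mathbf{R}^m} c\,w^2-\int_{\mathbf{R}^m} f\,w.
\]

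Next I would exploit ellipticity on the left and Young's inequality on the right. Writing $e_i:=D_ja_{ij}-b_i$, we have $\lambda\int|Dw|^2\le\int a_{ij}D_iwD_jw$ and $\bigl|\int e_i\,w\,D_iw\bigr|\le\tfrac\lambda2\int|Dw|^2+\tfrac1{2\lambda}\int|e|^2w^2$. Absorbing $\tfrac\lambda2\int|Dw|^2$ into the left‑hand side gives
\[
\int_{\mathbf{R}^m}|Dw|^2\le\frac1{\lambda^2}\int_{\mathbf{R}^m}\bigl(|e|^2+2\lambda c\bigr)w^2+\frac2\lambda\Bigl|\int_{\mathbf{R}^m} f\,w\Bigr|.
\]
Now I would apply the Sobolev inequality stated before the lemma to $w$ itself (it extends from $C^\infty_c$ to $w\in W^{1,2}\cap L^{\frac{2m}{m-2}}$ by the standard truncation‑and‑mollification approximation), then bound $(|e|^2+2\lambda c)w^2\le\bigl||e|^2+2\lambda c\bigr|\,w^2$ and use Hölder with the pairs noted above, to obtain
\[
\Bigl(\int_{\mathbf{R}^m}|w|^{\frac{2m}{m-2}}\Bigr)^{\frac{m-2}{m}}\le\frac{C_m}{\lambda^2}\Bigl(\int_{\mathbf{R}^m}\bigl||e|^2+2\lambda c\bigr|^{\frac m2}\Bigr)^{\frac2m}\Bigl(\int_{\mathbf{R}^m}|w|^{\frac{2m}{m-2}}\Bigr)^{\frac{m-2}{m}}+\frac{2C_m}{\lambda}\Bigl(\int_{\mathbf{R}^m}|f|^{\frac{2m}{m+2}}\Bigr)^{\frac{m+2}{2m}}\Bigl(\int_{\mathbf{R}^m}|w|^{\frac{2m}{m-2}}\Bigr)^{\frac{m-2}{2m}}.
\]
Dividing through by $\bigl(\int_{\mathbf{R}^m}|w|^{\frac{2m}{m-2}}\bigr)^{\frac{m-2}{2m}}$ (the inequality being trivial when this quantity is zero, and it being finite since $w\in L^{\frac{2m}{m-2}}$), and — after replacing $\lambda$ by $\min(\lambda,1)$ if necessary so that $\tfrac1\lambda\le\tfrac1{\lambda^2}$ and absorbing the numerical factor $2$ into $C_m$ — gives exactly \eqref{eq: L^p bound}.

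The main obstacle, and essentially the only nontrivial point, is the passage to the limit $R\to\infty$ in the integration by parts on the non‑compact manifold $\mathbf{R}^m$: one must check that the cutoff error $\int w\,a_{ij}D_iw\,D_j(\zeta_R^2)$ vanishes in the limit and that the remaining four integrands are globally integrable, both of which follow from the $W^{1,2}$ and $L^{\frac{2m}{m-2}}$ hypotheses on $w$ together with the a priori finite $L^{m/2}$ bound on $|e|^2+2\lambda c$ and $L^{\frac{2m}{m+2}}$ bound on $f$. Everything else — the ellipticity absorption, the two Hölder pairings, and the final division — is elementary, and the resulting constant $C_m$ depends only on $m$ through the Sobolev constant. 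I would also remark that the identical computation, with $f$ replaced by a divergence‑form right‑hand side $D_iF_i$, yields the variant of this estimate used in the proof of Proposition~\ref{prop: C^1 smoothness for foliation}.
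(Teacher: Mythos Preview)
Your proof is correct and follows essentially the same route as the paper's: rewrite in divergence form, test against $w$, use ellipticity plus Young to absorb the gradient, then Sobolev and H\"older, and finally divide through by $\bigl(\int|w|^{\frac{2m}{m-2}}\bigr)^{\frac{m-2}{2m}}$. The only difference is that you justify the integration by parts on $\mathbf{R}^m$ via a cutoff argument whereas the paper simply asserts the resulting identity; this is a point of added rigor rather than a different method.
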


\begin{proof}
\eqref{eq: linear eq1} can be rewritten as
\begin{align}\label{eq: 35}
    D_i(a_{ij}D_jw)+(b_i-D_ja_{ij})D_iw+cw = f
\end{align}
Multiply $w$ on two sides on \eqref{eq: 35} and integration by parts, we get
\begin{align}
    \int_{\mathbf{R}^m} a_{ij}D_iwD_jw dy= \int_{\mathbf{R}^m}(b_i-D_ja_{ij})wD_iw + cw^2 -fw dy
\end{align}

We have
\begin{align*}
    &\lambda \int_{\mathbf{R}^m}|Dw|^2dy \le  \int_{\mathbf{R}^m}(b_i-D_ja_{ij})wD_iw + cw^2 -fw dy\\
    \le & \frac{\lambda}{2}\int_{\mathbf{R}^m} |D_iw|^2 dy + \frac{1}{2\lambda}\int_{\mathbf{R}^m} (D_ja_{ij}-b_i)^2w^2 dy + \int_{\mathbf{R}^m} cw^2 -fw dy
\end{align*}
By Sobolev inequality, we get
\begin{align*}
    &(\int_{\mathbf{R}^m} |w|^{\frac{2m}{m-2}}dy)^{\frac{m-2}{m}}\le  C_m\int_{\mathbf{R}^m}|Dw|^2dy\\
    \le & \frac{C_m}{\lambda^2}\left(\int_{\mathbf{R}^m}[(D_ja_{ij}-b_i)^2+2\lambda c]w^2 dy -\int_{\mathbf{R}^m}2\lambda fw dy\right)\\
    \le & \frac{C_m}{\lambda^2}\left((\int_{\mathbf{R}^m}|(D_ja_{ij}-b_i)^2+2\lambda c|^{\frac{m}{2}} dy)^{\frac{2}{m}}(\int_{\mathbf{R}^m} w^{\frac{2m}{m-2}})^{\frac{m-2}{m}} + (\int_{\mathbf{R}^m} |f|^{\frac{2m}{m+2}})^{\frac{m+2}{2m}} (\int_{\mathbf{R}^m} |w|^{\frac{2m}{m-2}})^{\frac{m-2}{2m}}\right)
\end{align*}
Assume $w\ne 0$, we get
\begin{equation*}
    \begin{split}
        &(\int_{\mathbf{R}^m} |w|^{\frac{2m}{m-2}}dy)^{\frac{m-2}{2m}}\\
    \le & \frac{C_m}{\lambda^2}\left((\int_{\mathbf{R}^m}|(D_ja_{ij}-b_i)^2+2\lambda c|^{\frac{m}{2}} dy)^{\frac{2}{m}}(\int_{\mathbf{R}^m} w^{\frac{2m}{m-2}})^{\frac{m-2}{2m}} + (\int_{\mathbf{R}^m} |f|^{\frac{2m}{m+2}})^{\frac{m+2}{2m}}\right)
    \end{split}
\end{equation*}
\end{proof}
As a corollary, we have
\begin{corollary}\label{cor: uniqueness of solution}
Let $w\in C^2(\mathbf{R}^m)\cap W^{1,2}(\mathbf{R}^m)\cap L^{\frac{2m}{m-2}}(\mathbf{R}^m)$ be the solution of the equation
\begin{align}\label{eq: linear eq2}
    a_{ij}D_{ij}w+b_iD_i w +cw = 0
\end{align}
Then there exists some $\varepsilon_0$ depending only on $C_m$ and the uniform
elliptic coefficient $\lambda$ such that if  
\[
\int_{\mathbf{R}^m}|(D_ja_{ij}-b_i)^2+2\lambda c|^{\frac{m}{2}} dy\leq \varepsilon_0
\]
Then equation \eqref{eq: linear eq2} only has the trivial solution $w=0$.
\end{corollary}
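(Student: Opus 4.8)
The plan is to deduce the corollary directly from Lemma \ref{lem: L^ 2m/m-2 estimate} by specializing to $f = 0$. Applying that lemma to the homogeneous equation \eqref{eq: linear eq2}, the term involving $f$ drops out and we are left with
\[
\Bigl(\int_{\mathbf{R}^m} |w|^{\frac{2m}{m-2}}dy\Bigr)^{\frac{m-2}{2m}} \le \frac{C_m}{\lambda^2}\Bigl(\int_{\mathbf{R}^m}|(D_ja_{ij}-b_i)^2+2\lambda c|^{\frac{m}{2}} dy\Bigr)^{\frac{2}{m}}\Bigl(\int_{\mathbf{R}^m} |w|^{\frac{2m}{m-2}}dy\Bigr)^{\frac{m-2}{2m}}.
\]
Write $N := \bigl(\int_{\mathbf{R}^m} |w|^{\frac{2m}{m-2}}dy\bigr)^{\frac{m-2}{2m}}$, which is finite by the hypothesis $w \in L^{\frac{2m}{m-2}}(\mathbf{R}^m)$, and $\kappa := \frac{C_m}{\lambda^2}\bigl(\int_{\mathbf{R}^m}|(D_ja_{ij}-b_i)^2+2\lambda c|^{\frac{m}{2}}dy\bigr)^{\frac{2}{m}}$. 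The displayed inequality then reads simply $N \le \kappa N$.

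Next I would fix the threshold: set $\varepsilon_0 := \bigl(\frac{\lambda^2}{2C_m}\bigr)^{\frac{m}{2}}$ (any positive number strictly below $(\lambda^2/C_m)^{m/2}$ works). If $\int_{\mathbf{R}^m}|(D_ja_{ij}-b_i)^2+2\lambda c|^{\frac{m}{2}}dy \le \varepsilon_0$, then $\kappa \le \tfrac12 < 1$, so $N \le \kappa N$ together with $N < \infty$ forces $N = 0$. Hence $w = 0$ almost everywhere, and since $w \in C^2(\mathbf{R}^m)$ this gives $w \equiv 0$, which is the assertion. Note that $\varepsilon_0$ indeed depends only on $m$ (through $C_m$) and on the ellipticity constant $\lambda$.

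There is essentially no obstacle here; the only things worth verifying are that $N$ is genuinely finite — which is exactly why the integrability hypothesis $w \in L^{\frac{2m}{m-2}}(\mathbf{R}^m)$ is built into the statement, since otherwise the absorption step $N \le \kappa N \Rightarrow N = 0$ would be meaningless — and that Lemma \ref{lem: L^ 2m/m-2 estimate} applies to our $w$ without secretly assuming $N > 0$. The latter is fine: the proof of that lemma divides by $N$ only in the final line, which is harmless because when $N = 0$ the conclusion $w \equiv 0$ is already in hand. Thus the corollary follows in a couple of lines.
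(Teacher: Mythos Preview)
Your proof is correct and is exactly the intended argument: the paper states this result as an immediate corollary of Lemma~\ref{lem: L^ 2m/m-2 estimate} without supplying any further proof, and your specialization to $f=0$ followed by the absorption $N\le\kappa N\Rightarrow N=0$ is precisely what is meant.
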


Finally, we show the following proposition which is from \cite{Liu13}:
\begin{proposition}\label{flatness}
Suppose $(M^n,g)$ and  $\Sigma_t$, $t_0$ as those in Theorem \ref{thm: Existence}. Moreover,  $\Sigma_t$ satisfies the following conditions:
\begin{itemize}
	\item $\Sigma_t$ is isometric to $\mathbb{R}^{n-1}$ and totally geodesic;
	\item $Ric(\nu,\nu)=0$ along $\Sigma_t$, here $\nu$ is the outward unit normal vector of $\Sigma_t$.
	 \end{itemize}	
Then $\{(x,z)\in M: |z|\geq t_0\}$ is flat.
\end{proposition}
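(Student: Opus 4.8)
The plan is to run the standard foliation-rigidity argument: use the fact that the region $\{|z|\geq t_0\}$ is smoothly foliated by the leaves $\Sigma_t$, each of which is flat ($\cong\mathbb{R}^{n-1}$), totally geodesic, and has vanishing normal Ricci curvature, and then examine the evolution of the induced metric along the foliation. Writing the metric in a neighbourhood of the foliated region in Gaussian-type coordinates adapted to the leaves, $g = \rho^2(x,t)\, dt^2 + g_t$, where $g_t$ is the induced metric on $\Sigma_t$ and $\rho$ is the lapse, the flatness of $(M^n,g)$ in this region will follow once one shows $\rho$ is (essentially) constant along each leaf and the second fundamental forms stay zero. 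First I would recall that since each $\Sigma_t$ is totally geodesic, the first variation of the induced metric is $\partial_t g_t = 2\rho\, h_t = 0$ at $t$; but this only says the metric is stationary to first order, so I must go to second order. The key computation is the linearized equation for the second fundamental form along the foliation (the Jacobi/Riccati equation): $\partial_t h_{ij} = -\nabla_i\nabla_j \rho + \rho(\, \text{(curvature terms)} + h_{ik}h^k_j\,)$, which at a leaf where $h=0$ reduces to $\partial_t h_{ij} = -\nabla_i\nabla_j\rho - \rho\, R_{i\nu j\nu}$.

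Next I would use the two hypotheses together with the structure of the ambient curvature. Because $\Sigma_t$ is totally geodesic and intrinsically flat, the Gauss equation forces the tangential sectional curvatures $R_{ijij}$ of $M$ along $\Sigma_t$ to vanish; combined with $\mathrm{Ric}(\nu,\nu)=0$ and the traced Gauss/Codazzi relations, one gets that all the mixed curvature components $R_{i\nu j\nu}$ have vanishing trace, and in fact — this is the crux — one can show $R_{i\nu j\nu}\equiv 0$ along each leaf. Indeed $\mathrm{Ric}(\nu,\nu) = \sum_i R_{i\nu i\nu} = 0$, and one needs an extra relation to kill the full tensor $R_{i\nu j\nu}$ rather than just its trace; this should come from differentiating the totally-geodesic condition in $t$ and invoking that the neighbouring leaves $\Sigma_{t'}$ are \emph{also} totally geodesic and flat (the foliation hypothesis supplies a full interval of such $t$), so the variation $\partial_t h = 0$ holds identically on the foliated region. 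Substituting $h\equiv 0$ and $\partial_t h\equiv 0$ into the Riccati equation yields $\nabla_i\nabla_j\rho = -\rho\, R_{i\nu j\nu}$, and taking the trace over the flat leaf gives $\Delta_{\Sigma_t}\rho = -\rho\,\mathrm{Ric}(\nu,\nu) = 0$, so $\rho$ is a bounded (by asymptotic flatness, $\rho\to 1$ at infinity) harmonic function on $\mathbb{R}^{n-1}$, hence $\rho\equiv\text{const}=1$. Feeding $\rho\equiv 1$ back in gives $R_{i\nu j\nu}=0$, and together with the already-established vanishing of the purely tangential and the (Codazzi-controlled) mixed-with-one-normal components, the full curvature tensor of $M$ vanishes on $\{|z|\geq t_0\}$.

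The main obstacle I anticipate is the step that upgrades $\mathrm{Ric}(\nu,\nu)=0$ (a scalar condition) to the full vanishing of the normal-plane curvature operator $R_{i\nu j\nu}$, and correspondingly showing the lapse $\rho$ is harmonic on each leaf rather than merely satisfying an inequality. This is exactly where the \emph{foliation} hypothesis (as opposed to the existence of a single rigid leaf) must be used essentially: one differentiates the totally-geodesic identity $h_t\equiv 0$ in $t$ across the whole interval, which is legitimate because Proposition \ref{prop: C^1 smoothness for foliation} and its higher-order analogues guarantee the foliation is smooth in $t$. A secondary technical point is justifying the boundedness/asymptotics of $\rho$: here I would use Theorem \ref{thm: Existence}(2) and Lemma \ref{lem: decay estimate for higher derivative of u}, which give $|z-t|_{C^k}(x)=O(|x|^{1-\tau-k+\epsilon})$, so that in the region $\{|z|\geq t_0\}$ the coordinate $z$ and the foliation parameter $t$ differ by a decaying error and $\rho = |\nabla z|^{-1}\to 1$ at infinity; a bounded harmonic function on $\mathbb{R}^{n-1}$ is constant (Liouville), closing the argument. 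Once $\rho\equiv 1$ and $h\equiv 0$ the metric is a product $dt^2+g_{\text{flat}}$, i.e. flat, on $\{|z|\geq t_0\}$. (This is the argument of \cite{Liu13}, which I would cite for the detailed curvature bookkeeping.)
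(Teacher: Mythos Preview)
Your proposal is correct and follows essentially the same argument as the paper's proof (both ultimately trace back to \cite{Liu13}): use the fact that \emph{nearby} leaves are also totally geodesic to derive a Jacobi/Riccati identity $\nabla_i\nabla_j\rho = -\rho\,R_{i\nu j\nu}$ for a positive function on the flat leaf, take the trace and use $\mathrm{Ric}(\nu,\nu)=0$ to get $\Delta_{\Sigma_t}\rho=0$, apply Liouville on $\mathbb{R}^{n-1}$, and read off $R_{i\nu j\nu}=0$. The only cosmetic difference is packaging: the paper fixes a leaf $\Sigma$, uses Fermi (distance) coordinates, writes a nearby leaf as a graph $u$ over $\Sigma$, and lets $w=\lim u/u(\theta_0)$ play the role of your lapse $\rho$---but this $w$ is exactly the normalized lapse, so the two arguments coincide.
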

\begin{proof}

Let's fix any $\Sigma_t$ in $\{(x,z)\in M: |z|> t_0\}$	 and denote it by $\Sigma$ for simplicity, then we may write 
$$
g= d\rho^2+g_{ij}(\rho, \theta)d\theta^i d\theta^j,
$$
in a tubular neighborhood $\mathcal{U}$ of  $\Sigma$. Here $\rho$ is the distance function to $\Sigma$ in $(M^n,g)$, $\theta=(\theta^1, \cdots, \theta^{n-1})$ is a local coordinates in $\Sigma$, hence, $(\rho, \theta)$ is a local coordinates in $\mathcal{U}$. Then by the first assumption of $\Sigma$ in Proposition \ref{flatness}, we see that for any tangential vectors $X,Y,Z,W$ of $\Sigma$, we have $R(X,Y,Z,W)=0$ along $\Sigma$, and by Coddazi equations, we know that $R(X,Y,Z,\nu)=0$ along $\Sigma$ as well. Hence, it suffices to show $R(\nu,Y,Z,\nu)=0$ along $\Sigma$. To this end,  Let $\Sigma_t\subset \mathcal{U}$ be another leaf in the foliation of these minimal hypersurfaces mentioned in  Theorem \ref{thm: Existence} so that it can be written as graph over $\Sigma$ by a smooth positive function $u$, i.e. 
$$
\Sigma_t:=\{(u(\theta),\theta): \theta\in \Sigma\},
$$
let
$$
f(\rho,\theta)=\rho-u(\theta),
$$
then $f|_{\Sigma_t} =0$. Let $X,Y$ be any tangential vectors of $\Sigma_t$, then
\begin{equation}
	\begin{split}
	\nabla^2f(X,Y)&=\nabla^2_{\Sigma_t}f(X,Y)-<\nabla_XY, \nu>\nu(f)\\
		&=\nabla^2_{\Sigma_t}f(X,Y)\\
		&=0	,
	\end{split}\nonumber
\end{equation}
where $\nabla^2 $,$\nabla^2_{\Sigma_t}$ denote the Hessian operator on $(M,g)$ and $
\Sigma_t$ with the induced metric respectively, and we have utilized the fact that $\Sigma_t$ is totally geodesic,
hence on $\Sigma_t$ we have
\begin{equation}\label{eq:riccati eq}
	\begin{split}
\nabla^2_{\Sigma_t}u(X,Y)&=\nabla^2u(X,Y)\\
&=\nabla^2\rho(X,Y)\\
&=-R(\nu,X,Y,\nu)u+O(|u|^3),
	\end{split}
\end{equation}
where we have utilized the Riccati equation of $\nabla^2\rho(X,Y)$ and the fact that $\Sigma$ is totally geodesic. Finally, let $\theta_0$ be any fixed point in $\Sigma$, then $u(\theta_0)>0$ for each $t$,  and denote $w(\theta)=u^{-1}(\theta_0) u(\theta) $, then by previous estimate, we know that $w$ is uniformly bounded on $\Sigma_t$. Plug these issues into \eqref{eq:riccati eq}, and let $\Sigma_t$ approaches to $\Sigma$, then we get 
$$
\nabla^2_{\Sigma}w=-R(\nu,X,Y,\nu)w,
$$
on $\Sigma$. Taking trace in the above equation and noting that $Ric(\nu,\nu)=0$, we observe that $w$ is a bounded harmonic function on $\Sigma$, hence it must be a positive constant. This implies for any tangential vectors $X,Y$ on $\Sigma$,
$$
R(\nu,X,Y,\nu)=0.
$$
Therefore, the curvature tensor $Rm$ of $(M^n,g)$ vanishes along $\Sigma$, and $\Sigma$ is arbitrary, we see that $\{(x,z)\in M: |z|\geq t_0\}$ is flat.

\end{proof}

\bibliographystyle{alpha}

\bibliography{Positive}
\end{document}